\newtheorem{thm}{Theorem}[section]
\newtheorem{thmbis}{Theorem}
\newtheorem*{thm*}{Theorem}
\newtheorem{dfn}[thm]{Definition} 
\newtheorem*{dfn*}{Definition}
\newtheorem{cor}[thm]{Corollary}
\newtheorem*{cor*}{Corollary}
\newtheorem{corbis}[thmbis]{Corollary}
\newtheorem{prop}[thm]{Proposition} 
\newtheorem*{prop*}{Proposition} 
\newtheorem*{properties*}{Properties} 
\newtheorem{lem}[thm]{Lemma} 
\newtheorem*{lem*}{Lemma} 
\newtheorem{claim}[thm]{Claim} 
\newtheorem*{claim*}{Claim} 
\newtheorem*{fact*}{Fact}
\newtheorem*{qst*}{Question}
\newtheorem*{pb*}{Problem}
\newtheorem*{algo*}{Algorithm}
\theoremstyle{remark}
\newtheorem*{rem*}{Remark}
\newtheorem{rem}[thm]{Remark}
\newtheorem*{example*}{Example}
\newtheorem{example}[thm]{Example}
\newenvironment{SauveCompteurs}[1]{%
\newcommand{\monparametre}{#1}
\openexport{\monparametre_sauve}
  \Export{thmbis}
\closeexport}{}
\newenvironment{UtiliseCompteurs}[1]{%
\newcommand{\monparametre}{#1}
\openexport{\monparametre_aux}
  \Export{thmbis}
\closeexport
\Import{\monparametre_sauve}%
\renewcommand{\label}[1]{}
}{\Import{\monparametre_aux}}
\newlength{\espaceavantspecialthm}
\newlength{\espaceapresspecialthm}
\newenvironment{specialthm*}[1]{
\vskip\espaceavantspecialthm \noindent \textbf{#1} \itshape}%
{\normalfont \vskip \espaceapresspecialthm}
\newlength{\espaceavantenonce}
\newlength{\espaceapresenonce}
\newcommand{\fontetitreun}[1]{\textbf{#1}} 
\newcommand{\fontetitredeux}[1]{\textit{#1}} 
\newenvironment{enonce1*}[1]{
\vskip\espaceavantenonce \noindent \fontetitreun{#1} \itshape}%
{\normalfont \vskip \espaceapresenonce}
\newenvironment{enonce2*}[1]{
\vskip\espaceavantenonce \noindent \fontetitredeux{#1} }%
{\vskip \espaceapresenonce}
\edef\@tempa#1#2{\def#1{\mathaccent\string"\noexpand\accentclass@#2 }}
\@tempa\rond{017}
\newcommand{\es}{\emptyset}
\renewcommand{\phi}{\varphi} 
\newcommand{\m} {^{-1}} 
\newcommand{\eps} {\varepsilon} 
\newcommand{\st}{|\,}
\newcommand {\ra} {\rightarrow}
\newcommand {\onto} {\twoheadrightarrow}
\newcommand {\xra} {\xrightarrow}
\newcommand{\actson}{\,\raisebox{1.8ex}[0pt][0pt]{\begin{turn}{-90}\ensuremath{\circlearrowright}\end{turn}}\,}
\newcommand{\ul}[1]{\underline{#1}} 
\newcommand{\ol}[1]{\overline{#1}}
\newcommand{\normal} {\vartriangleleft}
\renewcommand{\subsetneq}{\varsubsetneq}
\newcommand{\dunion}{\sqcup}
\newcommand{\Dunion}{\bigsqcup} 
\newcommand{\ie} {i.~e.\ }
\newcommand {\calb} {{\mathcal {B}}}   
\newcommand {\calc} {{\mathcal {C}}}   
\newcommand {\cald} {{\mathcal {D}}}   
\newcommand {\cale} {{\mathcal {E}}}   
\newcommand {\calf} {{\mathcal {F}}}   
\newcommand {\calg} {{\mathcal {G}}}
\newcommand {\calk} {{\mathcal {K}}}   
\newcommand {\call} {{\mathcal {L}}}   
\newcommand {\calm} {{\mathcal {M}}}   
\newcommand {\calo} {{\mathcal {O}}}   
\newcommand {\calp} {{\mathcal {P}}}   
\newcommand {\calq} {{\mathcal {Q}}}   
\newcommand {\calr} {{\mathcal {R}}}   
\newcommand {\cals} {{\mathcal {S}}}   
\newcommand {\calt} {{\mathcal {T}}}
\newcommand {\bbN} {{\mathbb {N}}}
\newcommand {\bbQ} {{\mathbb {Q}}}   
\newcommand {\bbR} {{\mathbb {R}}}
\newcommand {\bbZ} {{\mathbb {Z}}}   
\newcommand{\abs}[1]{\lvert#1\rvert} 
\newcommand{\grp}[1]{\langle #1 \rangle}
\newcommand{\dom} {\mathop{\mathrm{dom}}} 
\newcommand{\Isom} {{\mathrm{Isom}}}
\newcommand{\Fix}{{\mathrm{Fix}}}
\newcommand{\Out} {{\mathrm{Out}}}
\newcommand{\Aut} {{\mathrm{Aut}}}
\newcommand{\Homeo} {{\mathrm{Homeo}}}
\newcommand{\id} {\mathrm{id}}
\newcommand{\G}{\Gamma}
\newcommand{\tto}{\twoheadrightarrow}
\newcommand{\K}{\mathcal{K}}
\newcommand{\calT}{\mathcal{T}}
\newcommand{\Cay}{\mathrm{Cay\,}}
\newcommand{\Cyl}{\mathrm{Cyl}}
\newcommand{\lqg}{\call\calq\calg}
\newcommand{\qg}{\calq\calg}
\newcommand{\Hfmi}{\Hat S_\pm^*}
\newcommand{\qi}{quasi-isometrically }
\newcommand{\Spm}{S_{\pm}}
\newcommand{\fmi}{\Spm^*}
\newcommand{\fmip}{\Spm^+}
\newcommand{\AutS}{{\Aut(\Spm)}}
\newcommand{\Sol}{\mathrm{Sol}}
\begin{document}
\title{Foliations for solving equations in groups: \\
 free, virtually free, and hyperbolic groups.}
\author{Fran{\c c}ois Dahmani, Vincent Guirardel}
\date{}

\maketitle

\begin{abstract}
We give an algorithm for solving equations and inequations with rational constraints
in virtually free groups. Our algorithm is based on Rips' classification of measured band complexes.
Using canonical representatives, we deduce 
 an algorithm for solving equations and inequations 
in all hyperbolic groups (possibly with torsion).
Additionally, we can deal with \qi embeddable rational constraints.
\end{abstract}

\setcounter{section}{-1}
\section{Introduction}

  \subsection{The equations problem}

Given a group $G$, the \emph{equations problem} in $G$ consists 
in deciding algorithmically whether a system of equations with constants has a solution in $G$ or not.
An equation is an equality $w=1$ where $w$ is a word on a set of variables an their inverses 
together with constants taken from $G$.
When inequations (\ie negation of equations) are allowed, we call this problem the \emph{problem of equations and inequations}.
In other words, the problem of equations and inequations is equivalent to the decidability of the existential (or universal)
theory of $G$ with constants in $G$.

A solution to the equations problem is quite powerful, as it vastly generalises 
the word problem, the conjugacy problem, the simultaneous conjugacy problem, etc.
When $G$ is  abelian, the problem of equations and inequations is easily solved using linear algebra.  
But already, if $G$ a free $3$-step nilpotent group of rank 2, the equations problem
is undecidable (\cite{Truss_equation}, see also \cite{Romankov_universal}).
This is based on Matiyasevich's Theorem 
saying that one cannot decide the solubility of polynomial equations in $\bbZ$  \cite{Matiyasevich}.
For a free $2$-step nilpotent group, 
the equations problem
is solvable if and only if one can decide 
the solubility 
of polynomial equations over $\bbQ$ \cite{Romankov_universal}.
Additionally, if $G$ is a non-commutative free metabelian group, 
the equations problem in $G$ is unsolvable \cite{Romankov_metabelian},
but the problem of equations and inequations \emph{without constants} is solvable \cite{Chapuis_universal}.

The problem of equations and inequations in free groups is
natural, and has attracted attention of many people (Lyndon, Appel,
Lorents...).  The solution of this problem by Makanin in 1982
\cite{Makanin_equations} (with the appropriate correction in \cite{Makanin_decidability}) certainly constitutes a milestone in the
theory.  It has been a source of inspiration for Rips for his study of
group actions on $\mathbb{R}$-trees, and his solution to Morgan and Shalen's
conjecture \cite{BF_stable,GLP1}, which found applications in many
branches of geometry.  It has been a decisive step towards
algorithmic, and theoretical description of the set of homomorphisms
of a group into a free group (Razborov \cite{Razborov_systems}).  It has also been,
together with these developments, a prelude to the far reaching recent
solutions to Tarski's problems on the elementary theory of free
groups, by Sela \cite{Sela_diophantine6}, and by Kharlampovich and Miasnikov
\cite{KhMy_elementary}. 
 Makanin's algorithm is also the basis of Rips and Sela's
solution to the  equations problem  in torsion free
hyperbolic groups that they manage to reduce to the equations problem in a free group
\cite{RiSe_canonical}.  
Finally, it is crucial in Sela's
solution of the isomorphism problem for torsion free hyperbolic groups
with finite outer-automorphism group: he makes a delicate use of
Makanin's algorithm, and of Rips' classification of actions on
$\mathbb{R}$-trees \cite{Sela_isomorphism}. 

Our main result is the following theorem, proved in section \ref{sec_endgame} (see below for definitions and discussion).

\begin{SauveCompteurs}{thm_eqn_hyp}
  \begin{thmbis}\label{thm_eqn_hyp}
    There exists an algorithm which takes as input
    \begin{itemize*}
    \item a presentation of a hyperbolic group $G$ (possibly with torsion),
    \item a finite system of equations and inequations with constants in
      $G$, and with \qi embeddable rational constraints
    \end{itemize*}
    and which decides whether there exists a solution or not.
  \end{thmbis}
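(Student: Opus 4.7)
The plan is to reduce the equations problem in a hyperbolic group $G$ to the equations problem with rational constraints in a virtually free group via the method of canonical representatives in the style of Rips--Sela \cite{RiSe_canonical}, and to treat the latter using Rips' theory of band complexes. For the reduction, suppose $\Sigma$ is a finite system of equations and inequations with \qi embeddable rational constraints in $G$. A putative solution $\sigma$ assigns a group element to each variable $x$, and via the canonical representative construction one associates to $\sigma(x)$ a quasi-geodesic word in the Cayley graph of $G$. For each relation $w(x_1,\ldots,x_n) = 1$ one builds a geodesic polygon whose sides are canonical representatives; the standard triangle/polygon analysis produces a cancellation diagram whose combinatorial type lies in a finite, effectively enumerable list depending only on $\Sigma$ and the hyperbolicity constants of $G$. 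Fixing such a type yields a system $\Sigma'$ whose variables are "pieces" of canonical representatives living in a virtually free group $F$ (carrying all finite subgroups of $G$ up to conjugacy), together with translated rational constraints. The solvability of $\Sigma$ in $G$ is then equivalent to the solvability of one of these finitely many effectively constructible systems $\Sigma'$ in $F$.

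The second ingredient, the heart of the paper, is the algorithm for equations and inequations with \qi embeddable rational constraints in a virtually free group $F$. From such a system one constructs a measured band complex $\calB$ whose vertical foliation records candidate values of the variables and whose bands encode the cancellation patterns dictated by the equations; the rational constraints and inequations become incidence and avoidance conditions on the leaves. Applying Rips' classification, one decomposes the minimal components of $\calB$ into simplicial, surface, axial, thin, and Levitt pieces. The algorithm iteratively reduces $\calB$, branching over the finitely many possible classification outcomes; reductions either simplify the complex strictly (so the procedure terminates) or leave a complex of controlled shape from which the existence of a solution, respecting all the constraints, can be read off directly.

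The \qi embeddability hypothesis is essential on both sides of the reduction: on the hyperbolic side it guarantees that canonical representatives of constrained values remain uniformly quasi-geodesic, so that constraints can be transported through the cancellation diagram; on the virtually free side it ensures that constraints translate into boundary conditions that the band complex can actually enforce. I expect the main obstacle to be the handling of torsion throughout the canonical representative construction: one must show that only finitely many combinatorial types of cancellation arise even in the presence of finite subgroups, and that the resulting translated system genuinely sits inside a virtually free group of effective description. This should follow from the effective thinness of geodesic triangles in $G$ combined with an effective bound on the orders of finite subgroups, both computable from the input presentation, but keeping the bookkeeping tight enough so that the final call to the virtually free algorithm is uniformly effective is the step requiring the most care.
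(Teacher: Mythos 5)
Your overall route matches the paper's: reduce via canonical representatives to a system with rational constraints in a virtually free group, then solve the latter through Rips' theory of band complexes. However, there are two substantive mismatches worth flagging.

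First, your description of the band-complex algorithm is not what the paper does. You describe iteratively applying the Rips machine to $\calB$, branching on classification outcomes, and terminating because each reduction either strictly simplifies the complex or leaves a readable normal form. This is the traditional Rips/Makanin picture. The paper's algorithm cannot proceed this way because it does not know whether a solution (hence a measured foliation) exists at all. Instead it enumerates \emph{prelaminations} — finite, combinatorial approximations to a lamination that need not extend — and for each one searches for a certificate (incompatibility of constraints, no invariant measure, excessive exponent of periodicity via Bulitko's lemma, or a shortening sequence of moves) proving it cannot be induced by a shortest solution. Rips' classification of minimal components (homogeneous, surface, exotic) enters only in the proof that this search \emph{terminates}: if no prelamination is ever rejected one extracts a limit measured foliation and shows a homogeneous component forces unbounded exponents of periodicity, while a surface or exotic component produces a shortening certificate for all large enough prelaminations. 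The distinction between using Rips theory to drive a reduction and using it to prove termination of a guess-and-verify search is central to the paper's argument and is absent from your account.

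Second, your reduction step says that to $\sigma(x)$ one associates a quasi-geodesic word in the Cayley graph of $G$. With torsion this is not the right object: the canonical representatives must be read as paths in the $1$-skeleton $\calk$ of the barycentric subdivision of a Rips complex, on which $G$ acts non-freely. The virtually free group $V$ then arises as the fundamental group of the finite graph of finite groups $\calk/G$, and the lifted system involves both equations in $V$ and short "central" constants whose product maps to $1$ in $G$. You acknowledge that the virtually free group carries finite subgroups of $G$, which is the right intuition, but the Cayley-graph picture does not actually produce it. Relatedly, you omit the further reduction from equations in a virtually free group to \emph{twisted} equations in a free group (via the Zimmermann--Culler trick of realizing a finite subgroup of $\Out(F)$ by permuting a basis of a larger free group, with rational constraints confining variables back into $F$), which is what lets the band complexes carry twisting automorphisms and what necessitates the machinery around M\"obius strips.
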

\end{SauveCompteurs}

In a forthcoming paper, we will use this algorithm to give a solution to the isomorphism problem
for all hyperbolic groups (possibly with torsion) \cite{DG2}.

We pursue several goals in this paper. 
Our first goal is a proof of Theorem \ref{thm_eqn_hyp}.
As in \cite{RiSe_canonical,Dah_existential}, our proof is based on canonical representatives, which allow us
to reduce to the equations problem in a virtually free group,
\ie a finite extension of a free group.

Our second goal is therefore to 
give a solution of the equations problem in virtually free groups.
This problem easily reduces to a problem of \emph{twisted equations} in a free group.

Last, but not least, we present a new approach to Makanin's algorithm,
based on Rips' theory for \emph{foliated   band complexes}, allowing us to solve these twisted equations.
This occupies the major part of this paper.

We continue this introduction by reviewing different aspects and concepts involved in our strategy.

\subsection{Rational constraints}

In a group $G$, the class of \emph{rational subsets} is the smallest
class containing finite subsets, and closed under finite union 
$A\cup B$, product $A\cdot B$, and semi-group generation $A^*$.  Equivalently, if
$S$ is a finite generating set of $G$, a subset $A\subset G$ is
rational if it is the image in $G$ of a regular language $L$ of the
free monoid on $S\cup S\m$, a regular language being a language
recognised by a finite state automaton.  
Solving a system of equations with \emph{rational constraints} consists in
solving this system of equations with the requirement that
each variable $x$ lies in a rational subset $\calr_x$ given in advance.

In a hyperbolic group, we propose the class of \emph{\qi embeddable} rational subsets as the suitable class for constraints. 
A rational subset $A\subset G$ is \qi embeddable if one can choose the regular language $L$ in the free monoid 
to consist of quasigeodesics (with uniform constants).
For example, a quasi-convex subgroup $H$ of a hyperbolic group $G$ is a \qi embeddable rational subset. Note that by \cite{Kap_Detecting}, one can compute a finite state automaton representing $H$ from a finite generating set. 
In general, the class of rational subsets of a group is not closed under complementation or intersection.
However, the set of \qi embeddable rational subsets of a hyperbolic group is a Boolean algebra (Cor. \ref{cor_boolean}).
For instance, the complement of a quasi-convex subgroup is also a \qi embeddable rational subset.
In particular, inequations in a hyperbolic group can be encoded using \qi embeddable rational constraints of the form $\calr=G\setminus\{1\}$.
In a virtually free group, every rational subset is \qi embeddable,
and the set of all rational subsets is a Boolean algebra.

Equations with rational constraints in a free monoid were first considered by Schulz \cite{Schulz_Makanin}.
Following an approach of Plandowski for free monoids,  
Diekert, Gutierrez and  Hagenah proved that systems of equations, inequations, 
and rational constraints in free groups are algorithmically solvable \cite{Plandowski_satisfiability,DGH_pspace}. 

The use of rational constraints in systems of equations turns out to be rather powerful. 
The problem of equations and inequations for right angled Artin groups has been reduced by
Diekert and Muscholl to a problem of equations with rational constraints in free groups \cite{DiekertMuscholl_graph-groups}.
This has been generalised by Diekert and Lohrey to 
free products, direct products, and graph products of certain groups 
\cite{DiekertLohrey_graph-products}.
It is the key tool to extend Rips and Sela's solution to
the equations problem for torsion free hyperbolic groups into a
solution to the problem of equations and inequations \cite{Dah_existential}.
It greatly streamlines the solution of the isomorphism problem for
torsion free hyperbolic groups, and allows  substantial
generalisations  \cite{DaGr_isomorphism,DG2}. It plays an important role in our 
solution to the equations problem in virtually free groups (Th.\ref{thm_vf} below),
and in Lohrey and Senizergues' independent solution \cite{LoSe_equations},
even if the initial problem does not involve inequations or rational constraints.
\\

For the sake of illustration, let us present two elementary applications of the use of rational constraints in a system of equations. 
Let $F_2$ be a free group of rank $2$. Given a finitely generated subgroup $H<F_2$ (or more generally any rational subset), 
and an element $x\in F_2$, one can decide if there is an automorphism of $F_2$ sending $x$ into $H$. 
Indeed, consider $a,b$ a basis of $F_2$, and write $x=w(a,b)$ as a word on $a,b$.
The orbit of $x$ under $\Aut(F_2)$ intersects $H$
if and only if there exists a basis $u,v$ of
$F_2$ such that $w(u,v) \in H$. By a theorem of Dehn, Magnus and
Nielsen, $(u,v)$ is a basis if and only if 
$\exists g, \, [u,v]^g= [a,b]^{\pm1}$. Therefore, the orbit of
$x$ intersects $H$ if and only if the system of equations with rational constraints
$$
\left\{
\begin{aligned}{}
[u,v]^g= [a,b]^{\pm1}\\
z=w(u,v),\quad z\in H
\end{aligned}
\right.$$
in the variables $u,v,g,z$ has a solution.

Our second application is  an immediate consequence of Theorem \ref{thm_eqn_hyp}:

\begin{corbis} Let $G$ be a hyperbolic group. 
Given a quasiconvex subgroup $H<G$, one can decide its malnormality
by solving the system $z=yxy\m$, $z\neq 1$ 
with rational constraints $z,x\in H$, $y\notin H$.
\end{corbis}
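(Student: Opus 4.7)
The plan is to show that the indicated system has a solution if and only if $H$ fails to be malnormal, and then to verify that the system falls within the scope of Theorem~\ref{thm_eqn_hyp}.

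First I would unpack the definition of malnormality. Recall $H$ is malnormal in $G$ iff for every $g\in G\setminus H$ one has $gHg\m\cap H=\{1\}$. Thus $H$ is \emph{not} malnormal iff there exist $y\in G\setminus H$ and $x\in H\setminus\{1\}$ with $yxy\m\in H\setminus\{1\}$. Setting $z:=yxy\m$, this is exactly the assertion that the system
$$z=yxy\m,\quad z\neq 1,\quad x\in H,\ z\in H,\ y\notin H$$
has a solution in $G$. (Note that $x\neq1$ is automatic from $z\neq1$, so one need not impose it separately.) So deciding solvability of this system decides malnormality.

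Next I would check that the constraints are \qi embeddable rational, so that Theorem~\ref{thm_eqn_hyp} applies. Since $H$ is quasi-convex in the hyperbolic group $G$, the excerpt's discussion gives directly that $H$ is a \qi embeddable rational subset of $G$, so the constraints $x\in H$ and $z\in H$ are legitimate. For the constraint $y\notin H$, one rewrites it as $y\in G\setminus H$ and invokes the fact (Corollary~\ref{cor_boolean}) that the class of \qi embeddable rational subsets of $G$ forms a Boolean algebra, hence is closed under complementation; so $G\setminus H$ is a \qi embeddable rational subset as well. The inequation $z\neq 1$ is standard. Thus the entire system is a finite system of equations and inequations with \qi embeddable rational constraints in the hyperbolic group $G$.

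Finally I would apply Theorem~\ref{thm_eqn_hyp}: the algorithm therein decides whether the system above has a solution, and by the first paragraph this is equivalent to deciding whether $H$ fails to be malnormal. There is no serious obstacle; the only point worth care is the use of the Boolean algebra property to encode $y\notin H$, and the elementary verification that the existential statement ``$\exists x\in H\setminus\{1\},\ \exists y\notin H,\ yxy\m\in H$'' is literally what the system encodes.
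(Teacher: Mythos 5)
Your proposal is correct and follows essentially the same route the paper intends: unfold the definition of malnormality into the existential system, note that $H$ and $G\setminus H$ are \qi embeddable rational subsets (quasi-convexity plus Corollary~\ref{cor_boolean}), and invoke Theorem~\ref{thm_eqn_hyp}. The only point the paper's surrounding discussion adds that you leave implicit is the effectivity of the input: one can actually compute a finite automaton for $H$ from a finite generating set (the paper cites Kapovich's detection of quasiconvexity for this), which is what makes ``given a quasiconvex subgroup'' meaningful as algorithmic data.
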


In presence of torsion, almost malnormality (meaning that $yHy\m\cap H$ is finite when $y\notin H$)
can be checked similarly by replacing the inequality $z\neq 1$ by an inequality $z^{N!}\neq 1$ where $N$ is a bound
on the order of torsion in $G$.

This result does not seem to appear in the literature.
This is a variant of problem H14 in \cite{open_pbs} and of Question 3 in \cite{BrWi_malnormality}.
Without assumption of quasiconvexity, malnormality is undecidable \cite{BrWi_malnormality}.
\\

\subsection{Lifting equations and rational constraints to a virtually free group}

Following the strategy initiated in  \cite{RiSe_canonical}, and
continued in  \cite{Dah_existential}, we now explain how to
reduce Theorem \ref{thm_eqn_hyp} to the problem
of solving equations with rational constraints in a virtually free group (see Section \ref{sec_eqn_hyp}).

In \cite{RiSe_canonical}, Rips and Sela introduced \emph{canonical representatives} for torsion free hyperbolic groups, 
which enabled them to reduce the equations problem in a torsion free hyperbolic group
to the equations problem in a free group.
These canonical representatives are paths in the Cayley graph of $G$
which satisfy some \emph{path equations} representing the initial equations.
Such paths correspond to words on the generating system of $G$, and thus to elements of the corresponding free group.
In presence of torsion, canonical representatives need to be interpreted as paths in a barycentric subdivision $X$ of a Rips complex of $G$.
The action of $G$ on $X$ is not free in general. 
The quotient of the $1$-skeleton $X^{(1)}/G$ is a finite graph of finite groups, whose fundamental group is a virtually free group $V$.
Path equations in $X$ are then interpreted in terms of equations in $V$.

Another task that needs to be done, is to lift rational constraints to $V$.
This can be done because both canonical representatives and paths representing the rational subsets are quasi-geodesics.
This part of the argument is similar to Cannon's argument showing that the language of geodesics is a regular language \cite{Cannon_combinatorial,CDP}.

This allows to reduce Theorem \ref{thm_eqn_hyp} to the following result (proved in section \ref{sec_endgame}):
\begin{SauveCompteurs}{thmvf}
  \begin{thmbis}\label{thm_vf} 
  The problem of equations with rational constraints is solvable in finite extensions of free groups.

  More precisely, there exists an algorithm which takes as input a presentation of a virtually free group $G$, and a system of
  equations and inequations with constants in $G$, together with a set of rational constraints, and which decides whether there exists
  a solution or not.
\end{thmbis}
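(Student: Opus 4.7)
The plan is to reduce the problem in a virtually free group $G$ to a problem of \emph{twisted equations with rational constraints} in a free subgroup, and then to attack the latter geometrically via foliated band complexes. First, I would pick a free subgroup $F\le G$ of finite index, computed from the given presentation, together with a transversal $T=\{t_1,\dots,t_n\}$ for $F$ in $G$. For every variable $x$ of the system, I would non-deterministically guess a representative $t_{i(x)}\in T$ and substitute $x=t_{i(x)}\tilde x$ with $\tilde x\in F$ a new variable. Plugging these substitutions into each equation $w=1$, one collects all transversal letters to the right using the explicit multiplication table $t_i\cdot t_j = t_{k(i,j)} f_{i,j}$ with $f_{i,j}\in F$, and the conjugation action of $T$ on $F$, which is a finite set of automorphisms of $F$ computable from the presentation. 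The result is a finite set of finite systems of equations in $F$, where each $\tilde x$ appears only through prescribed automorphic twists $\sigma(\tilde x)$, $\sigma\in\Aut(F)$; the initial system has a solution in $G$ if and only if at least one of these finitely many twisted systems has a solution in $F$.

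The rational constraints lift through the same coset decomposition: if $x\in\calR_x\subseteq G$, then for the branch where $x=t_{i(x)}\tilde x$ one imposes $\tilde x\in t_{i(x)}^{-1}(\calR_x\cap t_{i(x)}F)$. Since $F$ has finite index, the right-hand side is a rational subset of $F$, and a finite-state automaton recognising it is computable from one for $\calR_x$ (in a virtually free group rational subsets form a Boolean algebra and behave well under intersection with cosets and left translation). Inequations are encoded as rational constraints of the form $G\setminus\{1\}$ as in the excerpt. This reduces Theorem \ref{thm_vf} to the effective solvability of twisted equations with rational constraints in a finitely generated free group.

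For this reduced problem I would follow the approach announced in the introduction: replace Makanin's combinatorics by Rips' classification of measured band complexes. To a candidate shape of a solution one associates a $2$--complex carrying a measured foliation whose leaves encode the way the letters of the $\tilde x$'s cancel in each equation; the automorphic twists are absorbed in the gluings of bands, while each rational constraint is enforced by running its finite-state automaton along a dedicated transversal arc of the foliation. Existence of a solution is then equivalent to existence of a band complex of this kind admitting a compatible leaf. Rips' structure theorem decomposes any such band complex into pieces of bounded type (toral, surface, thin, etc.), which enables a terminating enumeration: one either exhibits a leaf satisfying all rational constraints, or reduces complexity (e.g. an Euler-characteristic-like invariant) until no admissible complex remains.

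The main obstacle is the last step: making the geometric machinery of foliated band complexes genuinely algorithmic in the presence of both twists and rational constraints. The subtlety is to ensure that the constraint automata do not destroy the complexity decrease provided by Rips' moves, and that each class of pieces in the classification can be tested for the existence of a constraint-compatible leaf (which for surface pieces amounts to a Diophantine-type problem on the intersection numbers and on the words read along boundary components). Carrying this out uniformly, and proving termination, is what the bulk of the paper is devoted to; the reduction sketched in the previous two paragraphs is comparatively routine.
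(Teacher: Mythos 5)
The high-level plan you describe—reduce to twisted equations in a finite-index free normal subgroup $F$, then attack those by a Rips-style analysis of foliated band complexes with rational constraints carried along transversals—is indeed the paper's overall strategy. But there is a genuine gap in the part you call ``comparatively routine,'' namely the reduction to twisted equations.

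The twisting automorphisms your reduction produces are conjugations $g\mapsto t^{-1}gt$ by transversal representatives. These generate a subgroup of $\Aut(F)$ whose image in $\Out(F)$ is finite, but they are \emph{not} length-preserving with respect to any free basis of $F$, and in general no lift of the finite subgroup of $\Out(F)$ to a finite subgroup of $\Aut(F)$ exists. This is not a cosmetic issue: the entire band-complex machinery you then invoke is built for twisting morphisms $\phi_B\in\Aut(S_\pm)$ that permute a fixed basis $S\cup S^{-1}$. The definition of a solution of a band complex requires $\sigma_{J_1}=\phi_B(\sigma_{J_0})$, which only makes combinatorial sense (same length, matching positions, leaf segments joining corresponding letters) when $\phi_B$ preserves word length; similarly Bulitko's Lemma, the normal decompositions with respect to powers, and the whole prelamination formalism all break down if $\phi_B$ can change the length of words. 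So ``the automorphic twists are absorbed in the gluings of bands'' is not possible in the sense needed without a preliminary normalisation.

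The paper's Proposition \ref{prop;trick_Out_Aut} is exactly the missing ingredient. It uses a Zimmermann--Culler-type realization: the finite subgroup $Q\subset\Out(F)$ is realized as a group of graph automorphisms of a graph $X$ with $\pi_1(X)\cong F$, but $Q$ may not fix a vertex, so one cannot lift it into $\Aut(F)$ directly. Instead one embeds $F$ into the larger free group $\hat F=\langle S\rangle$ on the edge set $S$ of $X$; now $Q$ does act on $\hat F$ by permuting $S\cup S^{-1}$, and one rewrites the equations in $\hat F$ while imposing extra rational constraints forcing the variables to land in (a translate of) $F\subset\hat F$. Only after this step does one have twisted equations of the restricted form (Definition \ref{def;twisted}) that the band-complex algorithm can handle. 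Your write-up does not mention this normalisation, and without it the reduction does not feed into the machinery you describe.
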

\end{SauveCompteurs}

This result was independently obtained in a more general form in \cite{LoSe_equations}.
Unlike \cite{LoSe_equations}, our approach does not rely on an existing solution of the equations problem in free groups.
Instead, we give a new proof, based on Rips' theory for foliated band complexes.

\subsection{Twisted equations and virtually free groups}

A \emph{twisted equation} in a group $G$ is an equation of the form 
$\phi_1(x_1)\dots\phi_n(x_n)=1$ where each $\phi_i$ is a fixed automorphism in $\Aut(G)$,
and $x_i$ is a variable or a constant.
For instance, twisted conjugacy involves a simple example of a twisted equation:
given an automorphism $\phi\in \Aut(G)$, two elements $a,b\in G$ are twisted conjugate
if there exists $x\in G$ such that  $xa\phi(x^{-1}) =b$. 

When one considers equations in a finite  
extension $1\to N\to G\to Q\to 1 $ ($Q$ is finite), 
twisted equations appear naturally.  Indeed one can
replace an equation $xyz=1$, by a disjunction of equations
$\left(\tilde{q}_xn_x \right) \left(\tilde{q}_yn_y \right) \left(\tilde{q}_zn_z\right) =1$, 
where the  constants $\tilde{q}_x,\tilde{q}_y,\tilde{q}_z$ are chosen
in a given cross-section of $Q$, and $n_x, n_y, n_z$ are new unknowns in $N$. Gathering the elements
$\tilde{q}_x,\tilde{q}_y,\tilde{q}_z$ to the left amounts to twist $n_x$ and $n_y$ by
suitable automorphisms.
Thus, the solubility of equations in $G$ reduces to the solubility of finitely many systems of twisted equations in $N$.
The twisting morphisms occurring in this manner are quite particular because
they generate a finite subgroup of the outer automorphism group $\Out(N)$.

In the Kourovka notebook, Makanin asked about the problem of twisted equations
\cite[Problem 10.26(b)]{Kourovka15}. 
We are able to give a positive answer 
to Makanin's question above assuming that the given automorphisms generate a finite subgroup of $\Out(F)$:

\begin{SauveCompteurs}{twistedpb}
\begin{thmbis}\label{thm_twistedpb}
  There exists an algorithm that takes as input a basis of a free group $F$, a finite set $\Phi$ of automorphisms of $F$ whose
  image in $\Out(F)$ generate a finite subgroup, and a system of twisted equations with rational constraints in $F$ (with twisting
  automorphisms in $\Phi$) and that decides whether there is a solution or not.
\end{thmbis}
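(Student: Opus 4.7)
The plan is to translate the twisted equation system into a foliated band complex over a finite graph representing $F$, and then to apply Rips' classification of measured band complexes to decide solvability. The hypothesis that the images of $\Phi$ in $\Out(F)$ generate a finite subgroup is what makes the geometric translation go through cleanly: without it the ``twist data'' could be of infinite complexity and the band-complex encoding would break down.

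First, using the classical realisation theorem for finite subgroups of $\Out(F)$, I would fix a finite graph $\Gamma$ with $\pi_1(\Gamma)\cong F$ on which the finite group $\langle\Phi\rangle\subset\Out(F)$ acts faithfully by graph automorphisms. Each $\phi_i\in\Phi$ then lifts to a graph automorphism $\tilde\phi_i$ of $\Gamma$ (well-defined up to an inner modification), and a path $\gamma$ in $\Gamma$ representing $x\in F$ has $\phi_i(x)$ represented by $\tilde\phi_i\circ\gamma$. A twisted equation $\phi_1(x_1)\cdots\phi_n(x_n)=1$ corresponds to a polygon whose sides are labelled by paths in twisted copies of $\Gamma$ representing the variables, the closure condition being that the concatenation is null-homotopic. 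Gluing such polygons across repeated occurrences of the same variable yields a finite $2$-complex $\Sigma$ bearing a foliation transverse to a system of bands; solutions of the original system are in bijection with transverse measures on $\Sigma$ compatible with the combinatorial gluings.

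Rational constraints $x_i\in\calR_i$ are incorporated by taking the product of $\Gamma$ with the states of the finite-state automata recognising the $\calR_i$: the enlarged graph is still finite, the band-complex structure is preserved, and admissible transverse measures on the new complex correspond exactly to solutions of the constrained system. The remaining task is to run the Rips machine on this band complex. A finite sequence of elementary moves --- free subarc collapses, band splittings, and processing of interval-exchange components --- reduces the complex to a normal form whose pieces are simplicial, surface, or thin (Levitt) type, each of which can be analysed separately for the existence of a compatible transverse measure satisfying the combinatorial boundary data.

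The main obstacle is making this reduction into a genuinely effective algorithm. Two issues must be addressed: first, one needs a monovariant that strictly decreases under the elementary moves, so that the procedure terminates after a bounded number of steps depending explicitly on the input; second, for each type of component produced --- especially the thin one --- the existence of a transverse measure compatible with both the combinatorial data and the rational constraints must be decided effectively. The thin case is the most delicate, as its space of invariant measures is not accessible by linear programming alone and requires a careful finitary analysis of the iteration associated with the band complex. These two points form the technical heart of the paper, and their resolution yields Theorem~\ref{thm_twistedpb}.
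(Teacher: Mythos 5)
Your proposal takes a more roundabout route than the paper, and its termination mechanism has a genuine gap. The paper's actual proof of Theorem~\ref{thm_twistedpb} is a short reduction to Theorem~\ref{thm_vf}: let $Q\subset\Out(F)$ be the finite group generated by the image of $\Phi$, and let $V$ be its preimage in $\Aut(F)$. Since $F$ has trivial centre, $\mathrm{Inn}(F)\cong F$ is a finite-index normal subgroup of $V$, so $V$ is virtually free. Embedding $F$ in $V$ by $x\mapsto i_x$ and using $i_{\phi(x)}=\phi\circ i_x\circ\phi^{-1}$ converts each $\Phi$-twisted equation into an untwisted equation in $V$, with the rational constraint that each variable lies in $F$; Theorem~\ref{thm_vf} then concludes. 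Your direct construction of a band complex via the Culler--Zimmermann realisation is in the spirit of what the paper does elsewhere (Proposition~\ref{prop;trick_Out_Aut}, used in the \emph{converse} reduction from virtually free groups to basis-permuting twisted equations, where the inner discrepancies you mention are handled by enlarging to the free group on the edges), but it misses the clean $\Aut(F)$-embedding trick and forces you to redo all the band-complex machinery from scratch.

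The real gap is the termination argument. You write that one needs ``a monovariant that strictly decreases under the elementary moves, so that the procedure terminates after a bounded number of steps.'' No such monovariant exists: on exotic (thin/Levitt) components the pruning process runs forever without reaching a normal form, and the decision procedure cannot be a bounded normalisation. The paper's algorithm explores a locally finite tree of prelaminations in parallel with an analyser that rejects branches using several kinds of certificates (incompatibility of rational constraints, absence of an invariant combinatorial measure, Bulitko's bound on the exponent of periodicity, and shortening sequences of moves). Termination is proved by \emph{contradiction}: an infinite non-rejected chain of prelaminations produces a limiting measured foliation, which Rips' classification breaks into homogeneous, surface, and exotic components; homogeneous components force the exponent of periodicity of the prelaminations to blow up past Bulitko's bound, and surface and exotic components yield shortening certificates showing that no shortest solution can induce a sufficiently long prelamination. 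That shortening argument --- not a decreasing complexity --- is the actual mechanism, and your sketch of ``a careful finitary analysis of the iteration'' for the thin case does not identify it. Also, be careful with the claim that solutions ``are in bijection with transverse measures'': solutions induce labellings and hence laminations with combinatorial measures, but an arbitrary invariant transverse measure need not come from a solution, which is precisely why the algorithm must work with prelaminations rather than measures.
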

\end{SauveCompteurs}

As explained above, Theorem \ref{thm_vf} easily follows from Theorem \ref{thm_twistedpb} (see Section \ref{sec_eqn}).
Moreover, we give a trick to reduce to the case where the twisting automorphisms of $F$
permute the elements of $S\cup S\m$ for some free basis $S$ of $F$.
This trick is related to the Zimmerman-Culler Theorem \cite{Zimmermann_CMH81,Culler_finite} which 
realises any finite subgroup of $\Out(F)$ as a finite group $H$ of automorphisms of a graph $X$ whose fundamental group is $F$. 
Because $H$ may fail to fix a point in $X$, $H$ may fail to lift to a finite subgroup of $\Aut(S)$.
This is why we need to embed $F$ into a larger free group $\Hat F$, whose basis is the set of edges of $X$.
The group $H$ is then realised as a subgroup of $\Aut(\Hat F)$ permuting the basis elements, and $H$ preserves the conjugacy class of $F$.
The initial system of equations gives a new system of equations in $\Hat F$, and we add rational constraints
saying that the variables should live in $F$.
 This reduction is the content of Proposition \ref{prop;trick_Out_Aut}
in the broader context of equations with rational constraints.

\subsection{Dynamical and geometric aspects}

We now focus on the equations problem in a free group.
Makanin and Razborov developed a combinatorial machinery to encode
equality of subwords occurring in a solution of an equation.  An
interesting and well written account on Makanin's algorithm for
equations in free monoids (a simplified version of the case of free
groups, that does not imply a solution for free groups) was given in
\cite{Diekert_encyclopedia}, and another one on Makanin and Razborov's
algorithm for free groups was given in \cite{KhMy_implicit}.  As we
said, Rips was inspired by this machinery for his study of foliated
band complexes, whose dynamics, on the other hand, reflect actions on
$\bbR$-trees.  Our strategy is to reverse this flow of ideas, and use
Rips' classification of foliated band complexes to prove that the
algorithm we propose always stops.

   We hope that this point of view on  Makanin's algorithm will be of 
interest to an audience concerned with  equations problems and also to an audience
concerned with geometry and dynamics of group actions.

Rips' theory is an understanding of actions of finitely generated
groups on $\mathbb{R}$-trees. Recall that an $\bbR$-tree is a geodesic metric space in which any two
points are joined by a unique injective path. 
Following Sela, let us try to explain how the equations problem in a free group $F$
is related to  $\bbR$-trees \cite{Sela_diophantine1}.

 The equations problem in $F$ is about homomorphisms of
finitely presented groups to $F$. Indeed, a system of
equations on a set of unknowns $X$ is a finite set $\cale$ of words in the free product $F*\grp{X}$,
and a solution of this system of equations corresponds to a morphism from $G_\cale=F*\grp{X}/\grp{\grp{\cale}}$ to $F$
which is the identity on $F$.
Each such morphism gives rise to an action of $G_\cale$ on the 
Cayley graph of $F$, a simplicial tree. 
One can rescale this tree in order to normalise the maximal displacement of the generators of $G_\cale$. 
 For an infinite sequence of solutions,
 the corresponding actions of $G_\cale$ on the Cayley graph
converge to an action of $G_\cale$ on some $\mathbb{R}$-tree. 

Rips' theory says that under suitable hypotheses, this action can be understood in terms
of actions on simplicial trees, and actions on $\bbR$-trees dual to minimal measured foliations on $2$-complexes.
The main result of Rips' theory is a classification of those minimal measured foliations into three types:
\begin{itemize}
\item homogeneous type (also known as axial or toral), whose dual $\bbR$-tree is a line
\item surface type (also known as interval exchange): the $\bbR$-tree is dual to a measured foliation on a surface (or a $2$-orbifold)
\item exotic type (also known as Levitt or thin).
\end{itemize}

 One can try to use these ideas 
to decide whether a  given system of equations has a solution, and to look for a \emph{shortest} solution 
(e.g.\ in terms of the maximal displacement of the generators).
If we have some solutions such that
the corresponding actions of $G_\cale$ on the Cayley graph  of $F$ are close enough to the limiting $\bbR$-tree,
we can apply Sela's \emph{shortening argument}.
This argument says that, under suitable hypotheses, 
there is a quotient of $G_\cale$ through which the actions factorise, 
and  automorphisms of this quotient that \emph{shorten} 
 all nearby solutions. 
These solutions are not shortest and  can therefore be ignored.
Using some compactness argument, only finitely many solutions 
do not lie in such a shortening neighbourhood of a limiting $\bbR$-tree.
One can hope to bound the length of these remaining solutions and check
by hand if such a solution exists.

\begin{figure}[htbp]
  \centering
\includegraphics{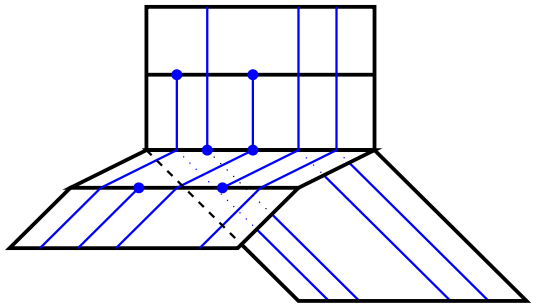}
  \caption{A prelamination is a finite disjoint union of \emph{leaf segments}.}
 \label{fig_prelamination}
\end{figure}

A major problem in this approach is that we cannot even start as
we don't know if our system of equations has any solution (this is what we have to decide).
Therefore, we cannot work with \emph{actual} solutions.
Instead, we work with \emph{potential} solutions.
An actual solution, \ie a morphism $G_\cale\ra F$, 
can  
be represented by a continuous map from a presentation complex $X$ of $G_\cale$ to a wedge of circles $Y$.
The preimage of midpoints of edges of $Y$ is a combinatorial \emph{lamination} of $X$.
Instead of working with laminations, we work with \emph{prelaminations} that play the role of potential solutions.
A subcomplex of a lamination is a typical example of a prelamination, but prelaminations are defined by local conditions
and are not required to extend to an actual lamination (See Figure \ref{fig_prelamination}).

Our proof shows that when a prelamination is very long, then any actual solution corresponding to this
prelamination can be shortened, and therefore ignored.
This is the key of the termination of our algorithm.

\subsection{Overview of the main algorithm}

Let us describe the approach to solving equations in free groups developed in this paper.

The first step toward Theorem \ref{thm_twistedpb} is to encode a system of
triangulated equations in a \emph{band complex}.
Band complexes are Makanin's \emph{generalised equations}, but they also play the role of a presentation complex of $G_\cale$ in the above discussion.
A band complex consists of $D$ a disjoint union of non-degenerate segments (one segment for each variable or constant involved in $\cale$)
together with a finite set of rectangles (called \emph{bands}) attached to $D$ by two opposite sides called its bases.
If one considers twisted equations in the context described above,
then each band carries a specific automorphism. If one considers
rational constraints, then subsegments of $D$ carry 
 regular languages.

A \emph{solution} of the band complex in the free group $\grp{S}$ is a labelling of $D$ by reduced words on $S\cup S\m$
so that both bases of each band are labelled by the same reduced word (up to composing by the automorphism of the band), 
and so that the label of each subsegment satisfies the associated rational constraint.
For instance, to encode constants, one can use regular languages consisting of a single word 
to impose the labelling on some subsets of $D$.
Solving twisted equations in free groups easily reduces to deciding the existence of a solution to
such a band complex (see Section \ref{sec_band_cplex}).
To simplify this presentation, we now forget about rational constraints.

Our \emph{main algorithm} \ref{algo;main} decides whether a given band complex admits a solution (\ie a labelling as above)
using prelaminations. 
A prelamination is a finite disjoint union of \emph{leaf segments},
and a leaf segment is a segment contained in a band and joining its two bases (see Figure \ref{fig_prelamination}).
A prelamination is \emph{induced} by a solution if its leaf segments join matching subwords of the labelling.
A prelamination is \emph{complete} if one cannot extend any leaf (\ie if it is an actual lamination).
Given a complete prelamination, it is easy to decide if there exists a solution that induces it.

We can explore the space of possible prelaminations, in quest of a complete prelamination induced by a shortest solution. 
The main concern is how to detect that there is no solution.

For each complete prelamination, we can decide if it is induced by a solution or not.
If it is, we are done, and otherwise, we reject this prelamination.
For each incomplete prelamination $\call$, we run a
\emph{prelamination analyser} 
which tries to 
find a certificate ensuring that no shortest solution can induce $\call$. 
In this case one can reject $\call$.
The analyser may fail to find any such certificate and might say ``I don't know''.
There are several kinds of certificates:
detection of an incompatibility with constants,
non-existence of an invariant (combinatorial) transverse measure,
or a \emph{shortening certificate} proving that if some solution induces $\call$,
then there is a shorter solution.

Note that if a lamination $\call'$ extends a rejected $\call$, then $\call'$ cannot be induced
by a shortest solution.
If all prelaminations remaining to analyse are extensions of rejected ones,
we know that there is no shortest solution, hence no solution at all, and the machine stops.

 To prove that this algorithm works, we assume that there is no solution,
and show that the algorithm rejects all sufficiently long prelaminations by 
 producing appropriate shortening certificates.

 This is where we use Rips' classification of measured foliations on
 band complexes.  Assume by contradiction that the algorithm does not stop. By
 extracting a limit of an increasing sequence of non rejected prelaminations $\call_1\subset\call_2\subset\dots$,
one can construct a topological foliation on the band
 complex with an invariant measure.  
We prove that this measure has no atom. Assume for simplicity that it has full support
(this is not true in general).
One can decompose the foliated band complex into minimal components. 
By Rips' theory, they are classified as homogeneous, exotic, and
 surface type components.  

In presence of a homogeneous component,  
for all $i$ large enough, $\call_i$ shows large repetition patterns that force
solutions inducing $\call_i$ to
have subwords that are arbitrarily high powers. 
By Bulitko's Lemma, there is a computable a priori bound on such powers in a shortest solution.
One can therefore produce a shortening certificate for $i$ large enough.

In presence of a surface or exotic component, one can perform, as in the Rips Machine, an infinite sequence of moves
on the foliated band complex $\Sigma$.
These moves don't increase the complexity of $\Sigma$, so finitely many unfoliated band complexes are visited.
Assume for simplicity that $\Sigma$ itself is visited twice.
For $i$ large enough, the corresponding moves are compatible with $\call_i$,
and transform any solution of $\Sigma$ inducing $\call_i$ into a shorter solution of $\Sigma$.
These moves provide a shortening certificate for $\call_i$:  if $\call_i$ is induced by some solution,
then this solution cannot be shortest.
When $\Sigma$ itself is not visited twice, this argument needs to be refined, and shortening must be
performed by at least a certain Lipschitz factor.

The running time of our algorithm does not seem to be very good.
Makanin's algorithm (in its corrected version \cite{Makanin_decidability}) is known not to be primitive recursive \cite{KoPa_Makanin},
and we don't see why ours should be better. 
Using  some data compression on words, Plandowski proposed an algorithm of much better complexity (polynomial in space)
\cite{Plandowski_satisfiability,DGH_pspace}.

\subsection{Organisation of the paper}

Sections \ref{sec_prelim} to \ref{sec_band_cplex} set up the vocabulary, and reduce the equations problem in virtually free groups
to band complexes.
Section \ref{sec_bulitko} is about Bulitko's Lemma on the periodicity exponent of minimal solutions.
Section \ref{sec_prelamin} introduces prelaminations and related notions.
Section \ref{sec_algos} describes the prelamination generator and the prelamination analyser, and
gives a precise definition of a \emph{shortening sequence of moves} that we use as a certificate for rejection.
Section \ref{sec_asym} contains the main part of the argument. Limits of sequences of prelaminations are analysed using Rips' theory,
and existence of shortening sequences of moves is proved.
Section \ref{sec_endgame} contains a detailed account on the main algorithm. 
This section can be read independently, 
admitting a certain number of well identified results of the previous sections.
Although based on Theorem \ref{thm_vf}, Section \ref{sec_eqn_hyp} is independent from the rest of the paper.
It deals with the equations problem in hyperbolic groups using canonical representatives, and 
with \qi embeddable rational constraints.

In a first reading, one can first forget about rational constraints.
Moreover, if one is only interested in the case of free groups, \ie with untwisted equations, one can safely ignore
all technical considerations about M\"obius strips (see Remark \ref{rem_strip}).

{\small
\tableofcontents
}

\section{Preliminaries}\label{sec_prelim}

\subsection{Regular languages}

Let $S$ be a finite set, $\ol S$ a copy of $S$, and $\Spm=S\dunion \ol S$
endowed with the canonical involution $s\mapsto \ol s$ exchanging $S$ and $\ol S$.
Let $\fmi$ be the free monoid on $\Spm$, endowed with 
the involution $w=s_1\dots s_n\mapsto \ol w=\ol s_n\dots \ol s_1$.

An \emph{automaton}  over $\Spm$ is a directed graph where each (oriented) edge is labelled by some element of $\Spm$
together with two finite subsets $Start,Accept$ of the set of vertices of this graph.
The \emph{language} $\calr_A\subset \fmi$ accepted by an automaton $A$ 
is the set of words labelled by directed paths starting from a vertex in $Start$
and finishing at a vertex in $Accept$.
A subset $\calr\subset \fmi$ is a \emph{regular language}
if $\calr=\calr_A$ for some automaton $A$.

If $\calr_1,\calr_2$ are regular languages, then so are $\calr_1\cup\calr_2$,
 $\calr_1\cdot\calr_2$ (the product set), and $\calr_1^*$ (the submonoid generated by $\calr_1$).
Kleene's theorem asserts that the class of regular languages is the smallest class containing finite subsets, and 
closed under these operations.\\

The class of regular languages is actually a Boolean algebra.
Moreover, one can algorithmically compute automata for  $\calr_1\cup\calr_2$,
  $\calr_1\cap\calr_2$,   $\fmi\setminus \calr_1$,
 $\calr_1\cdot\calr_2$ and $\calr_1^*$ from automata representing $\calr_1$ and $\calr_2$.
Similarly, if $\calr$ is rational, then so is its image $\ol \calr$ under the involution $w\mapsto \ol w$,
and the corresponding automaton can be computed.

\begin{example}\label{example_automate}
  Assume that $\pi:\fmi \ra F$ is a morphism to a finite group $F$, and consider a subset $E\subset F$.
Let $A$ be the Cayley graph of $F$ relative to $\pi(S)$.

Then $\pi\m(E)$ is a regular language, corresponding to the automaton $A$
where $Start=\{1\}$ and $Accept=E$.
\end{example}

Not all regular languages are of this form but this is true if one replaces the finite group $F$ by a finite monoid.
Let's be more specific.
Denote by $\calm_n$ the finite monoid of matrices with Boolean entries in $\{0,1\}=\{\mathtt{true,false}\}$, where the product of $C=AB$
of $A=(a_{ij})$ and $B=(b_{jk})$ is $(c_{ik})=(\bigvee_{j=1}^n a_{ij}\wedge b_{jk})$ and $\vee,\wedge$ denote $\mathtt{or}$ and $\mathtt{and}$.
Consider an automaton $A$ over $\Spm$ with vertices $v_1,\dots, v_n$. 
To each $s\in\Spm$ corresponds the matrix $M_s$ whose entry $(i,j)$ is $1$ if and only if
there is an edge labelled $s$ from $v_i$ to $v_j$.
The assignment $s\mapsto M_s$ extends to a morphism $\rho:\fmi\ra \calm_n$
and the language $\calr_A$ accepted by $A$ is the preimage under $\rho$ of the set of matrices
$\mu$ having a non-zero entry $(i,j)$ where $v_i\in Start$ and $v_j\in Accept$.
Conversely, given a morphism $\rho:\fmi\ra\calm$ to a finite monoid $\calm$ and a subset $\mu\subset \calm$,
$\rho\m(\mu)$ is a regular language corresponding to the following automaton: 
its vertex set is $\calm$, its edges labelled by $s$ are given by the multiplication by $\rho(s)$,
$Start=\{\id\}$ and $Accept=\mu$.
Clearly, one can compute $\rho$ and $\mu$ from an automaton, and conversely.

\begin{rem}\label{rem_deterministic}
  This (well known) construction implies the (well known) fact that one can make an automaton deterministic,
and in particular, assume that $Start$ is a single vertex of the automaton.
\end{rem}

\subsection{Rational subsets}

Consider a group $G$, $S$ a finite generating set,
and $\pi:\fmi\ra G$ the natural morphism mapping $S\cup \ol S$ to $S\cup S\m$.

\begin{dfn}
  A \emph{rational subset} of a group $G$ is the image under $\pi$ of a regular language of $\fmi$.

Equivalently, the class of rational subsets of $G$ is the smallest class
containing finite subsets, and 
closed under union, product, and submonoid generation.
\end{dfn}

The equivalence between the two definitions follows from Kleene's theorem.
In particular, the notion of rational subset does not depend on the choice of the generating set $S$. 
We say that a rational subset $\calr\subset G$ is \emph{represented} by an automaton $A$ 
over $\Spm$ when $\pi(\calr_A)=\calr$.

Note that if $\calr$ is rational, so is $\calr\m$.
Although the class of regular languages of $\fmi$ is a Boolean algebra,
the class of rational subsets of a group $G$ is not closed under intersection or complementation in general.
Indeed, it is easy to see that subgroups are rational subsets if and only if they are finitely generated,   
but it can happen that the intersection of two such subgroups is not finitely generated.

\begin{lem}\label{lem_morphism}
    Let $f:G\ra G'$ be a morphism, $S,S'$ be finite generating sets of $G,G'$, and $A$ an automaton on $S$ representing some
  rational subset $\calr\subset G$.

Then $f(\calr)$ is a rational subset of $G'$, and 
knowing an expression of $f(s)$ as an $S'$-word, one can compute an automaton for $f(\calr)$.
\end{lem}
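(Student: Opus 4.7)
The plan is to construct the desired automaton $A'$ on $\Spm'$ by a straightforward edge-substitution procedure applied to $A$, and then verify that the language it accepts projects to $f(\calr)$ under the natural map $\pi':(S'_\pm)^*\to G'$.

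First I would fix, for each $s\in S$, a word $w_s\in (S'_\pm)^*$ with $\pi'(w_s)=f(\pi(s))$; this is exactly the data the statement assumes we know. Extend this to $\Spm$ by setting $w_{\bar s}=\ol{w_s}$, so that $\pi'(w_{\bar s})=\pi'(w_s)^{-1}=f(\pi(s))^{-1}=f(\pi(\bar s))$. Now take the automaton $A$ over $\Spm$ representing $\calr$ (so $\pi(\calr_A)=\calr$) and build $A'$ as follows: keep the vertex set of $A$ and its $Start$, $Accept$ subsets, but replace every edge of $A$ labelled by some $s\in\Spm$ going from $v$ to $v'$ by a directed path from $v$ to $v'$ labelled by the letters of $w_s$, inserting $|w_s|-1$ fresh intermediate vertices (and collapsing the edge into an $\eps$-transition, or equivalently identifying $v$ with $v'$ and discarding the edge, in the special case $w_s=\eps$; alternatively, use a standard $\eps$-removal to stay within our definition of automaton).

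Next I would check that $\pi'(\calr_{A'})=f(\calr)$. Any directed path in $A'$ from a $Start$ vertex to an $Accept$ vertex decomposes canonically, along the original vertices of $A$, as a concatenation of the inserted sub-paths corresponding to edges $e_1,\dots,e_k$ of $A$ labelled by letters $s_1,\dots,s_k\in\Spm$; the label of the path in $A'$ is thus $w_{s_1}\cdots w_{s_k}$, whose image under $\pi'$ is $f(\pi(s_1))\cdots f(\pi(s_k))=f(\pi(s_1\cdots s_k))$, an element of $f(\calr_A)$ applied through $f\circ\pi$ and hence an element of $f(\calr)$. Conversely, every element of $f(\calr)$ has the form $f(\pi(u))$ for some $u=s_1\cdots s_k\in\calr_A$, and then the concatenated path in $A'$ corresponding to $u$ goes from $Start$ to $Accept$ and is labelled by $w_{s_1}\cdots w_{s_k}$, which maps to $f(\pi(u))$ under $\pi'$.

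Finally, I would observe that every step of this construction is effective: we are given $A$ and the words $w_s$ by hypothesis, the substitution of each edge by a labelled path (plus $\eps$-removal in the degenerate case, for which a standard algorithm exists and is recalled implicitly in Remark \ref{rem_deterministic}) is algorithmic, and the $Start$, $Accept$ subsets are unchanged. There is no real obstacle; the only mildly delicate point is the bookkeeping when some $w_s$ is empty, which is handled by the standard $\eps$-removal construction for automata and does not affect the language accepted.
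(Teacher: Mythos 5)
Your proposal is correct and follows essentially the same construction as the paper: replace each edge labelled $s$ in $A$ by a directed path labelled by the chosen $S'$-word for $f(s)$, keeping $Start$ and $Accept$ unchanged. The only added detail is your explicit handling of the case $w_s=\eps$ via $\eps$-removal, which the paper leaves implicit (and which can also be avoided by choosing a non-empty word such as $s'\ol{s'}$ whenever $f(s)=1$).
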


\begin{proof}
For each $s\in S\cup \ol S$, write $f(s)=s'_1\dots s'_n$ where $s'_i\in S'\cup \ol S'$.
Replace each directed edge of $A$ labelled $s$ by a directed segment of length $n$ labelled $s'_1\dots s'_n$.
The automaton obtained in this way clearly represents $f(\calr)$.
\end{proof}

The following lemma follows immediately from the analogous fact concerning regular languages of $\fmi$.
It holds without assumption on the group $G$. 

\begin{lem}\label{lem_union}
Given two automata $A_1,A_2$ defining some rational subsets $\calr_1,\calr_2$ of $G$,
one can algorithmically compute some automata representing
 $\calr_1\cup\calr_2$,  $\calr_1\cdot\calr_2$, $\calr_1^*$ 
and $\calr_1\m$.
\end{lem}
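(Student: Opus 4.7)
The plan is to reduce everything to the analogous, purely combinatorial statement for regular languages of the free monoid $\fmi$, and then transport the constructions through $\pi$. Let $L_1 = \calr_{A_1}$ and $L_2 = \calr_{A_2}$ be the regular languages accepted by the given automata, so that $\calr_i = \pi(L_i)$. Since $\pi$ is a monoid morphism satisfying $\pi(\ol s) = \pi(s)\m$ for every $s \in S$, an elementary check yields $\pi(L_1 \cup L_2) = \calr_1 \cup \calr_2$, $\pi(L_1 \cdot L_2) = \calr_1 \cdot \calr_2$, $\pi(L_1^*) = \calr_1^*$, and $\pi(\ol{L_1}) = \calr_1\m$. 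It therefore suffices to exhibit, in an effective way, automata recognising $L_1 \cup L_2$, $L_1 \cdot L_2$, $L_1^*$, and $\ol{L_1}$ inside $\fmi$.

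For this I invoke the standard textbook constructions from Kleene's theorem. For the union, I take the disjoint union of $A_1$ and $A_2$ as labelled graphs, with $Start := Start_1 \dunion Start_2$ and $Accept := Accept_1 \dunion Accept_2$. For the product, I again take the disjoint union and add an $\varepsilon$-transition from each $p \in Accept_1$ to each $q \in Start_2$, setting $Start := Start_1$ and $Accept := Accept_2$. For the star, I introduce a fresh vertex $v_0$ that is simultaneously the unique start and the unique accept state, and add $\varepsilon$-transitions from $v_0$ to each state of $Start_1$ and from each state of $Accept_1$ back to $v_0$. For the inverse $\ol{L_1}$, I reverse the orientation of every edge of $A_1$ and simultaneously replace its label $s$ by $\ol s$, then exchange the roles of $Start$ and $Accept$. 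The $\varepsilon$-transitions introduced above are removed effectively by the usual $\varepsilon$-closure procedure, producing automata in the restricted sense of this paper (whose edges carry only $\Spm$-labels).

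There is no real obstacle here; the work is essentially bookkeeping. The only point that merits a word of care is the identity $\pi(\ol{L_1}) = \calr_1\m$: it uses that the involution $w \mapsto \ol w$ on $\fmi$ reverses the letter order while barring each letter, so by induction on word length $\pi(\ol w) = \pi(w)\m$ in $G$, and hence $\pi(\ol{L_1}) = \{\pi(w)\m : w \in L_1\} = \calr_1\m$. Observe that none of the constructions ever needs to decide equality of elements in $G$, which is why the statement holds with no additional hypothesis on the group.
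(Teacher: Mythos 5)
Your proposal is correct and takes the same route as the paper, which simply observes that the lemma follows immediately from the corresponding closure properties of regular languages in $\fmi$ together with the fact that $\pi$ is a monoid morphism respecting the involution. You have merely filled in the standard Kleene-theorem constructions that the paper leaves implicit.
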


When $G$ is a free group, or even a virtually free group,
the class of rational subsets is a Boolean algebra (see below). 
For the free group, this is based on the following fact.
We denote by $\grp{S}$ the free group with free basis $S$.

\begin{lem}[{\cite[Proposition 2.8 p59]{Berstel_transductions}}]\label{lem_reduit}
  For any rational subset $\calr$ in the free group $\grp{S}$, consider the set $\Tilde \calr\subset\fmi$
of reduced words representing elements of $\calr$.

Then $\Tilde\calr$ is a regular language, and an automaton representing $\Tilde\calr$ can be computed from an automaton representing $\calr$.
\end{lem}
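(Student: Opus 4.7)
I would use the classical Benois saturation procedure, adapted to the notation of the paper. Given an automaton $A$ over $\Spm$ with $\pi(\calr_A)=\calr$, the plan is to construct an enlarged automaton $A'$ on the same vertex set by iteratively adding $\eps$-labelled edges; the set of reduced words accepted by $A'$ will turn out to be exactly $\Tilde\calr$.

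\textbf{Construction of $A'$.} First I would enlarge the framework to allow $\eps$-labelled edges, with the label of a path being the concatenation of its non-$\eps$ labels. Starting from $A$, iterate the following saturation step: for every ordered pair of states $(p,q)$ and every $s\in\Spm$, whenever there is a path in the current automaton from $p$ to $q$ with label $s\ol s$, add (if missing) an $\eps$-edge from $p$ to $q$. Since the number of possible edges on a fixed finite vertex set, labelled in $\Spm\cup\{\eps\}$, is bounded, this process terminates after finitely many rounds. Let $A'$ be the result, with the same $Start$ and $Accept$ as $A$.

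\textbf{Correctness.} The key claim is that a reduced word $v\in\fmi$ is accepted by $A'$ if and only if $\pi(v)\in\calr$. The forward direction is straightforward: given an accepting path in $A'$ with label $v$, replace each $\eps$-edge by the path of earlier edges that justified its addition; iterating this replacement produces a path in $A$ whose label $w\in\calr_A$ satisfies $\pi(w)=\pi(v)$, since every replacement only inserts a trivial factor $s\ol s$. For the reverse direction, suppose $\pi(v)\in\calr$ with $v$ reduced. Then $v$ is the free-reduction of some $w\in\calr_A$; I would induct on the number of elementary cancellations $s\ol s\to\eps$ needed to bring $w$ down to $v$. A single cancellation shortens an accepting path by a subpath labelled $s\ol s$, which after the first saturation round is bridged by an $\eps$-edge, so the inductive step reduces to the already-treated case of fewer cancellations. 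Confluence of the rewriting system $s\ol s\to\eps$ (Church--Rosser) guarantees that this induction is well-defined.

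\textbf{Finishing.} The language $L(A')$ is regular (treating $\eps$-transitions in the standard way), and so is the language $\mathrm{Red}\subset\fmi$ of reduced words (those containing no factor of the form $s\ol s$), which is recognised by a small deterministic automaton with one state per element of $\Spm\cup\{\star\}$. By Lemma~\ref{lem_union} and the fact recalled earlier that the class of regular languages of $\fmi$ is an effective Boolean algebra, the intersection $\Tilde\calr=L(A')\cap\mathrm{Red}$ is regular and an automaton for it is effectively computable from $A$.

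\textbf{Main obstacle.} The delicate step is the reverse direction of the correctness claim: one must match the successive cancellations in the reduction $w\to v$ with $\eps$-edges of $A'$, keeping in mind that a cancellation may bring into contact two letters that were previously far apart, so that the relevant $\eps$-edge is only added in a later round of saturation. Making precise the bookkeeping of which saturation round supplies which $\eps$-edge, and verifying that the induction closes, is where the real content of the argument lies.
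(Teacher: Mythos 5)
The paper does not give its own proof of this lemma; it is stated as a citation to Proposition~2.8 of Berstel's book, which is essentially Benois's theorem on rational subsets of free groups. Your saturation construction is precisely that argument, so your proposal is correct and takes the same approach as the cited source.

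One remark on the step you single out as the main obstacle: it is less delicate than you suggest, because you saturate $A$ to a fixpoint \emph{before} starting the correctness analysis. The relevant statement is then a one-step lemma that follows immediately from closure under the saturation rule: if the fully saturated automaton $A'$ contains a path from $p$ to $q$ whose non-$\eps$ label is $w$, and $w'$ results from $w$ by one elementary cancellation $s\ol s\to\eps$, then $A'$ also contains a path from $p$ to $q$ with non-$\eps$ label $w'$ — simply replace the subpath whose non-$\eps$ label is $s\ol s$ by the $\eps$-edge that the fixpoint property guarantees to exist between its endpoints. Iterating this lemma along any cancellation sequence taking $w\in\calr_A$ to its reduced form $v$ produces an accepting path for $v$ in $A'$, and there is no need to track which saturation round supplied which $\eps$-edge. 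That bookkeeping would only arise if one tried to interleave saturation rounds with cancellation steps, which your construction does not actually do: the two inductions (termination of saturation, and descent along cancellations) are independent. Finally, confluence of $s\ol s\to\eps$ is used only to identify the unique reduced form $v$ of $w$, not to justify the cancellation induction itself; the latter just needs \emph{some} cancellation sequence from $w$ to $v$, which exists by definition.
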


\newcommand{\Red}{\mathrm{Red}}
It follows that the class of rational subsets of a free group 
is a Boolean algebra: denoting by $\Red$ the regular language of reduced words of $\fmi$, one has
$\pi(\Red \setminus\Tilde \calr)=\grp{S}\setminus\calr$
and $\pi(\Tilde \calr_1\cap\Tilde \calr_2)=\calr_1\cap\calr_2$.
Moreover, if $S$ is a basis of the free group $F$, 
automata over $\Spm$ representing the obtained rational subsets can be computed from automata over $\Spm$ for the initial ones.

\subsection{Rational subsets of a virtually free group}

When $G$ is virtually free, the fact that rational subsets form a Boolean algebra 
follows from recent work by Lohrey and Senizergues \cite{LoSe_rational} 
(their result is more general but quite complicated).  
We propose here a short proof of what we need.

\begin{rem}
  From a presentation of a virtually free group $V$, one can compute a free basis of a normal finite index free subgroup $F$ and
a representative  $g_a\in V$ of each element $a\in V/F$. Indeed, by the Reidemeister-Schreier process, one can enumerate all 
normal finite index subgroups of $V$, with presentations, and by Tietze transformations, all presentations of these subgroups.
One will find one that is a presentation with no relator, \ie a free basis of a free group $F$.
Then the finite group $V/F$ and lifts in $V$ are easy to compute.
\end{rem}

\begin{lem}\label{lem_chg_base}
  Let $V$ be a virtually free group given by some presentation with generating set $S_V$.
Let $F\normal V$ be a normal free subgroup of finite index,
given by a free basis $S$, expressed as a set of words on $S_V$.
Let $\calr$ be a rational subset of $F$ (hence of $V$).

Given an automaton over $(S_V\cup \ol S_V)$ representing $\calr$,
one can compute an automaton over $\Spm$ representing it, and vice-versa.
\end{lem}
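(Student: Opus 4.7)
The plan is to treat the two directions separately, with the second one being the substantive content.

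\emph{From $\Spm$ to $S_V\cup\ol S_V$.} This direction is an immediate application of Lemma~\ref{lem_morphism} to the inclusion $F\hookrightarrow V$: each $s\in S$ is given as a word on $S_V\cup \ol S_V$, so replacing every $s$-edge by a directed path labelled by the corresponding word (and similarly for $\ol s$-edges) yields the required automaton.

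\emph{From $S_V\cup\ol S_V$ to $\Spm$.} I expect this to be the main obstacle; I would handle it by internalising a Reidemeister--Schreier rewriting inside the automaton. First, from the presentation of $V$ and of $F$, compute coset representatives $g_q\in V$ for $q\in V/F$ with $g_{\id}=1$. For every coset $q\in V/F$ and every letter $s\in S_V\cup \ol S_V$, the element
\[
t_{q,s}\ :=\ g_q\cdot s\cdot g_{qsF}^{-1}
\]
lies in $F$ by construction. Since $V$ is virtually free, its word problem is decidable, hence so is the generalised word problem for $F<V$, and by enumerating words on $\Spm$ and comparing in $V$ one computes an effective $\Spm$-word $\tau_{q,s}$ representing $t_{q,s}$.

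Given an automaton $A$ over $S_V\cup\ol S_V$ with $\pi(\calr_A)=\calr\subset F$, assumed deterministic via Remark~\ref{rem_deterministic}, build an automaton $A'$ with state set $Q_A\times V/F$, unique start state $(\mathrm{start}_A,\id)$, and accept states $\mathrm{Accept}_A\times\{\id\}$. For each edge $v\xrightarrow{s}v'$ of $A$ and each $q\in V/F$, glue into $A'$ a directed path from $(v,q)$ to $(v',qsF)$ labelled by $\tau_{q,s}$. A straightforward induction on the length of a run shows that if a path of $A$ reads $s_1\cdots s_n$ and visits $v_0,\ldots,v_n$, then the lifted path in $A'$ starting at $(v_0,\id)$ passes through the states $(v_i,\,s_1\cdots s_i F)$ and the word read up to step $i$ represents, in $V$, the element $s_1\cdots s_i\cdot g_{s_1\cdots s_i F}^{-1}$. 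Because $\calr\subset F$, every accepted run of $A$ ends in coset $\id$, so accepted runs of $A$ correspond bijectively to accepted runs of $A'$ and represent the same element of $V$. Hence $\pi(\calr_{A'})=\pi(\calr_A)=\calr$, and each construction above is effective, so both automata can be computed from one another.
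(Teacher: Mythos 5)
Your proof is correct and is essentially the paper's own argument: both directions match, and the substantive direction uses the same Reidemeister--Schreier style construction, taking the product automaton with $V/F$ and replacing each edge by a path labelled by $g_q\, s\, g_{qsF}^{-1}$ rewritten as an $\Spm$-word. The only cosmetic difference is that the paper presents this in two stages (first build $A'$ over $S_V\cup\ol S_V$ with the coset bookkeeping, then rewrite edges to get $A''$ over $\Spm$), while you fuse these into a single step; the content is identical.
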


\begin{proof}
Computing an automaton over $(S_V\cup \ol S_V)$ from one over $\Spm$ simply
follows from Lemma \ref{lem_morphism} applied to the embedding $F\ra V$.  

Let $\pi:(S_V\cup \ol S_V)^*\ra V$ be the natural map.
Let $A$ be an automaton over $S_V\cup \ol S_V$ accepting a language $\Tilde \calr\subset (S_V\cup \ol S_V)^*$
with $\pi(\Tilde\calr)=\calr$.
Let $V_A$ be its set of vertices, and assume that $Start$ is a single vertex (this can be assumed by Remark \ref{rem_deterministic}).
Denote by $\rho:(S_V\cup \ol S_V)^*\ra V/F$ the composition of $\pi$ with the quotient map. 

We construct a new automaton $A'$ with vertex set $V_{A'}=V_A\times V/F$.
We put an edge labelled $u\in S_V\cup \ol S_V$ from $(v_1,a)$ to $(v_2,b)$ if there is an edge labelled $u$ from $v_1$ to $v_2$ in $A$,
and $b=a \rho(u)$. We take $Start'=(Start,1)$ and $Accept'=Accept\times \{1\}$. 
Since $\calr\subset F$, the language accepted by $A'$ is precisely $\Tilde\calr$.

For each $a\in V/F$ choose a representative $g_a\in V$ with $g_a=1_V$ for $a=1$. We also think of $g_a$ as a word on $(S_V\cup \ol S_V)$,
and we note that such $g_a$ can be computed.
We construct an automaton $A''$ over $\Spm$ as follows.
Consider an edge $e$ labelled $u$ and joining $(v_1,a)$ to $(v_2,b)$.
Note that $u'=g_a u g_b\m \in F$, and write  $u'=s_1\dots s_k$ as a word on $\Spm$ (this can be done algorithmically).
Replace the edge $e$ by a directed segment labelled by $s_1\dots s_k$.
Do this for every edge of $A'$, take $Start''=Start'$ and $Accept''=Accept'$, and call $A''$ the obtained automaton.

We claim that the language $\Tilde\calr''$ accepted by $A''$ satisfies $\pi(\Tilde\calr'')=\calr$. 
Indeed, assume that  $u_1\dots u_n\in(S_V\cup \ol S_V)^*$ is accepted by $A'$,
and consider $a_i=\rho(u_1\dots u_i)$. One has $a_0=1$ since the path starts at $Start'$, and $a_n=1$ since $\pi(u_1\dots u_n)\in F$.  
Then the $\Spm$-word $u'_1\dots u'_n=(1 u_1 g_{a_1}\m)(g_{a_1} u_2 g_{a_2}\m)\dots (g_{a_{n-1}}u_n 1)$ 
is accepted by $A''$, and has the same image under $\pi$ as $u_1\dots u_n$. 
Similarly any word accepted by $A''$ maps to $\calr$.
\end{proof}

\begin{lem}\label{lem;rat_virt_free}
 Let $V$ be a virtually free group, and  
 $F\triangleleft V$ be a normal free 
  subgroup of finite index, 
  and let $g_1, \dots, g_k$ be representatives of the left cosets of $F$ in $V$.
  
  A subset $\calr\subset V$ is rational if and only if for all
  $i$, $(g_i^{-1} \calr )\cap F$ is rational in $F$.

Let $S_V$ be a generating set for $V$, and $S$ a basis of $F$. Given automata over $\Spm$ representing each $(g_i^{-1} \calr)\cap F$, 
one can explicitly compute an automaton over $S_V\cup \ol S_V$
  representing $\calr$, and conversely, given an automaton over $S_V\cup \ol S_V$
  representing $\calr$, and $g_1, \dots, g_k$, one can compute
  an automaton over $\Spm$ representing $(g_i^{-1} \calr )\cap F$.
\end{lem}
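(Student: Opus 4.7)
The starting observation is the disjoint decomposition
\[
\calr \;=\; \bigsqcup_{i=1}^{k} g_i \cdot \bigl((g_i^{-1}\calr)\cap F\bigr),
\]
coming from the partition of $V$ into left cosets of $F$. This makes both directions natural, and reduces everything to three elementary operations on rational sets: left translation by a fixed element, intersection with $F$, and finite union.

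For the direction $(\Leftarrow)$, suppose automata $A_i$ over $\Spm$ are given for each $(g_i^{-1}\calr)\cap F$. By Lemma~\ref{lem_chg_base} I convert each $A_i$ into an automaton $B_i$ over $S_V \cup \ol S_V$ representing the same subset, now viewed inside $V$. Writing $g_i$ as a word on $S_V \cup \ol S_V$, I prepend a directed segment labelled by this word to the starts of $B_i$, obtaining an automaton $C_i$ for $g_i \cdot ((g_i^{-1}\calr)\cap F)$; this is a one-line use of Lemma~\ref{lem_morphism} (or just a direct construction). Finally Lemma~\ref{lem_union} assembles an automaton over $S_V \cup \ol S_V$ for $\calr = \bigcup_i C_i$.

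For the direction $(\Rightarrow)$, suppose an automaton $A$ over $S_V \cup \ol S_V$ for $\calr$ is given. Prepending (a word representing) $g_i^{-1}$ in front of the start vertices of $A$ yields an automaton $A_i'$ for $g_i^{-1}\calr$. The main step is to compute an automaton for $(g_i^{-1}\calr)\cap F$. For this I use the standard product construction with the finite quotient $V/F$: letting $\rho:(S_V \cup \ol S_V)^* \to V/F$ be the composition of $\pi$ with $V \to V/F$, I build an automaton $A_i''$ whose vertex set is $V_{A_i'}\times V/F$, with an edge labelled $u$ from $(v_1,a)$ to $(v_2,a\,\rho(u))$ whenever $A_i'$ has an edge labelled $u$ from $v_1$ to $v_2$, taking $Start'' = Start(A_i')\times\{1\}$ and $Accept'' = Accept(A_i')\times\{1\}$. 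A word is accepted by $A_i''$ precisely when it is accepted by $A_i'$ and its image in $V/F$ is trivial, so $\pi$ of the accepted language is exactly $(g_i^{-1}\calr)\cap F$. This produces an automaton over $S_V\cup\ol S_V$, which I then convert to one over $\Spm$ using Lemma~\ref{lem_chg_base}.

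All ingredients are effective: expressing $g_i$ and the elements of $S$ as words on $S_V \cup \ol S_V$ (and vice versa, using the finite-index embedding) is a finite computation, as is the product with the finite set $V/F$, and the concatenations and unions of automata are Lemma~\ref{lem_union}. The only genuinely non-trivial step is the intersection with $F$ in the $(\Rightarrow)$ direction, but this is exactly the Cayley-graph product with $V/F$ already used in the proof of Lemma~\ref{lem_chg_base}, so no new machinery is required.
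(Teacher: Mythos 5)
Your proof is correct and takes essentially the same route as the paper's: the coset decomposition, Lemma~\ref{lem_chg_base} for switching alphabets, Lemma~\ref{lem_union} for the finite union, and the Cayley-graph/product construction with the finite quotient $V/F$ for the intersection with $F$. The only (welcome) difference is that you explicitly carry out the translation by $g_i^{\pm1}$ in the $(\Rightarrow)$ direction, whereas the paper's proof only treats $\calr\cap F$ directly and leaves the reduction to $(g_i^{-1}\calr)\cap F$ implicit.
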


\begin{proof}
  If $(g_i^{-1} \calr )\cap F$ is a rational subset of $F$, it is rational in $V$.
Therefore, the sets $\calr \cap g_i F$ are rational in $V$, and so is their union.
Given an automaton over $\Spm$ representing $(g_i^{-1} \calr)\cap F$,
one can compute an automaton over $S_V\cup \ol S_V$ representing it by Lemma \ref{lem_chg_base},
hence an automaton over $S_V\cup \ol S_V$ representing $\calr$ by Lemma \ref{lem_union}.

Conversely, assume that $\calr$ is rational in $V$.
Denote by $\pi:(S_V\cup \ol S_V)^*\onto V$ the natural morphism,
and $\Tilde\calr\subset (S_V\cup \ol S_V)^*$ a rational language such that
$\calr=\pi(\Tilde\calr)$.
By Example \ref{example_automate}, $\pi\m(F)$ is a regular language,
and so is $\Tilde\calr\cap\pi\m(F)$.
Thus $\calr\cap F$ is rational,  and one can compute an automaton $A$ over $S_V\cup \ol S_V$ representing it. 
By Lemma \ref{lem_chg_base}, we can turn it into an automaton over $\Spm$.

\end{proof}

\begin{lem} \label{lem;Boolean} Let $V$ be a virtually free
  group, and $\calr,\calr'$ be rational subsets.

Then $V\setminus \calr$ and $\calr\cap\calr'$ are rational,
and one can compute automata representing them from automata representing $\calr$ and $\calr'$.
\end{lem}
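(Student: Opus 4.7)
The plan is to reduce to the free subgroup $F$ via Lemma \ref{lem;rat_virt_free}, and then exploit the fact (noted after Lemma \ref{lem_reduit}) that rational subsets of a free group form a Boolean algebra, effectively.

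Fix a normal free subgroup $F \triangleleft V$ of finite index together with coset representatives $g_1, \dots, g_k$; all this data is computable from the presentation of $V$. By Lemma \ref{lem;rat_virt_free}, a subset $\calq \subset V$ is rational if and only if each $(g_i^{-1} \calq) \cap F$ is rational in $F$, and the translation between an automaton for $\calq$ over $S_V \cup \ol S_V$ and automata for the $(g_i^{-1} \calq) \cap F$ over $\Spm$ is effective.

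For the intersection, observe the elementary identity
\[
g_i^{-1}(\calr \cap \calr') \cap F \;=\; \bigl((g_i^{-1}\calr) \cap F\bigr) \cap \bigl((g_i^{-1}\calr') \cap F\bigr).
\]
Each factor on the right is a rational subset of $F$ by Lemma \ref{lem;rat_virt_free} applied to $\calr$ and $\calr'$, and automata for them (over $\Spm$) can be computed from those for $\calr, \calr'$. Since rational subsets of the free group $F$ are closed under intersection effectively (via Lemma \ref{lem_reduit} applied to reduced-word automata), we obtain effective automata for each $g_i^{-1}(\calr \cap \calr') \cap F$, and then reassemble an automaton for $\calr \cap \calr'$ over $S_V \cup \ol S_V$ using Lemma \ref{lem;rat_virt_free} again.

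For the complement, the key identity is
\[
g_i^{-1}(V \setminus \calr) \cap F \;=\; (V \setminus g_i^{-1}\calr) \cap F \;=\; F \setminus \bigl((g_i^{-1}\calr) \cap F\bigr).
\]
Again $(g_i^{-1}\calr) \cap F$ is a computable rational subset of $F$, and complementation within $F$ is effective for rational subsets of a free group (via Lemma \ref{lem_reduit} and the remark that $\pi(\Red \setminus \Tilde\calr) = F \setminus \calr$). Feeding these into Lemma \ref{lem;rat_virt_free} produces an automaton for $V \setminus \calr$ over $S_V \cup \ol S_V$.

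There is no real obstacle here: the entire argument is a bookkeeping exercise, with the only nontrivial ingredients being (i) the reduction Lemma \ref{lem;rat_virt_free}, and (ii) the effective Boolean algebra structure on rational subsets of a free group supplied by Lemma \ref{lem_reduit}. The one point meriting slight care is the algorithmic translation between the two kinds of automata (over $S_V \cup \ol S_V$ versus over $\Spm$), but this is handled by Lemma \ref{lem_chg_base}.
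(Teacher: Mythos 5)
Your proof is correct and follows essentially the same route as the paper's: reduce to the free subgroup $F$ coset by coset via Lemma \ref{lem;rat_virt_free}, invoke the effective Boolean algebra structure on rational subsets of $F$ (Lemma \ref{lem_reduit}), and reassemble. The only cosmetic difference is that the paper obtains $\calr\cap\calr'$ via De Morgan from the complement, whereas you treat the intersection directly inside each coset; both are the same computation.
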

        
\begin{proof}
       By Lemma \ref{lem;rat_virt_free}, one can write $\calr = g_1 \call_1
      \sqcup \dots \sqcup g_k \call_k$ with $\call_i$ rational in $F$. Then
      $V\setminus \calr = g_1 (F\setminus \call_1) \sqcup \dots \sqcup g_k
      (F\setminus \call_k)$ is rational since in the free group $F$, $(F\setminus \call_i)$ is rational.
      Moreover, one can compute automata for $\call_i$ by Lemma \ref{lem;rat_virt_free}, 
      hence for $F\setminus \call_i$, and for $V\setminus \calr$.
      
      For the second assertion, simply write $\calr\cap \calr' =(\calr^c \cup
      \calr'{}^c)^c$. 
\end{proof}

\section{Equations and twisted equations}\label{sec_eqn}

In this section, we start by giving a formal definition of a system of equations with rational constraints in a group $G$.
Then we explain how to translate a system of equations in a virtually free group
into a disjunction of systems of twisted equations in a free group.

\newcommand{\Xpm}{X_\pm}
\begin{dfn}[Equations with rational constraints]\label{dfn_equation}
Consider a finite set of variables $X$ 
and $\Xpm=X\dunion \ol X$ with the natural involution $x\mapsto \ol x$.

A \emph{system of equations} $\cale$ 
is a finite set of words $w=x_1\dots x_n$ in $\Xpm^*$ (representing the equation $x_1\dots x_n=1$).

A set of \emph{rational constraints} for $\cale$ is 
a tuple $\ul\calr=(\calr_x)_{x\in X}$ where for each $x\in X$, $\calr_x\subset G$ is  a rational subset.

A \emph{solution} of $(\cale,\ul\calr)$ is a tuple $\ul g=(g_x)_{x\in X}\in G^X$ 
with $g_x\in \calr_x$ for each $x\in X$, and such that 
 for each $w=x_1\dots x_n\in\cale$, $g_{x_1}\dots g_{x_n}=1$ in $G$
where we define $g_{\ol x}=g_x\m$ for each $x\in X$.
\end{dfn}

Abusing notations, if $(g_x)_{x\in X}$ is a solution of $\cale$, 
we will also call solution the corresponding tuple $(g_x)_{x\in X\cup\ol X}$ where $g_{\ol x}=g_x\m$ for each $x\in X$.

Constants in a system of equations can be encoded with rational constraints:
a constant $g_0\in G$ is a variable $x$ with corresponding constraint $\calr_x=\{g_0\}$.
If $G\setminus\{1\}$ is a rational subset, then inequations can be encoded with
rational constraints: 
replace each inequation $w\neq 1$ by an equation $w=y$ where $y$ is a new variable 
with rational constraint $\calr_y=G\setminus\{1\}$.

\subsection{Reduction to triangular equations}
\label{sec_triangular}

We fix $V$ a finitely generated virtually free group.
Classically, a system of equations with rational constraints is equivalent
to a triangular system (\ie in which every $w\in \cale$ has length at most $3$):
for each equation $x_1\dots x_n=1$ with $n\geq 4$, we add some new variables $y_2,\dots,y_{n-2}$ 
with no rational constraint on $y_i$ ($\calr_{y_i}=V$),
and we replace the equation $x_1\dots x_n=1$ by the equations $y_2=x_1x_2$, $y_3=y_2x_3$, \dots
and $y_{n-2}x_{n-1}x_n=1$.
One can also get rid of equations of length one by forgetting about the corresponding variable.

It is convenient to get rid of equations of length 2.
This can be done as follows, using the fact that the set of rational subsets 
of a virtually free group is a Boolean algebra (Lemma \ref{lem;Boolean}).
If we have an equation $x_1x_2=1$
where 
$x_1$ is distinct from $x_2,\ol x_2$ as a formal variable, 
one can substitute every occurrence of $x_2$ (resp. $\ol x_2$) with $\ol x_1$ (resp. $x_1$),
and change the rational constraint   
$x_1\in\calr_{x_1}$ to $x_1\in \calr_{x_1}\cap \calr_{x_2}\m$.
Of course, one can forget about equations of the form $x\ol x=1$.
Each equation of the form $x^2=1$ can be replaced by two equations $x^4=1$ and $x^6=1$,
which can be triangulated in the usual way.

 The transformation into a triangular system of equations
is algorithmic.

\subsection{Twisted equations}\label{sec_twisted}

Consider the general situation of a group $V$ containing a normal subgroup $G$ of finite index.
Let $\pi:V\ra Q=V/G$ the quotient map.
For each $q\in Q$, choose a lift $\Tilde q\in V$.
Given $x_1,x_2,x_3\in V$, let $q_i=\pi(x_i)$ and write $x_i=\Tilde q_i g_i$ for some $g_i\in G$.
Then $x_1,x_2,x_3$ satisfies the equation $x_1x_2x_3=1$ if and only if $q_1q_2q_3=1$, 
(so $\Tilde q_1\Tilde q_2\Tilde q_3\in G$), and
 $(\Tilde q_1\Tilde q_2\Tilde q_3)\, g_1^{\Tilde q_2\Tilde q_3}\, g_2^{\Tilde q_3}\, g_3=1$.
This last equation can be viewed as an equation with unknowns $g_1,g_2,g_3$, 
twisted by automorphisms of the form $g\mapsto g^{v}$ for some $v\in V$.

The group generated by these automorphisms has finite image in $\Out(G)$, but it usually fails to lift
to a finite subgroup of $\Aut(G)$.

In Proposition \ref{prop;trick_Out_Aut} below, we prove that when $G$ is a free group, we can embed $G$ in a larger free group $\grp{S}$
with free basis $S$,
so that the twisting automorphisms give rise to automorphisms of $\grp{S}$ that preserve $S\cup S\m$.
In particular, these automorphisms of $\grp{S}$ preserve word length, which will be of importance to us.
The price to pay is that we have to add rational constraints to our system of equations 
(even if there were no such constraints originally)
to guarantee that the solutions belong to the original free group.

Given a finite set $S$, and $\grp{S}$ the corresponding free group,
we denote by $\AutS$ the finite group of automorphisms of $\grp{S}$
preserving the set $\Spm=S\cup S\m$  (it has order $2^n n!$).

\begin{dfn}[Twisted equations] \label{def;twisted}
  A (triangular) \emph{system of twisted equations with rational constraints} $(\cale,\ul\calr)$ 
in a free group $\grp{S}$
consists of an alphabet of variables $X$, of a finite set $\cale$ of equations of the form 
$((\phi_1,x_1),(\phi_2,x_2),(\phi_3,x_3))\in(\AutS\times (X\cup\ol X))^3$
(representing the equation $\phi_1(x_1)\phi_2(x_2)\phi_3(x_3)=1$),
and a tuple $\ul\calr=(\calr_x)_{x\in X}$ of rational subsets of $\grp{S}$.

A solution of $\cale$ is a tuple $\ul g=(g_x)_{x\in X}\in \grp{S}^X$ 
with $g_x\in \calr_x$ for each $x\in X$, and such that
for each $((\phi_1,x_1),(\phi_2,x_2),(\phi_3,x_3))\in\cale$,
 $\phi_1(g_{x_1})\phi_2(g_{x_2})\phi_3(g_{x_3})=1$
where we define $g_{\ol x}=g_x\m$ for each $x\in X$.
\end{dfn}

\begin{rem} We opted for a restricted meaning of twisting. 
In general (and as presented in the introduction) a twisted equation is 
 an equation where some automorphisms are involved, but here, the only allowed automorphisms are those that preserve a basis of the given free group. 
This will be enough for our needs.
\end{rem}

The \emph{twisting subgroup} $\Phi\subset\AutS$ is the subgroup generated by the twisting morphisms involved in all twisted equations.
We say that $\Phi$ has no \emph{inversion} if for all $s\in S$ and $\phi\in\Phi$, $\phi(s)\neq s\m$.
Our construction from a virtually free group will produce only twisting subgroups without inversions.
However, if we are given a system of twisted equations where $\Phi$ has inversions, one can easily reduce to the case
without inversion using barycentric subdivision as follows. 
Replace $S=\{s_1,\dots,s_n\}$ by $S'=\{u_1,v_1,\dots,u_n,v_n\}$, and
embed $\grp{S}\ra \grp{S'}$ using $s_i\mapsto u_iv_i\m$.
If $\Phi(s_i)=s_j$, define $\Phi'(u_i)=u_j$ and $\Phi'(v_i)=v_j$, 
and if $\Phi(s_i)=s_j\m$, define $\Phi'(u_i)=v_j\m$ and $\Phi'(v_i)=u_j\m$.

\begin{prop}\label{prop;trick_Out_Aut}
The problem of solving equations with regular constraints in a virtually free group can be reduced
to the problem of solving twisted equations in a free group. 

More precisely,
let $(\cale,\ul \calr)$ be a system of equations with rational constraints 
in a virtually free group $V$.

Then there is a free group $\grp{S}$, and a finite set $(\cale'_1,\ul\calr'_1)\dots (\cale'_k,\ul\calr'_k)$ 
of systems of 
twisted equations (in the sense of Definition \ref{def;twisted}) with rational constraints on $\grp{S}$, with the same sets of unknowns $X$, such that
there is a natural bijection between the set of solutions of $\cale$ and the (disjoint) union of the sets of solutions
of $(\cale'_1,\ul\calr'_1),\dots,(\cale'_k,\ul\calr'_k)$.
The twisting subgroup has no inversion and is finite. 

Moreover, one can algorithmically compute $\grp{S}$ and $(\cale'_1,\ul\calr'_1)\dots (\cale'_k,\ul\calr'_k)$ from a presentation of $V$,
and from $(\cale,\ul\calr)$, where all rational subsets are represented by automata.
\end{prop}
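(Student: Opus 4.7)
The plan is to combine the coset-decomposition idea of Section~\ref{sec_twisted} with the Zimmerman-Culler trick sketched in the introduction, proceeding in three constructive stages.

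\textbf{Step 1 (reduce to twisted equations in a free subgroup).} Compute a normal free subgroup $F\triangleleft V$ of finite index with explicit basis (via Reidemeister-Schreier) and coset representatives $\tilde q_1,\dots,\tilde q_k$ of $Q=V/F$, and reduce $(\cale,\ul\calr)$ to triangular form as in Section~\ref{sec_triangular}. For each triangular equation $x_1x_2x_3=1$, write $x_i=\tilde q_i n_i$ with $n_i\in F$ a new variable, and replace the equation by the finite disjunction over triples $(q_1,q_2,q_3)\in Q^3$ with $q_1q_2q_3=1$ of
$$\phi_{\tilde q_2\tilde q_3}(n_1)\,\phi_{\tilde q_3}(n_2)\,n_3 \,=\, (\tilde q_1\tilde q_2\tilde q_3)^{-1}\in F,$$
where $\phi_v(g)=v^{-1}gv$. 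The rational constraint $x_i\in\calr_{x_i}$ transports to $n_i\in\tilde q_i^{-1}\calr_{x_i}\cap F$, which is rational in $F$ by Lemma~\ref{lem;rat_virt_free}. This yields a finite disjunction of (generalised) twisted systems in $F$ with twisting subgroup $\Psi\subset\Aut(F)$ generated by conjugations by elements of $V$.

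\textbf{Step 2 (Zimmerman-Culler embedding).} The image $H$ of $\Psi$ in $\Out(F)$ is finite, isomorphic to a quotient of $Q$. Using an effective version of the Zimmerman-Culler theorem, realize $H$ as a group of combinatorial automorphisms of a finite connected graph $X$ with $\pi_1(X)\cong F$. Barycentrically subdivide $X$ so that the $H$-action has no edge-inversion. Let $\hat F=\grp{S}$ be the free group with basis $S$ indexing one orientation of each edge of $X$; then $H$ embeds into $\AutS$ as a subgroup acting without inversion. Fix a base vertex $v_0$ and a spanning tree $T\subset X$. Writing each generator of $F=\pi_1(X,v_0)$ as the reduced edge-word of its representative loop yields an explicit embedding $\iota:F\hookrightarrow\hat F$ whose image is finitely generated, hence a rational subset of $\hat F$ with a computable automaton.

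\textbf{Step 3 (unfold equations in $\hat F$).} For each $v\in V$, the automorphism $\iota\circ\phi_v\circ\iota^{-1}$ extends to the automorphism $\mathrm{conj}_{w_v}\circ\alpha_v$ of $\hat F$, where $\alpha_v\in\AutS$ is induced by the $H$-action on edges and $w_v\in\hat F$ is the change-of-base-point word in $T$ from $v_0$ to $\alpha_v(v_0)$; both are effectively computable. A triangular twisted equation $\phi_{v_1}(n_1)\phi_{v_2}(n_2)\phi_{v_3}(n_3)=c$ in $F$ becomes
$$w_{v_1}\alpha_{v_1}(n_1)w_{v_1}^{-1}\cdot w_{v_2}\alpha_{v_2}(n_2)w_{v_2}^{-1}\cdot w_{v_3}\alpha_{v_3}(n_3)w_{v_3}^{-1}=c$$
in $\hat F$, with constants $w_{v_i},c$ and twisting only by the basis-preserving $\alpha_{v_i}\in\AutS$. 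Re-triangulate by introducing fresh variables (with the trivial constraint $\hat F$), and attach to each $n_i$ the constraint $\iota(\calr_{n_i}\cap F)\cap\iota(F)$, computable in $\hat F$ via Lemma~\ref{lem_morphism} and Lemma~\ref{lem;Boolean}.

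\textbf{Main obstacle and correctness.} The delicate step is the effectiveness of Zimmerman-Culler: producing $X$, its $H$-action, the tree $T$, and the correction words $w_v$ algorithmically from a presentation of $V$ and from the lifts $\phi_v$. Correctness of the bijection on solutions is then clear because each reduction is invertible: given $n_i\in\hat F$ satisfying the final system, the constraint $n_i\in\iota(F)$ provides preimages in $F$, which reassemble via $x_i=\tilde q_i n_i$ into a solution of the original $V$-system. Finiteness and absence of inversion of the twisting subgroup of $\AutS$ are immediate from finiteness of $H$ and the barycentric subdivision.
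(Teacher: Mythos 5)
Your proposal follows the same broad strategy as the paper (embed into a larger free group $\grp{S}$ whose basis is permuted by the finite quotient, encode ``living in $F$'' by a rational constraint), but the two-stage decomposition you use creates a real gap that the paper's own construction is specifically designed to avoid.

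Your Step~1 splits $x_i=\tilde q_i n_i$ and twists by $\phi_v=\mathrm{conj}_v$; your Step~3 then conjugates by the change-of-basepoint words $w_{v_i}$. The resulting relation in $\hat F$,
$$w_{v_1}\alpha_{v_1}(n_1)w_{v_1}^{-1}\cdot w_{v_2}\alpha_{v_2}(n_2)w_{v_2}^{-1}\cdot w_{v_3}\alpha_{v_3}(n_3)w_{v_3}^{-1}=c,$$
is not a triangular twisted equation in the sense of Definition~\ref{def;twisted}: it has constants $w_{v_i}$ and $c$, and the automorphism $\mathrm{conj}_{w_{v_i}}\circ\alpha_{v_i}$ is not in $\AutS$ because the conjugation part does not permute $S\cup S^{-1}$. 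You acknowledge this by saying ``re-triangulate by introducing fresh variables,'' but then the resulting systems no longer have the \emph{same} set of unknowns $X$, contradicting the explicit wording of the proposition. The paper sidesteps this by not going through $F$ at all: it builds the graph $\Gamma=T/F$ from the Bass-Serre tree $T$ of a graph-of-finite-groups decomposition of $V$ (adding a dummy edge so $T$ has a free orbit), and defines directly an injective map $c:V\to\grp{S}$ by recording edge paths $[*,g.*]$. This map is a cocycle, $c(gh)=c(g)\cdot\phi_{\pi(g)}(c(h))$, so the equation $x_1x_2x_3=1$ transforms in one stroke into the triangular twisted equation $x_1\,\phi_{q_{x_1}}(x_2)\,\phi_{q_{x_1}q_{x_2}}(x_3)=1$ with the same unknowns and no correcting constants.

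A second, smaller gap is what you yourself flag as ``the delicate step'': you invoke an effective form of Zimmerman--Culler as a black box, but do not supply it. The paper avoids appealing to that theorem entirely, obtaining the graph $X$ and the $Q$-action on it directly and algorithmically from the Bass-Serre data (enumerate presentations until a graph-of-finite-groups presentation of $V$ is found; then $\Gamma=T/F$ is the $Q$-cover of the quotient graph). If you want to keep your two-stage outline, you would need to both (a) replace ``Zimmerman--Culler'' by the concrete Bass-Serre construction, and (b) replace the pair (coset split, correction words) by the single cocycle $c$ so that the equations come out triangular in the original unknowns. Once you do that, you are essentially reproducing the paper's argument.
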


\begin{rem}
The free group $\grp{S}$ does  not occur naturally as a subgroup of $V$. 
Only a subgroup of  $\grp{S}$ can be naturally identified as a subgroup of finite index of $V$. 
The natural bijection defined in the proof is clearly computable, but we won't need this fact.
\end{rem}

\begin{proof}
  Write $V$ as an extension $F\ra V\xra{\pi} Q$
where $Q$ is finite and $F=\ker\pi$ is a normal free subgroup of finite index of $V$.
Write $V$ as the fundamental group of a finite graph of finite groups, and
let $V\actson T$ be the dual Bass-Serre tree.
By adding a new edge to the graph of groups, we may assume that $T$ contains a point $*$ with trivial stabiliser ($T$ may fail to be minimal).

Consider the graph $\Gamma=T/F$, and still denote by $*$ the image of $*$ in $\Gamma$. 
Since $F$ acts freely, the quotient map $T\ra \Gamma$
is a covering map, and $F=\pi_1(\Gamma,*)$.
Note that $Q$ acts on $\Gamma$ by graph automorphisms (which may fail to fix $*$).

Consider a set $S$ of oriented edges of $\Gamma$  obtained by choosing an orientation for each edge,
and consider the free group $\grp{S}$. 
 Each edge path in $\Gamma$ defines an element of $\grp{S}$, and in particular, $F\subset \grp{S}$.
 Clearly, $Q$ acts on $\grp{S}$ via automorphisms of $\AutS$.
We denote by $\phi_q(x)$ the image of $x\in\grp{S}$  under the action of  $q\in Q$.

For $g\in V$, let $c_g$ be the edge path of $\Gamma$ obtained by projection of the segment $[*,g.*]\subset T$.
This defines a map $c:V\ra \grp{S}$.
Since the stabiliser of $*$ is trivial, $c$ is one-to-one.
Note that $c_g$ joins $*$ to $\phi_{\pi(g)}(*)$.
The map $c$ is not a morphism but
satisfies  $c(gh)=c(g).\phi_{\pi(g)}(c(h))$ and $c(g\m)=\phi_{\pi(g)\m}(c(g))$.
In particular, the restriction of $c$ to $F$ is a morphism. 

Each solution of $\cale$ in $V$ maps to a solution of $\cale$ in $Q$.
For each solution $\ul q\in Q^X$ of $\cale$ in $Q$, the set $\calr_x\cap \pi\m(q_x)$ is a rational subset by Lemma \ref{lem;Boolean}.
Let $\cals_{\ul q}\subset V^X$ be the set of solutions of $\cale$
with rational constraints $x\in\calr_x\cap \pi\m(q_x)$.
The set of solutions of $(\cale,\ul\calr)$ is the disjoint union of the sets $\cals_{\ul q}$.

Each equation in $\cale$ is equivalent to an equation of the form $x_1x_2x_3=1$ or $x_1x_2\ol x_3=1$
where $x_1,x_2,x_3$ lie in $X$ (not $\ol X$); we assume that each equation is of this form.
Given $\ul g\in \cals_q$, the equation  $g_{x_1}g_{x_2}g_{x_3}=1$
implies $c(g_{x_1}).\phi_{q_{x_1}}(c(g_{x_2})).\phi_{q_{x_1}q_{x_2}}(c(g_{x_3}))=1$.
Similarly, the equation $g_{x_1}g_{x_2}g_{x_3}\m=1$ implies 
 $c(g_{x_1}).\phi_{q_{x_1}}(c(g_{x_2})).c(g_{x_3})\m=1$.

Let $\cale'_{\ul q}$ be the set of equations in $\grp{S}$  obtained by replacing in $\cale$ each equation 
$x_1x_2x_3=1$ (resp. $x_1x_2\ol x_3$)
by the twisted  equation $x_1.\phi_{q_{x_1}}(x_2).\phi_{q_{x_1}q_{x_2}}(x_3)=1$
(resp. $x_1.\phi_{q_{x_1}}(x_2).\ol x_3=1$).
Thus $c$ maps $\cals_{\ul q}$ into the set $\cals'_{\ul q}$ of solutions of $(\cale'_{\ul q},\ul\calr')$
where $\calr'_{\ul q,x}= c(\calr_x\cap \pi\m(q_x))\subset\grp{S}$.
Let's check that  $\calr'_{\ul q,x}$ is a rational subset of $\grp{S}$.
If $q_x=1$, this follows from the fact that $c_{|F}$ is a morphism.
Otherwise, consider $\Tilde q_x\in\pi\m(\{q_x\})$, and consider the rational subset 
$\calr''=\Tilde q_x\m(\calr_x\cap \pi\m(q_x))\subset F$. Since $c_{|F}$ is a morphism, $c(\calr'')$ is rational and so is
$\calr'_{q,x}=c(\Tilde q_x).\phi_{q_x}(c(\calr''))$.

We claim that $c$ maps $\cals_{\ul q}$ onto  $\cals'_{\ul q}$.
Indeed, consider a solution $\ul a\in \cals'_{\ul q}$. For each $x\in X$,
since $a_x\in\calr'_{q,x}$, $a_x$ corresponds to a path joining $*$ to $q_x.*$.
This path lifts to a path in the tree $T$ joining $*$ to some $g_x.*$, where $g_x$ is uniquely determined because $*$ has trivial stabiliser.
By definition, $c(g_x)=a_x$.
Since $c$ realises a bijection between $\calr_x\cap \pi\m(q_x)$ and its image,
the constraint $g_x\in \calr_x\cap \pi\m(q_x)$ is satisfied.
Since for each equation $x_1x_2x_3\in\cale$, $\ul a$ 
satisfies the corresponding equation in $\cale'_{\ul q}$, namely
$a_{x_1}.\phi_{q_{x_1}}(a_{x_2}).\phi_{q_{x_1}q_{x_2}}(a_{x_3})=1$,
we get that $c(g_{x_1}g_{x_2}g_{x_3})=1$.
By injectivity of $c$, $g_{x_1}g_{x_2}g_{x_3}=1$. A similar argument applies to equations of the form $x_1x_2\ol x_3\in\cale$.

This proves that $c$ induces a bijection between the set of solutions of $(\cale,\ul\calr)$ and the (disjoint) union of the sets of
solutions of $(\cale'_{\ul q},\ul\calr'_{\ul q})$ as $\ul q$ ranges among solutions of $\cale$ in $Q^X$.

Let's prove the computability of the new system of equations.
Starting with a presentation of a virtually free group $V$, one can effectively find
a presentation of $V$ as a graph of finite groups $\Lambda$.  
Indeed, enumerating all presentations of $V$, one will find one which is visibly a
presentation of the correct form. More precisely, if $A$ is a finite group, 
\newcommand{\tab}{\mathrm{table}}
its finite presentation $\grp{A|\tab(A)}$ consisting of its multiplication table is a presentation from which finiteness of $A$ is obvious.
Consider an amalgam of two finite groups $G=A*_C B$, given by two monomorphisms  $j_A:C\ra A$, $j_B:C\ra B$.
Then $G$ has a presentation of the form $\grp{A\cup B\cup C | \tab(A),\tab(B),\tab(C),\{c=j_A(c)=j_B(c)\}_{c\in C}}$
from which one can obviously read that $j_A,j_B$ are injective morphisms, and that $G$ is an amalgam of finite groups.
Similarly, given a graph of finite groups $\Gamma$ and $\tau\subset\Gamma$ a maximal tree,
one produces a presentation of $\pi_1(\Gamma,\tau)$ from multiplication tables of vertex and edge groups
from which one can read that the corresponding group is the fundamental group of a graph of finite groups.

One can also find a normal free subgroup $F$ of finite index:
enumerate all morphisms to finite groups $\rho:V\ra Q$,
and check whether for each vertex group $\Lambda_v$, $\rho_{|\Lambda_v}$ is injective; one will eventually find such a $\rho$,
and one can take $F=\ker\rho$.
The graph $\Gamma=T/F$ is constructed from $\Lambda$ as the covering with deck group $Q$: 
for each edge or vertex $x$ of $\Lambda$, we put a copy of this edge or vertex for each element of $Q/\rho(\Lambda_x)$,
and the incidence relation is the natural one. 
Thus $\grp{S}$ is computable together with its natural basis and the action of $Q$.
The new systems of equations $\cale'_{\ul q}$ are now explicit.

There remains to compute the rational constraints $\calr'_{q,x}=c(\calr_{x}\cap\pi\m(q_x))$.
We saw that 
$\calr'_{q,x}=c(\Tilde q_x).\phi_{q_x}(c(\calr''))$ with $\calr''=\Tilde q_x\m(\calr_x\cap \pi\m(q_x))$.
An automaton representing $\calr''$ can be computed using Lemma \ref{lem;Boolean}.
By Lemma \ref{lem_morphism}, since $c_{|F}$ is a morphism, we get an automaton representing $c(\calr'')$,
and thus $\calr'_{q,x}$.
\end{proof}

\section{Band complexes}\label{sec_band_cplex}

We fix a finite set $S$, the free group $G=\grp{S}$,
and the corresponding free monoid $\fmi$ with involution $x\ra \ol x$.
Let $\AutS$ be the corresponding  finite group 
of automorphisms of $G$ preserving $S\cup S\m$,
which we identify with the group of automorphisms of $\fmi$ commuting with the involution.

\subsection{Band complexes}

A \emph{band complex} $\Sigma$ consists of a \emph{domain} $D$ together with a finite set of bands.
The domain $D$ is a disjoint union of non-degenerate segments.
We say that a segment is non-degenerate when it is  non-empty and not reduced to a point.
A \emph{band} $B$ consists of a rectangle $[a,b]\times [0,1]$, together with two injective continuous \emph{attaching} maps
$f_{B,0}:[a,b]\times\{0\}\ra D$ and $f_{B,1}:[a,b]\times\{1\}\ra D$ 
and  a \emph{twisting} automorphism $\phi_B\in \AutS$.
The segments $J_{B,\eps}=f_{B,\eps}([a,b]\times \{\eps\})$ for $\eps=0,1$ are called  \emph{the bases} of the band $B$.

\begin{rem*}
  Since bands carry twisting automorphisms, a  band complex might rather be called a \emph{twisted} band complex.
We won't use this terminology for shortness's sake.
\end{rem*}

We denote by $\Phi\subset\AutS$ the group generated by all twisting morphisms $\phi_B$, and we will assume that
$\Phi$ has no inversion, \ie that $\phi(s)\neq \ol s$ for all $\phi\in\Phi$ and $s\in \Spm$.

The set of \emph{vertices} of $\Sigma$ is the subset of $D$ consisting 
of the endpoints of $D$ together with the endpoints of the bases of the bands.
We identify $\Sigma$ with the CW-complex obtained by gluing the bands on $D$ using the maps $f_{B,\eps}$.

The precise value of the attaching maps are not important:
a band complex is really determined by the ordering of vertices in each component of $D$,
and for each band, the 4-tuple $(f_{B,0}(a),f_{B,0}(b),f_{B,1}(a),f_{B,1}(b))$ together with the twisting automorphism $\phi_B$.

\subsection{Solution of a band complex}\label{sec_solution_BC}

An \emph{elementary segment} of $\Sigma$ is a segment of $D$ joining two adjacent vertices.
We denote by $E(\Sigma)$ the set of oriented elementary segments of $\Sigma$.
We denote by $e\mapsto \ol e$ the orientation reversing involution on $E(\Sigma)$.

A \emph{labelling} $\sigma$ of $\Sigma$ is a labelling of each $e\in E(\Sigma)$
by a 
 non-empty word $\sigma_e\in \fmip$, so that $\sigma_{\ol e}=\ol{\sigma_e}$
(we use the standard notation $\fmip=\Spm\cdot\fmi=\fmi\setminus\{1\}$).
Say that a segment $I\subset D$ is \emph{adapted} to $\Sigma$ if it is a union of elementary segments.
For each oriented adapted segment $I=e_1\dots e_n\subset D$ written as a concatenation of elementary segments, 
we define $\sigma_I=\sigma_{e_1}\dots \sigma_{e_n}$.
Alternatively, we often view a labelling of $\Sigma$ as a subdivision of $D$, together with
a labelling of the subdivided edges by letters in $S\cup \ol S$.

A \emph{solution} $\sigma$ of $\Sigma$ is a labelling such that
for each band $B$ with twisting automorphism $\phi_B$ and with bases $J_0$, $J_1$ 
oriented so that the attaching maps preserve the orientation,
one has $\sigma_{J_1}=\phi_B(\sigma_{J_0})$.
We say that the solution  $\sigma$ is \emph{reduced} if 
for every non-degenerate adapted segment $I\subset D$, $\sigma_I$ is a non-trivial reduced word.

The \emph{length} $|\sigma|$ of a solution $\sigma$ is the sum of lengths of words $|\sigma_{D_1}|+\dots+|\sigma_{D_k}|$
where $D_1,\dots,D_k$ are the connected components of $D$ (with a choice of orientation).

\subsection{Rational constraints}

A set of \emph{rational constraints} on $\Sigma$ is a family of
regular languages 
 $\ul\calr=(\calr_e)_{e\in E(\Sigma)}$ of $\fmip$, indexed by oriented elementary segments of $\Sigma$, and 
such that $\calr_{\ol e}=\ol{\calr_{e}}$. 
A \emph{solution} $\sigma$ of $\Sigma$, satisfying the rational constraint $\ul\calr$ is a solution of $\Sigma$ such
 that for each elementary segment $e\in E(\Sigma)$,  $\sigma_e\in\calr_e$.

All the rational constraints we will use later will be in some \emph{standard form}
as defined in Section \ref{sec_standard}.

\subsection{From twisted equations to band complexes}\label{sec_eq2band}

\begin{prop}\label{prop;red_band_cmplex}
  Let $(\cale,\ul\calr)$ be a system of 
twisted equations with rational constraints on a free group $\grp{S}$.
Then one can effectively compute a finite set of band complexes with rational constraints $\Sigma_1,\dots,\Sigma_p$
and a bijection between the set of solutions of $(\cale,\ul\calr)$ and the disjoint union of the
(reduced) solutions of $\Sigma_1,\dots,\Sigma_p$.

Moreover, every solution of $\Sigma_i$ is reduced.
\end{prop}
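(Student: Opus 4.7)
The plan, after using the triangulation from Section \ref{sec_triangular}, is to encode each triangular equation by a three-band "triangle", use one domain segment per variable, and push both the rational constraints and the reducedness requirement down to elementary-segment constraints by enumerating finitely many cases. For each equation $\phi_1(x_1)\phi_2(x_2)\phi_3(x_3)=1$ in $\cale$, any solution yields three reduced words $u_i=\phi_i(g_{x_i})$ (with the convention $g_{\ol y}=g_y\m$) whose product is trivial in $\grp{S}$, and such a triple admits a canonical decomposition $u_i=p_i\,\ol{p_{i-1}}$ into reduced (possibly empty) words $p_1,p_2,p_3$ (indices mod~$3$) with each $u_i$ already reduced as written. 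Enumerating which $p_j$ are empty gives finitely many combinatorial types; in the generic case I build $D$ with one segment $D_x$ per variable $x\in X$, add for each equation a single subdivision vertex on each of $D_{x_1},D_{x_2},D_{x_3}$ at the $p_i/\ol{p_{i-1}}$ cut, and attach three bands, each identifying a half of $D_{x_i}$ with a half of $D_{x_{i+1}}$ (with reversed orientation) so that both bases of the band carry the same reduced word $p_i$. Each band carries a twisting automorphism obtained from $\phi_i$ (composed with the involution $s\mapsto\ol s$ when the relevant $x_i$ lies in $\ol X$); since $\Phi$ has no inversion these are genuine elements of $\AutS$, and with the convention $\sigma_{D_x}=g_x$ one obtains a bijection between solutions of the equation and reduced labellings of the corresponding triangle.

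To encode $\ul\calr$ and force reducedness I invoke Lemma \ref{lem_reduit}: each $\calr_x$ is represented by a deterministic automaton $A_x$ over $\Spm$ accepting exactly the reduced words whose image lies in $\calr_x$, so $A_x$ already rejects internally unreduced words. For each variable $x$, $D_x$ has been cut by the triangle midpoints into elementary segments $e_1,\dots,e_k$; I guess the tuple $(q_0,\dots,q_k)$ of $A_x$-states at the subdivision vertices, with $q_0$ initial and $q_k$ accepting, and I set $\calr_{e_j}$ to be the regular language of $\Spm$-words labelling an $A_x$-path from $q_{j-1}$ to $q_j$. To forbid cancellation across each internal vertex of $D$, I further guess at that vertex a pair $(s,t)\in\Spm^2$ with $t\neq\ol s$ (the last letter entering and the first letter leaving) and restrict the two adjacent $\calr_{e_j}$ accordingly. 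Each global choice of state- and letter-guesses produces one band complex $\Sigma_i$ with its constraints, and the total number of guesses is finite.

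Correctness is then immediate: every solution of $\Sigma_i$ is reduced because $A_x$ rules out internal cancellations inside each elementary segment while the letter guesses rule out cancellation across internal vertices of $D$, and together these cover every non-degenerate adapted segment; conversely, a solution of $(\cale,\ul\calr)$ uniquely determines the triangle decomposition of each equation, the sequence of $A_x$-states traversed inside each $D_x$, and the two letters at each junction, hence exactly one $\Sigma_i$ together with one labelling of it. Everything in sight is algorithmic, by Lemma \ref{lem_reduit} combined with the finite enumeration of guesses. The main bookkeeping obstacle I anticipate is the interplay between the twisting morphisms $\phi_i\in\AutS$ and the orientation of each $x_i\in X\cup\ol X$, together with the case split for degenerate triangles; this becomes purely mechanical once a consistent convention linking $\sigma_{D_x}$ to $g_x$ is fixed.
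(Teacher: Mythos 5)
Your overall strategy is sound and close in spirit to the paper: both proceed via the cancellation tripod, a finite enumeration of combinatorial types, and the use of Lemma~\ref{lem_reduit} to force reducedness. The construction itself differs: the paper introduces one auxiliary segment $D_{\eps,i}$ per occurrence of a variable in an equation, attaches a band $D_{x_i}\to D_{\eps,i}$ carrying the twist $\phi_i$, and then only untwisted bands between the $D_{\eps,i}$'s, whereas you collapse this and attach bands \emph{directly} between halves of $D_{x_i}$ and $D_{x_{i+1}}$, carrying a composite twist of the form $\phi_{i+1}^{-1}\circ\phi_i$. This works as a band-complex construction (it is still in $\AutS$), and it is arguably leaner; the paper's version avoids the bookkeeping you describe at the end, since the cancellation bands are untwisted and orientation issues stay local. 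That trade-off is a matter of taste.

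There is, however, one genuine gap. You claim a bijection between solutions of $(\cale,\ul\calr)$ and the disjoint union of reduced solutions of the $\Sigma_i$, but your construction silently assumes every $g_x$ is non-trivial. If $g_x=1$ for some variable, the segment $D_x$ would have to carry the empty word, which is not a legal labelling of a non-degenerate segment (every elementary segment must be labelled in $\fmip$, and solutions are required to be non-empty). The paper deals with this by first splitting $\Sol(\cale,\ul\calr)$ over all $2^{|X|}$ subsets $X_0\subset X$ of variables set to $1$, producing systems $\cale_{X_0}$ whose solution sets are pairwise disjoint and whose union covers $\Sol(\cale,\ul\calr)$, and only then building band complexes for \emph{non-singular} solutions. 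Without this step, your $\Sigma_i$'s see none of the singular solutions, so the claimed bijection fails.

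Two smaller points. First, the parenthetical ``composed with the involution $s\mapsto\ol s$'' for handling $x_i\in\ol X$ does not type-check: the involution $w\mapsto\ol w$ is an anti-automorphism of $\fmi$, not an element of $\AutS$, so it cannot be absorbed into the twist. The correct mechanism (and the one the paper uses implicitly) is that $D_{\ol y}$ is simply $D_y$ with the opposite orientation, so the band attaches with reversed orientation; the twist stays in $\AutS$. Second, ``$A_x$ already rejects internally unreduced words'' is not automatic: Lemma~\ref{lem_reduit} gives an automaton whose \emph{accepted} language consists of reduced words, but a path between two interior states can still read an unreduced word. You should replace $A_x$ by its product with the deterministic ``reduced words'' automaton (whose states record the last letter read) before guessing states; your letter guesses $(s,t)$ at the cut points then ensure reducedness across vertices, and the product automaton ensures it inside each $e_j$. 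With these two fixes plus the singular-case preamble, your argument goes through.
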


\begin{proof}
Say that a tuple $\ul g\in G^X$ is \emph{singular} if $g_x=1$ for some $x\in X$.
Denote by $\Sol(\cale,\ul\calr)$ (resp. $\Sol^*(\cale,\ul\calr)$) 
the set of solutions (resp. of non-singular solutions) of $(\cale,\calr)$.
Clearly, $\Sol(\cale,\ul\calr)$ is 
 in bijection with the disjoint union
of $2^{\abs{X}}$ sets of the form $\Sol^*(\cale_{X_0},\ul\calr)$ 
where for $X_0\subset X$, $\cale_{X_0}$ is the system of 
twisted equations 
whose set of  unknowns is $X\setminus X_0$, and obtained from $\cale$ by replacing each occurrence of the variable $x\in X_0$
by $1$.

Some of the obtained equations may involve less than three variables.
However, if some equation involves just one variable, it is of the form $\phi(x)=1$, so $\cale$ has no non-singular solution 
and we may forget about this $X_0$.
We view the equations involving two variables as twisted equalities of the form $\phi_1(x_1)=\phi_2(x_2)$.

\begin{rem} 
 We could get rid of twisted equalities 
 involving two distinct formal variables by substitution,
and of twisted equalities of the form $\phi(x)=x$ by adding the rational constraint saying that $x\in\Fix\phi$,
but this 
does not allow to get rid of equations of the form $\phi(\ol x)=x$ since the set of fixed points of $x\mapsto \phi(\ol x)$ is
not a rational language in general, 
 and we don't want to add new singular variables.\end{rem}

We now need to build some band complexes encoding
the set of non-singular solutions of a given system of 
twisted equations $(\cale,\ul\calr)$ (including twisted equalities).

For each unknown $x\in X$, we consider an oriented segment $D_x$ (whose labelling will correspond to the value $g_x$ of the unknown $x$).
For $x\in \ol X$, we will denote by $D_{\ol x}$ the same segment as $D_x$, but with the opposite orientation.
Then for each 
twisted equation $\eps$ of the form $\phi_1(x_1)\phi_2(x_2)\phi_3(x_3)=1$
(resp. $\phi_1(x_1)=\phi_2(x_2)$), with $x_i\in X\cup \ol X$, we add three (resp.\ two) oriented segments $D_{\eps,i}$
corresponding to $\phi_i(x_i)$.
We define the domain $D$ of our band complex as $D=(\Dunion_{x\in X} D_x) \dunion (\Dunion_{(\eps,i)} D_{\eps,i})$.

For each  segment $D_{\eps,i}$ corresponding to $\phi_i(x_i)$,
we add a band whose bases are $J_0=D_{x_i}$ and $J_1=D_{\eps,i}$, preserving the orientation,
and whose twisting morphism is $\phi_i$.

For each twisted equality $\eps$ of the form $\phi_1(x_1)=\phi_2(x_2)$, we add a band 
whose bases are $J_0=D_{\eps,1}$ and $J_1=D_{\eps,2}$,
preserving the orientation, and with trivial twisting morphism.

For any non-singular solution $\ul g$ of the 
twisted equation $\eps\in\cale$ corresponding to 
$\phi_1(x_1)\phi_2(x_2)\phi_3(x_3)=1$,
one can consider its  cancellation tripod $\tau$:
this is the convex hull of $a_1=1,a_2=\phi_1(g_{x_1}),a_3=\phi_1(g_{x_1})\phi_2(g_{x_2})$ in the Cayley graph of $\grp{S}$,
so that $[a_i,a_{i+1}]$ is labelled by $\phi_i(g_{x_i})$ (where we view $i$ as an integer mod $3$).
Let $c$ be the centre of $\tau$, \ie the intersection of the three segments $[a_i,a_i+1]$.
There are four possible types of combinatorics of cancellation for each triangular equation $\eps$:
$c\notin\{a_1,a_2,a_3\}$ (in which cases $\tau$ is not a segment), or $c=a_1$, $c=a_2$, $c=a_3$. 
The four types are mutually exclusive because $\ul g$ is non-singular.

\begin{figure}[htb]
  \centering
\includegraphics{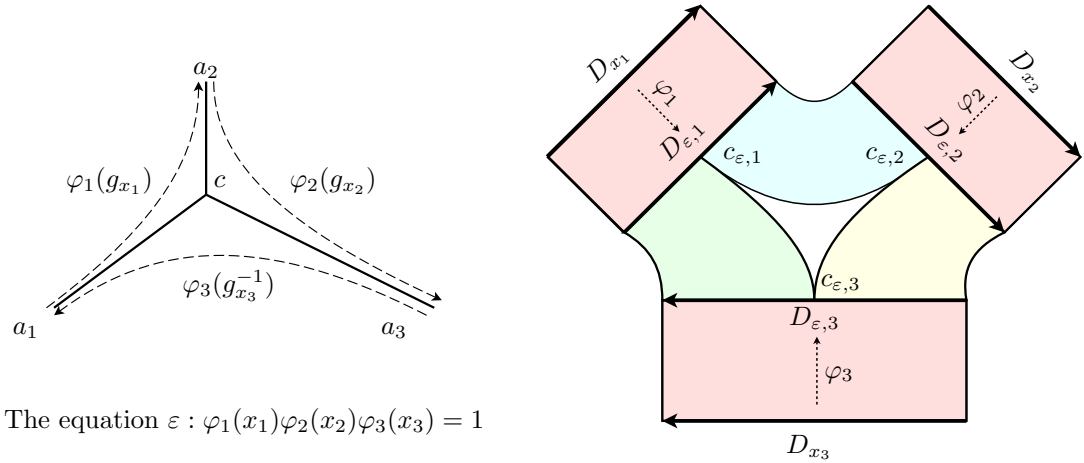}  
  \caption{The bands when the cancellation tripod is non-degenerate}
  \label{fig_bandcplex}
\end{figure}

To each choice of combinatorics of cancellation for this 
twisted equation, we associate a set of bands to be added to our band complex.
If the cancellation tripod is not a segment,
for each $i=1,\dots,3$, add a vertex $c_{\eps,i}$ at the midpoint of $D_{\eps,i}$ (Figure \ref{fig_bandcplex}).
Then add a band reversing  the orientation, whose bases are the right half-segment of $D_{\eps,i}$ 
and  the left half-segment of $D_{\eps,i+1}$ (right and left having a meaning relative to the orientation of those segments),
and whose twisting automorphism is the identity.

\begin{figure}[htbp]
  \centering
\includegraphics{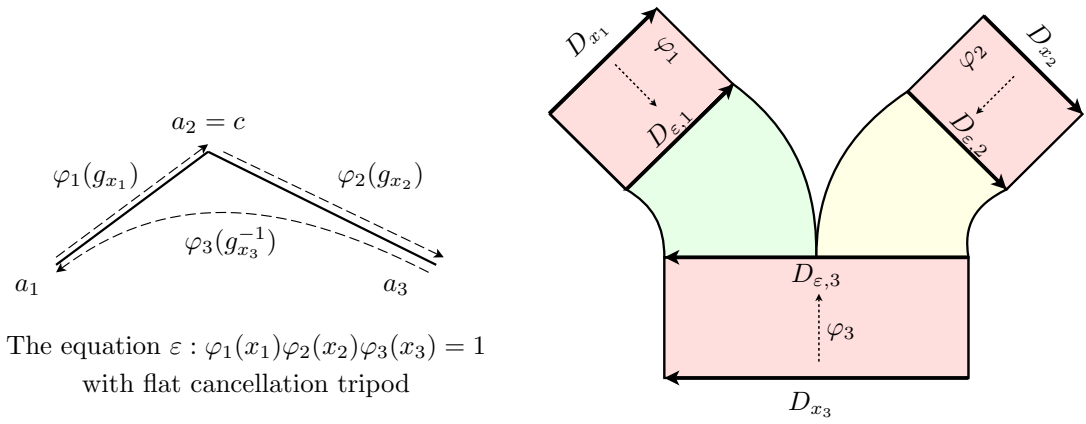}  
  \caption{The bands when the cancellation tripod is flat}
  \label{fig_bigon}
\end{figure}

If the cancellation tripod is such that $c=a_i$ for some $i\in\{1,2,3\}$, 
the product $\phi_{i-1}(g_{x_{i-1}})\phi_{i}(g_{x_i})$ is reduced, $i$ being thought modulo 3 
(Figure \ref{fig_bigon}). 
We add one vertex at the midpoint of $D_{\eps,i+1}$, and
two bands reversing   the orientation, whose 
 bases are $J_0=D_{\eps,i}$ (resp. $J_0=D_{\eps,i-1}$) 
and $J_1$ the 
 initial (resp. final) half-segment of $D_{\eps,i+1}$, 
and whose twisting automorphism is the identity.

Let $t$ be the number of triangular equations of $\cale$.
For each choice $\kappa$ out of the $4^t$ possible combinatorics of cancellation,
we obtain a band complex $\Sigma_\kappa$.

Finally, for each rational subset $\calr_x\subset \grp{S}$,
we consider $\Tilde\calr_x\subset\fmi$ the set of reduced words representing elements of $\calr_x$.
This is a rational language by Lemma \ref{lem_reduit}.
We add to $\Sigma_\kappa$ the rational constraint
 $\Tilde\calr_x$ on $D_x$ for each $x\in X$, and we don't put any constraint 
 on the other elementary segments of $\Sigma$ (\ie we set $\calr_e=\fmi$).
Since $\Tilde\calr_x$ is a language of reduced words, any solution of $\Sigma_\kappa$ is reduced.
\\

We claim that the set of reduced solutions of $\Sigma_\kappa$ is in one-to-one correspondence
with the subset $\cals_\kappa$ of $\Sol^*(\cale,\ul\calr)$ whose combinatorics of cancellation correspond to $\kappa$.

Indeed, a non-singular solution $\ul g\in \cals_\kappa$ defines a labelling $\sigma$ of $\Sigma_\kappa$ as follows:
$D_x$ is labelled by the reduced word $\sigma_x\in\fmi$ representing $g_x$,
each $D_{\eps,i}$ representing $\phi_i(x_i)$ is labelled by the reduced word representing $\phi_i(g_{x_i})$,
and if the midpoint of $D_{\eps,i}$ is a vertex of $\Sigma_\kappa$,
the labelling of the two half segments of $D_{\eps,i}$
should be such that the position of this vertex corresponds
to the centre of the cancellation tripod.
The labelling $\sigma$ thus constructed is a clearly reduced solution of $\Sigma_\kappa$.

Conversely, if $\sigma$ is a reduced solution of $\Sigma_\kappa$, the image $g_x$ in $\grp{S}$ of the word $\sigma_{D_x}$ 
clearly defines a non-singular solution of $(\cale,\ul\calr)$.
Moreover,  for each $\eps\in\cale_T$, the label of $D_{\eps,i}$ defines a geodesic segment, 
and the midpoint of the three segments $D_{\eps,i}$ corresponds to the centre of the cancellation tripod for $\eps$.
Thus, the combinatorics of cancellation agree with $\kappa$, and $g_x\in\cals_\kappa$.

 The construction of the band complexes $\Sigma_\kappa$ is clearly effective, so Proposition \ref{prop;red_band_cmplex} follows.
\end{proof}

\subsection{Standard forms of rational constraints}\label{sec_standard}

It will be convenient to represent all the needed rational languages in a fixed finite monoid.
This will allow to treat uniformly all rational languages appearing under various transformations
of the band complexes.

\begin{lem}\label{lem_monoid}
Let $\calr_1,\dots,\calr_k$ be a finite set of rational languages in $\fmip$.

There exists a finite monoid $\calm$ with an involution $m\mapsto \ol m$ 
and with an action of $\AutS$, and an $\AutS$-equivariant morphism $\rho:\fmip\ra\calm$ commuting
with the involutions, such that
each $\calr_i$ is the preimage of a finite subset of $\calm$.

Moreover, all this data is algorithmically computable from automata representing $\calr_i$.
\end{lem}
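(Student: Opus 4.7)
My plan is to build $\calm$ as the quotient of $\fmi$ by a congruence that saturates an enlarged family of rational languages, so that both the involution and the $\AutS$-action descend automatically.

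First I would enlarge the given family. Set
\[
\calF = \{\phi(\calr_i) : \phi \in \AutS,\ i=1,\dots,k\} \cup \{\ol{\phi(\calr_i)} : \phi \in \AutS,\ i=1,\dots,k\}.
\]
Since $\AutS$ is finite and automata representing each $\calr_i$ are given, Lemma \ref{lem_morphism} (applied to the automorphisms $\phi$, which are defined by finite data on the basis) together with the algorithmic construction of $\ol\calr$ mentioned after the definition of regular languages yields effectively an automaton for every element of $\calF$. By construction, $\calF$ is closed under $L \mapsto \ol L$ and under the $\AutS$-action $L \mapsto \phi(L)$.

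Next I would define the congruence. For each $L \in \calF$, the standard transition-monoid construction gives a morphism $\rho_L : \fmi \to T_L$ to a finite monoid such that $L = \rho_L^{-1}(F_L)$ for some $F_L \subset T_L$; an automaton, hence $\rho_L$ and $F_L$, is computable. Set $\calm = \rho(\fmi) \subset \prod_{L \in \calF} T_L$ where $\rho = \prod_L \rho_L$, so that $\calm$ is a finite monoid and each $L \in \calF$ (in particular each $\calr_i$) is the preimage under $\rho$ of a finite (computable) subset of $\calm$. Restricting $\rho$ to $\fmip$ gives the required map into $\calm$.

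Now I would descend the involution and the $\AutS$-action, which is the main point. The congruence $\sim$ defined by $w \sim w'$ iff $\rho(w)=\rho(w')$ is equivalently characterized as: for every $L \in \calF$, $w$ and $w'$ lie in the same $L$-left-right contexts. The key verification is that $w \sim w'$ implies $\ol w \sim \ol{w'}$ and $\phi(w) \sim \phi(w')$ for every $\phi \in \AutS$. For the involution: $u\ol w v \in L$ iff $\ol v w \ol u \in \ol L$, and $\ol L \in \calF$ by closure, so the equivalence of $w$ and $w'$ in all contexts for $\ol L$ transfers to $\ol w$ and $\ol{w'}$ in all contexts for $L$. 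The argument for $\phi$ is identical using $\phi^{-1}(L) \in \calF$. Therefore $m \mapsto \ol m$ (an anti-morphism of $\calm$ by the corresponding property on $\fmi$) and the $\AutS$-action on $\calm$ are well-defined, and by construction $\rho$ commutes with involutions and is $\AutS$-equivariant.

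Finally, for computability, all the data are explicit: the enlarged family $\calF$ is produced from the input automata; each $\rho_L$ and $F_L$ is read off from the DFA for $L$; multiplication in the finite product $\prod_L T_L$ is computable; and the involution and $\AutS$-action on $\calm$ are computed by picking any representative $w$ of $m \in \calm$ and returning $\rho(\ol w)$ or $\rho(\phi(w))$, a procedure whose correctness is exactly the well-definedness shown above. The only step requiring care is this last well-definedness, which is why I insisted on closing the family $\calF$ under both the involution and the $\AutS$-action before forming the monoid.
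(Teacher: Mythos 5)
Your route is genuinely different from the paper's. The paper never saturates the family: it handles the involution by an explicit product trick $\calm=\calm_0\times\calm_0$ with the involution $(M_1,M_2)\mapsto({}^tM_2,{}^tM_1)$ built by hand on Boolean matrices, and then handles the $\AutS$-action by taking the product $\prod_{\alpha\in\AutS}\calm_\alpha$ with $\rho_\alpha=\rho\circ\alpha$ and the permutation action. You instead saturate the family $\calF$ under both $L\mapsto\ol L$ and $L\mapsto\phi(L)$ and let both structures descend to the quotient; this is a cleaner, more conceptual argument, at the cost of a well-definedness verification. Both constructions are effective.

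There is, however, a gap in your descent argument as written. You assert that $\rho(w)=\rho(w')$ is \emph{equivalent} to ``$w$ and $w'$ lie in the same $L$-contexts for every $L\in\calF$'', but this equivalence fails if $T_L$ is the transition monoid of an arbitrary automaton for $L$: the transition monoid recognises $L$, so $\rho_L(w)=\rho_L(w')$ does imply equality of $L$-contexts, but the converse is false in general (the transition monoid of a non-minimal DFA strictly refines the syntactic monoid). Your argument genuinely needs the converse direction: you show that equality of $\ol L$-contexts transfers to equality of $L$-contexts for $\ol w,\ol{w'}$, and then must conclude $\rho_L(\ol w)=\rho_L(\ol{w'})$, which is exactly the failing implication. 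The repair is easy but must be made explicit: take $T_L$ to be the \emph{syntactic} monoid of $L$ (equivalently, the transition monoid of the minimal DFA), which is still computable from the input automaton. Then the two conditions really do coincide, the involution and $\AutS$-action descend as you describe, and the rest of the proof goes through. With that one change, this is a correct and pleasant alternative to the paper's proof.
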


When $\calr$ is the preimage under $\rho$ of some subset of $\calm$ as above,
we say that $\calr$ is \emph{represented} by $\rho$.

\begin{proof}
We first consider the case of a single regular language.
Let $\calr\subset\fmip$ be a rational language.
Let $A$ be an automaton representing $L$ with vertex set $v_1,\dots v_n$.
For each $s\in S\cup \ol S$, 
consider the $n\times n$ Boolean incidence matrix $M_s$
of the subgraph of $A$ whose edges are those labelled by $s$.

Let $\calm_0$ be the semigroup of $n\times n$-Boolean matrices,
and $\rho_0:\fmip\ra\calm_0$ the morphism sending $s$ to $M_s$.
Let $v_S$ (resp. $v_A$) be the Boolean vector representing $Start$ (resp. $Accept$).
Then $\calr$ is the preimage under $\rho$ of the set $A_0\subset \calm_0$ of matrices $M$ such that
${}^tv_S M v_A=1$.

Now let $\ol\rho_0:\fmip\ra\calm_0$ be the morphism sending $s$ to ${}^tM_{\ol s}$.
Note that $\ol\rho_0(w)={}^{t}\rho_0(\ol{w})$ for all $w\in \fmip$.
Consider the finite monoid $\calm=\calm_0\times\calm_0$ endowed with the involution
$(M_1,M_2)\mapsto \ol{(M_1,M_2)}\overset{\mathrm{def}}{=}({}^{t}M_2,{}^{t}M_1)$, and consider
$\rho:\fmip\ra\calm$ sending $s$ to $(\rho_0(s),\ol\rho_0(s))$.
By the remark above, $\rho$ commutes with the involutions of $\fmip$ and $\calm$,
and $\calr=\rho\m(A_0\times\calm_0)$. 

Now given a finite set of languages $\calr_1,\dots,\calr_k$, consider finite monoids with involutions
$\calm_1,\dots,\calm_k$ and $\rho_i:\fmip\ra\calm_i$ commuting with the involutions
such that $\calr_i=\rho_i\m(A_i)$ for some $A_i\subset \calm_i$.
Consider the finite monoid $\calm=\calm_1\times\dots\times\calm_k$  with the diagonal involution, 
and $\rho=\rho_1\times\dots\times \rho_k$. Then for $A'_i=\calm_1\times\cdots\times \calm_{i-1}\times A_i\times\calm_{i+1}\times \cdot\times\calm_k$,
 $\calr_i=\rho\m(A'_i)$.

Finally, we need to put the $\AutS$-action in the picture.
For each $\alpha\in\AutS$, let $\calm_\alpha$ be a copy of $\calm$, and consider $\rho_\alpha=\rho\circ\alpha:\fmip\ra \calm_\alpha$.
Consider the product $\Tilde\calm=\prod_{\alpha\in\AutS}\calm_\alpha$ endowed with the action of $\AutS$ by permutation of factors,
 and $\Tilde\rho=\prod_{\alpha\in\AutS}\rho_\alpha$.
Then $\Tilde\rho:\fmip\ra \Tilde\calm$ satisfies the lemma.
\end{proof}

By Lemma \ref{lem_monoid}, one can represent all the rational constraints $\calr_e$ of a given band complex 
by a single morphism $\rho:\fmip\ra\calm$.
It will be useful to assume that $\rho$ also represents
$\Fix\phi\subset\fmip$ for all $\phi\in\Phi$ (this is a regular language as it is the submonoid generated by
$\Phi$-invariant letters in $\Spm$).
By a finite disjunction of cases, we  
need only to consider rational constraints of the form $\calr_e=\rho\m(\{m_e\})$.

\begin{dfn}[Standard form for rational constraints]
A set of rational constraints in \emph{standard form} on a band complex $\Sigma$ consists of
\begin{enumerate}
\item a finite monoid $\calm$ with an involution and an action of $\AutS$,
\item an epimorphism $\rho:\fmip\ra\calm$ commuting with the involutions and $\AutS$-equivariant,
such that for all $\phi\in\AutS$, the rational language $\Fix\phi\subset\fmip$ is represented by $\rho$,
\item a collection $\ul m=(m_e)_{e\in E(\Sigma)}\in\calm^{E(\sigma)}$ such that $m_{\ol e}=\ol{m_e}$.
\end{enumerate}
\end{dfn}

The tuple $\ul m=(m_e)$ defines a tuple of rational constraints by $\calr_e=\rho\m(\{m_e\})$.
By definition, a solution of $(\Sigma,\ul m)$ is a solution of $\Sigma$ with the corresponding rational constraints.
Using  Lemma \ref{lem_monoid}, we immediately get the following lemma.
The fact that $\rho$ is onto can be obtained by changing $\calm$ to $\rho(\fmip)$.

\begin{lem}
  Given a band complex $\Sigma$ with rational constraints $\ul \calr$, one can compute
 a finite monoid $\calm$, a morphism $\rho:\fmip\ra\calm$, and some tuples
$\ul m_1,\dots,\ul m_p\in\calm^{E(\Sigma)}$ defining rational constraints in standard form on $\Sigma$
 such that the set of solutions of $(\Sigma,\ul\calr)$ is the 
disjoint union of the sets of solutions of $(\Sigma,\ul m_1),\dots(\Sigma,\ul m_p)$.
\end{lem}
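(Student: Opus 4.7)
The plan is to combine Lemma \ref{lem_monoid} with a straightforward finite case split over the possible values of $\rho(\sigma_e)$.

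First, I would list the rational languages that need to be representable: the constraints $\calr_e$ for each $e\in E(\Sigma)$, together with $\Fix\phi\subset\fmip$ for each $\phi\in\AutS$. The latter is indeed rational, being the submonoid generated by the $\phi$-invariant letters in $\Spm$, and its automaton is computable. Applying Lemma \ref{lem_monoid} to this finite collection produces a finite monoid $\calm_0$ with involution and $\AutS$-action together with an $\AutS$-equivariant, involution-preserving morphism $\rho_0:\fmip\to\calm_0$ such that each of these languages is the preimage of a finite subset of $\calm_0$. Replacing $\calm_0$ by the image $\calm:=\rho_0(\fmip)$ yields a surjective $\rho:\fmip\onto\calm$; this is legitimate because the image is stable under the involution (as $\rho$ commutes with it) and under the $\AutS$-action (as $\rho$ is equivariant), and each language $\calr_e$, $\Fix\phi$ is still the preimage of a subset $M_e,M_\phi\subset\calm$. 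By construction, items (1) and (2) of the definition of standard form are fulfilled.

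Next, for each oriented elementary segment $e\in E(\Sigma)$, write $\calr_e=\rho^{-1}(M_e)$ with $M_e\subset\calm$ finite. Since $\calr_{\ol e}=\ol{\calr_e}$ and $\rho$ commutes with the involutions, one has $M_{\ol e}=\ol{M_e}$. Enumerate all tuples
\[
\ul m=(m_e)_{e\in E(\Sigma)}\in\prod_{e\in E(\Sigma)}M_e
\quad\text{satisfying}\quad m_{\ol e}=\ol{m_e}\text{ for all }e.
\]
This is a finite (and explicit) set $\{\ul m_1,\dots,\ul m_p\}$, and each $\ul m_i$ defines rational constraints in standard form by setting $\calr^{(i)}_e:=\rho^{-1}(\{m_{i,e}\})$.

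Finally, I would check the bijection of solution sets. Let $\sigma$ be any solution of $(\Sigma,\ul\calr)$. For each $e\in E(\Sigma)$ set $m_e:=\rho(\sigma_e)\in M_e$; the identity $\sigma_{\ol e}=\ol{\sigma_e}$ and the involution-compatibility of $\rho$ give $m_{\ol e}=\ol{m_e}$, so this tuple is one of the $\ul m_i$, and by definition $\sigma$ is a solution of $(\Sigma,\ul m_i)$. Conversely, any solution of $(\Sigma,\ul m_i)$ automatically satisfies $\sigma_e\in\rho^{-1}(\{m_{i,e}\})\subset\calr_e$, hence is a solution of $(\Sigma,\ul\calr)$. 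The decomposition is disjoint because the assignment $e\mapsto\rho(\sigma_e)$ is uniquely determined by $\sigma$. All steps are algorithmic: automata for $\calr_e$ and $\Fix\phi$ are given or computable, Lemma \ref{lem_monoid} is effective, the image $\calm=\rho(\fmip)$ is computable by saturation from the generators $\rho(s)$, and the enumeration of admissible tuples is explicit. The only mildly delicate point—and the one I would pay attention to—is making sure that after restricting to $\calm=\rho(\fmip)$ the $\AutS$-action and the involution still make sense, which holds exactly because of the equivariance built into Lemma \ref{lem_monoid}.
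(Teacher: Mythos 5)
Your proof is correct and takes essentially the same route the paper intends: apply Lemma \ref{lem_monoid} to the finite list of constraint languages together with all $\Fix\phi$, restrict $\calm$ to the image of $\rho$ to ensure surjectivity (which is exactly the remark made in the paper), and then split into finitely many cases over the admissible involution-compatible tuples $(m_e)_{e\in E(\Sigma)}$. The verification of the disjoint union is handled cleanly by observing that $\rho(\sigma_e)$ is determined by $\sigma$.
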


From now on, all band complexes are endowed with rational constraints in standard form.

\section{Bulitko's Lemma: bounding the exponent of periodicity}\label{sec_bulitko}

The goal of this section is a version of Bulitko's lemma \cite{Bulitko_equations} 
adapted to  twisted equations, which we state in terms of band complexes.
The \emph{exponent of periodicity} of a solution $\sigma$ of a band complex $\Sigma$ is the largest integer $s$ such that
some subsegment of the domain $D$ of $\Sigma$ is labelled by some word of the form $p^s$, with $p$ a non-trivial word.
Recall that the length of a solution $\sigma$ of a band complex is the number of letters in $\Spm$ involved in the labelling.

\begin{prop}[\cite{Bulitko_equations}]\label{prop_Bulitko} 
  Let $\Sigma$ be a band complex with rational constraints $\ul m$ in standard form.

There exists a computable number $B(\Sigma,\ul m)$ such that 
any shortest solution of $\Sigma$ has an exponent of periodicity at most $B(\Sigma,\ul m)$.
\end{prop}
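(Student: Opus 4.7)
The plan is to establish a $\bbZ$-linear system whose nontrivial solutions witness length-decreasing modifications of a solution $\sigma$ with large exponent of periodicity, and to show that such a modification exists as soon as that exponent exceeds a bound depending computably on $\Sigma$ and $\ul m$. Suppose $\sigma$ is a solution of $(\Sigma,\ul m)$ and some elementary segment of $D$ carries a subword $p^s$ with $p\in\fmip$ cyclically reduced and primitive. My goal is to find a solution $\sigma'$ of $(\Sigma,\ul m)$, strictly shorter than $\sigma$, obtained by uniformly decrementing all the matching periodic blocks of $\sigma$.

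First I would track the $\Phi$-orbit of $p$. The subgroup $\Phi\subset\AutS$ is finite, so the orbit $\Phi\cdot p$ consists of finitely many cyclically reduced words, each of length $|p|$. A standard combinatorial analysis (propagating periodicity across bands, together with Fine--Wilf to handle overlapping periods) shows that the maximal periodic subwords of $\sigma$ whose primitive root lies in $\Phi\cdot p$ fall into a finite collection of \emph{periodicity classes} $Q_1,\ldots,Q_N$, where $N$ is bounded by the number of elementary segments of $\Sigma$ times $|\Phi|$. Each class $Q_j$ is labeled by a power $p_{i(j)}^{k_j}$. Each band $B$ whose two bases both land inside periodicity classes yields a linear equation $k_j - k_{j'} = c_{B}$, where $c_{B}\in\bbZ$ is a bounded correction coming from the offsets of the bases relative to the boundaries of the two periodic regions.

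Next I would solve the length-minimisation. The modification I want to perform is to replace each block $p_{i(j)}^{k_j}$ by $p_{i(j)}^{k_j-\lambda_j}$, where the $\lambda_j$ are non-negative integers, not all zero, subject to two constraints: the homogeneous equalities $\lambda_j=\lambda_{j'}$ whenever $B$ couples $Q_j$ and $Q_{j'}$, ensuring the band identity $\sigma_{J_1}=\phi_B(\sigma_{J_0})$ is preserved, and the divisibility condition $\rho(p_{i(j)}^{\lambda_j})=\id_\calm$, ensuring all rational constraints $m_e$ are preserved. The latter holds as soon as $\lambda_j$ is a multiple of the order $d_j\le|\calm|$ of $\rho(p_{i(j)})$ in $\calm$. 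The homogeneous system on the $\lambda_j$, restricted to the lattice $d_1\bbZ\times\cdots\times d_N\bbZ$, has dimension at most $N$, and by a pigeonhole/linear-algebra count one obtains an explicit computable threshold $B=B(\Sigma,\ul m)$ (depending on $|\calm|$, $|\Phi|$, $N$ and the magnitudes of the $c_B$) such that if some $k_j\ge B$ then a nonzero non-negative solution $(\lambda_j)$ with $\lambda_j\le k_j$ exists. The resulting labeling $\sigma'$ remains a solution of $(\Sigma,\ul m)$, is reduced (since $p$ was cyclically reduced and primitive), and satisfies $|\sigma'|<|\sigma|$, contradicting minimality of $\sigma$.

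The main obstacle I expect is the second step: precisely controlling bands whose bases \emph{straddle} the boundary of a periodic class. I need to show that each such band contributes only a bounded correction to some $c_B$ and that after decrementing, $\sigma'$ remains reduced and the rational-constraint values $\rho(\sigma'_e)=m_e$ are unchanged on every elementary segment, including those which only partially overlap a periodicity class. This is where cyclic reducedness and primitivity of $p$, together with the invariance of $\Fix\phi$ under $\rho$ (built into the standard form) and the finiteness of $\Phi$, do the work; combined with the linear-algebra bound from the third step, this yields the desired computable $B(\Sigma,\ul m)$.
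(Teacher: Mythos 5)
Your approach is in the right spirit --- both you and the paper reduce to a system of linear Diophantine equations in the exponents of periodic blocks, use the minimality of $\sigma$ to conclude the exponents must give a minimal solution of that system, and then invoke computable bounds on minimal solutions, with rational constraints handled via the eventual periodicity of the image in the finite monoid $\calm$. However, there is a genuine gap at the point where you assert that the maximal periodic subwords of $\sigma$ with primitive root in $\Phi\cdot p$ ``fall into a finite collection of periodicity classes $Q_1,\ldots,Q_N$, where $N$ is bounded by the number of elementary segments of $\Sigma$ times $|\Phi|$.'' This is false. A single elementary segment can carry a very long word containing arbitrarily many disjoint maximal $q$-periodic subwords (they need only be separated by $\geq q+1$ letters), so the number of distinct periodic blocks --- and hence of your variables $k_j$ --- is not bounded in terms of the combinatorics of $\Sigma$. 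Without a bound on the number of variables, your pigeonhole/linear-algebra step does not yield a threshold $B$ independent of the solution.

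The paper sidesteps this by \emph{not} bounding the number of variables. It introduces one unknown $\tau_{A,k}$ per special segment of each base $A$ (so their number grows with $|\sigma|$), but observes that almost all the resulting equations are mere identifications $\tau_{A,k}=\tau_{A',k'}$: a band $B$ carries the periodic structure of one base exactly onto the other (via $\phi_B$, which preserves lengths), so bands contribute only such equalities --- contrary to your picture, there is no per-band offset $c_B$. The nontrivial offsets (in $\{0,1,2,3\}$) come from the \emph{concatenation} decompositions $A = A'.A''$ at the at-most-one interior vertex of each component of $D$, via the careful case analysis of exceptional special segments; since there are at most $\#\pi_0(D)$ such concatenations with at most two exceptional segments each, there are at most $2\#\pi_0(D)$ non-equality equations. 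Eliminating the equalities by substitution reduces to a system with $\leq 2\#\pi_0(D)$ equations, at most $6\#\pi_0(D)$ unknowns, and coefficients in $\{0,\ldots,3\}$ --- all bounded purely in terms of $\Sigma$ --- and the free variables are then set to $0$ in the minimal solution. This substitution/free-variable reduction is the essential mechanism that your proposal is missing, and without it the argument does not close.
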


We will use and prove the proposition only for a band complex arising from a system of equations
as in Section \ref{sec_eq2band}. More precisely, we assume that every component $D_0$ of the domain of the band complex
contains at most one vertex in its interior, and at most two bases of bands distinct from $D_0$ (see Figure \ref{fig_bandcplex} and \ref{fig_bigon}).
The general statement can be easily deduced from this case.

\subsection{Normal decomposition with respect to powers}

Consider a solution $\sigma$ of the band complex $\Sigma$.
We view $\sigma$ as a subdivision of $D$ together with a labelling of the edges by elements of $\Spm{}$.
Each interval $A\subset D$ we consider is a union of such edges.
Its length $\abs{A}$ is the number of edges it contains, which coincides with
the length of the word $\sigma_A$ labelled by $A$.

We fix some $q\in\bbN\setminus \{0\}$. Given some interval $A\subset D$,
we are going to define a natural set of \emph{special segments}
encoding its large enough subwords which are $q$-periodic.
Say that an interval $A\subset D$ of length $n>q$ is \emph{$q$-periodic}
if $\sigma_A=a_1\dots a_n$ with $a_i\in\Spm{}$ and for all $i\leq n-q$,
$a_i=a_{i+q}$.

For each maximal $q$-periodic segment $P\subset A$ of length at least $2q$,
we say that its central subsegment $Q\subset P$ of length $\abs{P}-2q$ is a \emph{special segment} of $A$.
A special segment is reduced to a vertex when $\abs{P}=2q$.
Although maximal $q$-periodic segments may overlap, the following claim holds.

\begin{claim}
  Two distinct $q$-special segments of $A$ are disjoint, and separated by a segment of length at least $q+1$.
\end{claim}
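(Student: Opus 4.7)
The plan is to index the letters of $\sigma_A$ as $a_1,\dots,a_n$ and identify each subinterval with its set of edge positions $[i,j]\subset\{1,\dots,n\}$, so a $q$-periodic segment $[i,j]$ is one with $j-i\ge q$ and $a_k=a_{k+q}$ for all $i\le k\le j-q$. A maximal such segment of length $\ell=j-i+1\ge 2q$ has special subsegment $[i+q,j-q]$ of length $\ell-2q\ge 0$.

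The first step, and the only real content, is the overlap estimate: two distinct maximal $q$-periodic segments $P_1=[i_1,j_1]$, $P_2=[i_2,j_2]$ (ordered by $i_1\le i_2$) can share at most $q-1$ common edges. Indeed, if their intersection has length $j_1-i_2+1\ge q$, then the periodicity relations $a_k=a_{k+q}$ given by $P_1$ on $[i_1,j_1-q]$ and by $P_2$ on $[i_2,j_2-q]$ cover all of $[i_1,\max(j_1,j_2)-q]$ because $j_1-q\ge i_2-1$. Hence $P_1\cup P_2$ is $q$-periodic, contradicting the maximality of both $P_1$ and $P_2$ unless $P_1=P_2$. In particular $j_1<j_2$, so the special segments $Q_1=[i_1+q,j_1-q]$ and $Q_2=[i_2+q,j_2-q]$ are naturally ordered.

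The second step is the direct computation. The edges strictly separating $Q_1$ and $Q_2$ form the segment $[j_1-q+1,\,i_2+q-1]$, whose length is
\[
(i_2+q-1)-(j_1-q+1)+1 \;=\; (i_2-j_1)+(2q-1).
\]
If $P_1\cap P_2=\emptyset$ then $i_2\ge j_1+1$, so this length is at least $2q\ge q+1$. If $P_1$ and $P_2$ overlap, step one gives $j_1-i_2+1\le q-1$, i.e.\ $i_2-j_1\ge 2-q$, and the separating segment has length at least $(2-q)+(2q-1)=q+1$. In both cases the bound $q+1$ holds and $Q_1,Q_2$ are disjoint.

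I do not anticipate any real obstacle; the only subtle point is making sure the overlap argument is stated in the right direction (one has to be careful that $P_2\not\subset P_1$, which follows from maximality, so that $j_1<j_2$ and the special segments appear in the same order as $P_1,P_2$). Once that is in place the separation length falls out of a one-line arithmetic computation.
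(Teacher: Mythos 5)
Your proof is correct and rests on the same key idea as the paper's, namely that two distinct maximal $q$-periodic segments cannot overlap in $\geq q$ edges, since otherwise their union would itself be $q$-periodic, contradicting maximality; the paper phrases this as a contrapositive to the claim while you state the overlap bound directly and then finish with an explicit arithmetic computation of the gap, but the substance is the same.
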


\begin{proof}
  Let $Q,Q'\subset A$ be two special segments, and $P\supset Q$ and $P'\supset Q'$ the corresponding maximal $q$-periodic segments.
If the claim fails, then $P\cap P'$ has length at least $q$. 
Thus, two edges of $Q\cup Q'$ at distance $q$ of each other either both lie in $P$ or in $P'$,
so  $P\cup P'$ is $q$-periodic.
\end{proof}

For any word $p$ of length $q$, and $r=n/q\in \frac1q\bbN$, it will be convenient to denote by $p^r$
the prefix of length $n$ of 
a sufficiently large power of $p$. If $r\in\bbN$, then $p^r$ coincides with the usual $r$-th power of $p$.
For each special segment $Q$ we denote by $r(Q)=\abs{Q}/q\in \frac1q\bbN$
 the exponent and by $p(Q)$ the period so that $Q$ is labelled by $p(Q)^{r(Q)}$ (if $r(Q)<1$,
$p(Q)$ should be defined using the maximal $q$-periodic subsegment $P$ containing $Q$).

The  
\emph{normal decomposition} of $A\subset D$ (with respect to $q$ and to the solution $\sigma$) is 
the decomposition $A=U_0.Q_1.U_1\dots Q_{n}.U_{n}$  defined by its special segments $Q_1,\dots,Q_{n}$.
Note that if the decomposition is non-trivial (\ie if $n\geq 1$), then 
$\abs{U_0}\geq q$ since the suffix of length $q$ of $U_0$ is $p(Q_1)$.
If $\phi\in\AutS{}$, and if $\sigma_{A'}=\phi(\sigma_A)$ (resp. if $\sigma_{A'}=\sigma_A\m$), 
then the normal decompositions match (up to changing the orientation).

\subsection{Special segments and concatenation}

Assume that $A,A',A''$ are oriented intervals of $D$ such that $A=A'.A''$.
Any special segment $Q'$ of $A'$ 
 (or $A''$) is clearly contained in a special segment $Q$ of $A$.
If $Q=Q'$, we say that $Q'$ is \emph{regular}, and \emph{exceptional} otherwise. 
We say that a special segment $Q$ of $A$ is \emph{regular} if it 
is contained in $A'$ or $A''$ and is a special segment in it.

Let $A'=U'_0.Q'_1.U'_1\dots Q'_{n'}.U'_{n'}$ and $A''=U''_0.Q''_1.U''_1\dots Q''_{n''}.U''_{n''}$
be the normal decomposition of $A'$ and $A''$.
If $Q'$ (resp. $Q''$) is an exceptional segment of $A'$ (resp. $A''$), then $Q'=Q'_{n'}$ (resp $Q''=Q''_1$)
and by maximality of special segments, 
 $\abs{U'_{n'}}=q$ (resp. $\abs{U''_0}=q$).

Let $Q$ be an exceptional special segment of $A$.
We distinguish several cases.
\begin{enumerate}
\item If $Q$ contains two special segments of $A'$ or $A''$, then $Q=Q'_{n'}. U'_{n'}.U''_0. Q''_0$ so
$r(Q)=r(Q'_{n'})+r(Q''_0)+2$.
\item 
 Otherwise, if $Q$ contains exactly one special segment of $A$ or $A'$, and if this special segment lies in $A'$,
then $Q\supset Q'_{n'}$ and $\abs{Q\cap A''}< q$ since otherwise, $Q\cap A''$ would contain a special segment of $A''$.
Thus, $r(Q)< r(Q'_{n'})+2$.
\item Symmetrically, if $Q$ contains exactly one special segment, lying in $A''$, 
then $Q\supset Q''_{0}$ and $r(Q)< r(Q''_{0})+2$.
\item If $Q$ contains no special segment of $A'$ or $A''$, then $\abs{Q\cap A'}<q$ and $\abs{Q\cap A''}<q$ 
so $r(Q)< 2$.
\end{enumerate}
There can be at most $2$ exceptional segments in $A$ because two special segments are at least
at distance $q+1$ apart. It may happen that there are two exceptional segments:
for $q=5$, consider the word $p=ababa$, $A'$ labelled by $cpaba$ and $A''$ by $babapc$ so that $A'$ and $A''$ contain
no special segment; then $A'.A''$ is labelled by $cp^2bapc=cpabp^2c$ and has two special segments of length $0$.

\subsection{Proof of Bulitko's Lemma}

We are now ready for the proof of Proposition \ref{prop_Bulitko}.
Once normal forms have been adapted to twisted equations, the proof is identical to the one given in \cite{DGH_pspace}.
We give it here for the reader's convenience.

\begin{proof}

Start with a shortest solution $\sigma$ of $\Sigma$ with presumably high exponent of periodicity,
\ie containing high powers of a word of length $q$.
For each special segment $Q$, recall that $p(Q)$ denotes the word labelled by $Q$,
and $r(Q)=\abs{Q}/q\in \frac 1q \bbN$.
We also define $t(Q)\in \bbN$ and $u(Q)\in\{0,\frac 1q,\dots,\frac{q-1}q\}$
as the integer and fractional part of $r(Q)$ so that $r(Q)=t(Q)+u(Q)$.

The structure of the proof is as follows.
We view the $t(Q)$'s as variables satisfying a system of integer equations $\Lambda$.
Each solution of $\Lambda$ provides a new solution of $\Sigma$, and the fact that $\sigma$ is shortest
says that this solution of $\Lambda$ is minimal.
The system of equations happens to be equivalent to a system of equations whose 
 number of equations, variables, and coefficients are bounded independently of $\sigma$, 
thus giving a bound on the size of a minimal solution of $\Lambda$,
and thus on the exponents of special segments.

Since we assume that our band complex $\Sigma$ comes from the construction in section \ref{sec_eq2band},
each component $D_0$ of its domain $D$ contains at most one vertex in its interior,
and at most two bases of bands distinct from $D_0$.
We first forget about rational constraints.

Let $\calb$ be the set of bases of bands of $\Sigma$,
with a chosen orientation.
For each $A\in\calb$, let $A=U_{A,0}.Q_{A,1}.U_{A,1}\dots Q_{A,n_A}.U_{A,n_A}$ be its normal decomposition.
Given a set of values $(\tau_{A,i})\in \bbN^{n_A}$,
we define a new word by changing the integer part the exponents of the special segments of $A$:

$$ w_A(\tau_{A,1},\dots,\tau_{A,n}) = \sigma_{U_{A,0}}. \left(   p(Q_{A,1})^{\tau_{A,1}+u(Q_{A,1})} \right)  .\sigma_{U_{A,1}}\dots 
  \left(  p(Q_{A,1})^{\tau_{A,n_A}+u(Q_{A,n_A})} \right) .\sigma_{U_{A,n_A}}$$

By definition, $w_A(t(Q_{A,1}),\dots,t(Q_{A,n_A}))=\sigma_A$.

We now define a system of equations $\Lambda$ with unknowns $(\tau_{A,k})\in \oplus_{A\in \calb} \bbN^{n_A}$.
By construction, $\tau_{A,k}=t(Q_{A,k})$ will be a solution of $\Lambda$.
For each band with bases $A_0,A_1$ with twisting morphism $\phi$, we have $\sigma_{A_1}=\phi(\sigma_{A_0})$,
$n_{A_1}=n_{A_0}$, and we add the equations 
$\tau_{A_1,k}=\tau_{A_2,k}$ for $k\in\{1,\dots, n_{A_1}\}$.
For each connected component $A$ of $D$ containing a vertex distinct from its endpoints,
decomposing $A$ into $A'.A''$, 
we add to $\Lambda$ an equation $\tau_{A',k}=\tau_{A,k}$ for each regular special segment of $A'$,
and an equation $\tau_{A'',n_{A''}-k}=\tau_{A,n_{A}-k}$ for each regular special segment of $A''$.

If $Q$ is a exceptional segment of $A$ containing two exceptional segments $Q'\subset A'$
and $Q''\subset A''$, then $r(Q)=r(Q')+r(Q'')+2$,
so either $t(Q)=t(Q')+t(Q'')+2$
or $t(Q)=t(Q')+t(Q'')+3$; we add to $\Lambda$ the corresponding equation $\tau_{A,i}=\tau_{A',n_{A'}}+\tau_{A'',1}+2$
or $\tau_{A,n_x}=\tau_{A',n_{A'}}+\tau_{A'',1}+3$.

If $Q$ contains exactly one special segment $Q'$, lying in $A'$, 
then   $r(Q)<r(Q')+2$,
so $t(Q)=t(Q')+\eps$ for some $\eps\in\{0,1,2\}$; we add to $\Lambda$ the equation $\tau_{A,k}=\tau_{A',n_{A'}}+\eps$
where $k$ is the index corresponding to $Q$ in $A$.
We proceed symmetrically if $Q$ contains exactly one special segment in $A''$.

If $Q$ contains no special segment of $A'$ or $A''$,
then $r(Q)<2$ so $t(Q)=\eps$ for some $\eps\in\{0,1\}$, and we add to $\Lambda$ the equation
$\tau_{A,k}=\eps$ where $k$ is the index corresponding to $Q$ in $A$.

This defines $\Lambda$. This system has a solution, namely $\tau_{A,k}=t(Q_{A,k})$.
Moreover, if $(\tau_{A,k})$ is any solution of $\Lambda$, then 
 $w_A(\tau_{A,1},\dots,\tau_{A,n_A})$ defines a solution of $\Sigma$.
In particular, if $t(Q_{A,k})$ is not a minimal solution of $\Lambda$,
meaning that there exists a solution of $\Lambda$ with $\tau_{A,k}\leq t(Q_{A,k})$
for all $A,k$, and some inequality is strict,
then $\sigma$ is not the shortest solution.

The number of equations and of unknowns of $\Lambda$ is not bounded a priori.
However, there are at most $2\#\pi_0(D)$ equations which are not equalities of the type $\tau_{A,k}=\tau_{A',k'}$.
By substitution, one can get rid of these equalities, and we are left with a system $\Lambda'$ of
at most $2\#\pi_0(D)$ equations, with coefficients in $\{0,\dots,3\}$, and with $l\leq 6\#\pi_0(D)$ unknowns.
Note that the bounds are independent of $\sigma$ and of $q$.

Knowing only the band complex $\Sigma$ (and not the solution $\sigma$), 
one can list all possible systems $\Lambda'$,
decide which of them have a solution in $\bbN^l$, 
list all minimal solutions of each of them (see for instance \cite[Th 17.1]{Schrijver}) 
and compute the maximal value $M$ of any $\tau_{A,k}$ in any such minimal solution.
The value $M$ is then an upper bound on the exponent of periodicity of
a shortest solution $\sigma$ of $\Sigma$.

In presence of rational constraints, represented by $\rho:\fmip\ra \calm$, 
we first compute $\alpha,\beta\in\bbN\setminus\{0\}$ such that for all $m_1,m_2,m_3\in\calm$ and all $\tau\in\bbN$
$m_1m_2^\beta m_3=m_1m_2^{\alpha\tau+\beta}m_3$ (it is an easy standard fact that such $\alpha,\beta$ exist and are computable).
In particular, $u_1u_2^\beta u_3\in\calr_e$ if and only if $u_1u_2^{\alpha\tau+\beta}u_3\in \calr_e$.
Then we modify the system $\Lambda$ as follows: for each $A\in\calb$ and $k\in\{1,\dots,n_A\}$ such that 
$t(Q_{A,k})\leq \beta$, replace the variable $\tau_{A,k}$ by the constant $t(Q_{A,k})$;
for each $t(Q_{A,k})> \beta$,  write by Euclidean division $t(Q_{A,k})=\beta+\alpha t'_{A,k}+ \eps_{A,k}$ for some $\eps_{A,k}<\alpha$ and $t'_{A,k}\in\bbN$;
then replace the unknown $\tau_{A,k}$ by $\tau'_{A,k}$ using the identity  $\tau_{A,k}=\alpha\tau'_{A,k}+\beta+\eps_{A,k}$.
We get a linear system of integer equations with unknowns $\tau'_{A,k}$, with at most 
$2\#\pi_0(D)$ equations, with coefficients bounded in terms of $\alpha$ and $\beta$, and with $l\leq 6\#\pi_0(D)$ unknowns.
As above, one can compute a bound $M'$ on all minimal solutions of all such systems which have a solution.
The same argument as above shows that  $\alpha M'+\beta$ is an upper bound the exponent of periodicity of
the shortest solution of $\Sigma$, including rational constraints.
\end{proof}

\begin{rem*}
Using estimates on the size of minimal solutions of linear systems of Diophantine equations,
and being more careful in the analysis, one can get an explicit bound of $M=2^{O(\#\pi_0(D))+n\log n}$ where $n$ is the size of
an automaton accepting the languages of $\ul\calr$, see \cite{DGH_pspace}.
\end{rem*}

\section{Prelaminations}\label{sec_prelamin}

\subsection{Definition} \label{subsec_prelamination}

\begin{dfn}
A \emph{prelamination} $\call$ on a band complex $\Sigma$ consists in
 \begin{itemize}
 \item  
 A finite set $V(\call)\subset D$ containing all  vertices of $\Sigma$.
 Points of $V(\call)$ are called \emph{$\call$-subdivision points}. 
 \item 
 For each band $B=[a,b]\times [0,1]$, a set $L_B(\call)\subset [a,b]\times [0,1]$ 
of the form $\{c_1,\dots, c_r\}\times [0,1]$. Each segment $\{c_i\}\times[0,1]$ is called a \emph{leaf segment}.
The endpoints of each leaf segment are asked to lie in $V(\call)$: $f_{B,\eps}(c_i,\eps)\in V(\call)$ for $\eps=0,1$.
We also require that both segments $\{a\}\times [0,1]$ and $\{b\}\times [0,1]$ lie in $L_B(\call)$.
 \end{itemize}
\end{dfn}

See Figure \ref{fig_prelamination} for an illustration.

 \begin{rem}
In this definition, the precise value of the attaching maps $f_{B,\eps}$ are not important, except at endpoints of leaf segments.
Moreover, the precise position of the $\call$-subdivision points in $D$ is unimportant, only the induced ordering matters.
Thus, a prelamination of a band complex is really determined by the ordering of the elements of $V(\call)$ in each component of $D$,
and for each band, by a bijection preserving or reversing the ordering, 
between a subset of $V(\call)\cap J_0$ and a subset of $V(\call)\cap J_1$.
\end{rem}

We also view leaf segments of $\call$ as subsegments of the topological realisation $\abs{\Sigma}$ of $\Sigma$.
A \emph{leaf} of $\call$ is a connected component of the union of the leaf segments
 in $\abs{\Sigma}$.
A \emph{leaf path} is a concatenation of
oriented leaf segments (which defines a path contained in a leaf).

A leaf segment $\{c\}\times [0,1]\subset [a,b]\times[0,1]$ is \emph{singular} if $c\in\{a,b\}$.
A leaf path is \emph{regular} if it consists of non-singular leaf segments.
A leaf is \emph{singular} if it contains a singular leaf segment, \ie if it contains a vertex of $\Sigma$.

An \emph{elementary segment} of $\call$ is a segment of $D$ joining two adjacent $\call$-subdivision points.
The set of oriented elementary segments of $\call$ is denoted by $E(\call)$.
We say that $J\subset D$ is \emph{adapted to $\call$} if it is a union of elementary segments of $\call$.

A point $x\in V(\call)$ is \emph{not an orphan} if 
for every base $J_\eps$ of any band $B$, with $x\in J_\eps$, there is a leaf segment of $B$ having $x$ as an endpoint:
$x\in f_{B,\eps}(L_B(\call))$. On Figure \ref{fig_prelamination}, orphans are represented as thick dots.
The prelamination is \emph{complete} if no $\call$-subdivision point is an \emph{orphan}
(in this case, the prelamination could be called a genuine \emph{lamination}).

If $B=[a,b]\times [0,1]$ is a band with attaching maps $f_{B,0}$, $f_{B,1}$, 
we define the \emph{opposite} $B\m$ of $B$ as the band $[a,b]\times[0,1]$ with attaching maps $f_{B\m,i}=f_{B,1-i}$ for $i=0,1$,
with twisting morphism $\phi_{B\m}=\phi_{B}\m$.
An \emph{oriented band} is a band of $\Sigma$ or its opposite.
The leaf segments of $B$ define leaf segments of $B\m$.
We orient each leaf segment $\{x\}\times [0,1]$ of the oriented band $B$ from $(x,0)$ to $(x,1)$
so that leaf segments of $B$ and $B\m$ have the opposite orientation.
If $B$ is an oriented band with bases $J_0,J_1$, we denote by $\dom B=J_0$ its initial base,  and we call it its \emph{domain}.

A \emph{$\Sigma$-word} is a word on the alphabet of oriented bands of $\Sigma$.
The twisting morphism of the $\Sigma$-word $w=B_1\dots B_n$ is $\phi_w=\phi_{B_1}\circ\dots\circ\phi_{B_n}$.
To each leaf path naturally corresponds a $\Sigma$-word corresponding to the oriented bands crossed by this path.
We write $[x,w)$ for the leaf path starting at the point $x$  and whose $\Sigma$-word is $w$ (this leaf path is unique, 
but may fail to exist for some $x$ and $w$).
The \emph{holonomy} of $w$ is the map $h_w$ defined on a subset of $V(\call)$
and mapping $x$ to the terminal endpoint of $[x,w)$ when $[x,w)$ exists.
We write paths from right to left  so that $h_{ww'}=h_{w}\circ h_{w'}$ (thus, the path $ww'$ starts by $w'$ and ends by $w$).
By extension, if $[a,b]\subset D$ is such that $h_w$ maps $a$ to $a'$ and $b$ to $b'$, we say that $h_w$ maps $[a,b]$ to $[a',b']$.
When such $w$ exists, we say that $[a,b]$ and $[a',b']$ are \emph{equivalent under the holonomy} of $\call$.

\subsection{M\"obius strips}\label{sec_mobius}

A \emph{M\"obius strip} $S$ in a prelaminated band complex $(\Sigma,\call)$ consists of
a reduced band word $w$ and an interval $I$ adapted to $\call$ such that $h_w$ maps $I$ to itself reversing the orientation,
and $h_{w_1}(I)\neq h_{w_2}(I)$ for any two distinct non-empty  suffixes $w_1,w_2$ of $w$. 
Note that $(\Sigma,\call)$ has only finitely many M\"obius strips.
A \emph{core leaf} of $S$ is a leaf path $[x,w)$ with $h_w(x)=x$.

\begin{rem}\label{rem_strip}
  Note that if the twisting morphism $\phi_w$ of $w$ is trivial, or more specifically, if we
are solving untwisted equations in a free group, the presence of a M\"obius strip prevents the existence of a reduced
solution of $\Sigma$. Thus, in the context of equations in free groups, prelaminations containing  M\"obius strips can be discarded.
\end{rem}

If $\call$ is complete and all its M\"obius strips have a core leaf, we say that $\call$ is \emph{M\"obius-complete}.
A leaf is \emph{pseudo-singular} if it is singular or if it contains the core leaf of a M\"obius strip.
Most prelaminations we will use will consist of pseudo-singular leaves.

If $S$ has no core leaf, \emph{adding a core leaf to $S$} means extending $\call$ by adding at most $|w|$ new leaf segments $l_1,\dots,l_n$
 so that $S$ has a core leaf $l$ in the obtained prelamination, with $l\supset l_1\cup \dots\cup l_n$.

\subsection{Rational constraints on the prelamination}\label{sec_rational_prelamin}

Consider a band complex $\Sigma$ with rational constraints in standard form given
by $\rho:\fmip\ra \calm$ and $\ul m\in \calm^{E(\Sigma)}$.
Let $\call$ be a prelamination on $\Sigma$.
Recall that $E(\call)$ is the set of oriented elementary segments of $\call$,  
\ie of subsegments of $D$ joining adjacent $\call$-subdivision points. 

A set of rational constraints on the prelamination $\call$
is a tuple $m'_e\in\calm^{E(\call)}$ such that $m'_{\ol e}=\ol{m'_e}$
and which refines $\ul m$ in the following sense.
For a segment $J$ written as a concatenation of elementary segments of $\call$,  $J=e_1\dots e_n$,
define $m'_J=m'_{e_1}.\dots m'_{e_n}\in \calm$.
Then $\ul m'$ refines $\ul m$ if for all $J$ oriented elementary segment of $\Sigma$, $m_J=m'_J$.
This compatibility allows to blur the distinction between $\ul m$ and $\ul m'$, and we will use the same
notation $\ul m$ for both.

\subsection{Prelaminations and rational constraints induced by a solution}\label{sec_induced}

Consider a band complex  $\Sigma$ with $\rho:\fmip\ra\calm$ 
and $\ul m=(m_e)_{e\in E(\Sigma)}$ a set of rational constraints standard form.
Let $\sigma$ be a solution of $(\Sigma,\ul m)$.
The labelling $\sigma$ allows to subdivide $D$ into subsegments, each of which is labelled
by a letter in $\Spm$. 
This defines a complete prelamination $\call'$ whose subdivision points are the set of
vertices of the subdivided $D$:
since $\sigma$ is a solution, each band $B$ defines a pairing between all the vertices contained in its two bases.
This pairing can be realised geometrically by a finite set of leaf segments of the form $\{c\}\times [0,1]\subset[a,b]\times[0,1]$
by changing the two attaching maps of $B$ relatively to the endpoints of the bases.

Let $\call_\infty(\sigma)\subset \call'$ be the complete prelamination consisting of the pseudo-singular leaves of $\call'$.
We call $\call_\infty(\sigma)$ the \emph{complete prelamination induced by $\sigma$}.
This is a M\"obius-complete prelamination because the group $\Phi$ of twisting morphisms has no inversion.

For any segment $J$ adapted to some prelamination $\call\subset\call_\infty(\sigma)$,
one can define $m_J=\rho(\sigma_J)\in\calm$, which defines a rational constraint on $\call$.
This leads to the following definition:

\begin{dfn}
  A prelamination $\call$ with rational constraints $\calm$ is \emph{induced by $\sigma$}
if $\call\subset \call_\infty(\sigma)$, all leaves of $\call$ are pseudo-singular,
 and if  $m_e=\rho(\sigma_e)$ for all $e\in E(\call)$.
\end{dfn}

\begin{rem}\label{rem_contrainte_holonomie}
  The rational constraints induced by a solution are \emph{invariant under the holonomy} in the following sense:
consider a $\Sigma$-word $w$ 
with twisting morphism $\phi_w$ whose holonomy maps some $e\in E(\call)$ to $e'\in E(\call)$,
then $m_{e'}=\phi_w(m_e)$.
\end{rem}

Given a prelamination $\call$ of $\Sigma$, we define the solutions of $(\Sigma,\call)$.
A \emph{labelling} $\sigma$ of $(\Sigma,\call)$ is a labelling of each $e\in E(\call)$ by a word
$\sigma_e\in\fmip$ so that $\sigma_{\ol e}=\ol{\sigma_e}$.
As in section \ref{sec_solution_BC}, this allows to define $\sigma_J\in \fmip$ for all oriented segment $J$ adapted to $\call$.

\begin{dfn}
A solution of $(\Sigma,\call)$ is a labelling $\sigma$ of $(\Sigma,\call)$ such that
for all $\Sigma$-word $w$ and all oriented segments $J,J'$
adapted to $\call$ such that the holonomy of $w$ maps $J$ to $J'$, then $\sigma_{J'}=\phi_w(\sigma_J)$.

If there are rational constraints on $\call$, then we additionally require that 
$\rho(\sigma_e)=m_e$ for all $e\in E(\call)$.
\end{dfn}

The following observation is obvious.
\begin{lem}
  Let $\sigma$ be a solution of $\Sigma$ with rational constraints $\ul m$.
Let $(\call,\ul m')$ be a prelamination with rational constraints induced by $\sigma$.

Then $\sigma$ is a solution of $(\Sigma,\call)$ with rational constraints $\ul m'$.\qed
\end{lem}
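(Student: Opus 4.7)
The plan is to unpack the three defining conditions of a solution of $(\Sigma,\call)$ with rational constraints $\ul m'$ and verify each one from the given hypotheses.

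First, I would check that $\sigma$ restricts to a valid labelling of $(\Sigma,\call)$. Every $e\in E(\call)$ is a subsegment of $D$ adapted to $\Sigma$ (its endpoints lie in $V(\call)$, which contains the vertices of $\Sigma$, and elementary segments of $\call$ are unions of elementary segments of $\Sigma$), so $\sigma_e\in\fmip$ is well defined and non-empty; the involution compatibility $\sigma_{\ol e}=\ol{\sigma_e}$ is inherited from the fact that $\sigma$ was already a labelling of $\Sigma$.

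Next, I would verify the holonomy-invariance condition. By induction on the length of the $\Sigma$-word $w$, it suffices to treat a single oriented band $B=[a,b]\times[0,1]$ with bases $J_0,J_1$ and twisting morphism $\phi_B$. Suppose $J\subset\dom B$ is adapted to $\call$ and the holonomy of $B$ maps $J$ to $J'\subset J_1$. Since $\call\subset\call_\infty(\sigma)$, the endpoints of $J$ and of $J'$ are $\call_\infty(\sigma)$-subdivision points corresponding to endpoints of paired leaf segments of $B$ in the complete lamination induced by $\sigma$. Because $\sigma$ is a solution of $\Sigma$, one has $\sigma_{J_1}=\phi_B(\sigma_{J_0})$; restricting this equality to the paired subsegments $J,J'$ gives $\sigma_{J'}=\phi_B(\sigma_J)$. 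Composing along a general $\Sigma$-word $w=B_1\dots B_n$ yields $\sigma_{J'}=\phi_w(\sigma_J)$ as required.

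Finally, the rational-constraint condition $\rho(\sigma_e)=m'_e$ for every $e\in E(\call)$ is precisely the definition of $(\call,\ul m')$ being \emph{induced by} $\sigma$, so there is nothing to check. There is no real obstacle here: the lemma is a direct unpacking of definitions, with the only mildly non-trivial point being that the pairings used to define the holonomy of $\call$ are a subset of the pairings recorded by $\call_\infty(\sigma)$, and that these latter pairings automatically respect $\sigma$ because $\sigma$ solves $\Sigma$.
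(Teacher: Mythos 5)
The paper itself gives no proof of this lemma; it simply labels it as an obvious observation and attaches an immediate \verb|\qed|. Your proof is the right kind of argument: unpack the three defining conditions for a solution of $(\Sigma,\call)$ and check each from the hypotheses. The second and third checks are carried out correctly: the holonomy-invariance follows by reducing (by induction on the length of the $\Sigma$-word) to a single band, where the equality $\sigma_{J_1}=\phi_B(\sigma_{J_0})$ coming from $\sigma$ being a solution of $\Sigma$ restricts along the paired subsegments recorded in $\call_\infty(\sigma)$; and the rational-constraint condition $\rho(\sigma_e)=m'_e$ is literally the definition of the constraints induced by $\sigma$.

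One statement in your first paragraph is reversed, though it does not break the argument. You write that ``elementary segments of $\call$ are unions of elementary segments of $\Sigma$'' and conclude that each $e\in E(\call)$ is adapted to $\Sigma$. Since $V(\call)$ \emph{contains} all vertices of $\Sigma$, elementary segments of $\call$ \emph{refine} those of $\Sigma$: an elementary segment of $\Sigma$ is a union of elementary segments of $\call$, and an elementary segment of $\call$ is a subsegment (not a union) of one of $\Sigma$, hence in general not adapted to $\Sigma$. The correct reason $\sigma_e$ is a well-defined nonempty word is that $\call\subset\call_\infty(\sigma)$ forces $V(\call)\subset V(\call_\infty(\sigma))$, and the latter is precisely the letter-level subdivision of $D$ given by $\sigma$, so each $e\in E(\call)$ is a union of single-letter intervals. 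With this correction the rest of your proof goes through as written.
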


\begin{dfn}\label{dfn_extends}
  Consider $\call\subset \call'$ some prelaminations on $\Sigma$ with rational constraints $\ul m,\ul m'$.
We say that $\call'$ \emph{extends} $\call$ and we denote $\call\prec\call'$ if for all $e\in E(\call)$,
$m_e=m'_e$.

\end{dfn}

When $\call'$ extends $\call$, the set of solutions of $(\Sigma,\call')$ with constraints $\ul m'$ 
is  a subset of the set of solutions of $(\Sigma,\call)$ with constraints $\ul m$.

\section{Algorithms on prelaminations}\label{sec_algos}

We now present two algorithms that will be part of the main algorithm solving equations. 
One of them  enumerates prelaminations of a given a band complex. 
The other is given a prelamination, and checks several properties,  for instance to prove that it cannot be induced by a shortest solution.

\subsection{A first algorithm: the prelamination generator}\label{sec_generator}

We first consider a \emph{prelamination generator}:
this straightforward algorithm takes as input a band complex with rational constraints $\Sigma$ 
and enumerates a set of prelaminations, 
such that any prelamination with rational constraints induced by a solution of $\Sigma$
has an extension which is enumerated.

The set of produced prelaminations is organised in a locally finite rooted tree.
The prelamination at depth 0 is the \emph{root prelamination} whose leaf segments are the boundary segments of the bands.
By definition, the root prelamination is contained in any prelamination.
We define the rational constraints on the root prelamination as the rational constraints on $\Sigma$.
The children of a given prelamination $\call$ are the extensions of $\call$ produced by the following
extension process, consisting of the three following steps:

\paragraph{Step 1: extend leaves of the prelamination.} 
Let $\calo$ be the set of orphans of $\call$ (as defined in Section \ref{subsec_prelamination}).
Look at all the possible ways to add leaf segments to $\call$, so that 
every leaf segment added has an endpoint in $\calo$, and points in $\calo$ are no more orphans in the new prelamination.

\paragraph{Step 2: add cores of M\"obius strips.} 
Consider all the (finitely many) M\"obius strips of $(\Sigma,\call)$ having no core leaf.
Then look at all possible ways to add a core leaf to each of them (see section \ref{sec_mobius} for definitions).

\paragraph{Step 3: extend rational constraints.}
If $\call'$ is a prelamination constructed above, and if $\call'=\call$, discard $\call'$.
Given $\call'\supsetneq\call$ constructed above,
and a set of rational constraints $\ul m\in\calm^{E(\call)}$ on $\call$, 
consider all possible rational constraints $\ul m'\in \calm^{E(\call')}$
such that $(\call',\ul m')$ extends $(\call,\ul m)$ as in Definition \ref{dfn_extends}, meaning that
for all oriented elementary segment $e\in E(\call)$ written as a concatenation
$e=e'_1\dots e'_n$ of elements of $E(\call')$, $m_e=m'_{e_1}\dots m'_{e_n}$.
Note that it might happen that for some $\call'$, there exists no set of rational constraints $\ul m'$.
\\

\begin{rem*}
The second step would actually be unnecessary if there was no twisting as no reduced solution of $\Sigma$
can induce a prelamination containing a M\"obius strip whose twisting morphism is trivial. 

Step 2 should add a core leaf to every M\"obius strip of $\call$, but it is allowed to create
new  M\"obius strips having no core leaf.

If all rational constraints are given by one-letter constants, then the third step consists in ensuring that
one does not subdivide the elementary segments corresponding to constants.
\end{rem*}

If $\call$ is M\"obius-complete 
(\ie has no orphan and all its M\"obius strips have a core leaf),
the only extension of $\call$ produced by the extension process is $\call$, and is discarded.
In particular, $\call$ is a leaf of the rooted tree of prelaminations.
If $\call$ is not M\"obius-complete, all the prelaminations produced properly contain $\call$.
Still, it may happen that no extension of $\call$ is produced
if for all $\call'\supsetneq\call$ obtained after step 2,
no extension of the rational constraints to $\call'$ exist.
Of course, in this case, $\call$ cannot be induced by a solution of $\Sigma$.

Recall that all band complexes and prelaminations come with a set of rational constraints which we do not mention explicitly.

\begin{lem}\label{lem_extension1}
Consider $\sigma$ a solution of $\Sigma$.
Assume that $\sigma$ induces some prelamination $\call$ produced by the prelamination generator.

Then either $\call=\call_\infty(\sigma)$,
or there exists some extension $\call'$ of $\call$ produced by the prelamination generator which is induced by $\sigma$.
\end{lem}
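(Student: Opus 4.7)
The plan is to construct $\call'$ as a prelamination contained in $\call_\infty(\sigma)$ by running one iteration of the extension process, using $\call_\infty(\sigma)$ itself as a reference template dictating all choices.

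First I will show that the hypothesis $\call\ne\call_\infty(\sigma)$ forces $\call$ not to be M\"obius-complete. If $\call$ were M\"obius-complete, I would argue $\call=\call_\infty(\sigma)$. Since $\call\subset\call_\infty(\sigma)$, it suffices to show every leaf segment of $\call_\infty(\sigma)$ already lies in $\call$. The key observation is that the leaf segment of a band $B$ attached to a given point $v$ of a base is uniquely determined as $\{f_{B,\eps}\m(v)\}\times[0,1]$, because the attaching maps are injective. Hence completeness of $\call$ forces any $v\in V(\call)$ lying in a base of $B$ to carry the same leaf segment in $L_B(\call)$ as in $L_B(\call_\infty(\sigma))$. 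Iterating along the finite graph structure of a leaf of $\call_\infty(\sigma)$ meeting $V(\call)$, the entire leaf lies in $\call$. Singular leaves contain a vertex of $\Sigma$, which lies in $V(\call)$; the remaining pseudo-singular leaves contain the core of a M\"obius strip of $(\Sigma,\call_\infty(\sigma))$, and I transfer such a strip to $(\Sigma,\call)$ by enlarging its interval to a $\call$-adapted one and invoking M\"obius-completeness of $\call$ to produce the core inside $\call$.

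Now assume $\call$ is not M\"obius-complete, i.e.\ has an orphan or a M\"obius strip without core. I run the three steps of the extension process, always picking the option that keeps us inside $\call_\infty(\sigma)$. In Step~1, for each orphan $v$ of $\call$ lying in a base $J_\eps$ of a band $B$ with no attached leaf segment, I add the unique leaf segment $\{f_{B,\eps}\m(v)\}\times[0,1]$, which exists in $L_B(\call_\infty(\sigma))$ by completeness, together with its other endpoint as a new subdivision point. This produces $\call_1\subset\call_\infty(\sigma)$ in which every orphan of $\call$ has been resolved. In Step~2, every M\"obius strip of $(\Sigma,\call_1)$ without core is still a M\"obius strip of $(\Sigma,\call_\infty(\sigma))$ (adapted intervals of a coarser prelamination remain adapted; after a possible refinement of the interval the minimality condition is recovered), so by M\"obius-completeness of $\call_\infty(\sigma)$ a core leaf exists there and is added, yielding $\call_2\subset\call_\infty(\sigma)$. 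In Step~3, I define rational constraints by $m'_e:=\rho(\sigma_e)$ for each $e\in E(\call_2)$; since $\rho$ is a monoid morphism and the constraints on $\call$ were already $m_e=\rho(\sigma_e)$, this refines the constraints on $\call$ in the sense of Definition~\ref{dfn_extends}.

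Call the result $\call'$. By construction $\call\subsetneq\call'\subset\call_\infty(\sigma)$, and $\call'$ is among the extensions enumerated by the generator. A direct check shows $\call'$ is induced by $\sigma$: the rational constraints match by definition, and each leaf of $\call'$ is pseudo-singular because it either extends a pseudo-singular leaf of $\call$ (when the new leaf segments meet $\call$-leaves through endpoints already in $V(\call)$), contains a vertex of $\Sigma$ (when an orphan resolved in Step~1 is itself such a vertex), or contains a freshly added core of a M\"obius strip. The main obstacle will be the treatment of M\"obius strips in both parts: the notion of M\"obius strip depends on the set of adapted intervals, hence on the prelamination, so strips must be carefully transferred between $\call$ and $\call_\infty(\sigma)$, and their cores tracked accordingly.
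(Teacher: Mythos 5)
Your strategy — running the three steps of the extension process while choosing, at each step, the option dictated by $\call_\infty(\sigma)$ — matches the paper's. The paper's proof is essentially your second half, in terser form; it does not explicitly rule out the case where $\call$ is M\"obius-complete yet $\call\subsetneq\call_\infty(\sigma)$, and your first paragraph attempts exactly that. This is also where your argument has a gap.

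The step ``enlarging its interval to a $\call$-adapted one'' is not justified. If $(w,I)$ is a M\"obius strip of $\call_\infty(\sigma)$, an arbitrary $\call$-adapted superinterval $I'\supset I$ need not satisfy $h_w(I')=I'$, so $(w,I')$ need not be a M\"obius strip of $\call$, and you cannot invoke M\"obius-completeness of $\call$ yet. The right choice is $I'=\dom(h_w)\cap h_w(\dom(h_w))$: since relative to $\sigma$ the holonomy $h_w$ is an orientation-reversing, length-preserving map fixing the core point $m$, one checks $h_w(I')=I'$ and $I\subset I'$. The endpoints of $\dom(h_w)$ are endpoints of $D$ or holonomy-pullbacks of endpoints of bases of bands, hence lie on singular leaves; because $\call$ contains the root prelamination and is complete, every singular leaf of $\call_\infty(\sigma)$ already lies in $\call$, so these endpoints and their $h_w$-images lie in $V(\call)$. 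Therefore $I'$ is $\call$-adapted, $(w,I')$ (with $w$ taken minimal for $I'$) is a M\"obius strip of $\call$, and M\"obius-completeness of $\call$ forces $m\in V(\call)$, contradicting that the pseudo-singular leaf through $m$ misses $V(\call)$. With this patch your proof is complete and, on this particular point, more explicit than the paper's.
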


\begin{proof}
Assume that $\call\subsetneq\call_\infty(\sigma)$.
If $\call$ is not complete, then step 1 
will clearly produce an extension of $\call_1\supsetneq\call$ contained in $\call_\infty(\sigma)$.
Otherwise, step 1 will produce only $\call_1=\call$.
Since 
all M\"obius strips of  $\call_\infty(\sigma)$ contain a core leaf,
if $\call_1$ contains a M\"obius strip without core leaf,
then step 2 produces some $\call_2\supsetneq\call_1$ contained in $\call_\infty(\sigma)$.
The rational constraints induced by $\sigma$ on $\call_2$ will be produced in step 3.
The lemma follows.
\end{proof}

\begin{cor}\label{cor_extension}
For any  solution $\sigma$ of $\Sigma$, $\call_\infty(\sigma)$ will be produced by the prelamination generator.

Consider some prelamination $\call$ which is not M\"obius-complete, and  such that 
none of its extensions constructed by the prelamination generator
can be induced by a shortest solution of $\Sigma$. Then $\call$ cannot be induced by a shortest solution of $\Sigma$.
\end{cor}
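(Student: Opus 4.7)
The plan is to derive both parts of the corollary as essentially immediate iterated consequences of Lemma \ref{lem_extension1}.

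For the first assertion, I would start by observing that the root prelamination $\call_0$ is induced by any solution $\sigma$: its leaf segments are the boundary segments of the bands (hence singular, so pseudo-singular), its rational constraints are by definition those of $\Sigma$, and $E(\call_0) = E(\Sigma)$, so the condition $m_e = \rho(\sigma_e)$ holds for all $e \in E(\call_0)$ simply because $\sigma$ is a solution of $\Sigma$. Then I apply Lemma \ref{lem_extension1} inductively: while the current prelamination $\call_i$ (induced by $\sigma$ and produced by the generator) is properly contained in $\call_\infty(\sigma)$, the lemma furnishes a strict extension $\call_{i+1}$ produced by the generator and still induced by $\sigma$. Since $V(\call_\infty(\sigma))$ is finite and $V(\call_i) \subset V(\call_\infty(\sigma))$, and since the number of possible leaf segments with endpoints in this fixed finite set is bounded, the strictly increasing chain $\call_0 \subsetneq \call_1 \subsetneq \dots$ must stabilize; by Lemma \ref{lem_extension1} it can only stabilize at $\call_\infty(\sigma)$ itself.

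For the second assertion, I would argue by contradiction. Suppose $\call$ were induced by some shortest solution $\sigma$. By definition of ``induced'', $\call \subset \call_\infty(\sigma)$, and by definition $\call_\infty(\sigma)$ is M\"obius-complete. Since $\call$ is assumed not to be M\"obius-complete, one has the strict inclusion $\call \subsetneq \call_\infty(\sigma)$. Lemma \ref{lem_extension1} then produces some extension $\call'$ of $\call$ generated by the prelamination generator and still induced by $\sigma$. But then $\call'$ is induced by a shortest solution of $\Sigma$, contradicting the hypothesis that none of the extensions of $\call$ produced by the generator has this property.

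I do not expect any real obstacle here: the entire content of the corollary is the observation that Lemma \ref{lem_extension1} can be iterated up the rooted tree of prelaminations until it terminates, together with the remark that $\call_\infty(\sigma)$ is itself M\"obius-complete so that the iteration's only possible terminal state is $\call_\infty(\sigma)$. The only point requiring a little care is checking that the root prelamination is indeed induced by $\sigma$ in the sense of Section \ref{sec_induced} (including its rational constraints), and that the chain of extensions produced by the generator must be strict and therefore finite; both are immediate from the definitions given in Section \ref{sec_generator}.
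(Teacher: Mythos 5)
Your proof is correct and follows essentially the same route as the paper's (very terse) proof: apply Lemma \ref{lem_extension1} inductively starting from the root prelamination, use the finiteness of $\call_\infty(\sigma)$ to guarantee termination of the strictly increasing chain, and argue the second assertion by contradiction via Lemma \ref{lem_extension1}. The extra care you take in checking that the root prelamination is indeed induced by $\sigma$ (it lies in $\call_\infty(\sigma)$ because boundary segments of bands lie in singular, hence pseudo-singular, leaves, and its rational constraints agree with those of $\Sigma$) is implicit in the paper but correctly filled in here.
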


\begin{proof}
The first statement clearly follows from the lemma since $\call_\infty(\sigma)$ cannot contain an infinite chain
of prelaminations strictly contained in each other.
The second statement is clear.  
\end{proof}

\subsection{A second algorithm: the prelamination analyser}\label{subsec_analyser}

We are going to describe a machine which analyses a given prelamination.
We call it the \emph{analyser}.

This algorithm takes as input a band complex $\Sigma$ together with a
prelamination $\call$ (with rational constraints). 
 It tries to reject prelaminations for which it can prove
that it  cannot be induced by a shortest solution.

More precisely, if the input prelamination $\call$ is M\"obius-complete, 
the analyser determines easily whether it is 
induced by some solution or not. 
The analyser the stops and outputs ``Solution found'' together with a solution, or ``Reject'' accordingly.

If the input prelamination $\call$ is not M\"obius-complete, it looks for some certificate 
 ensuring that no shortest solution can induce $\call$. If it can find one, it says ``Reject'' and stops.
Otherwise, it may fail to stop or say ``I don't know''.

Actually, 
we could make sure that it actually always terminates because given $\call$, there are only finitely 
certificates to try. But we won't need this fact.

The overall structure of the prelamination analyser is as follows.\\

\begin{algorithm}[H]
\dontprintsemicolon
\refstepcounter{thm}\label{algo_analyser}
\caption{\textbf{Algorithm \thethm.} Prelamination Analyser}
\KwIn{a prelamination with rational constraints $\call$ on a band complex $\Sigma$.}
\KwOut{``Solution found'', ``Reject'' or ``I don't know''.}
\BlankLine

  \begin{itemize}
  \item If $\call$ is a M\"obius-complete prelamination, the algorithm stops and its output is either
    \begin{itemize*}
    \item ``\emph{Solution Found}'': the analyser can produce a solution inducing $\call$.
    \item ``\emph{Reject}'': there exists no solution inducing $\call$, and the analyser can produce a certificate proving it.
    \end{itemize*}
\item If $\call$ is not M\"obius-complete, then
the algorithm looks for certificates proving that $\call$ cannot be induced by a shortest solution of $\Sigma$. 
The algorithm tries four kinds of reasons to reject $\call$: 
\begin{itemize*}
\item rejection for incompatibility of rational constraints (see subsection \ref{sec_incompatibility}),

\item rejection for non-existence of invariant measure  (see subsection \ref{sec_transverse}),

\item rejection for too large exponent of periodicity (see subsection \ref{sec_reject_bulitko}),

\item and rejection by a shortening sequence of moves (see subsection \ref{sec_shortening}).

\end{itemize*}
This procedure may stop or not, and its output can be
  \begin{itemize*}
  \item ``\emph{Reject}'': the algorithm produces a certificate proving that 
$\call$ cannot be induced by a shortest solution of $\Sigma$. 
  \item ``\emph{I don't know}'': failure to find such a certificate.
  \end{itemize*}
\end{itemize}
\end{algorithm}

We now describe more precisely the work of the prelamination analyser, describing the four types of rejection.

\subsubsection{Incompatibility of rational constraints}\label{sec_incompatibility}

The set of rational constraints $\ul m$ on a prelamination induced by a solution of $\Sigma$
is invariant under the holonomy (see Remark \ref{rem_contrainte_holonomie}):
for all band $B$ whose holonomy maps some $e\in E(\call)$ to $e'\in E(\call)$, $m_{e'}=\phi_B(m_e)$.
Thus, one can obviously reject rational constraints where this does not hold.  
 For example, in  this way one rejects   prelaminations such that the constants do not match under the holonomy.

There is another source of incompatibility coming from the twisting morphisms.
Assume that $w$ is a $\Sigma$-word whose holonomy fixes some segment $J$ adapted to $\call$,
and whose twisting morphism $\phi_w$ is non-trivial.
Then for any solution $\sigma$ of $\Sigma$ inducing $\call$, $\sigma_J$ lies in $\Fix \phi_w$.
By definition of standard form of rational constraints (section \ref{sec_standard}),
$\Fix \phi_w$ is represented by $\rho:\fmi\ra \calm$, so $\sigma_J\in\Fix\phi_w$
if and only $\rho(\sigma_J)\in F$ for the subset $\rho(\Fix\phi_w)\subset\calm$ which has already been computed.
Thus, one can reject $\call$ if $m_J\notin \rho(\Fix\phi_w)$ for some $w$  whose holonomy fixes $J$.
Even though the set of $\Sigma$-words whose holonomy fixes $J$ may be infinite,
this fact can be algorithmically checked.
Indeed, the fact that $m_J\in \rho(\Fix\phi_w)$ needs to be checked only for a generating set 
of the group of $\Sigma$-words $w$ fixing $J$, and such a generating set is easily computed.

We say that $\call$ shows an \emph{incompatibility of rational constraints} when one can reject $\call$ for one of the two possible reasons above.
In the prelamination analyser, rejection for incompatibility of rational constraints simply
consists in checking this algorithmically.

When $\call$ is M\"obius-complete, this is the only obstruction to the existence a solution of $\Sigma$ inducing $\call$ and $\ul m$:

\begin{lem}\label{lem_check_cplete}
  Let $\call$ be a M\"obius-complete prelamination of $\Sigma$.
Then $\Sigma$ has a solution inducing $\call$ with set of rational constraints $\ul m$ if and only if
it shows no incompatibility of rational constraints.
\end{lem}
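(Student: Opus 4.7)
The forward direction is immediate from Remark~\ref{rem_contrainte_holonomie}: if $\sigma$ is a solution of $\Sigma$ inducing $(\call,\ul m)$, then whenever $h_w$ maps $J$ to $J'$ we have $m_{J'}=\rho(\sigma_{J'})=\rho(\phi_w(\sigma_J))=\phi_w(m_J)$, and if $h_w$ fixes $J$ then $\sigma_J\in \Fix\phi_w$, hence $m_J\in\rho(\Fix\phi_w)$; no incompatibility of rational constraints can be detected.

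For the converse, assume $(\call,\ul m)$ exhibits no incompatibility. The plan is to build a labelling $\sigma_e\in\rho\m(m_e)\subset\fmip$ for each $e\in E(\call)$ and verify that it defines a solution. I consider the equivalence relation on $E(\call)$ generated by the partial holonomy maps $h_w$ together with the involution $e\mapsto\ol e$, and pick a representative $e_0$ in each class. A crucial preliminary observation is that M\"obius-completeness prevents any $\Sigma$-word $w$ from satisfying $h_w(e_0)=\ol{e_0}$ for an elementary segment $e_0$: such a $w$ (reduced to a minimal suffix-distinct one) would turn $(w,e_0)$ into a M\"obius strip whose fixed point lies in the interior of $e_0$, and the core leaf required by M\"obius-completeness would force that point to belong to $V(\call)$, contradicting the fact that $e_0$ is elementary. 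Consequently, the only holonomy constraints on $\sigma_{e_0}$ come from its stabilizer $\mathrm{Hol}(e_0)=\{w:h_w(e_0)=e_0\}$, whose image $\Phi_0=\{\phi_w:w\in\mathrm{Hol}(e_0)\}$ is a finite subgroup of $\AutS$, and take the form $\sigma_{e_0}\in\Fix\Phi_0$.

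The main step is thus to find $\sigma_{e_0}\in\rho\m(m_{e_0})\cap\Fix\Phi_0$, and this is where the standard-form hypothesis is decisive: by definition, $\rho$ \emph{represents} $\Fix\phi$ for each $\phi\in\AutS$, meaning $\Fix\phi=\rho\m(\rho(\Fix\phi))$. The absence of incompatibility yields $m_{e_0}\in\rho(\Fix\phi_w)$ for every $w\in\mathrm{Hol}(e_0)$, hence $\rho\m(m_{e_0})\subseteq\Fix\phi_w$ for each such $w$, and intersecting over $w$, $\rho\m(m_{e_0})\subseteq\Fix\Phi_0$. Since $\rho$ is surjective, $\rho\m(m_{e_0})$ is a nonempty subset of $\fmip$, and any element serves as $\sigma_{e_0}$; both the rational constraint and the stabilizer fixed-point condition are satisfied automatically. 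Extend by propagation, setting $\sigma_{h_w(e_0)}:=\phi_w(\sigma_{e_0})$ and $\sigma_{\ol e}:=\ol{\sigma_e}$; well-definedness follows from the stabilizer condition on $\sigma_{e_0}$ together with the absence of orientation-reversing self-identifications established above.

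Finally, I must verify that the labelling $\sigma$ is a solution of $\Sigma$ inducing $(\call,\ul m)$. Completeness of $\call$ ensures that for every band $B$ with bases $J_0,J_1$ the leaf segments pair the elementary segments of $J_0$ with those of $J_1$ in an order-preserving way compatible with the attaching maps, so $\sigma_{J_1}=\phi_B(\sigma_{J_0})$ by construction; the rational constraints $\rho(\sigma_e)=m_e$ are preserved by propagation thanks to the holonomy-invariance $m_{h_w(e_0)}=\phi_w(m_{e_0})$ (itself guaranteed by the first kind of incompatibility check); and the leaves of $\call$ appear among the pseudo-singular leaves of $\call_\infty(\sigma)$ tautologically. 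The essential obstacle in the argument is reconciling the rational and stabilizer fixed-point constraints on the chosen representatives $\sigma_{e_0}$, and this is exactly what the standard-form assumption on $\rho$ is designed to dissolve.
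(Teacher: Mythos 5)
Your proof is correct and follows essentially the same route as the paper's: partition $E(\call)$ into holonomy orbits, pick a representative $e_0$ per orbit, choose $\sigma_{e_0}\in\rho\m(m_{e_0})$, and propagate using the fact that the standard-form hypothesis forces $\rho\m(m_{e_0})\subseteq\Fix\phi_w$ for every $w$ stabilizing $e_0$. You add a useful explicit justification (via M\"obius-completeness and elementarity of $e_0$) of the claim, asserted without proof in the paper, that no $e$ lies in the orbit of $\ol e$.
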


This explains how the prelamination analyser determines the existence of a solution when its input is a M\"obius-complete prelamination:
it just checks whether $\call$ shows an incompatibility of rational constraints.
The argument below also shows how to produce a solution in the absence incompatibility of rational constraints.

\begin{proof}   
Since $\call$ is M\"obius-complete, the set of oriented elementary segments $E(\call)$ is partitioned
into orbits under the holonomy of $\call$, and no $e\in E (\call)$ is the orbit of $\ol e$ 
(\ie $e$ with the orientation reversed).
For each such orbit, choose one representative $e\in E(\call)$, and choose some word $\sigma_e\in\rho\m(m_e)$
($\rho$ is onto). For each $e'$ in the orbit of $e$, choose a $\Sigma$-word $w$ such that $h_w(e)=e'$
and define $\sigma_{e'}=\phi_w(\sigma_e)$.
Since $\call$ shows no incompatibility of rational constraints of the first kind,
one has $\rho(\sigma_{e'})=\phi_w(m_e)=m'_e$.
If $w'$ is another word with $h_{w'}(e)=e'$, then $\sigma_e\in\Fix \phi_{w' w\m}$
because $\call$ shows no incompatibility of rational constraints of the second kind,
so $\sigma_{e'}$ does not depend on the choice of $w'$.
Since  $e$ and $\ol e$ are not in the same orbit, we can extend these choices by defining
$\sigma_{\ol e'}=\ol\sigma_{e'}$ for all $e'$ in the orbit of $e$.
Doing this for every orbit of elementary segments, we get a solution of $(\Sigma,\call)$
respecting the rational constraints.
\end{proof}

\subsubsection{Rejection for non-existence of invariant measure}\label{sec_transverse}

If a prelamination $\call$ is induced by a solution $\sigma$, then the labelling defines a positive length $l_e$ on 
each elementary segment $e$ of $\call$, and more generally, to each segment adapted to $\call$.
This can be viewed as a combinatorial measure invariant under the holonomy
along the leaves of $\call$ in the following sense: if $I$ and $J$ are segments adapted to $\call$ and such that
the holonomy $h_B$ maps $I$ to $J$ for some band $B$, then $l_I=l_J$.

A prelamination may fail to have such an invariant transverse measure, for example if the holonomy along the leaves maps
a segment to a proper subset of itself. 
 The existence a combinatorial measure $(l_e)$ is equivalent to the existence of a positive solution 
to some system of linear equations with integer coefficients, 
which can be checked algorithmically. 
Thus, the prelamination analyser can check whether $\call$ admits an invariant combinatorial measure,
and if there is no such measure, it rejects $\call$.

\subsubsection{Rejection for too large exponent of periodicity}\label{sec_reject_bulitko}

In some cases, one can read in a prelamination $\call$ the fact that any solution inducing $\call$
must have a very large  exponent of periodicity.

\begin{dfn}\label{dfn_exponent}
We say that $exponent(\call)\geq N$ if there exists 
a segment $J$ which is 
a concatenation of $N$ intervals $I_1\cdots I_N\subset D$ adapted to $\call$,
and some $\Sigma$-words $w_i$ whose holonomy maps $I_i$ to $I_{i+1}$ preserving the orientation,
and whose twisting 
automorphisms $\phi_{w_i}$ are trivial.
\end{dfn}

Clearly, if $exponent(\call)\geq N$, then the exponent of periodicity of any solution inducing $\call$ 
is at least $N$.
By Bulitko's Lemma, one can compute a bound $B$ such that any shortest solution of $\Sigma$
has an exponent of periodicity bounded by $B$.

In the prelamination analyser, rejection for too large exponent of periodicity
consists in computing $B$, and checking whether $exponent(\call)> B$. This is easily done algorithmically.

\subsubsection{Rejection by a shortening sequence of moves}
\label{sec_shortening}

This last type of rejection is more complicated.
In Section \ref{sec_moves}, we will introduce a finite set of moves which can be performed on a band complex with a prelamination.
There will be several types of moves.

\begin{dfn}[Inert move]
We say that a prelaminated band complex $(\Sigma',\call')$ is obtained by an \emph{inert move}
from $(\Sigma,\call)$ if 
\begin{itemize}
\item for any solution $\sigma$ of $\Sigma$ inducing $\call$,
there is a solution $\sigma'$ of $\Sigma'$ inducing $\call'$ such that $|\sigma'|=|\sigma|$
\item for any solution $\sigma'$ of $\Sigma'$ (which need not induce $\call'$),
there is a solution $\sigma$ of $\Sigma$ (which need not induce $\call$) and such that $|\sigma|=|\sigma'|$
\end{itemize}
\end{dfn}

An obvious property of an inert move is that if $\call$ is induced by a shortest solution of $\Sigma$,
then $\call'$ is induced by a shortest solution of $\Sigma'$.
Examples of inert moves include band subdivision, domain cut, and band removal moves (see section \ref{sec_moves}).

To illustrate the two other types of moves, we start with an example: \emph{the pruning move}.
The input is a band complex $\Sigma$ with domain $D$ together
with a prelamination $\call$, and the output is a new prelaminated band complex
$(\Sigma',\call')$.
We assume that there is a segment $[a,b]\subset D$ where $a,b$ are vertices of $\Sigma$,
such that $[a,b]$ is contained in a base $J_0$ of some band $B$, and such that
no other base of $\Sigma$ intersects the interior of $[a,b]$ (see Figure \ref{fig_pruning}). 
Assume that $a$ and $b$ are not orphans in $\call$.
The rational constraints on $(\Sigma,\call)$ are assumed to be invariant under the holonomy.
We view the band $B$ as $J_0\times [0,1]$, the attaching map $f_{B,0}$ being the identity.
Let $D'=D\setminus (a,b)$, and let $\Sigma'$ be the band complex with domain $D'$ obtained by replacing
the band $B=J_0\times [0,1]$ by the two bands defined by the connected components of $(J_0\setminus (a,b))\times [0,1]$.
If $a$ or $b$ is an endpoint of $D$, then $(a,b)$ has to be interpreted as the interior of $[a,b]$
in $D$, 
in which case $(J_0\setminus (a,b))\times [0,1]$ consists of only one band or no band at all.
The attaching map of the new bands are well defined because $a$ and $b$ are not orphans.
The new prelamination is the one naturally obtained by restriction.
The new set of rational constraints on $\call'$ is obtained by restriction.
The set rational constraints $\ul m'$ on $\Sigma'$ is the one induced by $\call'$:
any elementary segment $e'$ of $\Sigma'$ is a concatenation of elementary segments $e'_1,\dots,e'_n$ of $\call'$,
which allows to define $m'_{e'}=m'_{e'_1}\dots m'_{e'_n}$.

\begin{figure}[htbp]
  \centering
  \includegraphics[width=10cm]{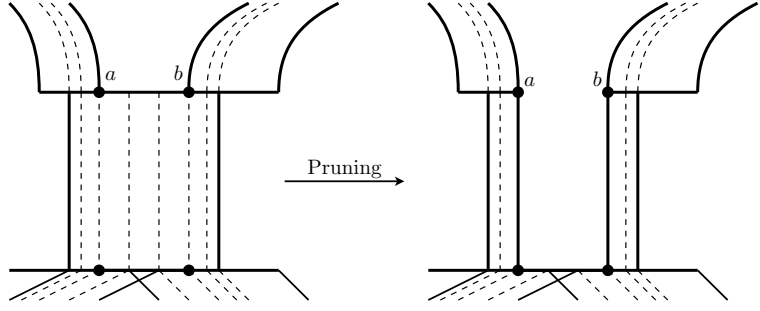}
  \caption{Pruning move}
  \label{fig_pruning}
\end{figure}

It is clear that any solution $\sigma$ of  $\Sigma$ inducing $\call$ 
(together with its set of rational constraints, which we don't mention explicitly)
defines a solution $\sigma'$  of  $\Sigma'$ inducing $\call'$ (with rational constraints).
Moreover, the length of $\sigma'$ is the length of the restriction of $\sigma$ to $D'$.

Conversely, if $\sigma'$ is any solution of $\Sigma'$, without assuming that 
$\sigma'$ induces $\call'$, it naturally extends to a solution $\sigma$ of $\Sigma$,
and $|\sigma|\leq 2|\sigma'|$.
We say that the move from $(\Sigma,\call)$ to $(\Sigma',\call')$ is a \emph{restriction move}
and that the move from $\Sigma'$ to $\Sigma$ is an \emph{extension move}.

\begin{rem} 
  Note that restriction moves are about prelaminated band complexes while
extension moves are about band complexes without prelamination.
\end{rem}

We will consider long sequences of  restriction and extension moves which may leave a large part of the band complex 
essentially untouched. To keep track of this, we partition the domain $D$ of the involved band complexes
into $D=D_I\dunion D_A$
where $D_I$ and $D_A$ are a union of connected components of $D$
where $D_I$ is the \emph{inert} part (which is left untouched by the move), and  $D_A$ the \emph{active} part.
If $D'$ is the domain of $\Sigma'$, we denote by $D'_I\dunion D'_A$ the corresponding decomposition of $D$.

\begin{dfn}[Restriction move]
We say that $(\Sigma',\call')$ is obtained by a \emph{restriction move}
from $(\Sigma,\call)$ if there is an injective map $\iota:D'\hookrightarrow D$
with  $D_I=\iota(D'_I)$ and $\iota(D'_A)\subset D_A$ adapted to $\call$, such that 
for any solution $\sigma$ of $\Sigma$ inducing $\call$,
there is a solution $\sigma'$ of $\Sigma'$ inducing $\call'$ such that
\begin{itemize}
\item $\abs{\sigma'_{|D'_I}}=\abs{\sigma_{|D_I}}$
\item $\abs{\sigma'_{|D'_A}}=\abs{\sigma_{|\iota(D'_A)}}$
\end{itemize}
\end{dfn}
 
\begin{rem}
We call $\iota$ the \emph{restriction map}.
For the pruning move, for instance, $\iota$ is just the inclusion.
\end{rem}

\begin{dfn}[Extension move]
We say that $\Sigma$ is obtained from $\Sigma'$ by an \emph{extension move}
with Lipschitz factor $\lambda\geq 1$
if  for any solution $\sigma'$ of $\Sigma'$,
there is a solution $\sigma$ of $\Sigma$ such that
\begin{itemize}
\item $\abs{\sigma_{|D_I}}=\abs{\sigma'_{|D'_I}}$
\item $\abs{\sigma_{|D_A}}\leq \lambda\abs{\sigma_{|D'_A}}$.
\end{itemize}
\end{dfn}

For instance, one can take $\lambda=2$ for the pruning move, under the requirement that
$D_A$ contain both bases of the band pruned.
\\

To be able to ensure that a prelamination $\call$ cannot be induced by a shortest solution, 
we need a way of certifying that some subset $\iota(D'_A)\subset D_A$ is \emph{short relative to $D_A$},
\ie  that for any solution $\sigma$ inducing $\call$,
$\abs{\sigma_{|D'_A}}$ is at most $\eps\abs{\sigma_{|D_A}}$ for some small $\eps>0$.
Relative shortness will be guaranteed by some repetition as in the lemma below.

\begin{dfn}[Certificate of shortness of $J$ relative to $D_A$]\label{dfn_certif_shortness}
Let $(\Sigma,\call)$ be a prelaminated band complex, and $J \subset D_A$ be a subset adapted to $\call$. 
A \emph{certificate of $\eps$-shortness of $J$ relative to $D_A$} is
a family of adapted subsets $J_1,\dots,J_N\subset D_A$ with disjoint interiors for some $N\geq 1/\eps$,
and which are all equivalent to $J$ under the holonomy of $\call$.
\end{dfn}

\begin{lem}\label{lem_certif_shortness}
If there is a certificate of $\eps$-shortness of $J$ relative to $D_A$,
then  for any solution $\sigma$ of $\Sigma$ inducing $\call$,
then $\abs{\sigma_{|J}}\leq \eps \abs{\sigma_{|D_A}}$.
\end{lem}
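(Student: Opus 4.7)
The plan is very short; essentially one just adds up lengths. Here is how I would organise it.

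First, I would record the key length-preservation property: since every twisting automorphism $\phi \in \AutS$ permutes $\Spm = S \cup \ol S$, the induced action on $\fmip$ preserves word length, i.e. $|\phi(w)| = |w|$ for every $w \in \fmip$. In particular, if $\sigma$ is a solution of $(\Sigma,\call)$, and if $I,I' \subset D$ are two segments adapted to $\call$ that are equivalent under the holonomy of $\call$ — meaning $h_w(I) = I'$ for some $\Sigma$-word $w$ — then by the definition of a solution of $(\Sigma,\call)$ (Section \ref{sec_induced}) one has $\sigma_{I'} = \phi_w(\sigma_I)$, whence $|\sigma_{I'}| = |\sigma_I|$.

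Now apply this to the certificate. By hypothesis, each $J_i$ is equivalent to $J$ under the holonomy of $\call$, so the previous paragraph gives $|\sigma_{J_i}| = |\sigma_J|$ for $i = 1,\dots,N$. Since the $J_i$ are adapted subsets of $D_A$ with pairwise disjoint interiors, the restrictions $\sigma_{J_i}$ label pairwise disjoint portions of $D_A$, so
\[
\sum_{i=1}^N |\sigma_{J_i}| \;\leq\; |\sigma_{|D_A}|.
\]
Combining these two observations yields $N\,|\sigma_J| \leq |\sigma_{|D_A}|$, and since $N \geq 1/\eps$ we conclude $|\sigma_J| \leq \eps\,|\sigma_{|D_A}|$, which is the claim.

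There is no real obstacle here: the only point that deserves care is checking that the length is preserved by the holonomy, which reduces to the fact that the twisting group $\Phi \subset \AutS$ acts by letter-permutations on $\Spm$, a property that was built into the setup of band complexes in Section \ref{sec_band_cplex}. Once that is noted, the rest is additivity of length over segments with disjoint interiors.
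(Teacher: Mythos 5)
Your proof is correct and follows the paper's own argument; the paper simply states that the $J_i$ "will have the same length as $J$" and you have spelled out why (namely that $\AutS$ acts by length-preserving permutations of $\Spm$, so holonomy preserves the length of labels), then summed over the disjoint $J_i\subset D_A$ exactly as the paper does.
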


\begin{proof}
  This is essentially obvious: for any solution $\sigma$ inducing $\call$,
the labellings of $J_1,\dots,J_N$ will have the same length as $J$.
Since the $J_i$'s don't overlap and are contained in $D_A$,
we get $N.\abs{\sigma_{|J}}\leq \abs{\sigma_{|D_A}}$ and the lemma follows.
\end{proof}

The prelamination analyser will try to apply a sequence of elementary moves (as defined in Section \ref{sec_moves}) 
to the prelaminated band complex.
It is clear that a 
succession of inert moves defines an inert move.
Moreover,  a 
succession of restriction moves having the same inert part also defines a restriction move,
whose restriction map $\iota$ is the composition of the restriction maps of the elementary moves.
Similarly, a 
succession of elementary extension moves having the same inert part
defines an extension move whose Lipschitz factor is 
the product of the Lipschitz factors of the individual moves.\\

\begin{dfn}[Shortening sequence of moves]\label{dfn_shortening_moves}
  A \emph{shortening sequence of moves} for the prelamination $\call_0$ of $\Sigma_0$ consists of the following data:
  \begin{enumerate*}
\item a sequence of inert moves transforming $(\Sigma_0,\call_0)$ to $(\Sigma,\call)$;
  \item a sequence of elementary restriction moves, whose concatenation defines a restriction move
$(\Sigma,\call)\ra(\Sigma',\call')$, with corresponding map $\iota:D'_A\ra D_A$;
\item  a sequence of elementary extension moves transforming $\Sigma'$ back to $\Sigma$;
\item a certificate of $\eps$-shortness of $\iota(D'_A)$ relative to $D_A$ with $\eps<1/\lambda$,
where $\lambda$ is the product of the Lipschitz factors of the extension moves used above.
  \end{enumerate*}
\end{dfn}

\begin{lem}\label{lem_correct_shortening}
  If there exists a shortening sequence of moves for $\call_0$,
then $\call_0$ cannot be induced by a shortest solution of $\Sigma_0$.
\end{lem}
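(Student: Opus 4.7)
The plan is to assume for contradiction that some shortest solution $\sigma_0$ of $\Sigma_0$ induces $\call_0$, and then chase it through the shortening sequence of moves to produce a strictly shorter solution of $\Sigma_0$.

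First, I would push $\sigma_0$ forward through the sequence of inert moves. By the very definition of an inert move (applied iteratively), this produces a solution $\sigma$ of $\Sigma$ inducing $\call$ with $|\sigma|=|\sigma_0|$. Next, applying the restriction move yields a solution $\sigma'$ of $\Sigma'$ inducing $\call'$ whose length decomposes as
\[
|\sigma'_{|D'_I}|=|\sigma_{|D_I}|, \qquad |\sigma'_{|D'_A}|=|\sigma_{|\iota(D'_A)}|.
\]
Now I invoke the shortness certificate: since $\sigma$ induces $\call$ and $\iota(D'_A)\subset D_A$ admits a certificate of $\eps$-shortness relative to $D_A$, Lemma~\ref{lem_certif_shortness} gives $|\sigma_{|\iota(D'_A)}|\leq \eps\,|\sigma_{|D_A}|$.

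Then I feed $\sigma'$ through the extension moves, which do not require inducing any prelamination, only being a solution of $\Sigma'$. This produces a solution $\Tilde\sigma$ of $\Sigma$ with $|\Tilde\sigma_{|D_I}|=|\sigma'_{|D'_I}|$ and $|\Tilde\sigma_{|D_A}|\leq \lambda\,|\sigma'_{|D'_A}|$, where $\lambda$ is the product of the Lipschitz factors. Combining everything,
\[
|\Tilde\sigma|\leq |\sigma_{|D_I}|+\lambda\,|\sigma_{|\iota(D'_A)}|\leq |\sigma_{|D_I}|+\lambda\eps\,|\sigma_{|D_A}|.
\]
Since $\lambda\eps<1$ by hypothesis, this is strictly smaller than $|\sigma|=|\sigma_{|D_I}|+|\sigma_{|D_A}|$, provided $|\sigma_{|D_A}|>0$. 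The latter always holds when $D_A$ is non-empty, because labels of elementary segments lie in $\fmip=\fmi\setminus\{1\}$ and thus have length at least one; the case $D_A=\es$ is vacuous since then the restriction and extension moves affect nothing and no shortening is asserted.

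Finally I would push $\Tilde\sigma$ backwards through the sequence of inert moves, using the second clause of the definition of inert move to produce a solution $\Tilde\sigma_0$ of $\Sigma_0$ with $|\Tilde\sigma_0|=|\Tilde\sigma|<|\sigma|=|\sigma_0|$, contradicting the minimality of $\sigma_0$. The main (very mild) subtlety is simply bookkeeping: tracking that inert, restriction, and extension moves compose correctly and that a composition of inert moves is inert, a composition of restriction moves is a restriction move, and a composition of extension moves is an extension move with Lipschitz factor equal to the product; these facts are already recorded in the paragraph preceding Definition~\ref{dfn_shortening_moves}, so the argument reduces to the length computation above.
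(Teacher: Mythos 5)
Your proof is correct and essentially identical to the paper's; the only cosmetic difference is that the paper invokes the derived fact that inert moves preserve ``being induced by a shortest solution'' (stated just after the definition), which lets it contradict minimality directly for $\sigma$ on $\Sigma$, whereas you explicitly push $\Tilde\sigma$ back through the inert moves to contradict minimality of $\sigma_0$ on $\Sigma_0$ — both appeals use exactly the two clauses in the definition of an inert move. Your extra remark that $|\sigma_{|D_A}|>0$ when $D_A\neq\es$ because labels live in $\fmip$ is a correct (if unstated in the paper) justification of the strict inequality.
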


\begin{proof}
Assume on the contrary that $\call_0$ is induced by a shortest solution of $\Sigma_0$.
Using the property of inert moves,  $\call$ is itself induced by a shortest solution $\sigma$ of $\Sigma$.
Using the restriction move, consider $\sigma'$ a solution of $\Sigma'$ such that
$\abs{\sigma'_{|D'_I}}=\abs{\sigma_{|D_I}}$
and $\abs{\sigma'_{|D'_A}}=\abs{\sigma_{|\iota(D'_A)}}$.
Using the extension move, consider a solution $\Tilde \sigma$ of $\Sigma$
such that 
$\abs{\Tilde \sigma_{|D_I}}=\abs{\sigma'_{|D'_I}}$ and
 $\abs{\Tilde \sigma_{|D_A}}\leq \lambda\abs{\sigma'_{|D'_A}}$.
Since $\iota(D'_A)$ has a certificate of $\eps$-shortness relative to $D_A$,
 $\abs{\Tilde \sigma}\leq \abs{\sigma_{D_I}}+\eps\lambda\abs{\sigma_{|D_A}}<\abs{\sigma}$, a contradiction.
\end{proof}

\subsubsection{More about the analyser}

We are now ready to describe the rejection by shortening moves by the prelamination analyser.
The analyser tries all possible sequences of moves looking like a shortening sequence of moves constructed as follows:
a sequence of inert moves transforming $(\Sigma_0,\call_0)$ into some $(\Sigma,\call)$, 
followed by a choice of decomposition  $D=D_I\dunion D_A$,
followed by a sequences of shortening moves with inert part $D_I$ transforming $(\Sigma,\call)$ into some $(\Sigma',\call')$,
followed by a sequence of extension moves with inert part $D_I$ and transforming $\Sigma'$ back into $\Sigma$.
All such sequences can be enumerated. 
Then look for $1/\lambda$-shortness certificates for $\iota(D'_A)$ in $D_A$.
Being a little more careful, one could easily ensure that this enumeration terminates, but we won't need this fact.
If such a certificate exists, it will be found. In this case, the prelamination analyser rejects $\call$.

\begin{lem}
\label{lem_analyser_correct}
  The prelamination analyser is correct: 
if it says ``Solution found'', then $\Sigma$ has a solution inducing $\call$;
if it says ``Reject'',
then $\call$ cannot be induced by a shortest solution.
\end{lem}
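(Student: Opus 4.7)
The plan is to verify the two assertions by going through each of the outcomes described in Algorithm \ref{algo_analyser}. Both directions reduce to invoking results already established in subsections \ref{sec_incompatibility}--\ref{sec_shortening}, so the proof is a case analysis with essentially no new content.

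First I would treat the case where $\call$ is M\"obius-complete. In this situation the analyser outputs ``Solution found'' precisely when it detects no incompatibility of rational constraints, and ``Reject'' otherwise. By Lemma \ref{lem_check_cplete}, absence of incompatibility is equivalent to the existence of a solution of $\Sigma$ inducing $\call$; moreover, that lemma's proof shows how to produce such a solution explicitly, which justifies the ``Solution found'' branch. Conversely, if an incompatibility is detected, the same lemma forbids any solution inducing $\call$, a fortiori any shortest one, which justifies ``Reject''.

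Next I would handle the four certificates for the non-M\"obius-complete case one by one. For an incompatibility of rational constraints: by Remark \ref{rem_contrainte_holonomie} and the discussion of standard forms in Section \ref{sec_standard}, a solution inducing $\call$ forces holonomy-invariance of $\ul m$ and forces $m_J\in\rho(\Fix\phi_w)$ whenever $w$ fixes $J$ under the holonomy; detection of a violation rules out any solution inducing $\call$. For non-existence of an invariant measure: the positive lengths $l_e=|\sigma_e|$ defined by any labelling solution provide an invariant combinatorial measure, so non-existence of such a measure forbids any solution inducing $\call$. For too large exponent of periodicity: if $exponent(\call)\geq N$ in the sense of Definition \ref{dfn_exponent}, the holonomy identifications force the labelling $\sigma_J$ to be $N$-periodic (with trivial twisting), hence any solution inducing $\call$ has exponent of periodicity at least $N$; comparing with the computable bound $B$ supplied by Proposition \ref{prop_Bulitko} shows that if $exponent(\call)>B$ no shortest solution can induce $\call$. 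For a shortening sequence of moves: this is exactly Lemma \ref{lem_correct_shortening}.

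The only point that requires a little care is checking that rejection for incompatibility or non-existence of invariant measure is actually stronger than needed: it rules out \emph{all} solutions inducing $\call$, whereas the statement only requires ruling out shortest ones. This is immediate since a shortest solution is, in particular, a solution. Collecting the four cases, every output ``Reject'' of the analyser is justified by one of the cited lemmas, and the output ``Solution found'' is justified by Lemma \ref{lem_check_cplete}. No other outputs are produced apart from ``I don't know'', for which there is nothing to prove. The main (minor) obstacle is purely bookkeeping: ensuring that each of the four rejection procedures is algorithmic, which has been addressed at the end of each of Sections \ref{sec_incompatibility}--\ref{sec_shortening} (finite generating sets for holonomy-stabiliser groups, finite linear systems for invariant measures, Bulitko's computable bound, and enumeration of finite sequences of moves and shortness certificates).
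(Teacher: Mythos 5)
Your proof is correct and takes essentially the same approach as the paper: both reduce the lemma to Lemma \ref{lem_check_cplete} for the M\"obius-complete case and to the four correctness arguments from Subsections \ref{sec_incompatibility}--\ref{sec_shortening} (with Lemma \ref{lem_correct_shortening} handling the shortening-sequence certificate). The paper's proof is a two-sentence citation of these sources; your version simply unpacks the case analysis in more detail, which is fine.
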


\begin{proof}
The correctness when a solution has been found follows from Lemma \ref{lem_check_cplete}.
The correctness for each reason of rejection follows from Subsections \ref{sec_incompatibility}, \ref{sec_transverse},
\ref{sec_reject_bulitko}, and from Lemma \ref{lem_correct_shortening} above.
\end{proof}

\section{Asymptotic of prelaminations: Rips band complexes}\label{sec_asym}

 In this section, we assume that in the locally finite tree of prelaminations produced by the prelaminations generator, 
there is an infinite chain, \ie an infinite sequence of prelaminations with rational constraints extending each other. 

By passing to a limit, 
we will 
construct a topological foliation  on $\Sigma$ together with an invariant transverse measure $\mu$. 
With an assumption on the exponent of periodicity, 
we will prove that 
$\mu$ has no atom. However, $\mu$ may fail to have full support.
Collapsing segments of measure zero, we will get a new foliated band complex $\Sigma_\mu$ where the measure is the Lebesgue measure: we say
that $\Sigma_\mu$ is a \emph{Rips band complex}.

In   \cite{GLP1,BF_stable} is studied   the decomposition of such an object into  so-called  minimal  components, 
 and the classification of such    minimal components. In particular, there are so-called homogeneous components, 
in which, as we will see, one can find arbitrarily high exponent of periodicity. If a minimal component is not homogeneous, 
one can perform a sequence of moves on the Rips band complex 
 (to obtain so-called independent bands) so that it becomes either a surface component 
(also called interval exchange component) or an exotic component (also called  thin, or Levitt). 
In both cases, we will prove the existence of a shortening sequence of moves (in the sense of Definition \ref{dfn_shortening_moves}) 
for all sufficiently large prelaminations in our infinite chain.

These properties are used to prove that the main algorithm \ref{algo;main} always terminates, 
see Section \ref{sec_endgame}.

\subsection{Limit measured foliation}\label{sec_limit}

Consider an infinite sequence of prelaminations with rational constraints extending each other:
$\call_1 \prec \call_2 \prec\dots$ (Definition \ref{dfn_extends}).

The goal of this subsection is to associate to such a chain of prelaminations a topological foliation on $\Sigma$,
and a transverse invariant measure under the assumption that each $\call_i$ has an invariant transverse measure (Subsection \ref{sec_transverse}). 
Of course, those objects are not accessible to the algorithm, but they are used
to prove the existence of certificates  rejecting $\call_i$ for $i$ large enough.

\subsubsection{The limit topological foliation}

We first define the notion of a topological foliation on a band complex $\Sigma$.
Recall that bands of a band complex are defined by attaching maps
$f_{B,\eps}:[a,b]\times\{\eps\}\ra D$ for $\eps=0,1$, 
but the only relevant data is the combinatorial arrangement of the endpoints of bases of bands.
If $\Sigma$ is endowed with a prelamination, more data has to be extracted from $f_B$, 
\ie the combinatorial arrangement of the images of endpoints of leaf segments.
A \emph{topological foliation} $\calf$ consists in choosing for each band, an injective continuous map 
$\Tilde f_{B,\eps}:[a,b]\times\{\eps\}$ 
coinciding with $f_{B,\eps}$ on endpoints of bases of bands.
Here, the precise values of $\Tilde f_{B,\eps}$ matters, not only its values on a finite subset.

\emph{Leaf segments} of $\calf$ in a band $B$ consist of all segments $\{x\}\times [0,1]\subset B$.
The \emph{endpoints} of the leaf segments are the points $f_{B,0}(x,0),f_{B,1}(x,1)\in D$.
Consider the equivalence relation on the set of leaf segments, generated by
the property of having a common endpoint. 
A \emph{leaf} of $\calf$ is the union of leaf segments
in a given equivalence class.
As usual, we identify a leaf with its image in the topological realisation of the band complex.

As for prelaminations, we can talk about singular or regular leaf segments, leaf paths, and leaves.
Given a $\Sigma$-word $w$, and $x\in D$, we still use the notation $[x,w)$ which has a meaning for a topological foliation.
The holonomy map $h_w$ is now a homeomorphism between a compact interval $\dom w\subset D$, called the \emph{domain} of $w$,
and its image.

\begin{dfn}\label{dfn_represent}
Consider a (maybe infinite) set $l_1,\dots,l_p,\dots$ of leaves of a topological foliation $\calf$
containing all singular leaves.

We say that a sequence of prelaminations $\call_i$ 
is \emph{represented} in $\calf$
if one can write $\call_i$ as a union of leaf segments of $\calf$ so that
\begin{itemize}
\item $\call_1\subset\call_2\subset\dots$ is an exhaustion of $l_1\cup\dots\cup l_p \cup \dots $ by finite subgraphs,
\item every M\"obius strip of $\call_i$ has a core in $\call_j$ for some $j\geq i$.
\end{itemize}
The leaves $l_i$ are called  the \emph{special leaves} of $\calf$.
\end{dfn}

\begin{rem}
\label{rem_pseudo}
By definition, every singular leaf $l$ is special.
If $l$ is pseudo-singular but not singular, and if $\call_i$ has an invariant combinatorial measure,
then $l$ is special.
Indeed, consider $x\in \call$ and a $\Sigma$-word $w$ such that $x$ lies in the interior of $\dom w$, $h_w(x)=x$, and $w$ reverses the orientation.
Let $[a,b]$ be the domain of $h_{w^2}$. If $h_{w^2}$ fixes $a$, then there will a M\"obius strip in $\call_i$ for $i$ large enough
since the leaf through $a$ is singular hence special. The second condition ensures that $l$ is special.
Otherwise, up to changing $w$ to $w\m$, we can assume $h_{w^2}(a)>a$.
This implies $h_{w^2}(b')<b'$, where $b'=h_w(a)$.
It follows that the holonomy of $w^2$ maps $[a,b']$ to a proper subsegment of itself, which prevents
the existence of an invariant combinatorial measure.
\end{rem}

\begin{prop}\label{prop_topological}
Consider an infinite sequence of prelaminations $\call_1\prec\call_2\prec\dots$ extending each other,
and consisting of pseudo-singular leaves.
Assume that for each subdivision point $x$ of $\call_i$, there exists $j>i$ such that $x$ is not orphan in $\call_j$,
and that any M\"obius strip of $\call_i$ has a core in some $\call_j$ for some 
$j\geq i$.

Then there exists a topological foliation $\calf$ on $\Sigma$ 
representing $\call_i$.
\end{prop}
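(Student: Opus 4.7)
The plan is to build $\calf$ band by band: for each band $B=[a,b]\times[0,1]$, I will redefine the attaching maps $f_{B,\varepsilon}$ so that every leaf segment appearing in any $\call_i$ becomes a vertical leaf segment of $\calf$ with the correct endpoints in the bases $J_\varepsilon$. Set $L_B=\bigcup_i L_B(\call_i)$, viewed as a countable totally ordered set with the order inherited from the prelaminations. The endpoints of leaf segments give two monotone maps $\pi_\varepsilon:L_B\to J_\varepsilon$ (one preserving the order, the other preserving or reversing it, depending on the band). Fix identifications $J_\varepsilon\cong[0,1]$ consistent with the position of the vertices of $\Sigma$ under $f_{B,\varepsilon}$, and view $\pi_\varepsilon$ as taking values in $[0,1]$.

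The key step is to choose the positions of the leaves inside $[a,b]\cong[0,1]$ by the averaging formula
\[
\iota_B(c)=\tfrac12\bigl(\pi_0(c)+\tilde\pi_1(c)\bigr),
\]
where $\tilde\pi_1$ equals $\pi_1$ or $1-\pi_1$ according to whether the two attaching maps of $B$ have the same or opposite orientation. Since $\pi_0$ and $\tilde\pi_1$ are strictly monotone on $L_B$ in the same direction, $\iota_B$ is a strictly monotone order embedding into $[0,1]$, and each of the maps $\iota_B(c)\mapsto\pi_\varepsilon(c)$ is $2$-Lipschitz on $\iota_B(L_B)$: if $c<c'$ then $\pi_\varepsilon(c')-\pi_\varepsilon(c)\leq 2(\iota_B(c')-\iota_B(c))$ because the other summand is nonnegative. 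By uniform continuity they extend to continuous monotone functions on the closure of $\iota_B(L_B)$, and then by monotone interpolation on the remaining open intervals to continuous injections $\tilde f_{B,\varepsilon}:[a,b]\to J_\varepsilon$ with the same range as $f_{B,\varepsilon}$ and the same values at every vertex of $\Sigma$ lying in $J_\varepsilon$ (those vertices lie in $V(\call_i)$, hence in the image of $\iota_B$). Doing this independently for every band yields a topological foliation $\calf$ on $\Sigma$.

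Finally I would check the representation property. By construction, the leaf segments of $\call_i$ in band $B$ are exactly the vertical segments $\{\iota_B(c)\}\times[0,1]$ with $c\in L_B(\call_i)$, whose endpoints under $\tilde f_{B,\varepsilon}$ are the prescribed subdivision points $\pi_\varepsilon(c)$; hence each $\call_i$ is a finite union of leaf segments of $\calf$. Take the collection of special leaves $(l_p)$ to be the leaves of $\calf$ that meet $\bigcup_i\call_i$, so that $\call_1\subset\call_2\subset\cdots$ exhausts $l_1\cup l_2\cup\cdots$. The two bullets of Definition \ref{dfn_represent} then follow directly from the two standing hypotheses, and every singular leaf is special because every vertex of $\Sigma$ lies in $V(\call_1)$ and eventually ceases to be an orphan, so it is hit by a leaf segment in some $\call_j$. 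The main obstacle is ensuring a continuous extension of the prescribed endpoint values at the countable subset $L_B$ of $[a,b]$: a naive choice of positions can produce incompatible accumulation behaviour between the images in $J_0$ and those in $J_1$, and the averaging formula for $\iota_B$ is precisely what forces the $2$-Lipschitz compatibility needed for both extensions to exist simultaneously.
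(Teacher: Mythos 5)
Your approach is genuinely different from the paper's and contains a gap at the step where you claim the Lipschitz extensions are ``continuous injections $\tilde f_{B,\varepsilon}:[a,b]\to J_\varepsilon$.''

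The issue is the following. Once the positions of the subdivision points of $\call_\infty=\bigcup_i\call_i$ inside $D$ are fixed (which your ``fix identifications $J_\varepsilon\cong[0,1]$'' implicitly does), the maps $\pi_0,\tilde\pi_1$ are fixed, and the averaged map $\iota_B$ gives $2$-Lipschitz extensions $\phi_\varepsilon$ on $\overline{\iota_B(L_B)}$. But $2$-Lipschitz plus monotone does not give strict monotonicity across the gaps of $\overline{\iota_B(L_B)}$. Concretely, suppose $\pi_0(L_B)$ accumulates at some interior point $y\in J_0$ from both sides, while the corresponding images $\tilde\pi_1(L_B)$ leave a positive-length gap $(z^-,z^+)\subset J_1$. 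Then $\iota_B(L_B)$ has a positive-length gap $[\alpha,\beta]$, and the left and right limits of $\pi_0$ at this gap are both equal to $y$: $\phi_0(\alpha)=\phi_0(\beta)=y$. No monotone interpolation on $[\alpha,\beta]$ can make $\tilde f_{B,0}$ injective there, while $\tilde f_{B,1}$ \emph{is} injective there (interpolating from $z^-$ to $z^+$); the resulting ``holonomy'' $\tilde f_{B,1}\circ\tilde f_{B,0}^{-1}$ is multivalued at $y$, which is fatal. Your averaging prevents the case where \emph{both} $\phi_0$ and $\phi_1$ are constant on a gap (since if $\phi_0(\alpha)=\phi_0(\beta)$ and $\alpha<\beta$, then necessarily $\phi_1(\alpha)<\phi_1(\beta)$), but it does not prevent exactly one of them being constant, which is the problematic case. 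Your closing paragraph shows you recognize there is an obstruction about accumulation behaviour, but mis-diagnoses it as an issue of \emph{existence} of the extension rather than \emph{injectivity}; the Lipschitz bound fixes the former but not the latter.

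The paper's proof avoids this by actively \emph{choosing} where to place the subdivision points in $D$: the ``halving'' condition in the inductive choice of $L_i$ guarantees that every connected component of $D\setminus\overline{L_\infty}$ of positive length has its endpoints actually in some $L_{i_0}$, which rules out the scenario above. (If the gap $(z^-,z^+)\subset J_1$ were nonempty, its endpoints would be subdivision points, hence eventually non-orphan in $B$, hence endpoints $\pi_1(c^-),\pi_1(c^+)$ of leaves with $c^+$ the immediate successor of $c^-$ in $L_B$ --- contradicting the assumed two-sided accumulation of $\pi_0$ at $y$.) After that choice the paper proves the band holonomies $h_i$ and their inverses converge uniformly, so the limit $h_\infty$ is a homeomorphism, and the attaching maps $\tilde f_{B,1}=h_\infty\circ f^{(0)}_{B,0}$ are automatically injective. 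Your proof skips this step entirely: you never specify how the subdivision points are placed in $D$, and the averaging alone cannot substitute for that choice. A secondary concern, related to the same omission, is that you propose to do the construction ``independently for every band,'' but a given subdivision point typically lies in bases of several bands and must receive a single position in $D$; as written, your per-band choices of $\iota_B$ and $\tilde f_{B,\varepsilon}$ are not guaranteed to be compatible across bands sharing a subdivision point.
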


\begin{proof}

Embed $D$ in $\bbR$ and endow it with the induced metric. Denote by $|C|$ the diameter
of a subset $C\subset D$.
Let $L_i=V(\call_i)$ be the set of subdivision points of $\call_i$.
Recall that we have some freedom in the choice of the points of $L_i$ 
as long as we don't change the induced ordering of the points.
Thus, we can choose inductively $L_i$ so that $L_{i}\supset L_{i-1}$, 
and so that the following holds.
For each component $C$ of $D\setminus L_{i}$, let  $C'$ 
be the component of $D\setminus L_{i-1}$ containing $C$.
Then either $C=C'$, or $|C|\leq |C'|/2$.

We also have some freedom in the choice of attaching maps
of each band $f_{B,\eps}^{(i)}:[a,b]\times\{0,1\}\ra D$ 
used to define the prelamination $\call_i$.
By modifying $f_{B,\eps}^{(i)}$  with respect to  
 $L_i$, we can assume that 
$f_{B,\eps}^{(i)}$ coincides with $f_{B,\eps}^{(i-1)}$ on each $C$ such that $C=C'$
(formally, we should rather say that they coincide on $f_{B,\eps}\m(C)$, but to
keep notations lighter, we will make this abuse of notations).

Let $L_\infty=\cup_{i\geq 0} L_i$.
Our inductive choice of $L_i$ ensures that for any connected component $C$ of
$D\setminus \ol{L_\infty}$, its endpoints lie in some $L_{i_0}$ so $C$ is actually a connected component
of $D\setminus L_{i_0}$. 
Say that a connected component of $D\setminus L_{i_0}$ is \emph{stabilised} if it is also a connected component of $D\setminus \ol{L_\infty}$.
In this case, any oriented band $B$ whose initial base intersects (and therefore contains) $C$,
the restriction to $C$ of the attaching map $f_{B,0}^{(i)}$ is independent of $i\geq i_0$.
Up to enlarging $i_0$, we can assume that the endpoints of $C$ are not orphan in $\call_{i_0}$. 
It follows that the image $C'$ of $C$ under the holonomy of $B$
is also a stabilised component, so the restriction to $C$ of the holonomy of $B$ does not depend on $i\geq i_0$. 

We fix a band $B$ with bases $J_0,J_1\subset D$.
We claim that $h_i=f_{B,1}^{(i)}(f_{B,0}^{(i)}){}\m:J_0\ra J_1$ converges uniformly as $i$ goes to infinity.
Fix some $\eps>0$. 
Let $U$ be the (finite) union of all connected components of $D\setminus \ol{L_\infty}$
of diameter at least $\eps/2$. Denote by $N_i\subset L_i$ the set of non-orphan subdivision points of $\call_i$,
and note that $\bigcup_{i>0} N_i=\bigcup_{i>0} L_i$.
Let $i_0$ be such that $\partial U\subset N_{i_0}$, 
and that all connected components of 
$(D\setminus U)\setminus N_{i_0}$ have diameter at most $\eps$.

Consider $x\in J_0$. If  $x\in U$, then $h_j(x)=h_{i}(x)$ for all $j,i>i_0$. 
If $x\notin U$, let $[s,s']$ be the smallest interval containing $x$ such that 
$s$ and $s'$ are the endpoints of leaf segments of $B$.
Then $h_i(s)$ and $h_i(s')$ are independent of $i$, and are at distance at most $\eps$
by choice of $i_0$.
Since for all $i$, $h_i(x)\in [h_i(s),h_i(s')]$, we get $|h_i(x)-h_j(x)|\leq \eps$ for all $i,j\geq i_0$, 
which proves uniform convergence to some $h_\infty$. Since $h_i\m$ converges by the symmetric argument,
$h_\infty$ is a homeomorphism.

Finally, we can define the foliation using the attaching maps $\Tilde f_{B,\eps}$ defined by
$\Tilde f_{B,0}=f_{B,0}^{(0)}$
and $\Tilde f_{B,1}=h_\infty\circ f_{B,0}^{(0)}$.

It is now clear that $\calf$ represents $\call_i$: indeed 
$\call_i$ is an exhaustion of the union of the leaves it intersects 
because no subdivision point of $\call_i$ remains orphan forever.
\end{proof}

\begin{rem}\label{rem_complement_speciales}
Although the foliation is meaningless in the complement $\Sigma^*$ of
closure of special leaves, it can be convenient to ensure 
that the foliation on $\Sigma^*$ is a twisted product, in other words, 
that the holonomy of any $\Sigma$-word preserving a connected component $J$ of $\Sigma^*\cap D$
and preserving the orientation has to be the identity on $J$.
This is easily done by choosing the attaching maps in the proof above to be affine
on the complement of the subdivision points.
Also note that the measure to be defined in next section is zero on $\Sigma^*$.
\end{rem}

\subsubsection{The limit invariant measure}

\begin{prop}\label{prop_measure}
Consider an infinite sequence of prelaminations $\call_1\prec\call_2\prec\dots$ represented
in some topological foliation $\calf$.
Assume that for each $i$, $\call_i$ has an invariant combinatorial measure.

Then either the exponent of periodicity of $\call_i$ goes to infinity (see Definition \ref{dfn_exponent}), or 
there exists a measure $\mu$ on $D$, which has no atom and which is invariant under the holonomy of $\calf$.
\end{prop}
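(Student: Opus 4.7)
The plan is to obtain $\mu$ as a weak-$*$ limit of rescaled combinatorial transverse measures carried by the $\call_i$, to verify invariance under the holonomy of $\calf$ by a continuity argument, and then to argue in the contrapositive that an atom in $\mu$ forces the exponent of periodicity of $\call_i$ to diverge.

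For the construction, I embed $D\subset \bbR$ and, for each $i$, let $\mu_i$ be the Borel measure assigning mass $l^{(i)}_e$ distributed uniformly (with respect to Lebesgue on $D$) to each elementary segment $e$ of $\call_i$. Setting $\bar\mu_i = \mu_i/\mu_i(D)$ and invoking weak-$*$ compactness of probability measures on the compact space $D$, I extract a subsequence $\bar\mu_{i_k}\to \mu$. For invariance, it suffices to treat the holonomy $h_B$ of a single band $B$, since arbitrary holonomies are compositions. For a continuous $f$ supported on the base $J_1$, I decompose $\dom B$ and $J_1$ into adapted subsegments of $\call_i$: the invariance of $l^{(i)}$ under the prelamination holonomy $h_B^{(i)}$ matches these subsegments in pairs of equal mass, so the discrepancy between $\int_{\dom B}(f\circ h_B)\,d\bar\mu_i$ and $\int_{J_1}f\,d\bar\mu_i$ is controlled by the oscillation of $f$ on adapted subsegments. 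Uniform continuity of $f$, combined with the fact that non-stabilised components of $D\setminus V(\call_i)$ shrink to $0$ (from the proof of Proposition \ref{prop_topological}) and with the twisted-product structure of $\calf$ on stabilised components (Remark \ref{rem_complement_speciales}, which makes the contribution on stabilised pieces exactly preserved), drives the discrepancy to $0$, yielding the desired invariance of $\mu$ in the limit.

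For the atom analysis, suppose $\mu(\{x\})=m>0$. Since each $\bar\mu_i$ is absolutely continuous with respect to Lebesgue on $D$, this atom must arise from a concentration of $l^{(i)}$-mass onto intervals around $x$ of shrinking Lebesgue diameter. Along a subsequence I extract adapted intervals $J_i\ni x$ with diameter tending to $0$ and $l^{(i)}(J_i)\geq (m/2)\,|l^{(i)}|$. The $\call_i$-holonomy orbit of $J_i$ consists of pairwise disjoint adapted intervals of equal mass, hence has cardinality at most $2/m$; likewise the $\calf$-orbit of $x$ is a finite set of atoms of $\mu$. From the infinitude of the $\calf$-holonomy pseudogroup at $x$ (granted by the unboundedly growing cluster of subdivision points of $\call_i$ accumulating at $x$) and the finiteness of that orbit, a pigeonhole argument produces a reduced $\Sigma$-word $w$ with $h_w(x)=x$. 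Passing to an appropriate power and using that $\Phi$ has no inversion, I may assume $\phi_w=\id$ and $h_w$ orientation-preserving near $x$. If $h_w$ acts nontrivially on one side of $x$, monotonicity produces an infinite chain of consecutive disjoint intervals $[h_w^k(y),h_w^{k+1}(y)]$, each mapped to the next by $h_w$ with trivial twist; for $i$ large any $N$ of them are adapted to $\call_i$, so $\mathrm{exponent}(\call_i)\geq N$. The alternative case, $h_w$ identically trivial near $x$, is incompatible with the atomic concentration of $\bar\mu_i$ on neighborhoods of $x$ of shrinking Lebesgue size, since the foliation would then be locally a product and $\bar\mu_i$ would spread uniformly over a segment of positive Lebesgue diameter.

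The main obstacle is this last extraction, in which one must produce iterates of a holonomy word that are \emph{consecutive} in $D$ (not merely pairwise disjoint) and compatible with $\call_i$. This relies on the dynamics of the holonomy pseudogroup at $x$, on delicate use of the non-inversion hypothesis on $\Phi$ to trivialise twists after squaring, and on the pseudo-singular character of leaves of $\call_i$ ensuring that the $\call_i$-holonomy faithfully tracks the $\calf$-holonomy on the cluster near $x$. Once this dynamical fact is secured, the dichotomy of the proposition follows immediately: if the exponent of periodicity stays bounded, then $\mu$ has no atom, and the construction of Steps 1 and 2 furnishes the required atomless invariant measure.
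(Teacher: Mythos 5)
Your construction of $\mu$ takes a genuinely different route from the paper's: you take a weak-$*$ limit of the normalised, Lebesgue-spread combinatorial transverse measures $l^{(i)}$ of the $\call_i$, whereas the paper follows Plante and builds $\mu$ as a limit of normalised counting measures $\mu_n=\frac{1}{\#L_n}\sum_{s\in L_n}\delta_s$ carried by the intersection of a single infinite singular leaf $l_0$ with $D$; the paper's polynomial-growth lemma (which itself uses the existence of an invariant combinatorial measure) produces a subsequence along which the error term $\#O_{n_k}/\#L_{n_k}$ vanishes, and this directly yields invariance on interiors of bases. Your invariance argument by oscillation is plausible, but note that it requires more care than you give it: the combinatorial invariance of $l^{(i)}$ constrains only segments with non-orphan endpoints, and the claim that the stabilised pieces contribute an exactly zero discrepancy depends on choosing the $\calf$-attaching maps affine on $\Sigma^*$ and on the $\call_j$-subdivision points having become non-orphan in $\call_i$; moreover the paper separately handles the invariance of the one-sided atomic masses $\mu_\pm$ at endpoints of bases (Lemma \ref{lem_holonomie2}), a step your sketch omits.

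The genuine gap, however, is in the atom analysis. First, your statement that the $\calf$-holonomy pseudogroup at $x$ is infinite ``granted by the unboundedly growing cluster of subdivision points of $\call_i$ accumulating at $x$'' does not follow: the subdivision points accumulating at $x$ lie on special leaves, but $x$ itself may sit on a finite leaf (indeed, by your own orbit bound the $\calf$-orbit of $x$ has at most $2/m$ points), so there is no reason infinitely many $\Sigma$-words have $x$ in the domain of their holonomy. The paper sidesteps this precisely because its $\mu$ is carried by $l_0$: $\mu_+(a)>0$ forces $l_0\cap D$ to accumulate on $a$, and the dynamics are then run at a point $x\in l_0\cap D$ near $a$ (not at $a$ itself), using the finite graph $\calc$ built from the leaf through $a$ and the fact that the leaf through $x$ is infinite. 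With your construction of $\mu$ you would first need to show that one of the finitely many special leaves accumulates at the atom, and then run the paper's $\calc$-graph argument there; your pigeonhole producing $w$ with $h_w(x)=x$ is not the right object, since what is needed is a word that \emph{moves} points of an infinite leaf near the atom strictly towards it. Second, your dismissal of the case ``$h_w$ identically trivial near $x$'' does not hold: if every holonomy germ at $x$ is trivial, the holonomy relations impose no constraint on $l^{(i)}$ near $x$, and nothing prevents the combinatorial masses from concentrating there. The ``local product $\Rightarrow$ mass spreads uniformly'' implication is false, because the $l^{(i)}$ are chosen independently of Lebesgue measure. You would need the argument the paper actually gives, namely that if every loop $w_e$ of $\calc$ fixed $x$ for $x$ on $l_0$ near $a$, then the leaf through $x$ would be finite, contradicting that $l_0$ is infinite.
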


In particular, if no $\call_i$ is rejected by the prelamination analyser, then $\calf$ has a non-atomic
transverse invariant measure.

\newcommand{\Orph}{\mathrm{Orph}}

\begin{proof}
The construction follows \cite{Plante_measure}.
Denote by $\calf^s$ the (finite) union of singular leaves of $\calf$.
We first note that if $\calf^s$ is finite (\ie contains only finitely many leaf segments), 
then $\call_i$ is eventually constant.
Indeed, consider $J\subset D$ a segment meeting the singular leaves of $\calf$ only at its endpoints.
Since $\call_i$ preserves a combinatorial measure, the union of all non-singular pseudo-singular leaves of $\call_i$ 
intersects $J$ in at most one point (its midpoint).
Since all leaves of $\call_i$ are pseudo-singular, this bounds the number of leaf segments in $\call_i$.

Consider $l_0$ an infinite singular leaf of $\calf$, and choose a base point $x_0\in l_0$.
Let $l_0(n)$ be the ball of radius $n$ centred at $x_0$ in $l_0$ viewed as a graph, and consider $L_n=l_0(n)\cap D$.

We claim that $\#L_n$ grows polynomially with $n$:
\begin{lem}
$\#L_n\leq 2(2n+1)^b$ where $b$ is the number of bands of $\Sigma$.
\end{lem}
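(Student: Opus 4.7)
The plan is to encode each point of $L_n$ by a compact numerical invariant of a shortest path from $x_0$, and then to bound the number of possible invariants.

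First I would, for each $y \in L_n$, fix a shortest edge-path $\gamma_y$ from $x_0$ to $y$ in the leaf $l_0$ viewed as a graph (edges are leaf segments of $\calf$, vertices are the points of $l_0 \cap D$); by the definition of $L_n$, this path consists of at most $n$ leaf segments.

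Next I would encode $\gamma_y$ by its \emph{net crossings vector} $v_y = (k_1,\ldots,k_b) \in \bbZ^b$, where $k_i$ counts the crossings of band $B_i$ in the positive direction minus those in the negative direction along $\gamma_y$. Since the total length of $\gamma_y$ is at most $n$, one has $|k_i| \leq n$ for every $i$, so $v_y$ ranges in the set $\{-n,\ldots,n\}^b$ of cardinality $(2n+1)^b$.

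The core of the argument is then to show that the map $y \mapsto v_y$ is at most $2$-to-$1$; combined with the previous count this yields $\#L_n \leq 2(2n+1)^b$. To establish this I would argue that if two shortest paths $\gamma_y$ and $\gamma_{y'}$ have the same net crossings vector, then the concatenation $\gamma_y \cdot \ol{\gamma_{y'}}$ corresponds to a $\Sigma$-word with trivial abelianized image in $\bbZ^b$; exploiting the one-dimensionality of $D$, together with the monotonicity of the holonomy homeomorphisms of $\calf$ on intervals (which is essentially explicit in the complement of the special leaves, by the choice of attaching maps allowed in Remark \ref{rem_complement_speciales}), one concludes that $y$ and $y'$ must coincide or be the two endpoints of a common leaf segment, which accounts for the factor of $2$.

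I expect the main obstacle to be this injectivity-up-to-$2$ step. Since band holonomies do not commute in general, $\Sigma$-words with the same abelianization can in principle land at very different points; the argument must leverage the restrictive structure of band complexes---that each band has a bounded base, that $D$ is a one-dimensional domain with a well-defined local orientation, and that the shortest-path minimality of $\gamma_y$ rules out backtracking---in order to preclude any unbounded discrepancy between $y$ and $y'$.
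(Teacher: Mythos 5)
The key step your proposal is missing is the use of the \emph{invariant combinatorial measure} on $\call_i$, which is a hypothesis in force at this point of the argument. The paper's proof proceeds by choosing $i$ large enough that $l_0(n)\subset\call_i$, embedding $D$ in $\bbR$ \emph{isometrically} for the combinatorial measure of $\call_i$, and observing that each band holonomy then extends to an isometry of $\bbR$. The group $G\subset\Isom(\bbR)$ generated by these $b$ isometries has balls of radius $n$ of cardinal at most $2(2n+1)^b$ (the factor $2$ for orientation, $(2n+1)^b$ for the translation part written as $\sum_i c_i t_i$ with each $\abs{c_i}\le n$), and $L_n$ lies in the orbit of $x_0$ under this ball. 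Your approach never invokes the measure, so the holonomies remain arbitrary homeomorphisms of intervals; in that generality nothing prevents an orbit that grows exponentially, and no abelianization count can help. The ``monotonicity on intervals'' and the twisted-product structure away from special leaves (Remark \ref{rem_complement_speciales}) do not substitute: monotone homeomorphisms need not commute and do not give an isometry group.

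Moreover, even granting the isometric structure, the invariant you propose (the net-crossings vector, i.e.\ the abelianization of the $\Sigma$-word) is the wrong one. When some band holonomy reverses orientation --- which does occur here, because of twisting and M\"obius strips --- the subgroup of $\Isom(\bbR)$ is non-abelian and the endpoint is \emph{not} a $2$-to-$1$ function of the abelianization: for two reflections $h_1,h_2$ with distinct fixed points, the words $h_1h_2$ and $h_2h_1$ have the same abelianization but are inverse translations, and longer words with equal abelianization produce ever more distinct points. The invariant the paper implicitly uses is the pair $(\eps, (c_1,\dots,c_b))$ with $\eps\in\{\pm1\}$ the orientation and $c_i$ the signed coefficient of $t_i$ in the translation part, where signs accumulate through orientation flips along the word; this is bounded by $\abs{c_i}\le n$ and \emph{does} determine the isometry, but it is not the net-crossings vector. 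So the injectivity-up-to-$2$ step you flag as the main obstacle is a genuine failure of the argument as written, not merely a gap to be filled with more care.
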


\begin{proof}
Fix some $n$, and consider $i$ large enough so that the finite graphs $l_0(n)$ is contained in the prelamination $\call_i$.
Since $\call_i$ has an invariant combinatorial measure, 
one can embed $D$ in $\bbR$ isometrically with respect to this combinatorial measure.
For each band $B$ of $\Sigma$, the holonomy of $B$ extends to an isometry $\gamma_{B}$ of $\bbR$.
Let $G<\Isom(\bbR)$ be the subgroup generated by the isometries $\gamma_{B}$.
One easily checks that the ball of radius $n$ of the Cayley graph of $G$ 
has cardinal $2(2n+1)^b$ where $b$ is the number of bands of $\Sigma$. 
If follows that the set of images of $x_0$ under elements of length at most $n$ of $G$ has cardinal at most
$2(2n+1)^b$. Since $L_n$ is contained in this set, the lemma follows.
\end{proof}

Consider the normalised counting measure $\mu_n=\frac{1}{\#L_n} \sum_{s\in L_n}\delta_s$
where $\delta_s$ is the Dirac mass at $s$.
Consider the set $O_n=L_{n}\setminus L_{n-1}$. This is the locus where $\mu_n$ might fail to be preserved:
if $B$ is a band with bases $J_0,J_1$, and if $x\in J_0\cap L_n\setminus O_n$,
then $h_B(x)\in L_n$ so $\mu_n(x)=\mu_n(h_B(x))>0$.
Using the same argument for $B\m$, this means that $h_B$ preserves the measure $\mu_n$ except maybe
at points of $O_n\cup h_B(O_n)$.
Because of polynomial growth,
there exists a subsequence $n_k$ such that the proportion $\frac{\#O_{n_k}}{\#L_{n_k}}$ converges to $0$
as $k$ goes to $\infty$.

Let $\mu$ be any accumulation point of $\mu_{n_k}$ for the weak-* topology (existence is ensured by Banach-Alaoglu's theorem).
If we know that the endpoints of the bases of bands have zero measure,
the following lemma proves that $\mu$ is invariant under the holonomy.

\begin{lem}\label{lem_holonomie}
Consider a band $B$ with bases $J$, $J'$ and with holonomy $h:J\ra J'$.
Then $\mu_{|\rond{J}'}=h_*\mu_{|\rond J}$.
\end{lem}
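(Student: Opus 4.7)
The plan is to test the two measures against continuous functions compactly supported in the interior $\rond J'$ and exploit the key fact, already extracted from the construction, that $\#O_{n_k}/\#L_{n_k}\to 0$.

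Fix $g\in C(D)$ with $\mathrm{supp}(g)\subset \rond J'$, and define $\Tilde g:D\to\bbR$ by $\Tilde g(x)=g(h(x))$ for $x\in J$ and $\Tilde g(x)=0$ otherwise. Because $h:J\to J'$ is a homeomorphism of closed segments and $g$ vanishes near $\partial J'$, $\Tilde g$ is continuous on $D$ with support in $\rond J$. By weak-$*$ convergence along the subsequence $n_k$, it will suffice to show
\[
\int g\,d\mu_{n_k} \;-\; \int \Tilde g\,d\mu_{n_k} \;\xrightarrow[k\to\infty]{}\; 0,
\]
because this yields $\int g\,d\mu = \int \Tilde g\,d\mu=\int g\,d(h_*\mu_{|\rond J})$ for every such $g$, which is the desired equality of measures on $\rond J'$.

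Unpacking the counting measures, the above difference equals
\[
\frac{1}{\#L_{n_k}}\Bigl(\sum_{s\in L_{n_k}\cap \rond J'} g(s)\;-\;\sum_{t\in L_{n_k}\cap \rond J} g(h(t))\Bigr),
\]
and the key combinatorial step is to compare the finite sets $L_{n_k}\cap \rond J'$ and $h(L_{n_k}\cap\rond J)$. If $t\in (L_{n_k}\cap\rond J)\setminus O_{n_k}$, then $t$ lies in $L_{n_k-1}$, so $h(t)$, being the other endpoint of a leaf segment of $B$ based at $t$, lies in $L_{n_k}$; symmetrically, if $s\in (L_{n_k}\cap\rond J')\setminus O_{n_k}$, then $h^{-1}(s)\in L_{n_k}$. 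Consequently the symmetric difference of $L_{n_k}\cap\rond J'$ and $h(L_{n_k}\cap \rond J)$ is contained in $O_{n_k}\cup h(O_{n_k}\cap\rond J)$, and in particular has cardinality at most $2\#O_{n_k}$.

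Therefore the displayed difference is bounded in absolute value by $2\|g\|_\infty \#O_{n_k}/\#L_{n_k}$, which tends to $0$ by construction of the subsequence. The only delicate point is the necessity of restricting to functions with compact support in the interior: a point at the boundary of $J$ may acquire a $\mu$-mass in the limit which is not matched across the band, and this is precisely why the statement concerns $\rond J$ and $\rond J'$ rather than $J$ and $J'$. Everything else is a routine bookkeeping of which points in $L_{n_k}$ near $J$ can fail to be paired by $h$, and that bookkeeping is cleanly controlled by $O_{n_k}$.
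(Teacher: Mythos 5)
Your argument is correct and takes essentially the same route as the paper: test the two measures against continuous functions compactly supported in the interiors, pair off the points of $L_{n_k}$ that are not in $O_{n_k}$ via the holonomy $h$ of the band, and conclude from $\#O_{n_k}/\#L_{n_k}\to 0$. The only cosmetic difference is that the paper pushes a test function $f$ on $\rond{J}$ forward to $f'=f\circ h^{-1}$ on $\rond{J}'$, whereas you pull $g$ on $\rond{J}'$ back to $\Tilde g=g\circ h$ on $\rond{J}$, with slightly more explicit bookkeeping of the symmetric difference.
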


\begin{proof}
\newcommand{\intd}[1]{\int #1\, \mathrm{d}}
Consider $\eps>0$ and a continuous function $f:D\ra\bbR $ with support in $\rond J$, and $f'=f\circ h\m$ with support in $\rond J'$.
Then for all $\eps>0$, for $k$ large enough,
$$\abs{\intd{f'}\mu - \intd{f}\mu }  \leq  \frac{1}{\#L_{n_k}}\left\lvert \sum_{s'\in L_{n_k}} f'(s') - \sum_{s\in L_{n_k}} f(s) \right\rvert +\eps.$$
For each $s\in L_{n_k}\setminus O_{n_k}\cap \rond{J}$, the point $s'=h(s)$ lies in $L_{n_k}\cap \rond{J'}$,
and the two corresponding  terms $f(s)$ and $f'(s')$ cancel.
Thus, 
$$\abs{\intd{f'}\mu - \intd{f}\mu }  \leq  \frac{1}{\#L_{n_k}} \sum_{s'\in O_{n_k}} \abs{f'(s')}+\frac{1}{\#L_{n_k}}\sum_{s'\in O_{n_k}} \abs{ f(s)} +\eps$$
$$\leq   \frac{\#O_{n_k}}{\#L_{n_k}}||f||_\infty+\eps$$
Since the proportion  $\frac{\#O_{n_k}}{\#L_{n_k}}\ra 0$ converges to zero, 
the lemma follows.
\end{proof}

To prove that $\mu$ is invariant under the holonomy, we need to prove that $\mu$ has no atom.
Note that the lemma implies that if $\mu(\{x\})>0$ for some $x\in \rond{J}$, then $\mu(\{h(x)\})=\mu(\{x\})$

Consider $x\in D$  and a segment $J=[x,a]$. Consider the limit $\mu_J$ of the restriction of $\mu_{n_k}$ to $J$ (up to passing to a subsequence).
Clearly, $\mu_J$ coincides with $\mu$ on $\rond{J}$, but $\mu_J(\{x\})$ may differ from $\mu(\{x\})$  because
leaves of $\call_i$ can go in the neighbourhood of $x$ without entering $J$.
If $J'=[x,b']\subset J$, then $\mu_{J'}(\{x\})=\mu_J(\{x\})$. We thus denote by $\mu_+(x)=\mu_{[x,x+t]}(\{x\})$ and 
$\mu_{-}(x)=\mu_{[x-t,x]}(\{x\})$ for some (any) $t>0$.
Since $\mu_n(\{x\})\ra 0$ as $n\ra \infty$, one gets $\mu_+(x)+\mu_-(x)=\mu(\{x\})$.
The proof of Lemma \ref{lem_holonomie} directly extends to the following:

\begin{lem}\label{lem_holonomie2}
Let $J$ be a base of some band $B$ with holonomy $h:J\ra J'$, and assume that $[x,x+\eps t]\subset J$ for some $\eps=\pm1$, $t>0$.
Let $\eps'=\pm 1$ be the sign of $h(x+\eps t)-h(x)$ for $t>0$ small enough.

Then $\mu_{\eps'}(h(x))=\mu_{\eps}(x)$.  \qed
\end{lem}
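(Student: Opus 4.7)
The plan is to mimic the argument of Lemma \ref{lem_holonomie} using test functions supported on one side of $x$, then extract the atomic mass by letting the support shrink. Without loss of generality assume $\eps = +1$, and fix $t>0$ small enough that the sign $\eps'$ of $h(x+\eps't)-h(x)$ is constant; this is possible because $h$ is a homeomorphism of an interval, hence monotonic in a neighbourhood of $x$. For each $\delta\in(0,t]$, introduce the tent function $f_\delta:D\to[0,1]$ supported on $[x,x+\delta]\subset J$, equal to $1$ at $x$ and decreasing linearly to $0$ at $x+\delta$. Define $f'_\delta:D\to[0,1]$ to be $f_\delta\circ h\m$ on $J'$ (supported on $h([x,x+\delta])$, which is a one-sided neighbourhood of $h(x)$ in the $\eps'$-direction) and zero outside $J'$; continuity of $f'_\delta$ follows from $h$ being a homeomorphism, and $f'_\delta(h(x))=1$.

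The next step is the same counting identity as in the proof of Lemma \ref{lem_holonomie}. For every $s\in L_{n_k}\cap J\setminus O_{n_k}$, one has $h(s)\in L_{n_k}\cap J'$ and the contributions $f_\delta(s)$ and $f'_\delta(h(s))$ cancel in the comparison of the empirical sums
$\frac{1}{\#L_{n_k}}\sum_{s\in L_{n_k}\cap J} f_\delta(s)$ and $\frac{1}{\#L_{n_k}}\sum_{s'\in L_{n_k}\cap J'} f'_\delta(s')$.
The residual contribution is bounded by $2\|f_\delta\|_\infty\cdot \#O_{n_k}/\#L_{n_k}$, which tends to $0$ as $k\to\infty$ by choice of the subsequence $n_k$. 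Passing to the weak-$*$ limits of $\mu_{n_k}|_J$ and $\mu_{n_k}|_{J'}$ therefore yields
$$\int_J f_\delta \, d\mu_J \;=\; \int_{J'} f'_\delta \, d\mu_{J'}.$$

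Finally, let $\delta\to 0$. On the left, $f_\delta$ is dominated by $\mathbf{1}_J$ and converges pointwise to $\mathbf{1}_{\{x\}}$, since $f_\delta$ is supported on $[x,x+\delta]$ and equals $1$ only at $x$; by dominated convergence the left-hand side tends to $\mu_J(\{x\})=\mu_+(x)=\mu_\eps(x)$. On the right, uniform continuity of $h\m$ on the compact $J'$ forces the support $h([x,x+\delta])$ to shrink to $\{h(x)\}$, so $f'_\delta$ converges pointwise to $\mathbf{1}_{\{h(x)\}}$, supported on the $\eps'$-side of $h(x)$; hence the right-hand side tends to $\mu_{J'}(\{h(x)\})=\mu_{\eps'}(h(x))$. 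Equality of these two limits is the desired conclusion. The only non-routine point is verifying that $f'_\delta$ shrinks in a one-sided way around $h(x)$, which is a direct consequence of the monotonicity of $h$ near $x$ and justifies the appearance of the sign $\eps'$ in the statement.
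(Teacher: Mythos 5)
Your approach — a tent-function localisation combined with the counting argument of Lemma \ref{lem_holonomie} and dominated convergence — is the right one, but there is a genuine error in the identification of the limiting measures. You pass to weak-$*$ limits of $\mu_{n_k}|_J$ and $\mu_{n_k}|_{J'}$ where $J,J'$ are the \emph{whole} bases of the band, and then identify $\mu_J(\{x\})$ with $\mu_\eps(x)$ and $\mu_{J'}(\{h(x)\})$ with $\mu_{\eps'}(h(x))$. This fails whenever $x$ is interior to $J$, which the hypothesis allows: the paper defines $\mu_\eps(x)$ via restriction to the \emph{half-interval} $[x,x+\eps t]$, whereas if $x$ is interior to $J$ then $\mu_J$ agrees with $\mu$ near $x$, so $\mu_J(\{x\})=\mu(\{x\})=\mu_+(x)+\mu_-(x)$, not $\mu_\eps(x)$. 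The mistake is tied to an overlooked continuity problem: your one-sided ramp $f_\delta$ (and likewise $f'_\delta$) has a jump at $x$ (resp.\ $h(x)$) as a function on $J$ (resp.\ $J'$) whenever these points are interior, so the convergence $\int_J f_\delta\,d\mu_{n_k}\to\int_J f_\delta\,d\mu_J$ is unjustified and in fact false. Since $f_\delta$ is supported on $[x,x+\delta]$ and continuous there, the empirical integrals actually converge to $\int f_\delta\,d\mu_{[x,x+\delta]}$, not to $\int f_\delta\,d\mu_J$; the discrepancy is exactly $\mu_-(x)$.

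The fix is small: fix $t>0$ with $I:=[x,x+\eps t]\subset J$, set $I':=h(I)$, and run the whole argument with the measures $\mu_I,\mu_{I'}$ (weak-$*$ limits of $\mu_{n_k}|_I$ and $\mu_{n_k}|_{I'}$ along a common subsequence). Then $f_\delta|_I$ and $f'_\delta|_{I'}$ are genuinely continuous, the cancellation estimate and the $\delta\to 0$ limit go through unchanged, and $\mu_I(\{x\})=\mu_\eps(x)$, $\mu_{I'}(\{h(x)\})=\mu_{\eps'}(h(x))$ hold by definition. As written, your argument proves only the weaker symmetric identity $\mu(\{x\})=\mu(\{h(x)\})$, losing exactly the one-sided information that is the point of the lemma.
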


The following lemma ends the proof of Proposition \ref{prop_measure}.
\end{proof}

\begin{lem}
If $\mu$ has an atom,
then $exponent(\call_i)\ra\infty$.
\end{lem}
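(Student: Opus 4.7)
My plan is to show that an atom of $\mu$ at a point $x$ forces, in each prelamination $\call_i$ with $i$ sufficiently large, a long segment of $D$ divided into many pieces that are all translates of each other under the holonomy of a single trivial-twist $\Sigma$-word, thereby yielding $exponent(\call_i)\to\infty$.

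First I would derive finiteness of a one-sided holonomy orbit of $x$. Writing $\mu(\{x\})=\alpha>0$, at least one of $\mu_+(x),\mu_-(x)$ is $\geq\alpha/2$; by Lemma~\ref{lem_holonomie2} this one-sided mass is preserved along the orbit of $x$, and since $\mu$ has total mass at most $1$, the orbit on the corresponding side has cardinality at most $2/\alpha$. On the other hand, $l_0$ meets $D$ near $x$ in arbitrarily many points: by the atomic-mass condition, $\#(L_{n_k}\cap U)\geq (\alpha/2)\#L_{n_k}$ for any neighbourhood $U$ of $x$ and $k$ large, and $\#L_{n_k}\to\infty$ (a bounded $\#L_n$ would make $l_0$ finite, contradicting its infiniteness as established at the start of the proof).

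Next I would extract a periodic pattern on one side of $x$. Each of the many intersections of $l_0$ with $U$ on a fixed side of $x$ arrives via one of finitely many ``passage types'' (band, base, and entry side), so pigeonhole yields arbitrarily many sharing a single type. Between two consecutive passes of the same type along $l_0$ lies a $\Sigma$-word; by further pigeonhole on the finite twisting group $\Phi$ and on the finite one-sided orbit of $x$, I may arrange infinitely many of these inter-pass words to coincide with a single $\Sigma$-word $w$ of trivial twist. Crucially, away from the special leaves of $\calf$ the foliation is a (twisted) product by Remark~\ref{rem_complement_speciales}, so $h_w$ acts as a translation on the transverse interval containing the relevant passes; this forces equal spacings $s_1<s_2<\cdots<s_M$ in $U$ with $h_w([s_j,s_{j+1}])=[s_{j+1},s_{j+2}]$ for each $j$.

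Finally I would transfer the pattern to $\call_i$. For $i$ large enough that $l_0(n_k)\subset\call_i$, each $s_j$ is a subdivision point of $\call_i$, the intervals $I_j=[s_j,s_{j+1}]$ are adapted to $\call_i$, and $h_w$ (of trivial twist, preserving orientation) maps $I_j$ to $I_{j+1}$; Definition~\ref{dfn_exponent} then gives $exponent(\call_i)\geq M-1$. Since $M$ grows like $\#(L_{n_k}\cap U)$, which is proportional to $\#L_{n_k}\to\infty$, this yields $exponent(\call_i)\to\infty$. The main obstacle will be the pigeonhole producing a single $w$ that works for many consecutive $j$'s: it relies essentially on the product structure of $\calf$ off the special leaves, which constrains holonomies between same-type entries in $U$ to be translations, determined by finite combinatorial data, rather than arbitrary orientation-preserving germs.
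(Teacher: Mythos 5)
Your opening reductions are correct and mirror the paper's: the one-sided mass preservation (Lemma~\ref{lem_holonomie2}) bounds the one-sided holonomy orbit of the atom, and the atom forces $l_0$ to accumulate at $x$ with $\#L_{n_k}\to\infty$. But the pigeonhole step that follows has a genuine gap. Knowing that arbitrarily many passes of $l_0$ through a neighbourhood $U$ of $x$ share the same \emph{passage type}, the same \emph{twist}, and hit the same point of the \emph{finite one-sided orbit} of $x$ does not constrain the actual inter-pass $\Sigma$-words to a finite set. The leaf $l_0$ is an infinite graph, and the $\Sigma$-words it traces between consecutive same-type passes are in general pairwise distinct and of unbounded length; nothing you are pigeonholing on bounds the words themselves. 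Since Definition~\ref{dfn_exponent} requires explicit $\Sigma$-words $w_i$ carrying $I_i$ to $I_{i+1}$ (and in your version you need a \emph{single} $w$ to act as a translation of the right period), you cannot produce them by this argument.

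The appeal to Remark~\ref{rem_complement_speciales} does not rescue this, for two reasons. First, the atom $x$ lies in the support of $\mu$, which is contained in the closure of the special leaves; the twisted-product structure holds only on the complement $\Sigma^*$, where moreover $\mu=0$. Second, even granting that $h_w$ is a translation germ, having $h_w(s_{j})=s_{j+1}$ for the chosen pass does not imply $h_w(s_{j+1})=s_{j+2}$: the inter-pass holonomy between consecutive same-type passes need not be the same germ, so you get one jump, not a ladder. The paper avoids both problems by a different device: it assembles the germs at $a$ (the atom) into a finite graph $\calc$, picks a maximal tree, and observes that some loop-word $w_e$ in $\calc\setminus T$ must satisfy $h_{w_e}(a)=a$ yet $h_{w_e}([a,x])\subsetneq[a,x]$ for $x\in l_0$ close to $a$ (else the leaf through $x$ would be finite, contradicting $x\in l_0$). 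Iterating this one \emph{contracting} word $w_e$ (passing to a power to kill the twist) produces, inside a single interval $[h_{w}^m(x),x]$, the required chain of adjacent intervals all carried to one another by $w$, and these live in $\call_i$ for $i$ large because $x$ is singular hence special. That step --- exhibiting a fixed word with a fixed germ at $a$ and a strict displacement of a nearby singular point --- is the missing idea in your proposal.
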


\begin{proof}
Consider $a\in D$ with $\mu(\{a\})>0$.
 Assume for instance that $\mu_+(a)>0$.

Let $l$ be the $\calf$-leaf through $a$.
Define the graph $\calc$ as follows: its vertex set
$V(\calc) \subset (l\cap D)\times \{+1,-1\}$ is defined by $(y,\eps)\in V(\calc)$ if 
there exists a $\Sigma$-word $w$ whose holonomy $h_w$ is defined on $[a,a+t]$ for some $t>0$, 
with $h_w(a)=y$ and $h_w(a+t)-h_w(a)$ of the sign of $\eps$.
Put an edge labelled $B$ between $(y,\eps)$ and $(y',\eps')$ if 
$h_B(y)=y'$ and $h_B(y+\eps t)-h_B(y)$ is defined and of the sign of $\eps'$ for $t>0$ small enough.
The natural map $V(\calc)\ra l$ sending $(y,\eps)$ to $y$ extends to 
a map $\calc\ra l$ sending edge to edge, and which is at most $2$ to $1$.
Let us choose the point   $(a,+1)$ (corresponding to the empty word) as a base point for  $\calc$.

By Lemma \ref{lem_holonomie2}, for all $(y,\eps)\in \calc$, $\mu_\eps(y)=\mu_+(a)$.
Since $\mu(D)<\infty$, $\calc$ is a finite graph.
Since $\mu_+(a)>0$, the singular leaf $l_0$ of $\calf$ 
accumulates on $a$ on the right.

Let $T$ be a maximal tree in $\calc$. For any  vertex $(y,\epsilon)\in \calc$, 
let us denote $[(a,+1),(y,\epsilon)]_T$ the unique path in $T$ joining the two points. Each such segment 
defines a  $\Sigma$-word $w_{(y,\epsilon)}$ whose domain  contains an interval  $[a,a+\eta_y]$ for some $\eta_y>0$.    For each oriented edge $e\in\calc$ not in $T$, labelled by $B$, joining $v_1$ to $v_2$, the $\Sigma$-word $w_e= w_{v_2}^{-1}B w_{v_1}$ has its domain containing some interval  $[a,a+\eta_e]$ for some $\eta_e>0$. Note that by definition of $\calc$, its holonomy  fixes $a$ and preserves the orientation.    

Let $\eta>0$ be such that the holonomy of each  $\Sigma$-word $w_e, e\in \calT\setminus T$ is defined on  $[a,a+\eta]$.

We claim that for any $x\in l_0\cap D$ close enough to $a$ on the right of $a$,
there exists an oriented edge $e\in \calc\setminus T$ such that $h_{w_e} ([a,x])\subsetneq [a,x]$.
Otherwise, take $x$  close enough to $a$ so that the set
$\bigcup_{v\in V(\calc)} h_{w_v}([a,x])$ does not contain an endpoint of a base of band.
Then any band defined on some $h_{w_v}(x)$ is defined on the whole interval $h_{w_v}([a,x])$.
If for all oriented edge $e\in   \calc\setminus T   $, $h_{w_e}(x)=x$, then the leaf through $x$ is finite since its 
 intersection with $D$ 
is the finite set  $\bigcup_{v\in   V(\calc) } h_{w_v}(x)$. This contradicts the fact that $x\in l_0$ since $l_0$ is infinite,
and proves our claim.

Consider an oriented edge $e\in   \calc\setminus T $ and $x\in l_0$ such that $h_{w_e}([a,x])\subsetneq [a,x]$.
Since $l_0$ is singular, $x$ is a subdivision point of $\call_i$ for $i$ large enough.
This does  not necessarily prevent the existence of a combinatorial measure for $\call_i$
because $a$ is not necessarily a subdivision point of $\call_i$. 
Up to replacing $w_e$ by some power $w$ of it, one can assume that the twisting morphism $\phi_{w}$ is trivial.
Then for any integer $m$, there exists $i$ large enough such that 
all segments $[x,h_w(x)],[h_w(x),h_w^2(x)],\dots,[h_w^{m-1}(x),h_w^m(x)]$
are mapped to each other under the holonomy of a power of $w$ in $\call_i$.
It follows that $exponent(\call_i)$ goes to infinity with $i$.

\end{proof}

\subsubsection{Collapsing segments of measure 0, Rips band complex.}

Now we assume that $\Sigma$ is endowed with its topological foliation $\calf$, and its non-atomic invariant transverse measure $\mu$.
Still, $\mu$ may fail to have full support.
We now define a \emph{Rips} band complex by collapsing connected components of the complement of the support of $\mu$ as follows.

Let $D_\mu$ be the quotient of $D$ by the equivalence relation $\mu([x,y])=0$  (recall that $\mu$ has no atom), 
and $\pi_\mu:D\ra D_\mu$ the quotient map.
For each band $B$ of $\Sigma$ with bases $J_0,J_1$, we consider the corresponding band $B_\mu$ with 
bases $\pi_\mu(J_0),\pi_\mu(J_1)$ and whose holonomy is induced by the holonomy of $B$. Let $\Sigma_\mu$ be the complex of bands on $D_\mu$ whose bands are the $B_\mu$.
The holonomy of the bands naturally defines a foliation $\calf_\mu$ on $\Sigma_\mu$.
We still denote by $\mu$  the measure induced by $\mu$ on $D_\mu$.
This measure is invariant under the holonomy of $\calf_\mu$.

Thus, one obtains a new foliated band complex $\Sigma_\mu$,
where one needs to generalise the definition of a band complex to allow 
some connected components of $D_\mu$ to be reduced to a point, and some bands to have
bases reduced to a point.
In this setting, we still can talk of leaves, singular leaves etc.

Since $\mu$ has no atom and has full support in $D_\mu$, 
it can be transported to the Lebesgue measure of a finite union of intervals of $\bbR$ by a homeomorphism.
Such a band complex $\Sigma_\mu$ with its measured foliation $(\calf_\mu,\mu)$ is thus
the object studied in  \cite{BF_stable}
or in \cite{GLP1} under the name system of isometries.
We call such a band complex a \emph{Rips} band complex.

We will use the same notations for the bands of $\Sigma$ and the corresponding bands of $\Sigma_\mu$.
In particular, we will view any $\Sigma$-word as a $\Sigma_\mu$-word, and conversely.

We now gather some simple facts about the projection $\Sigma\ra\Sigma_\mu$.

\begin{lem}\label{lem_singular}
  Consider $x\in D_\mu$ whose $\calf_\mu$-leaf is singular (resp. pseudo-singular). 

Then there exists $\Tilde x\in D$ with $\pi_\mu(\Tilde x)=x$ such that $\Tilde x$ lies in a singular (resp. pseudo-singular) leaf of $\calf$.
\end{lem}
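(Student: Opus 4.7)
The plan is to prove both cases by a forward induction along a leaf path in $\calf_\mu$, leveraging a rigidity property of the projection $\pi_\mu\colon D\to D_\mu$ that makes vertex-free preimages behave very simply.

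First I will establish two dual observations. \emph{(Vertex lifting)} If $v$ is a vertex of $\Sigma_\mu$ and $B$ is a band of $\Sigma$ with $v\in\pi_\mu(J_{B,\eps})$, then $\pi_\mu^{-1}(v)\cap J_{B,\eps}$ contains a vertex of $\Sigma$. Indeed, $\pi_\mu^{-1}(v)$ contains at least one vertex $\Tilde v$ of $\Sigma$, since every vertex of $\Sigma_\mu$ is the image of one; if $\Tilde v\notin J_{B,\eps}$ then $\pi_\mu^{-1}(v)\cap J_{B,\eps}$ is a proper nonempty subinterval of $\pi_\mu^{-1}(v)$, one of whose endpoints must be an endpoint of $J_{B,\eps}$---hence itself a vertex of $\Sigma$. \emph{(Rigidity)} Conversely, if $y\in D_\mu$ is \emph{not} a vertex of $\Sigma_\mu$ and $y\in\pi_\mu(J_{B,\eps})$, then $\pi_\mu^{-1}(y)\subseteq J_{B,\eps}$: otherwise the same boundary analysis would produce a vertex of $\Sigma$ inside $\pi_\mu^{-1}(y)$, forcing $y$ to be a vertex of $\Sigma_\mu$.

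For the singular case, pick a $\Sigma_\mu$-word $w=C_m\cdots C_1$ of minimal length realising a leaf path from some vertex $v$ of $\Sigma_\mu$ on the $\calf_\mu$-leaf of $x$ to $x$ itself. Set $y_0=v$, $y_i=h_{C_i\cdots C_1}^{\Sigma_\mu}(v)$, so $y_m=x$. By minimality, no $y_i$ with $1\leq i\leq m$ is a vertex of $\Sigma_\mu$, and rigidity gives $\pi_\mu^{-1}(y_i)\subseteq J_{C_i,1}\cap J_{C_{i+1},0}$ for $1\leq i<m$ (and $\pi_\mu^{-1}(x)\subseteq J_{C_m,1}$). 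Choose a vertex $\Tilde y_0\in\pi_\mu^{-1}(v)\cap J_{C_1,0}$ of $\Sigma$ by vertex lifting, and propagate forward $\Tilde y_i:=h_{C_i}^\Sigma(\Tilde y_{i-1})$; the rigidity inclusions guarantee $\Tilde y_{i-1}\in J_{C_i,0}$ at each step, so the construction is well-defined, and $\Tilde x:=\Tilde y_m\in\pi_\mu^{-1}(x)$ lies on the same $\calf$-leaf as the vertex $\Tilde y_0$, hence on a singular $\calf$-leaf.

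For the pseudo-singular case, if the $\calf_\mu$-leaf of $x$ is singular we fall back on the previous paragraph; otherwise the leaf contains the core of a M\"obius strip, provided by a reduced $\Sigma_\mu$-word $u=B_k\cdots B_1$ and a point $z$ on the leaf with $h_u^{\Sigma_\mu}(z)=z$ and $u$ reversing orientation around $z$. Since no point on this leaf is a vertex of $\Sigma_\mu$, rigidity gives $\pi_\mu^{-1}(z_i)\subseteq J_{B_i,1}\cap J_{B_{i+1},0}$ at every intermediate point $z_i$. Consequently $h_u^\Sigma$ is defined on all of $\pi_\mu^{-1}(z)$ and restricts to a self-homeomorphism of this interval; comparison with $h_u^{\Sigma_\mu}$ and the monotonicity of $\pi_\mu$ force this restriction to reverse orientation, so by the intermediate value theorem it admits a fixed point $\Tilde z\in\pi_\mu^{-1}(z)$. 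The pair $(u,\Tilde z)$ realises a M\"obius strip in $\Sigma$ with core through $\Tilde z$, which therefore lies on a pseudo-singular $\calf$-leaf. Transporting $\Tilde z$ to $\Tilde x\in\pi_\mu^{-1}(x)$ along a leaf path from $z$ to $x$ by the same forward induction---still valid because no intermediate point is a vertex---completes the proof. The hard step is the production of the fixed point for the lifted M\"obius holonomy; the rigidity principle makes everything else formal.
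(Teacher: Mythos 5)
Your proof is correct and follows essentially the same route as the paper's: lift a regular leaf path connecting $x$ to a vertex of $\Sigma_\mu$ to handle the singular case, and for the pseudo-singular case lift an orientation-reversing holonomy and extract a fixed point via the intermediate value theorem, with uniqueness of the $\Sigma_\mu$-fixed point forcing the lift to lie over $x$. Your rigidity and vertex-lifting observations simply make explicit the bookkeeping (that the chosen vertex lift lies in the right band base, and that the lifted holonomy really does preserve $\pi_\mu^{-1}(x)$) which the paper's two-line argument leaves implicit.
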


\begin{proof}
If $x$ is a vertex of $\Sigma_\mu$, then $\pi_\mu\m(\{x\})$ contains a vertex of $\Sigma$ and we are done.
Otherwise, there is a $\Sigma_\mu$-word $w$ such that $[x,w)$ is a regular leaf path joining $x$ to a vertex $y$ of $\Sigma_\mu$.
Let $\Tilde y\in\pi_\mu\m(\{y\})$ be a vertex of $\Sigma$.
Viewing $w$ as a $\Sigma$-word, the point $h_{w\m}(\Tilde y)\in \pi_\mu\m(\{x\})$ satisfies the lemma.
If the leaf through $x$ is pseudo-singular but not singular, let $w$ be a $\Sigma_\mu$ word such that
$h_w(x)=x$ and $h_w$ reverses the orientation.
Then the corresponding $\Sigma$-word $w$ has a fix point $\Tilde x$, 
so the leaf through $\Tilde x$ is pseudo singular.
Since $x$ is the unique fix point of $h_w$, $\pi_\mu(\Tilde x)=x$.
\end{proof}

\begin{lem}\label{lem_finite_leaves}
  No regular leaf of $\Sigma_\mu$ is finite.
\end{lem}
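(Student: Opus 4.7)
The plan is to derive a contradiction: if $l\subset\Sigma_\mu$ were a regular finite leaf, one would exhibit an open set $\Tilde I\subset D$ which simultaneously has positive $\mu$-mass (by full support of $\mu$ on $D_\mu$) and zero $\mu$-mass (because it is avoided by every singular leaf of $\calf$).

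First, I would rewrite $l$ as a closed loop. Since $l$ contains no vertex of $\Sigma_\mu$, every point of $l\cap D_\mu$ lies in the interior of the bases meeting it, so $l$ has no free endpoint. Picking $x\in l\cap D_\mu$, there is a $\Sigma_\mu$-word $w$ with $h_w(x)=x$; replacing $w$ by $w^2$ if needed, I may assume $h_w$ preserves orientation near $x$. The key step is then to force $h_w$ to be the identity on an open interval $I\ni x$: since $\mu$ has full support and is therefore strictly monotone on intervals of $D_\mu$, an orientation-preserving $\mu$-preserving homeomorphism fixing $x$ must be the identity on a neighbourhood of $x$, as follows from the short computation $\mu([x,x+t])=\mu([x,h_w(x+t)])$ together with strict monotonicity. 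Shrinking $I$, I would further arrange that $I$ and each holonomy image $h_{w'}(I)$ (for prefixes $w'$ of $w$) avoid the finitely many vertices of $\Sigma_\mu$, so that every point of $I$ lies on a regular finite leaf of $\calf_\mu$.

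Next, I would lift to $\Sigma$ by setting $\Tilde I=\pi_\mu^{-1}(I)$, an open subset of $D$, and show that $\Tilde I$ is disjoint from every singular leaf of $\calf$. Indeed, if the $\calf$-leaf through some $y\in\Tilde I$ passed through a vertex $v$ of $\Sigma$, then $\pi_\mu(v)$ would be a vertex of $\Sigma_\mu$ (since $\pi_\mu$ carries bases of bands onto bases of bands) lying on the $\calf_\mu$-leaf through $\pi_\mu(y)\in I$, contradicting the regularity of the leaves meeting $I$ established in the previous step.

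Finally, I would invoke the construction of $\mu$ in Proposition \ref{prop_measure}: $\mu$ is a weak-$*$ accumulation point of the normalised counting measures $\mu_n=\frac{1}{\#L_n}\sum_{s\in L_n}\delta_s$ supported on an infinite singular leaf $l_0$. Since $l_0$ is singular, $\Tilde I$ avoids $l_0$, so $L_n\cap\Tilde I=\emptyset$ and $\mu_n(\Tilde I)=0$ for all $n$; lower semi-continuity of weak-$*$ limits on the open set $\Tilde I$ then gives $\mu(\Tilde I)\leq\liminf_n\mu_n(\Tilde I)=0$. On the other hand, $\mu(\Tilde I)=\mu(I)>0$ by full support of $\mu$ on $D_\mu$, yielding the desired contradiction. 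I expect the only delicate point to be the careful choice of $I$ small enough that the lifted neighbourhood $\Tilde I$ is entirely swept out by regular leaves of $\calf$; everything else reduces to weak-$*$ lower semi-continuity and the elementary rigidity of measure-preserving monotone homeomorphisms.
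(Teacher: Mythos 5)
Your overall strategy is the one the paper uses: choose a small open interval $I\subset D_\mu$ around a point of the supposed regular finite leaf so that every leaf meeting $I$ is regular and finite, observe that $\pi_\mu\m(I)\subset D$ is then a union of regular $\calf$-leaves, and derive a contradiction from the fact that $\mu$ is a weak-$*$ limit of normalised counting measures concentrated on a singular leaf (so $\mu(\pi_\mu\m(I))=0$) while $\mu(I)>0$ by full support. The paper states the choice of $I$ tersely ("small enough so that every leaf meeting $I$ is regular and finite"); your rigidity observation --- a $\mu$-preserving, orientation-preserving homeomorphism fixing $x$ is the identity near $x$, by strict monotonicity of $\mu$ --- correctly identifies the substance behind that phrase.

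There is, however, a flaw in your first step. From "every point of $l\cap D_\mu$ lies in the interior of the bases meeting it" (which is exactly regularity of $l$) you infer that $l$ has no free endpoint and is therefore a closed loop. That inference is unjustified: a point of $D_\mu$ may lie in the interior of exactly \emph{one} base, and then it is a terminal point of the leaf. A regular finite leaf is in general a finite graph which could be a tree (then the only reduced $\Sigma_\mu$-word $w$ with $h_w(x)=x$ is trivial, and the prefixes of $w$ do not sweep out the leaf, so your argument gives no control on nearby leaves), or could have several independent cycles. The fix is routine: take a spanning tree $T$ of the leaf rooted at $x$; since the finitely many points of $l\cap D_\mu$ are at positive $\mu$-distance from the finitely many vertices of $\Sigma_\mu$, nearby leaves carry the same tree $T$; and for each edge outside $T$ apply your rigidity argument to the holonomy around the corresponding loop to conclude that it is the identity near $x$. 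Then nearby leaves are combinatorially identical to $l$, hence regular and finite, and the remainder of your proof (lifting to $\Sigma$, lower semi-continuity of open sets under weak-$*$ convergence, contradiction with full support) goes through unchanged.
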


\begin{proof}
Let $x\in D_\mu$ whose $\Sigma_\mu$-leaf is regular and finite.
Consider a small open interval $I\subset D_\mu$ containing $x$ in its interior,
small enough so that the every leaf meeting $I$ is regular and finite.
Let $K$ be the union of leaves meeting $I$.
Then $\pi_\mu\m(K)$ is  a union of regular leaves of $\Sigma$.
Since $\mu$ is a limit of measures supported on singular leaves of $\Sigma$,
$\mu(\pi_\mu\m(I))=0$, a contradiction.
\end{proof}

\subsection{Moves}\label{sec_moves}

We describe moves that can be performed on a band complex together with a prelamination.
These combinatorial moves can be performed by the algorithm. 
They can be performed in the forward direction on a prelaminated band complex,
or in the backward direction on a naked band complex (without prelamination).
Each of these forward moves has a counterpart which can be applied to a measured foliation.
Performing a move on the foliation will give a way to uniformly perform a move on a sequence of prelaminations represented by this foliation.

Each band complex and prelamination comes with a set of rational constraints in standard form, described by
$\rho:\fmip\ra\calm$ and a tuple $\ul m\in\calm^{E(\call)}$.
We always assume that $\ul m$ is invariant under the holonomy and shows no incompatibility of rational constraints (see section \ref{sec_incompatibility}).
All the moves will keep $\rho$ and $\calm$ unchanged.

We introduce inert moves before restriction and extension moves (see Section \ref{sec_shortening} for definitions).

\begin{figure}[htbp]
  \centering
  \includegraphics{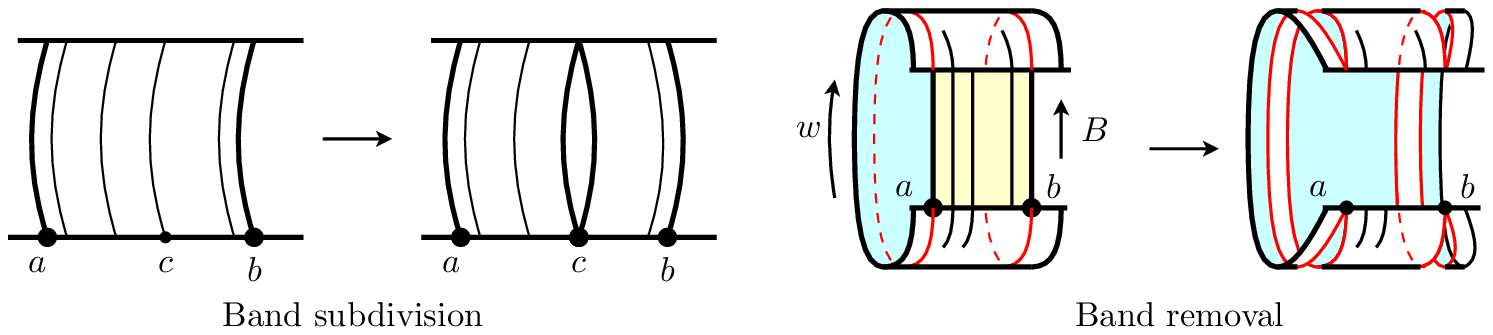}
  \caption{Moves\label{fig_move1}}
\end{figure}

\subsubsection{Band subdivision}

Let $\Sigma$ be a  prelaminated band complex with domain $D$, 
$\call$ a prelamination on $\Sigma$,
$B=[a,b]\times[0,1]$ a band of $\Sigma$,
and $\{c\}\times [0,1]\subset B$ a leaf segment of $\call$ with $c\notin\{a,b\}$ (see Figure \ref{fig_move1})

Let $\Sigma'$ be the band complex with domain $D'=D$, 
obtained by replacing the band $B$
by two bands $B'_a=[a,c]\times[0,1]$ and $B'_b=[c,b]\times [0,1]$, the attaching maps being the
restrictions of the attaching maps of $B$.
The twisting morphisms of $B'_a$ and $B'_b$ are the same as the twisting morphism of $B$.
The prelamination $\call$ naturally induces a prelamination $\call'$ on $\Sigma'$.
The set $\ul m\in\calm^{E(\call)}$ of rational constraints associated to $\call$
can be seen as a set of rational constraint on $\call'$ since $E(\call')=E(\call)$.
This induces a set of rational constraints on $\Sigma'$ by writing each elementary segment of $\Sigma'$
as a concatenation of elementary segments of $\call$.

It is clear that any solution of $\Sigma$ inducing $\call$ defines a solution of $\Sigma'$
inducing $\call'$, and that any solution of $\Sigma'$ induces a solution of $\Sigma$.
Thus,  $(\Sigma,\call)\ra (\Sigma',\call')$ is an inert move.\\

If we are given a measured foliation $(\calf,\mu)$ on $\Sigma$ instead of a prelamination,
together with a leaf segment $\{c\}\times [0,1]\subset B$,
we can similarly subdivide the band to obtain a new measured foliation $(\calf',\mu')$ on a new band complex $\Sigma'$.
Moreover, if $\calf$ represents an increasing sequence of prelaminations $\call_i$,
and if $\{c\}\times [0,1]$ is in a special leaf of $\calf$ (as in Definition \ref{dfn_represent}), 
then for all $i$ large enough, $\{c\}\times [0,1]$ is contained in a leaf segment of $\call_i$,
so one can perform the band subdivision move on $(\Sigma,\call_i)$, and
get an increasing sequence of prelaminations $\call'_i$ on $\Sigma'$ represented in $\calf'$.

\begin{dfn}\label{dfn_compatible_with_Li}
Assume that the topological foliation $\calf$ represents $\call_i$.

We say that a move $(\Sigma,\calf)\ra(\Sigma',\calf')$ is \emph{compatible} with $\call_i$ if 
for all $i$ large enough, one can perform the corresponding combinatorial move $(\Sigma,\call_i)\ra(\Sigma',\call'_i)$
(including rational constraints) and $\call'_i$ is represented by $\calf'$.
 \end{dfn}

We proved:
\begin{lem}\label{lem_band_subdiv}
  Assume that $\calf$ represents a sequence of prelaminations $\call_i$.
If $e$ is a leaf segment of $\calf$ contained in a special leaf,
then band subdivision along $e$ is compatible with $\call_i$.
\end{lem}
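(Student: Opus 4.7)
The plan is to apply the definitions of band subdivision (Section \ref{sec_moves}) and of representation (Definition \ref{dfn_represent}) directly. First I would invoke the exhaustion property: since $e$ lies in a special leaf and the $\call_i$ exhaust the union of special leaves by finite subgraphs, there is an $i_0$ such that for every $i\geq i_0$, $e$ is a leaf segment of $\call_i$. For such $i$ the band subdivision move applies verbatim to $(\Sigma,\call_i)$, producing a new prelaminated complex $(\Sigma',\call_i')$ with the induced rational constraints.

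In parallel I would perform the corresponding subdivision on the foliated complex, cutting the band $B$ along $e$ to obtain $(\Sigma',\calf')$. Since this operation only refines the CW-structure of $\Sigma$ without altering the identifications of points in $D$, the topological realisations of $\Sigma$ and $\Sigma'$ are canonically homeomorphic, the leaves of $\calf'$ are identified with those of $\calf$, and in particular the set of special leaves is unchanged.

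It remains to check that $\calf'$ represents $\call_i'$. The first clause of Definition \ref{dfn_represent} is immediate: the set of leaf segments of $\call_i'$ equals that of $\call_i$ up to regrouping into the new bands $B'_a,B'_b$, so $(\call_i')_i$ is still an exhaustion of the union of special leaves of $\calf'$. For the M\"obius strip clause, every M\"obius strip in $\call_i'$ corresponds, by replacing each occurrence of $B'_a$ or $B'_b$ in its band word by $B$, to a M\"obius strip in $\call_i$; if the latter acquires a core in some $\call_j$, the same leaf path remains a core in $\call_j'$.

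The only point requiring a small check is the consistency of the rational constraints on $\Sigma'$: these are defined by decomposing each new elementary segment of $\Sigma'$ into elementary segments of $\call_i$, and I would argue that for $i$ large enough both $c$ and its holonomy-images are subdivision points of $\call_i$, so the induced tuple $\ul m'\in\calm^{E(\Sigma')}$ stabilises and is compatible with the constraints inherited on $\call_i'$. This consistency is the main (minor) obstacle; everything else is combinatorial bookkeeping that follows at once from the fact that the subdivision is purely cellular.
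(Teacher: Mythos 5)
Your proof is correct and follows the same route as the paper's (the paper's ``proof'' is just the paragraph preceding the lemma, which gives the same argument in compressed form): since $e$ lies in a special leaf, the exhaustion property of Definition~\ref{dfn_represent} gives an $i_0$ such that $e$ is a leaf segment of $\call_i$ for $i\geq i_0$, whence the combinatorial band subdivision applies, and the resulting $\call_i'$ is visibly represented in $\calf'$. The extra bookkeeping you spell out (the homeomorphism of realisations, the correspondence of M\"obius strips, the stabilisation of the induced constraints $\ul m'$ once the endpoints of $e$ are subdivision points) is left implicit in the paper but is exactly the verification needed.
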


\subsubsection{Band removal}

Let $(\Sigma,\call)$ be a prelaminated band complex with domain $D$.
Assume that there is an oriented band $B=[a,b]\times[0,1]$ 
with bases $J_0=[a_0,b_0],J_1=[a_1,b_1]$,
and a $\Sigma$-word $w$ not involving the band $B$ or its inverse, 
whose holonomy map $h_{w}$ is defined and coincides with the holonomy of $B$ on $\{a_0,b_0\}$
(see Figure \ref{fig_move1}).

Let $(\Sigma',\call')$ be the prelaminated band complex obtained by
performing on $(\Sigma,\call)$ a band subdivision for each leaf segment of 
$[a,w)$ and $[b,w)$, then by removing the band $B$ and all the leaf segments contained in $B$.
Clearly, any solution of $\Sigma$ inducing $\call$ is a solution of $\Sigma'$ inducing $\call'$.

If the twisting morphisms of $w$ and $B$ agree, then conversely, 
any solution of $\Sigma'$ (without prelamination) is a solution of $\Sigma$.
Consider the case where the twisting morphism $\phi=\phi_w\m\phi_B$ is non-trivial in $\AutS$.
Let $m_{J_0}\in \calm$ be the element induced by the rational constraint $\ul m$ on $J_0$.
Since there is no incompatibility of rational constraints, 
$m_{J_0}\in\rho(\Fix\phi)$.
By definition of standard form for rational constraints, $\Fix\phi$ is represented by $\rho$ so
$\Fix\phi=\rho\m(\rho(\Fix\phi))$.
If follows that $\rho\m(m_{J_0})\subset \Fix\phi$
so the rational constraint $m_{J_0}$ alone imposes the fact that any solution
$\sigma'$ of $\Sigma'$ satisfies $\sigma_{J_0}\in\Fix\phi$.
Thus, in this case too, any solution of $\Sigma'$ is a solution of $\Sigma$.

If we are given a measured foliation $(\calf,\mu)$ on $\Sigma$ instead of a prelamination,
and if $h_{w}$ is defined and coincides with the holonomy of $B$ on $\{a_0,b_0\}$ as above,
we can similarly perform this move to obtain a new measured foliation $(\calf',\mu')$ 
on a new band complex $\Sigma'$.
If $\calf$ represents an increasing sequence of prelaminations $\call_i$ without incompatibility of rational constraints,
then $a_0$ and $b_0$ lie in singular, hence special leaves.
Thus, for $i$ large enough, the $\call_i$-holonomy of $w$ will be defined on $a_0$ and $b_0$,
and we will be able to perform the band removal move on $\call_i$. 
This defines an increasing sequence of prelaminations $\call'_i$ on $\Sigma'$ represented in $\calf'$.
Thus, the move on the foliated band complex uniformly induces moves on the prelaminations.
We thus proved:

\begin{lem}\label{lem_band_removal}
  Assume that $\calf$ represents a sequence of prelaminations $\call_i$ without incompatibility of rational constraints.
Assume that $B$ is an oriented band and $w$ a $\Sigma$-word not involving $B^{\pm 1}$
whose holonomy maps one base of $B$ exactly to the other base of $B$.

Then the band removal move on $\calf$ is compatible with $\call_i$.
\end{lem}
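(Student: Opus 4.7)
The plan is to verify the three conditions built into Definition \ref{dfn_compatible_with_Li}: (a) that for all sufficiently large $i$, the combinatorial band removal move can actually be carried out on $(\Sigma,\call_i)$; (b) that the rational constraints can be transferred to $\Sigma'$; and (c) that the resulting prelamination $\call'_i$ is represented by the new foliation $\calf'$.

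First I would address (a). The endpoints $a_0,b_0$ of the base $J_0$ of $B$ are vertices of $\Sigma$ and therefore lie on singular leaves of $\calf$. By Definition \ref{dfn_represent} every singular leaf is special, and by the exhaustion property of Definition \ref{dfn_represent} the finite leaf paths $[a_0,w)$ and $[b_0,w)$ (which exist in $\calf$ since the foliation-level holonomy of $w$ is defined on $\{a_0,b_0\}$) are contained in $\call_i$ for all $i$ large enough. Once this holds, I can apply to $(\Sigma,\call_i)$ the sequence of band subdivisions along the leaf segments of $[a_0,w)$ and $[b_0,w)$ provided by Lemma \ref{lem_band_subdiv}, and then remove the band $B$ together with its leaf segments. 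This is exactly the combinatorial band removal move performed at the level of the prelamination.

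Next I would address (b). The set of rational constraints on $\call_i$ is assumed invariant under the holonomy and shows no incompatibility; this is preserved by subdivision, and is exactly what is needed in the paragraph preceding the lemma to conclude that, whether or not the twisting morphism $\phi_w\m\phi_B$ is trivial, the rational constraint $m_{J_0}$ already forces any solution of $\Sigma'$ to satisfy $\sigma_{J_0}\in\Fix(\phi_w\m\phi_B)$, since $\Fix$ of any element of $\AutS$ is represented by $\rho$ in the standard form. The rational constraints on $\Sigma'$ are then inherited from $\call'_i$ by concatenation along elementary segments, exactly as in the band subdivision move.

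Finally, for (c), I would observe that the projection $\Sigma\to\Sigma'$ sends leaves of $\calf$ to leaves of $\calf'$, so the collection of special leaves of $\calf$ (minus the parts lying inside the removed band $B$) becomes a natural collection of special leaves for $\calf'$. Then $\call'_i$, obtained as the restriction of $\call_i$ to $\Sigma'$ after subdivision, is an increasing exhaustion of this collection, and every M\"obius strip of $\call'_i$ is still eventually given a core (since any core that existed in $\call_j$ for some $j\ge i$ survives the removal of $B$: the word $w$ does not involve $B^{\pm1}$). The main potential obstacle is the bookkeeping around the twisting: one must check carefully that the rational constraint argument handles the case $\phi_w\neq \phi_B$ uniformly in $i$, but this is a direct consequence of the standard-form assumption on $\rho$ together with the no-incompatibility hypothesis, so no new idea is required beyond what already appears in the pre-lemma discussion.
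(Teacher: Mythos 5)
Your proposal is correct and follows essentially the same route as the paper: both arguments reduce the claim to the observation that $a_0,b_0$ lie on singular, hence special, leaves, so that the leaf paths $[a_0,w)$ and $[b_0,w)$ are eventually contained in $\call_i$, allowing the combinatorial subdivisions and band removal; the rational-constraint transfer and representation of $\call'_i$ in $\calf'$ then go through as you describe. Your writeup simply spells out the three clauses of Definition \ref{dfn_compatible_with_Li} that the paper leaves implicit in the paragraph preceding the lemma.
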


\subsubsection{Domain cut}

Let $(\Sigma,\call)$ be a prelaminated band complex with domain $D$, and consider $x\in D\setminus \partial D$ a $\call$-subdivision point, 
and assume that $x$ is non-orphan.
Let $B_1,\dots,B_k$ the oriented bands containing $x$ in the interior of their domain (one may have $B_i=B_j\m$ for some $i\neq j$).
By performing a band subdivision of the band $B_i$ along the leaf segment $[x,B_i)$, we can assume
that $x$ is in the interior of no base of band.
Now cut $D$ along $x$, \ie replace $D$ by $D'$ the disjoint union of the closure of the connected components of $D\setminus \{x\}$.
The attaching maps of the bands are still well defined, and $\call$ induces a prelamination $\call'$ on the obtained band complex $\Sigma'$.

It is clear that any solution of $\Sigma$ inducing $\call$ defines a solution of $\Sigma'$
inducing $\call'$, and that any solution of $\Sigma'$ induces a solution of $\Sigma$.
Thus,  $(\Sigma,\call)\ra (\Sigma',\call')$ is an inert move.
It can also be viewed as a restriction move for the map $\iota:D'\ra D$ induced by inclusion
as long as $x$ lies in the active part of $D$, the backwards move $\Sigma'\ra \Sigma$ being an extension move
of Lipschitz factor $1$.

This move generalises naturally to a foliation $\calf$ instead of a prelamination, 
and if $x$ lies in a special leaf, then it is non-orphan in some $\call_i$, so
it uniformly induces domain cut moves on a sequence of prelaminations represented by $\calf$:

\begin{lem}\label{lem_domain_cut}
  Assume that $\calf$ represents a sequence of prelaminations $\call_i$.

If $x\in D$ lies in a special leaf of $\calf$,
then domain cut at $x$ is compatible with $\call_i$.
\end{lem}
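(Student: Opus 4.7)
The plan is to verify the two requirements of compatibility (Definition \ref{dfn_compatible_with_Li}): first that for $i$ sufficiently large the combinatorial domain cut move at $x$ is well-defined on $(\Sigma,\call_i)$, producing a prelamination $\call'_i$ on $\Sigma'$; and second that $\call'_i$ is represented by the corresponding cut foliation $\calf'$ on $\Sigma'$.

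The first step is to show that $x$ is eventually a non-orphan subdivision point of $\call_i$. Let $l$ denote the special leaf of $\calf$ passing through $x$. Only finitely many leaf segments of $\calf$ meet $x$: one segment $\{x\}\times[0,1]$ inside each band $B$ whose base contains $x$ (in its interior or at an endpoint). Each of these segments is incident to $x$ and hence belongs to the same leaf $l$. Since by Definition \ref{dfn_represent} the sequence $\call_1\subset\call_2\subset\dots$ is an exhaustion by finite subgraphs of the union of special leaves, for $i$ large enough all these finitely many leaf segments already appear in $\call_i$. This forces $x\in V(\call_i)$, and moreover for every base of a band containing $x$ there is a leaf segment of $\call_i$ terminating at $x$, so $x$ is non-orphan in $\call_i$.

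Once this holds, the combinatorial domain cut move is legal on $(\Sigma,\call_i)$. As described before the lemma, one first performs band subdivisions along the leaf segments $\{x\}\times[0,1]$ in every band with $x$ in the interior of a base; each of these leaf segments lies in the special leaf $l$, so by Lemma \ref{lem_band_subdiv} these subdivisions are compatible with $\call_i$. Afterwards $x$ is at the boundary of every base meeting it, and $D$ may be cut at $x$, replacing $x$ by two copies. The attaching maps of all bands remain well defined, $\call_i$ descends to a prelamination $\call'_i$ on $\Sigma'$, and the rational constraints transport without change since $E(\call'_i)$ is in canonical bijection with $E(\call_i)$.

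Finally, performing the same cut on the topological foliation $\calf$ yields $\calf'$ on $\Sigma'$, whose leaves are in bijection with the leaves of $\calf$ (possibly split at $x$); in particular the set of special leaves of $\calf'$ is exactly the image of that of $\calf$. The exhaustion property and the condition that every M\"obius strip of $\call'_i$ have a core in some $\call'_j$ with $j\geq i$ then descend directly from the corresponding properties of $\{\call_i\}$ in $\calf$. This shows that $\calf'$ represents $\{\call'_i\}_{i}$, and concludes compatibility. The main delicate point is establishing non-orphanhood in step one, which hinges on the fact that every leaf segment incident to $x$ sits in the single leaf $l$ together with finiteness of this set, so that they are all eventually captured by the exhaustion.
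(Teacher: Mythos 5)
Your proof is correct and follows the same route as the paper's (very terse) justification: establish that $x$ is eventually a non-orphan subdivision point of $\call_i$ via the exhaustion property of special leaves, then observe that the band subdivisions and the cut itself descend compatibly. The paper compresses this to a single sentence, so your unpacking of the non-orphanhood argument and the check that $\calf'$ still represents $\{\call'_i\}$ is exactly the content it leaves implicit.
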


\subsubsection{Pruning move}

The pruning move for a prelaminated band complex has been described in section \ref{sec_shortening}.
This move generalises naturally to a foliation $\calf$ instead of a prelamination, 
and this move is always compatible with a sequence of prelaminations represented by $\calf$.

\subsubsection{Forgetful move}

Let $(\Sigma,\call)$ be a prelaminated band complex with domain $D$. 
Assume that there is a connected component $D_0$ of $D$ 
which contains exactly two bases of bands $B_1,B_2$ with $B_1\neq B_2^{\pm1}$, and such that 
$\dom B_1\cup \dom B_2=D_0$ and $I=\dom B_1\cap \dom B_2$ is a non-degenerate segment.

By performing a pruning move on the components of $D_0\setminus I$, we can assume that
$\dom B_1=\dom B_2=D_0$.
Let $\Sigma'$ the band complex with domain $D\setminus D_0$, and where the bands $B_1,B_2$ are removed and replaced by
a single band $B'$ with holonomy $h_{B_2}\circ h_{B_1}\m$ and 
twisting morphism $\phi_{B_2}\circ \phi_{B_1}\m$. 
The prelamination $\call$ induces a prelamination $\call'$ on $B'$,
whose set of subdivision points is $D_\call\cap D'$, and such that leaf segments of $B'$ are exactly the segments $[x,B')$
such that $[x,B_2B_1\m)$ is a leaf path of $\call$.

It is clear that any solution of $\Sigma$ inducing $\call$ defines a solution of $\Sigma'$
inducing $\call'$, and that any solution of $\Sigma'$ induces a solution of $\Sigma$.
Thus, for any partition $D=D_I\cap D_A$ where $D_A$ contains both bases of $B_1$ and of $B_2$,
 the move $(\Sigma,\call)\ra (\Sigma',\call')$ is a restriction move with respect to the inclusion $\iota:D'\hookrightarrow D$
and $\Sigma'\ra \Sigma$ is an extension move of Lipschitz factor $3$.

This move generalises naturally to a foliation $\calf$ instead of a prelamination, 
and it uniformly induces forgetful moves on a sequence of prelaminations represented by $\calf$.

\begin{lem}\label{lem_forgetful}
  Assume that $\calf$ represents a sequence of prelaminations $\call_i$.
Consider $D_0$ a connected component of $D$ which contains exactly two bases of bands $B_1,B_2$ with $B_1\neq B_2^{\pm1}$, and such that 
$\dom B_1\cup \dom B_2=D_0$, and $\dom B_1\cap \dom B_2$ is a non-degenerate segment.

Then the forgetful move on $\calf$ (consisting in removing $D_0$) is compatible with $\call_i$.
\end{lem}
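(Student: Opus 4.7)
The plan is to decompose the forgetful move on $\calf$ into two steps that have already been treated, and to verify compatibility for each step. Set $I = \dom B_1 \cap \dom B_2$, so that $D_0 \setminus I$ consists of at most two open intervals. The forgetful move factors as: (a) pruning each component of $D_0 \setminus I$, after which $\dom B_1 = \dom B_2 = D_0$; and (b) removing the component $D_0$ together with the bands $B_1, B_2$ and inserting a single band $B'$ on the remainder of the domain, with holonomy $h_{B_2}\circ h_{B_1}\m$ and twisting morphism $\phi_{B_2}\circ \phi_{B_1}\m$.

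First I would handle the pruning step. The endpoints of the intervals being pruned are the endpoints of $I$, which are endpoints of bases of bands, hence vertices of $\Sigma$. Any vertex of $\Sigma$ lies in a singular leaf of $\calf$, and singular leaves are always special by Definition \ref{dfn_represent}. Since $\call_1\subset \call_2\subset\dots$ exhausts the union of special leaves, for $i$ large enough each such endpoint is non-orphan in $\call_i$, so the pruning move is applicable to $(\Sigma,\call_i)$. The resulting pruned prelaminations $\call_i^{(p)}$ on the pruned band complex $\Sigma^{(p)}$ are still unions of leaf segments of the pruned foliation $\calf^{(p)}$, and clearly exhaust its special leaves; the rational constraints are inherited by restriction, with no compatibility issue.

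Second, I would handle the band replacement step, now applied to $(\Sigma^{(p)},\call_i^{(p)})$ where $\dom B_1 = \dom B_2 = D_0$. The combinatorial move is purely formal: remove $D_0$ and the two bands, introduce $B'$, and define the resulting prelamination $\call'_i$ on the non-modified domain by restriction, and inside $B'$ by declaring its leaf segments to be those segments $\{x\}\times[0,1]$ for which $[x,B_2B_1\m)$ is a leaf path of $\call_i^{(p)}$, i.e.\ such that both $\{x\}\times[0,1]\subset B_1$ and $\{h_{B_1}(x)\}\times[0,1]\subset B_2$ are leaf segments of $\call_i^{(p)}$. The rational constraints on $\call'_i$ are obtained by concatenation from those of $\call_i^{(p)}$, and inherit invariance under the holonomy because the holonomy of $B'$ is a composition of holonomies of $B_1$ and $B_2$.

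It remains to verify that $\call'_i$ is represented by $\calf'$. A leaf segment $\{x\}\times[0,1]$ of $\calf'$ in $B'$ corresponds bijectively to a two-step leaf path of $\calf^{(p)}$ using $B_1$ then $B_2$; if this segment lies in a special leaf of $\calf'$, then the corresponding two leaf segments of $\calf^{(p)}$ lie in special leaves of $\calf^{(p)}$ (each singular or pseudo-singular leaf of $\calf'$ lifts to singular or pseudo-singular leaves of $\calf^{(p)}$, cf.\ Remark \ref{rem_pseudo}), hence for $i$ large enough both belong to $\call_i^{(p)}$, so that the segment belongs to $\call'_i$. Conversely the $\call'_i$ are clearly contained in the union of leaf segments of $\calf'$ coming from special leaves. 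The M\"obius strip condition in Definition \ref{dfn_represent} transfers through the move, since a M\"obius strip of $\call'_i$ pulls back to a M\"obius strip of $\call_i^{(p)}$ whose core is provided by $\call_j^{(p)}$ for some $j\geq i$. The main subtlety to watch out for is the bookkeeping around the special leaves across the two steps; but since each new subdivision point comes either from a vertex of $\Sigma^{(p)}$ or from an endpoint lying on a special leaf of $\calf^{(p)}$, the exhaustion property propagates cleanly to $\calf'$.
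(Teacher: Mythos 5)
Your proof is correct and follows the same route the paper implicitly takes: the paper itself states Lemma~\ref{lem_forgetful} without proof, considering it an immediate consequence of the way the forgetful move is defined (pruning, then collapsing the two bands into one), which is exactly the two-step decomposition you carry out. The only nit is a small notational slip in the band-replacement step: if $B_1, B_2$ are both parametrised by $D_0$ after pruning, the pair of leaf segments giving rise to $[x,B')$ should be $\{x\}\times[0,1]\subset B_1$ and $\{x\}\times[0,1]\subset B_2$ (or, if $x$ is taken in the far base $J_1$, the point entering $B_2$ is $h_{B_1}\m(x)$, not $h_{B_1}(x)$); this does not affect the substance. Your handling of the orphan condition for pruning (vertices lie on singular hence special leaves, finitely many bases through each, so non-orphan for $i$ large), the exhaustion property of the pushed-forward prelaminations, and the transfer of M\"obius strips via the substitution $B'\mapsto B_2B_1\m$ are all sound and match the level of detail the paper leaves to the reader.
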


\subsection{Minimal components}\label{sec_minimal}

A Rips band complex  has a decomposition into a finite union of \emph{minimal components} (\cite{BF_stable,GLP1}).
In this subsection, we introduce $\mu$-minimal components of the topological foliation $\Sigma$ which mimic
the minimal components of the corresponding Rips band complex $\Sigma_\mu$.
We prove in this subsection an important result showing that the measure of an interval $I$
controls the relative length of any solution inducing $\call_i$ for $i$ sufficiently large.

Consider a foliation $\calf$ on a band complex $\Sigma$ with domain $D$.
Recall  that the singular leaf segments of $\calf$ are the leaf segments in the boundary of the bands of $\Sigma$,
and that a leaf path is regular if it does not involve any singular leaf segment.
We say that two points are in the same $\calf$-leaf if they can be connected by a regular leaf path.

Thus, two points of $\Sigma_\mu$ are in the same $\rond\calf_\mu$-leaf if they can be joined by a regular leaf path of $\calf_\mu$
(but the projection in $\Sigma_\mu$ of a regular leaf path of $\calf$ may fail to be regular in $\Sigma_\mu$).
It is convenient to call $\mu([x,y])$ the $\mu$-distance between $x$ and $y$.

\begin{dfn}
Let $(\Sigma_\mu,\calf_\mu)$ be a Rips band complex 
such that no component of $D$ and no base of band is reduced to one point.

We say that $(\Sigma_\mu,\calf_\mu)$ is \emph{minimal} if
for each $x\in D_\mu\setminus \partial D_\mu$, every $\rond{\calf_\mu}$-leaf is dense in $\Sigma_\mu$.
\end{dfn}

\begin{dfn}
A  measured foliation $(\calf,\mu)$ on a band complex $\Sigma$ with domain $D$
is called \emph{$\mu$-minimal} if all connected components of $D$ and all bands of $\Sigma$ have positive measure, and
\begin{itemize}
\item $\Sigma_\mu$ is minimal (as a Rips band complex)
\item for each $x\in\partial D$, there exists a $\Sigma$-word whose holonomy 
is defined on an interval of positive measure, and maps $x$
at positive $\mu$-distance of $\partial D$.
\end{itemize}
\end{dfn}

\begin{rem}
  One easily checks that all leaves of a $\mu$-minimal band complex are infinite, but this may be false if
one removes the 
 last condition of the definition.
Leaves of a $\mu$-minimal band complex may in general fail to be dense, as there may be wandering intervals.
\end{rem}

Assume that $\Sigma$ is a $\mu$-minimal foliated band complex representing a sequence of prelaminations $\call_i$.
Given a solution $\sigma$ of $\Sigma$ 
inducing $\call_i$ for $i$ large enough, one can control the ratio 
${\abs{\sigma_I}}/{\abs{\sigma}}$ in terms of the measure $\mu(I)$:

\begin{lem} \label{lem_certif_mesure}
Let $(\Sigma,\calf,\mu)$ be a $\mu$-minimal foliated band complex with domain $D$.
Let $\call_i$ be a sequence of prelaminations represented by $\calf$.

For all $\eps>0$, there exists $\eta>0$ such that the following holds:
for each segment $I\subset D$ with $\mu(I)<\eta$,
and for $i$ large enough, there exists $I'\supset I$ adapted to $\call_i$,
and a certificate of $\eps$-shortness of $I'$ relative to $D$ as in Definition \ref{dfn_certif_shortness}
(ensuring that $\frac{\abs{\sigma_I}}{\abs{\sigma}}\leq \eps$ by Lemma \ref{lem_certif_shortness}).
\end{lem}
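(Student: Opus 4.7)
The plan is to use the $\mu$-minimality of $(\Sigma,\calf,\mu)$ to produce a finite open cover of $D$ along with $\Sigma$-word systems yielding disjoint holonomy translates, and then transfer this structure to the prelaminations $\call_i$ for $i$ sufficiently large.

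Set $N=\lceil 1/\eps\rceil$. I would first build a finite family of open subintervals $U_1,\dots,U_M$ of $D$ whose interiors cover $D$, together with, for each $U_j$, a collection of $\Sigma$-words $w^j_1=\emptyset,w^j_2,\dots,w^j_N$ whose $\calf$-holonomy is defined on all of $U_j$ and yields pairwise disjoint images $w^j_k(U_j)\subset D$. The construction is pointwise: for $\mu$-a.e.\ $x\in D\setminus\partial D$, the $\rond{\calf_\mu}$-leaf through $\pi_\mu(x)$ is dense in $\Sigma_\mu$ by $\mu$-minimality, which yields $\Sigma$-words $w^x_2,\dots,w^x_N$ sending $x$ to points $y^x_k$ that are pairwise separated in $\mu$-distance and at positive $\mu$-distance from $\partial D$. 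Continuity of the holonomy then gives a small open interval $U_x\ni x$ on which each $w^x_k$ is defined with pairwise disjoint images. For $x\in\partial D$, the boundary clause in the definition of $\mu$-minimality provides analogous words, and wandering intervals inside the complement of the support of $\mu$ can be inserted directly into the cover (noting that $\mu$ vanishes there by Remark~\ref{rem_complement_speciales}, so the corresponding bands act trivially). Compactness of $D$ then extracts a finite subcover $\{U_j\}_{j=1}^M$.

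Next, I would pick $\eta>0$ to be a ``Lebesgue-type'' number for this cover in $\mu$-measure: since the cover is finite and each $U_j$ has positive $\mu$-width around its points, there exists $\eta>0$ such that every subinterval $I\subset D$ with $\mu(I)<\eta$ lies in some $U_j$. For the passage to $\call_i$: given such $I$ with $I\subset U_j$, using the construction of $\calf$ in Proposition~\ref{prop_topological} together with the fact that $\mu$ is a weak-$*$ limit of normalized counting measures on $\call_i$-subdivision points (Proposition~\ref{prop_measure}), subdivision points of $\call_i$ become dense in $U_j$ in $\mu$-measure as $i\to\infty$, and the combinatorial $\call_i$-holonomies converge uniformly to those of $\calf$. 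For $i$ large enough I can therefore find $I'\supset I$ adapted to $\call_i$, with $I'\subset U_j$, and such that each endpoint of $I'$ lies on a leaf represented in $\call_i$, ensuring that the $\call_i$-holonomy of each $w^j_k$ is defined on $I'$. The resulting images $J_k=h^{(i)}_{w^j_k}(I')$ remain pairwise disjoint (by uniform approximation of disjoint closed sets by nearby ones), and since $N\geq 1/\eps$, this yields the desired certificate of $\eps$-shortness of $I'$ relative to $D$.

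The main obstacle is precisely this transition between the continuous foliation $\calf$ and the discrete prelaminations $\call_i$: one must simultaneously ensure that the endpoints of $I'$ lie in leaves represented in $\call_i$ so that the holonomy of each $w^j_k$ is genuinely defined on $I'$ at the combinatorial level, and that the open-set disjointness of the $w^j_k(U_j)$ at the level of $\calf$ passes to interior-disjointness of the $J_k$ at the level of $\call_i$. Both points require care in the choice of $I'$, but both follow from the uniform convergence of attaching maps established in the proof of Proposition~\ref{prop_topological}, applied near the endpoints of $I'$ and to each of the finitely many fixed words $w^j_k$.
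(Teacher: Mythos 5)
Your plan follows the paper's: use $\mu$-minimality to find, near each point, a neighbourhood on which $N\ge 1/\eps$ holonomy words have pairwise disjoint images, extract a finite cover, take a Lebesgue-type number $\eta$, and pass to $\call_i$ for large $i$. The gap is in the Lebesgue-number step. You assert that for a finite open cover $\{U_j\}$ of $D$, ``there exists $\eta>0$ such that every subinterval $I\subset D$ with $\mu(I)<\eta$ lies in some $U_j$.'' This is false in the presence of wandering intervals (components of $D\setminus\mathrm{supp}\,\mu$), which can be metrically long while having zero $\mu$-measure. A segment $I$ straddling an endpoint of such a wandering interval has arbitrarily small $\mu$-measure but need not be contained in any single $U_j$. (Concretely, if $D=[0,2]$ and $\mu$ is Lebesgue measure restricted to $[0,1]\cup[1.5,2]$, any finite open cover by intervals of diameter at most $1/2$ fails to contain $I=[0.9,1.4]$, even though $\mu(I)=0.1$.)

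Your proposed patch---inserting wandering intervals into the cover on the grounds that the ``bands act trivially'' there by Remark~\ref{rem_complement_speciales}---does not work: that remark only observes that one may \emph{choose} attaching maps so that the holonomy of an orientation-preserving $\Sigma$-word preserving a component of $\Sigma^*\cap D$ is the identity there; it does not make bands trivial, and if it did, trivial holonomy would preclude the $N$ disjoint translates needed for the certificate. Moreover a segment straddling the boundary of a wandering interval is still not covered. The paper avoids all of this by running the Lebesgue-number argument in the collapsed domain $D_\mu$ of $\Sigma_\mu$ (where wandering intervals are points), and by choosing the endpoints of the cover intervals $I_x$ on singular leaves of $\Sigma_\mu$, lifted to singular leaves of $\Sigma$. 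Because $\pi_\mu\m$ of the interior of $\pi_\mu(I_x)$ is then contained in $I_x$, the condition $\mu(I)<\eta$ really does force $I\subset I_x$; and because the endpoints of $I_x$ lie on singular (hence special) leaves, the \emph{fixed} interval $I'=I_x$ becomes adapted to $\call_i$ for all $i$ large, with the $\call_i$-holonomies of the $N$ words automatically defined---so one need not let $I'$ vary with $i$ as you do.
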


\begin{proof}
Fix $\eps>0$, and $N\geq\frac{1}{\eps}$. 
For $x\in D_\mu\setminus \partial D_\mu$, the $\rond\calf_\mu$-leaf of $x$ is infinite,
so we can find $N$ distinct $\Sigma_\mu$-words $w_1,\dots,w_N$ 
whose holonomies are defined on a neighbourhood of $x$, and map $x$ to $N$ distinct points of $D_{\mu}$.
Take $\eps_1,\eps_2>0$ small enough so that
the holonomies $h_{w_i}$ are defined on $[x-\eps_1,x+\eps_2]$ and
map this segment to $N$ disjoint intervals.
Since every leaf is dense in $\Sigma_\mu$, one can assume moreover that $x-\eps_1$ and $x+\eps_2$ lie in singular leaves of $\Sigma_\mu$.
Let $I_x\subset D$ whose endpoints are in singular leaves of $\Sigma$, and such that $\pi_\mu(I_x)=[x-\eps_1,x+\eps_2]$.
Then for $i$ large enough, $I_x$ is adapted to $\call_i$, and has a certificate of $\eps$-shortness
relative to $D$.

Consider $x\in\partial D_{\mu}$ and $\Tilde x$ its preimage in $\partial D$.
Consider $w$ a $\Sigma$-word whose $\calf$-holonomy is defined on an interval of positive measure, and
maps $\Tilde x$ at positive $\mu$-distance from $\partial D$. 
Let $\Tilde y=h_w(\Tilde x)$, and $y=h_w(x)$ the image of $\Tilde y$ in $D_\mu$.
Consider the interval $I_y\subset D$ defined above and define $I_x=h_w\m(I_y)$. The endpoints of $I_x$ 
are in singular leaves of $\Sigma$, so  $I_x$ is adapted to $\call_i$ for $i$ large enough, and has a certificate of $\eps$-shortness
relative to $D$.

Consider the covering of $D_{\mu}$ by the interiors in $D_{\mu}$ of $\pi_\mu(I_x)$,
and by compactness, consider $\eta$  the Lebesgue number of this covering.
Then any segment $J\subset D_k$ of measure at most $\eta$ is contained in the interior of some $I_x$,
and the lemma follows.
\end{proof}

\begin{prop}\label{prop_minimal}
Consider a band complex $\Sigma$ with a measured foliation $(\calf,\mu)$ without atom
representing a sequence of prelamination $\call_i$.
Assume that there is a bound on the exponent of periodicity of $\call_i$.
 
Then one can perform a finite number of inert moves 
on $(\Sigma,\calf)$ compatible with $\call_i$
so that the obtained foliated complex $(\Sigma',\calf')$ 
is  the disjoint union of finitely many $\mu$-minimal band complexes $\Lambda_1,\dots,\Lambda_p$ 
together with finitely many bands and components of $D'$ of measure 0.
\end{prop}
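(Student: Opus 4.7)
The plan is to transfer the problem to the Rips band complex $(\Sigma_\mu, \calf_\mu)$, apply the classical decomposition of a Rips band complex into minimal components from \cite{BF_stable,GLP1}, and lift the corresponding cuts back to $\Sigma$ via inert moves along singular leaves. Concretely, $(\Sigma_\mu, \calf_\mu)$ carries a full-support, non-atomic transverse measure $\mu$, and the standard Rips theory decomposes $\Sigma_\mu$ into finitely many closed invariant sub-band-complexes $M_1, \dots, M_p$ on which the foliation is minimal (every orbit in the interior of the domain is dense), together with a residual peripheral part. Each band and each connected component of $D_\mu$ lies entirely in some $M_j$ or entirely in the peripheral part, by invariance of the $M_j$ under holonomy; the boundary between these pieces is a finite union of singular leaves of $\calf_\mu$. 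Performing a band subdivision along each such singular leaf segment, followed by a domain cut at each vertex of $\Sigma_\mu$ lying on these boundaries, separates $\Sigma_\mu$ into the disjoint pieces $M_j$ and the peripheral remnants.

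Next, I would lift each of these combinatorial operations back to $\Sigma$. By Lemma \ref{lem_singular}, every singular leaf segment of $\calf_\mu$ used above has a preimage in a singular leaf of $\calf$, and singular leaves are automatically special by Definition \ref{dfn_represent}. Thus by Lemmas \ref{lem_band_subdiv} and \ref{lem_domain_cut}, each corresponding band subdivision and domain cut on $(\Sigma, \calf)$ is an inert move compatible with $\call_i$. Performing all these lifted moves in turn yields a foliated complex $(\Sigma', \calf')$ splitting as a disjoint union of pieces $\Lambda_j$ with $\pi_\mu(\Lambda_j) = M_j$, together with residual pieces. These residual pieces project to peripheral parts of $\Sigma_\mu$ with empty interior, hence they consist of bands and connected components of $D'$ of $\mu$-measure zero, as required.

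It remains to verify that each $\Lambda_j$ is $\mu$-minimal in the sense of Subsection \ref{sec_minimal}. Minimality of $\pi_\mu(\Lambda_j) = M_j$ as a Rips band complex is built into the construction. For the boundary condition, I would fix $x \in \partial D'$ lying in $\Lambda_j$ and set $y = \pi_\mu(x) \in M_j$. By density of every interior leaf of $\calf_\mu$ in $M_j$, there is a $\Sigma_\mu$-word $w$ whose holonomy is defined on a neighborhood of $y$ of positive $\mu$-measure, lying on the $\Lambda_j$-side of $y$, and mapping $y$ to an interior point of the domain of $M_j$; lifting $w$ to the corresponding $\Sigma$-word furnishes the holonomy required by the boundary clause in the definition of $\mu$-minimality.

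The main obstacle is to confirm that the classical Rips decomposition of $\Sigma_\mu$ is genuinely realised by cuts supported exclusively on singular leaves of $\calf_\mu$, so that all the required cuts lift to special leaves of $\calf$ and hence to inert moves on $(\Sigma, \calf)$ compatible with the whole sequence $\call_i$ uniformly in $i$; one must in particular check that bands straddling two distinct minimal components do not arise, and that the finitely many singular leaves of $\calf_\mu$ suffice to isolate every $M_j$. The hypothesis of bounded exponent of periodicity enters as a standing assumption inherited from the asymptotic setting of Section \ref{sec_asym} (ensuring in particular, via Proposition \ref{prop_measure}, the non-atomic regime in which this entire picture applies).
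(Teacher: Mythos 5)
Your overall strategy—decompose the Rips band complex $\Sigma_\mu$ via the standard theory of \cite{BF_stable,GLP1} into minimal components separated by the graph $\calc_\mu$ of singular finite $\rond\calf_\mu$-leaves, then lift the required band subdivisions and domain cuts to $\Sigma$ along special leaves—is the right plan and matches the paper's approach. But you underestimate two genuine difficulties in the lifting step, and they are precisely where the exponent-of-periodicity hypothesis is used, not where you think it is.

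First, Lemma \ref{lem_singular} gives a preimage in a singular leaf of $\calf$ for an individual point of $\Sigma_\mu$; it does not give a coherent lift of the whole graph $\calc_\mu$. Since the fibre $\pi_\mu^{-1}(v_\mu)$ over a vertex $v_\mu$ of $\calc_\mu$ is an interval rather than a point, the preimage chosen along one edge of $\calc_\mu$ incident to $v_\mu$ need not agree with the preimage chosen along another edge, and \emph{a priori} one cannot embed $\calc_\mu$ as a graph $\calc\subset\Sigma$. The paper proves the lift exists by fixing a spanning tree $T$ of each component of $\calc_\mu$, propagating a base point $\Tilde x$ through $T$, and showing that for edges outside $T$ the holonomy $h_{w_e}$ fixes a suitable choice of base point in $\pi_\mu^{-1}(v_\mu)$. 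The orientation-preserving case uses the bound on $exponent(\call_i)$: otherwise a nontrivial translation on the fibre produces arbitrarily long periodic patterns. The orientation-reversing case uses the M\"obius-strip machinery: the unique fixed point $m$ of $h_w$ lies on a pseudo-singular, hence special, leaf (because a core leaf must eventually appear in some $\call_j$), and one then replaces $\Tilde x$ by $m$. None of this is addressed in your argument; "lift each leaf segment to a singular leaf" does not by itself give a well-defined cut set.

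Second, your verification of the boundary clause of $\mu$-minimality has a hole. You find a $\Sigma_\mu$-word $w$ with $y=\pi_\mu(x)$ in $\dom h_w$ mapping $y$ into the interior, and assert that the lifted $\Sigma$-holonomy does the job for $x$. But the domain of the lifted $h_w$ in $D$ is an interval $\Tilde J$ with $\pi_\mu(\Tilde J)\supset[y,y+\epsilon]$, and $\Tilde J\cap\pi_\mu^{-1}(y)$ is in general only a subinterval of $\pi_\mu^{-1}(y)$: if the bands involved in $w$ have $\Sigma$-domains whose endpoint sits \emph{strictly inside} $\pi_\mu^{-1}(y)$ on the $\Lambda_j$-side, then $x\notin\Tilde J$ and the condition fails. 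This is exactly why the paper performs a second round of cutting along a lifted graph $\cald$ built from leaf segments joining boundary points of $D_\mu$: it relocates the endpoint of $D_{\Lambda''}$ to a point $\Tilde y$ fixed by all the relevant holonomies, and this in turn uses the same exponent-of-periodicity argument and M\"obius considerations as the first lift. Your "standing assumption" remark, attributing the role of the periodicity bound to Proposition \ref{prop_measure} and the non-atomic regime, is therefore misplaced: the non-atomicity is already a hypothesis of the present Proposition, and the periodicity bound is needed internally in both lifting steps.
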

 
The exponent of periodicity of $\call_i$ is defined in Definition \ref{dfn_exponent}.

\begin{proof}
We first recall the decomposition into minimal components for $\Sigma_\mu$.
Let $\calc_\mu\subset\Sigma_\mu$ be the finite graph defined by the union of
singular finite $\rond\calf_\mu$-leaves (a  $\rond\calf_\mu$-leaf is \emph{singular}
if it contains a subdivision point of  $\Sigma_\mu$, or equivalently, if the  $\calf_\mu$-leaf containing it is singular).
This graph is finite because $\Sigma_\mu$ has only finitely many vertices.

Denote by $\Lambda_{\mu,1},\dots,\Lambda_{\mu,p}$ the Rips band complexes obtained as 
the closure of the connected components of $\Sigma_\mu\setminus\calc_\mu$.
Equivalently, $\Lambda_{\mu,i}$ are the connected components of the band complex obtained 
from $\Sigma_\mu$
by  performing
band subdivision and domain cuts at edges and vertices of $\calc_\mu$,
and by removing bands of width $0$.
By \cite{GLP1,BF_stable}, either $\Lambda_{\mu,i}$ is minimal, or
$\Lambda_{\mu,i}$ is a \emph{family of finite leaves} (all leaves of $\Lambda_{\mu,i}$ are finite in particular).
In our setting, families of finite leaves cannot occur by Lemma \ref{lem_finite_leaves}.

We now construct a lift $\calc$ of $\calc_\mu$ in $\Sigma$.
Working independently on each connected component on $\calc_\mu$, we can assume that $\calc_\mu$ is connected.
Consider $x_\mu\in \calc_\mu$, and $\Tilde x\in D$ be a point whose leaf is special 
with $\pi_\mu(\Tilde x)=x_\mu$. Existence of $\Tilde x$ follows from Lemma \ref{lem_singular} 
 since singular leaves are special.
Consider $T$ a maximal tree of $\calc_\mu$.
Given two vertices $v_1,v_2\in T$, we denote by $[x_1,x_2]_T$ the path of $T$ joining them.
First, we can lift $T$ in $\Sigma$. Each segment $[x_\mu,v]_T$ 
defines a $\Sigma$-word $w_v$, and $\Tilde x$ lies in the interior of $\dom w_v$ 
since $\calc_\mu$ contains no singular leaf segment.
Therefore, one can define $\Tilde T\subset\Sigma$ as the union of the leaf paths $[\Tilde x,w_v)$. 
For each oriented edge $e\subset \calc_\mu\setminus T$ joining $v_1$ to $v_2$,
consider $B_e$ the oriented band containing it,
and the $\Sigma$-word $w_e=w_{v_2}\m B_e w_{v_1}$. 
To lift $e$, we need that the endpoint of the leaf segment $[\Tilde v_1,B_e)$ coincides with $\Tilde v_2$, where $\Tilde v_i$
denotes the lift of $v_i$ in $\Tilde T$.
Equivalently, we need to check that for all edge $e$ of $\calc_\mu\setminus T$,
$h_{w_e}(\Tilde x)=\Tilde x$.

Let $I\subset D$ be the preimage of $x_\mu$ under $\pi_\mu$.
Since $\Tilde x$ and $h_{w_e}(\Tilde x)\in I$, we are done if $I$ is reduced to one point.
As the holonomy in $\Sigma_\mu$ of every loop in $\calc_\mu$ based at $x_\mu$ is defined on a neighbourhood of $x_\mu$,
this defines in $\Sigma$ a morphism $\psi:\pi_1(\calc,x_\mu)\ra \Homeo(I)$.
Let $H$ be the image of $\psi$ and $H_+<H$ the orientation preserving subgroup.

We claim that for each point $\Tilde y\in I$ whose leaf is special,
$h.\Tilde y=\Tilde y$ for all $h\in H_+$. Otherwise, one can assume for instance $h.\Tilde y>y$,
so $h^n. \Tilde y> \Tilde y$ for all $n>0$.
Moreover, up to changing $h$ to a power, we may assume that $h=\psi(w)$ for some $\Sigma$-word $w$
with trivial twisting morphism. Since $\Tilde y$ lies in a special leaf of $\calf$,
for any $n>0$ there exists $i$ such that the all the intervals $[\Tilde y,h.\Tilde y],\dots,[h^{n-1}.\Tilde y,h^n.\Tilde y]$
are equivalent under the holonomy of $\call_i$, so $Exponent(\call_i)\ra \infty$, a contradiction.

Let $w\in\pi_1(\calc,x_\mu)$ be a $\Sigma$-word whose holonomy does not fix $\Tilde x$. 
Since $\Tilde x$ lies in a singular hence special leaf of $\calf$,
the homeomorphism $h=\psi(w)$ reverses the orientation. Let $m\in I$ be its unique fixed point.
The leaf through $m$ is special since $w$ and $[x_\mu,h_w(x_\mu)]$ define a M\"obius strip in some $\call_i$,
whose core needs to appear in some $\call_j$. The claim above shows that $H_+.m=m$, 
and since $H=\grp{H_+,h_w}$, $H.m=m$.
This means that choosing $m$ instead of $\Tilde x$ as a lift of $x_\mu$, 
one can lift $\calc_\mu$ to $\Sigma$.

Let $\calc\subset\Sigma$ be the graph obtained by lifting each connected component of $\calc_\mu$ as above.
Let $(\Sigma',\calf')$ be the measured foliated band complex with domain $D'$ obtained from $(\Sigma,\calf)$ by first performing 
a band subdivision at each edge of $\calc$ and then a domain cut at each vertex of $\calc$.
We still denote by $\mu$ the obtained transverse measure on $\Sigma'$.
The band subdivisions along edges of $\calc$ are mirrored by band subdivisions on $\Sigma_\mu$ since edges of $\calc_\mu$ are at positive
$\mu$-distance of the boundary of the band containing them.
This is not true for the domain cuts:
given a vertex $x\in\calc$, and an oriented band $B$ containing $x$ in the interior of domain,
the leaf segment $[x,B)$ may fail to lie in $\calc$, in which case
$x$ is at zero $\mu$-distance from an endpoint of $\dom B$; in this case,
the domain cut at $x$ will split the band $B$ into two bands, one of them
having measure $0$. 
Thus, $\Sigma'_{\mu}$ differs from $\Sigma_\mu$
only by the potential presence of additional bands of measure 0.
Let $D'^0$ be the union of connected components of $D'$ with positive measure, and
$\Sigma'^0$ the band complex on $D'^0$ whose bands are the bands of $\Sigma'$ with positive measure.
Then $\Sigma'^0$ is a disjoint union of foliated band complexes $\Lambda'_1,\dots\Lambda'_p$,
such that $\Lambda'_{1,\mu},\Lambda'_{p,\mu}$ are precisely the minimal components of $\Sigma_\mu$.

We need to perform some more operations on $\Sigma'$ to make sure that $\Lambda'_i$ satisfies the second condition of $\mu$-minimality.
We focus on a component $\Lambda'_i\subset \Sigma'$, and to make notations lighter, we use the notations $\Lambda=\Lambda'_i$,
and by $D\subset D'$ its domain.
Consider the graph $\cald_\mu\subset \Lambda_\mu$ with vertex set $\partial D_\mu$, 
and whose edges are the leaf segments of $\Lambda_\mu$
having both endpoints in $\partial D_\mu$.
Working independently on each component of $\cald_\mu$, we can assume that $\cald_\mu$ is connected.
There exists a vertex $x_\mu\in\cald_\mu$ and an oriented band $B_\mu$ of $\Sigma_\mu$ sending $x_\mu$ into the interior of $D_\mu$:
otherwise leaves through the points close to the the vertex set of $\calc_\mu$ would be compact.

Let $\Tilde x$ be the (unique) endpoint of $D$ projecting to $x_\mu$.
A loop in $\cald_\mu$ based at $x_\mu$  defines a $\Sigma$-word $w$ whose holonomy $h_w$ in $\Sigma$
is defined on an interval of positive measure, and contains a point at $\mu$-distance 0 from $\Tilde x$.
Moreover, $h_w$ preserves the orientation.
We claim that for each $y\in\dom w$ and in a special leaf, $h_w(y)=y$.
Indeed, 
write $\dom w=[a,b]$, and assume for instance $h_w(y)>y$.
If $\mu[y,b]>0$, then $h_{w^k}(y)$ is defined for all $k>0$ since $\mu([y,h_w(y)])=0$;
if $\mu[y,b]=0$, then up to changing $w$ to $w\m$ and $y$ to $h_w(y)$, the same conclusion holds. 
This contradicts
the bound on the exponent of periodicity as above.

We construct a lift $\cald$ of $\cald_\mu$.
Let $T$ be a maximal tree of $\cald_\mu$. For each $v\in\cald_\mu$, let $w_v$ be the $\Sigma$-word
representing the segment $[x_\mu,v]_T$.
For each oriented edge $e\in \cald_\mu\setminus T$ joining $v_1$ to $v_2$, let $B_e$ be the oriented band containing it,
and $w_e=w_{v_2}\m B_e w_{v_1}$.
Let $I\subset D'^0$ be the intersection of all domains $\dom w_v$, $\dom w_e$, and $\dom B$.
Since all these domains have positive measure and a point a $\mu$-distance 0 from $\Tilde x$,
$I$ has an endpoint $\Tilde y\in \pi_\mu\m(x_\mu)$.
The leaf through $\Tilde y$ is singular hence special, so $h_{w_e}(\Tilde y)=\Tilde y$ for all $e$ by the claim above.
Thus, the union of all leaf paths $[\Tilde y,w_e)$, $[\Tilde y,w_v)$ 
defines a lift $\cald$ of $\cald_\mu$.

Perform on $\Sigma$ band subdivisions at all edges of $\cald$ and domain cuts at all vertices of all connected components of $\cald$,
and denote by $\Sigma''$ with domain $D''$ the obtained foliated band complex. 
Let $D''^0$ be the union of connected components of $D''$ with positive measure, and
$\Sigma''^0$ the band complex on $D''^0$ whose bands are the bands of $\Sigma''$ with positive measure.
Each band of $\Sigma'$ has been cut into
one band in $\Sigma''^0$, and at most two bands of measure 0 (at most one on each side).
A similar fact holds for connected components of $D'$.
Thus $\Lambda$ has been transformed to $\Lambda''\subset\Sigma''^0$ together with some intervals and bands of measure $0$.

We claim that $\Lambda''$ is a $\mu$-minimal component. 
Denote by $D_{\Lambda''}\subset D"^0$ the domain of $\Lambda''$.
Because of the cutting operation performed, for every endpoint $x$ of a base of a band of $\Lambda''$,
either $x$ lies in $\partial D_{\Lambda''}$, or it lies at positive $\mu$-distance from it.
Since for every endpoint $x_\mu$ of $D_\mu$ there is a $\Sigma_\mu$-word defined on a set of positive measure
mapping $x_\mu$ to the interior of $D_\mu$, this word lifts to a $\Lambda''$-word mapping the corresponding endpoint of
$D_{\Lambda''}$ at positive $\mu$-distance from $\partial D_{\Lambda''}$.

Since all band subdivision moves and all domain cuts have been performed on special leaves of $\calf$,
these moves are compatible with $\call_i$, and the proposition follows.
\end{proof}

\subsection{Homogeneous components}\label{sec_homogeneous}

 Recall that two points of $\Sigma_\mu$ are in the same $\rond\calf_\mu$-leaf if they can be joined by a regular leaf path of $\calf_\mu$.

\begin{dfn}
  One says that $\Sigma_\mu$ contains a \emph{homogeneous component} (also called \emph{axial} or \emph{toral} component) 
if there exists a non-degenerate open interval
$I\subset D_\mu$ and a subgroup $P\subset\Isom(\bbR)$ with dense orbits such that for all $x,y\in I$,
$x,y$ are in the same $\rond\calf_\mu$-leaf if and only if $x,y$ are in the same $P$-orbit.
\end{dfn}

When a minimal component satisfies this definition, it is called a homogeneous component.
If $\Sigma_\mu$ contains a homogeneous component as in the definition above, one of its minimal components
is a homogeneous component.

\begin{prop}\label{prop_homogene}
  If $\Sigma_\mu$ contains a homogeneous component, then the exponent of periodicity of $\call_i$
goes to infinity as $i$ goes to infinity.
\end{prop}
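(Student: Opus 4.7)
The plan is to produce, for each $N\geq 1$, configurations witnessing $exponent(\call_i)\geq N$ for all sufficiently large $i$, by lifting high-iteration translations from the homogeneous component of $\Sigma_\mu$ up to $\Sigma$. A first observation is that the orientation-preserving subgroup $P^+\leq P$ must be nontrivial (otherwise $P$ has orbits of size at most two, contradicting density) and, being a subgroup of the translation group $(\bbR,+)$ with a dense orbit, must be dense in $\bbR$. Let $P^+_0\leq P^+$ denote the subgroup of translations realised by $\Sigma$-words with trivial twisting morphism; since $P^+/P^+_0$ embeds in the finite group $\AutS$ and $P^+$ is infinite and dense, $P^+_0$ is also dense in $\bbR$, and in particular it contains translations of arbitrarily small amplitude.

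Next, for any prescribed $N$, I aim to produce a $\Sigma$-word $W$ with $\phi_W=\id$ whose holonomy $h_W$ acts on an open interval $J_\mu\subset D_\mu$ as a nontrivial translation of amplitude $\alpha>0$ and such that $\mu(J_\mu)\geq (N-1)\alpha$. Achieving trivial twist is straightforward: starting from any word $w$ realising a translation in $P^+_0$, we may adjust and take powers, using that $\AutS$ is finite. The substantial point, and the main technical obstacle of the proof, is controlling the ratio of translation amplitude to $\mu$-length of the domain. This is handled by a recurrence argument in the pseudogroup: since the pseudogroup restricted to the homogeneous minimal component consists of isometries and acts with dense orbits, its closure in the $C^0$-topology contains all translations of small amplitude on any interval compactly contained in $I$; combined with the fact that $P^+_0$ has finite index in $P^+$, this furnishes for any desired ratio a word $W$ with the required three properties.

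Having fixed such a $W$, let $K_\mu\subset J_\mu$ be a subinterval of $\mu$-length exactly $\alpha$; then $K_\mu,\,h_W(K_\mu),\dots,h_W^{N-1}(K_\mu)$ form a concatenation of $N$ consecutive subintervals of $D_\mu$ with pairwise disjoint interiors. Because singular $\calf_\mu$-leaves are dense in the homogeneous (minimal) component, Lemma \ref{lem_singular} lets me lift $K_\mu$ to $K=[a,b]\subset D$ with $a,b$ on singular leaves of $\calf$. Setting $I_k:=h_W^{k-1}(K)$ for $k=1,\dots,N$ yields a concatenation of $N$ intervals in a single connected component of $D$, with all $2N$ endpoints on singular—hence special—leaves of $\calf$. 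Since $\calf$ represents the chain $\call_1\prec\call_2\prec\cdots$ by exhausting the special leaves, for $i$ sufficiently large all these endpoints become $\call_i$-subdivision points, so each $I_k$ is adapted to $\call_i$. Together with $\phi_W=\id$ and the fact that $h_W(I_k)=I_{k+1}$ preserving orientation (the lift of a positive translation), Definition \ref{dfn_exponent} gives $exponent(\call_i)\geq N$; since $N$ was arbitrary, $exponent(\call_i)\to\infty$.
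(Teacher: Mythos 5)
Your overall strategy is the same as the paper's: produce a $\Sigma$-word with trivial twisting automorphism and large translation ratio in $\Sigma_\mu$, then transport the resulting chain of intervals up to $\Sigma$ along singular (hence special) leaves to realise $exponent(\call_i)\geq N$ for all large $i$. The paper does this by first isolating a ``Fact'' --- in a homogeneous component, there are $\Sigma_\mu$-words with arbitrarily large translation ratio $\mu(\dom w)/t(w)$ --- proved by the narrowing-of-bands technique (\cite{GLP1}): one narrows each band by $\eta$ so that the narrowed complex still has an infinite orbit, takes a narrowed word with very small translation, and observes that the corresponding $\Sigma_\mu$-word has domain of measure at least $2\eta$. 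The trivial-twist reduction is then achieved cheaply via $tr(w^k)\geq\tfrac1k tr(w)-1$ with $k=\#\AutS$.

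The genuine gap in your write-up is precisely at the step you yourself flag as ``the substantial point'': you assert that the $C^0$-closure of the restricted holonomy pseudogroup contains all small translations of any compactly contained subinterval, as a consequence of density of orbits, and deduce the large-ratio word from this. But density of $P$-orbits on $I$ does not by itself control the \emph{domains} of composed holonomies: a priori, every word realising a small net translation could have a tiny domain, because the intermediate images must stay inside the bases of the bands visited. The assertion you make is (at least as strong as) the paper's Fact, and it is exactly the thing that requires the band-narrowing argument and the GLP1 input; invoking a vague ``recurrence argument in the pseudogroup'' does not supply a proof. Two smaller imprecisions are worth noting but are fixable: (a) ``$P^+/P^+_0$ embeds in $\AutS$'' is not well posed, since a given translation may be realised by several words with different twisting morphisms --- the clean route is simply to raise $w$ to the power $\#\AutS$ as the paper does; (b) lifting the whole interval $K_\mu$ to $K=[a,b]\subset D$ by independently choosing both endpoints does not guarantee $h_W(a)=b$, so the iterates $h_W^{k-1}(K)$ need not concatenate in $D$ --- one should lift only the left endpoint $a$ (which is automatically on a singular leaf once chosen as an endpoint of $\dom_\Sigma W$) and set $I_k=[h_W^{k-1}(a),h_W^{k}(a)]$, as the paper does.
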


Given a $\Sigma_\mu$-word $w$ whose holonomy is a translation of positive length $t(w)$, whose
domain of definition has positive measure,
its \emph{translation ratio} is the ratio $tr(w)=\mu(\dom(w))/t(w)$.
The proposition is based on the following fact from Rips' Theory (of which we give a proof below):

\begin{fact*}
  Assume that a system of isometries $\Sigma_\mu$ contains a homogeneous component.

Then there are $\Sigma_\mu$-words with arbitrarily large translation ratio.
\end{fact*}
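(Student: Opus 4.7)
The plan is to normalize the homogeneous component into a standard form, then construct $\Sigma_\mu$-words of arbitrarily small translation length but uniformly large domain via a Sturmian/continued-fraction argument.

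\emph{Step 1 (Normalization).} After a finite sequence of inert moves (band subdivisions, domain cuts, pruning, forgetful moves) compatible with the homogeneous structure, and after passing to the orientation-preserving part $P^+ \subset \bbR$ of the holonomy group $P$, I reduce to a minimal axial sub-component given by finitely many bands $B_1,\dots,B_n$ over an interval $I=[0,L]$, where each $B_i$ induces a pure translation $T_{\tau_i}$ on its base (a maximal sub-interval of $I$). The group $P^+$ generated by $\tau_1,\dots,\tau_n$ acts on $I$ with dense orbits, and is therefore dense in $\bbR$.

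\emph{Step 2 (Sturmian construction of balanced words).} By density of $P^+$ I choose two generators $\tau_1,\tau_2$ with $\tau_1/\tau_2$ irrational. Set $\alpha := \tau_1/(\tau_1+\tau_2) \in (0,1)$. The Sturmian coding $u_n := \lfloor n\alpha \rfloor - \lfloor (n-1)\alpha \rfloor \in \{0,1\}$ produces, for each $N\geq 1$, a $\Sigma_\mu$-word $w_N$ of length $N$ (with $n$-th letter $B_1$ if $u_n=0$ and $B_2^{-1}$ if $u_n=1$) whose partial translation sums satisfy $s_\ell = (\tau_1+\tau_2)\{\ell\alpha\} \in [0,\tau_1+\tau_2]$ for all $\ell\leq N$. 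The total translation is $\tau(w_N) = (\tau_1+\tau_2)\{N\alpha\}$, which by equidistribution of $\{N\alpha\}$ mod $1$ can be made arbitrarily small but positive.

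\emph{Step 3 (Domain estimate and conclusion).} Setting $C := \tau_1+\tau_2$, since all partial sums lie in $[0,C]$ and each intermediate point must remain in $\dom(B_1) \cap \dom(B_2^{-1})$, the domain of $h_{w_N}$ in $I$ contains $[\tau_2,\,L-\tau_1-C]$, of length $L-2C$. Provided $L>2C$, this yields $\mu(\dom w_N) \geq L-2C>0$ independently of $N$, hence $tr(w_N) \geq (L-2C)/\tau(w_N) \to \infty$. If initially $L\leq 2C$, I iterate the construction: an earlier $w_N$ of sufficiently small translation serves as a new ``virtual band'' providing smaller generators, and repeating the Sturmian construction with these smaller generators reduces $C$ until $L>2C$ holds.

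The main obstacle is the normalization of Step 1, which relies on Rips' classification of axial components and requires that the moves remain compatible with the sequence $\call_i$ of prelaminations the foliation represents. A secondary difficulty is the bookkeeping in Step 3 when $L\leq 2C$ initially, where the inductive reduction of $C$ via virtual bands must be made precise; the Sturmian/continued-fraction combinatorics of Step 2 is then routine.
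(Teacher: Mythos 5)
Your overall strategy — produce $\Sigma_\mu$-words of arbitrarily small translation while keeping a uniform lower bound on the measure of their domain — is the same as the paper's, but the way you secure the domain bound has a genuine gap. In Step~3 you claim that because all partial sums $s_\ell$ lie in $[0,C]$, the domain of $h_{w_N}$ contains $[\tau_2,\,L-\tau_1-C]$. This is only true if $\dom(B_1)$ and $\dom(B_2^{-1})$ are the \emph{maximal} sub-intervals $[0,L-\tau_1]$ and $[\tau_2,L]$. That normal form is asserted in Step~1 but never obtained: the inert moves at your disposal (band subdivision, domain cut, pruning, forgetful) all shrink, subdivide or delete, and none of them extends a base. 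A homogeneous component may well consist of bands whose bases are short sub-intervals of $I$ that need not even overlap; in that case $\dom(B_1)\cap\dom(B_2^{-1})$, shrunk by $C$, can be empty, and the Sturmian word has empty domain. The iteration with ``virtual bands'' does not repair this: a composite word $w_N$ of small translation is itself subject to the same domain constraint, so replacing a generator by $w_N$ shrinks the available domain at the same rate that it shrinks $C$, and you get no uniform lower bound.

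The paper sidesteps exactly this difficulty by invoking the narrowing lemma from \cite{GLP1}: in the presence of a homogeneous component there exists $\eta>0$ such that the system of isometries $\Sigma'$ obtained by narrowing every base by $\eta$ on each side still has an infinite orbit. A $\Sigma'$-word $w$ of translation length $\leq\eta/M$, viewed in $\Sigma_\mu$, then has domain of measure at least $2\eta$ — this is the uniform domain control that your proof assumes but does not establish. The Sturmian construction is an elegant way to manufacture words whose \emph{partial} translations stay bounded, which is indeed the right shape of estimate, but it is not enough without some structural input on how far the bands extend; the GLP1 narrowing result is exactly that input. If you want to keep the Sturmian route, you would need to first prove (or cite) a statement of that kind, and then run the balanced-word argument inside the $\eta$-narrowed system, where the $2\eta$ margin plays the role your maximal-base assumption was meant to play.
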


\begin{proof}[Proof of Proposition \ref{prop_homogene}]
Let $k$ be the cardinal of $\AutS$.
Since $tr(w^k)\geq \frac{1}{k}tr(w)-1$, there are $\Sigma_\mu$-words
with trivial twisting whose translation ratio is arbitrarily large.
Consider such a $\Sigma_\mu$-word $w$ with translation ratio at least $N$, and view it as a $\Sigma$-word.
Let $x\in D$ be an endpoint of $\dom w$.
Since $tr(w)\geq N$, up to changing $w$ to $w\m$, we can assume that
for all $k=0,\dots, N-1$, $x\in\dom w^k$.
Since the leaf through $x$ is singular hence special, 
for all $i$ large enough,
all intervals  $[x,h_w(x)]$,\dots,  $[h_w^{N-2}(x),h_w^{N-1}(x)]$ 
are equivalent under the holonomy of some ${w^j}$. 
The exponent of periodicity of $\call_i$ then goes to infinity with $i$.
\end{proof}

\begin{proof}[Proof of the fact]
Given $\eta>0$, consider $\Sigma'$ the Rips band complex on $D_\mu$ obtained by 
narrowing each band by $\eta$ on each side. More formally, this amounts to restricting
$f_{B,\eps}:[a,b]\times\{\eps\}\ra D$ to $[a+\eta,b-\eta]\times\{\eps\}$.
  If $\Sigma_\mu$ contains a homogeneous component, then one can find $\eta>0$ such that
$\Sigma'$ has an infinite orbit (\cite{GLP1}).
In particular, there exists $\Sigma'$-words whose holonomy is a translation of arbitrarily small length.
Fix some large $M\in \bbR$, and consider $w$ whose holonomy is a translation of length at most $\eta/M$.
The holonomy of corresponding $\Sigma_\mu$-word is a translation of the same length,
and its domain has measure at least $2\eta$, so $tr(w)\geq 2M$.
\end{proof}

\subsection{Independent bands}\label{sec_indep}

A theorem by Gaboriau asserts that if a Rips band complex $\Sigma_\mu$ has no homogeneous component,
one can narrow its bands so that its bands are \emph{independent}, meaning that
no $\rond\calf_\mu$-leaf contains a cycle.
This result was also used in \cite{GLP1} or \cite[Section 6]{Gui_approximation} for the analysis of actions on $\bbR$-trees.
Our goal is to mirror this fact in the topological foliated band complex $(\Sigma,\calf)$, 
and into the prelaminated band complexes $(\Sigma,\call_i)$.\\

Given a foliated band complex $(\Sigma,\calf)$ with domain $D$, an \emph{orbit} of $(\Sigma,\calf)$ is the intersection of a leaf
with $D$.

\begin{thm}[\cite{Gab_indep}]\label{thm_Gab}
Given $\Sigma_\mu$ a Rips band complex with domain $D$ without homogeneous component,
there exists a Rips band complex $\Sigma'_\mu\subset \Sigma_\mu$ with same domain $D$, obtained by replacing each band $B=[a,b]\times [0,1]$ of $\Sigma$
by a narrower band $B'=[a',b']\times[0,1]\subset B$, and such that
\begin{enumerate*}
\item $\Sigma_\mu$ and $\Sigma'_\mu$ have the same orbits
\item $\Sigma'_\mu$ has independent bands: no $\rond\calf'_\mu$-leaf contains a cycle
\item any singular leaf of $\Sigma'_\mu$ is contained in a pseudo-singular of $\Sigma_\mu$.
\end{enumerate*}
\end{thm}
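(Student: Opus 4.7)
The plan is to follow Gaboriau's Rips machine approach from \cite{Gab_indep}, constructing $\Sigma'_\mu$ through a finite sequence of band-narrowing moves that preserve orbits but strictly simplify the band complex at each step. The central object is a complexity function $c(\Sigma_\mu)$ (for example, a lexicographic pair consisting of the number of orbits of endpoints of bases, followed by the total measure of bands), chosen so that the narrowing moves we allow are non-increasing in $c$, and strictly decreasing whenever a specific configuration witnessing non-independence is present.

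First, I would set up the allowable narrowing moves. The prototypical one is: given two bands $B_1, B_2$ whose bases overlap on a segment $I$ of positive measure, compare the holonomies $h_{B_1}$ and $h_{B_2}$ on $I$; if they agree on a subsegment $I'$ of positive measure, then the composition $B_2^{-1} B_1$ acts trivially on $I'$, and one can narrow $B_1$ away from a suitable endpoint of $I'$ without losing any orbit, because the lost identifications are recovered through $B_2$ and a smaller band. More generally, whenever a regular $\calf_\mu$-leaf contains a cycle passing through a positive-measure set of points, one obtains a $\Sigma_\mu$-word $w$ with $h_w = \id$ on a positive-measure subinterval, and one uses this relation to trim a band. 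A key input is the classification of minimal components from \cite{BF_stable,GLP1}: the hypothesis of no homogeneous component ensures that whenever such a relation $h_w = \id$ holds on a positive measure set, it is non-trivial in a way that yields a strict decrease of $c$ (in a homogeneous component, the analogous relations would only reflect the ambient abelian action and produce no genuine simplification).

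Second, I would show termination. Since $c$ is well-ordered and each non-independence configuration yields a strict decrease, after finitely many moves we reach a complex $\Sigma'_\mu$ in which no regular $\rond\calf'_\mu$-leaf contains a cycle; this gives (2). Condition (1) is built into the moves themselves: narrowing a band only removes identifications that are already realised by other bands, so the equivalence relation on $D$ generated by the bands is preserved, and hence the orbits are unchanged.

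The main obstacle, and the delicate bookkeeping point, is condition (3): any singular leaf of $\Sigma'_\mu$ must lie inside a pseudo-singular leaf of $\Sigma_\mu$. New singular leaves arise at the fresh band endpoints $\{a',b'\}$ created by each narrowing move, so one must verify that at every step these new boundary points lie in pseudo-singular leaves of the \emph{original} $\Sigma_\mu$. This forces a careful choice of where to trim: the cut should be made at a point $x$ such that the relation $h_w(x) = x$ used to justify the narrowing realises $x$ either as a boundary point of some existing band base (hence as a singular point of $\Sigma_\mu$) or as the fixed point of a holonomy involution (hence as a point on the core of a M\"obius strip, and therefore pseudo-singular). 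Arguing as in the lift of $\calc_\mu$ in the proof of Proposition \ref{prop_minimal}, such cut points always exist because the holonomy relation $h_w = \id$ on an interval either pins down existing vertices or produces an orientation-reversing fixed point, both of which are pseudo-singular in $\Sigma_\mu$.
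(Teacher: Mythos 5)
Your proposal attempts to re-prove Gaboriau's theorem from scratch via a terminating loop of narrowing moves controlled by a vaguely specified complexity function; the paper, by contrast, treats conditions (1) and (2) as Gaboriau's result (cited as such) and only proves the additional refinement (3) by modifying the \emph{choice of narrowing widths} in Gaboriau's existing construction. These are genuinely different strategies, and yours has real gaps.

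First, on the overall structure: Gaboriau's proof (as the paper sketches) does not simply narrow bands until they become independent. It narrows by a tuple $\ul t$ indexed by a maximal subforest $\calt$ of the graph of band sides, then narrows further by a small tuple $\ul u$ to get a complex $\Sigma_2$ whose leaves are compact (using non-homogeneity via \cite[Th.\ 3.3]{Gab_indep}), then applies \cite[Lemme 5.2]{Gab_indep} to the finite-orbit complex to get independent bands, and finally \emph{re-enlarges} by $\ul u$ to return to the original orbits. The termination/correctness bookkeeping is not done with a lexicographic complexity: it is done with the measure identity $e(\Sigma)+l(\Sigma)=\mu(D)$ that characterizes independent-band complexes (\cite[Th.\ 6.3]{Gab_indep}), together with $e(\Sigma_{\ul u+\ul t})=|\ul u|$ from \cite[Th.\ II.E.4]{Gus_feuilletages}. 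Your claim that ``each non-independence configuration yields a strict decrease'' of your complexity is asserted but not justified, and it is not at all clear that a single narrowing move kills a cycle relation without creating others, or that it makes progress in any fixed well-ordered complexity. You also say nothing about re-enlarging, which is essential: otherwise you have no control on whether the final complex is maximal among independent sub-complexes realizing the same orbit equivalence. Similarly, your explanation for why the no-homogeneous-component hypothesis helps (``produce no genuine simplification'') is only an intuition, not a usable step; the paper uses it precisely through the compactness-of-leaves theorem.

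Second, on condition (3), your idea of trimming at fixed points of orientation-reversing holonomies (hence at M\"obius cores, hence pseudo-singular points) is in the right spirit, but it is aimed at the wrong place. The paper achieves (3) not by choosing where to cut inside a loop, but by choosing the widths $t_\eps$ on the non-tree edges $\eps\notin\calt$ so that the \emph{new band boundary already lies on a singular leaf of $\Sigma_\mu$} (possible by density of singular leaves in a minimal component). That is the actual modification to Gaboriau's construction; your version would still need to explain why, after your cascade of moves, every endpoint of every band in the final complex was created at a pseudo-singular point, and your sketch gives no mechanism for propagating this control through an unbounded number of steps.
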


\begin{rem}
  If bands of $\Sigma'_\mu$ are independent, every pseudo-singular leaf is singular.
\end{rem}

The last condition is not stated in Gaboriau's theorem but can be ensured by slightly modifying Gaboriau's construction.

\begin{proof}
We explain how to ensure that the set of pseudo-singular orbits of $\Sigma_\mu$ 
does not increase
during Gaboriau's construction in the case where $\Sigma_\mu$ is a minimal component.
For convenience, we give references to statements in Gaboriau's paper. 
To lighten notations, we drop the 
index $\mu$ so we denote by $\Sigma$ the initial Rips band complex.
Clearly, one can ensure that condition 3 holds if all orbits are finite (\cite[Lemme 5.2(1)]{Gab_indep}).
The proof of Gaboriau's theorem (see \cite[section 7]{Gab_indep}) consists in modifying the band complex
$\Sigma$ by narrowing then re-enlarging bands.
Instead of narrowing  bands by a uniform amount, 
we need to choose narrowing widths so that we don't create new pseudo-singular leaves.

The set of sides of bands (\ie the set of singular leaf segments) is the set of edges $E(\Delta)$ of the finite graph $\Delta$ of \cite{Gab_indep}.
Given a tuple $\ul t=(t_\eps)_{\eps\in E(\Delta)}$ of (small enough) non-negative numbers, one can define $\Sigma_{\ul t}$
by narrowing each band by $t_\eps$ on the side corresponding to $\eps$.

Consider $\calt$ a maximal subforest of $\Delta$, 
a small tuple $\ul t$ so that 
$t_\eps=0$ for $\eps\in \calt$, and for $\eps\notin \calt$, choose $t_\eps>0$ 
so that the boundary of the new band lies in a singular leaf of $\Sigma$.
Moreover, $\ul t$ should be small enough so that $\Sigma$ and $\Sigma_t$ have the same orbits (\cite[Prop. 4.5]{Gab_indep}).
This is possible because the union of singular leaves is dense.

Let $t_1$ be the minimum of $t_\eps$ for $\eps\notin\calt$,
and consider $\ul u=(u_\eps)_{\eps\in E(\Delta)}$ a small tuple with $u_\eps=0$ for $\eps\notin \calt$
and $0<u_\eps\leq t_1$ for $\eps\in\calt$ so that the singular leaves of $\Sigma_2=\Sigma_{\ul u+\ul t}$ 
are contained in singular leaves of $\Sigma$.
Since $\Sigma$ is non-homogeneous, and since all coordinates of $\ul u+\ul t$ are positive,
leaves of $\Sigma_{\ul u+\ul t}$ are compact (\cite[Th 3.3]{Gab_indep}).
Denote by $e(\Sigma)$ the measure of the space of leaves of a Rips band complex $\Sigma$ (this is $0$ if every leaf is dense),
and by $l(\Sigma)$ the sum of the transverse measure of its bands.
Then $e(\Sigma_{\ul t})=0$, and using \cite[Th II.E.4]{Gus_feuilletages},
$e(\Sigma_{\ul u+\ul t})=|\ul u|$ where $|\ul u|=\sum_{\eps\in\Delta} u_\eps$ (this is the generalisation we need of \cite[Prop. 4.3]{Gab_indep}).
Since $\Sigma_2$ has compact leaves, 
one can find $\ul v$ such that $v_\eps=0$ for $\eps\in \calt$,
and $\Sigma_3=\Sigma_{\ul u+\ul t+\ul v}$ has the same orbits as $\Sigma_2$, and has independent bands 
and has no new pseudo-singular leaves \cite[Lemme 5.2]{Gab_indep}.
Since bands are independent, $e(\Sigma_3)+l(\Sigma_3)=\mu(D)$ (\cite[Th. 6.3]{Gab_indep}).

Finally, consider $\Sigma_4=\Sigma_{\ul t+\ul v}$, obtained from $\Sigma_3$ by enlarging the bands by $\ul u$.
Then as in  \cite[section 7]{Gab_indep}, $\Sigma_4$ has the same orbits as $\Sigma$, so $e(\Sigma_4)=0$.
Since $l(\Sigma_4)+e(\Sigma_4)=l(\Sigma_3)+|\ul u|=\mu(D)$, its bands are independent \cite[Th. 6.3]{Gab_indep}.
Finally, one easily checks that any singular leaf of $\Sigma_4$ is contained in a pseudo-singular leaf of $\Sigma$.
\end{proof}

\begin{dfn}
Say that a measured foliation $(\calf,\mu)$ on $\Sigma$
 has $\mu$-independent bands if 
the holonomy of no non-trivial 
 reduced $\Sigma_\mu$-word fixes an interval of positive measure.
\end{dfn}

Our goal is the following proposition.

\begin{prop}\label{prop_indep}
Consider a band complex $\Sigma$ with a measured foliation $(\calf,\mu)$ without atom
representing a sequence of prelaminations $\call_i$.
Assume that there is a bound on the exponent of periodicity of $\call_i$, and that each $\call_i$
has an invariant combinatorial measure.

Then one can perform a finite number of inert moves on $(\Sigma,\calf)$ compatible with $\call_i$
so that the obtained foliated complex $(\Sigma',\calf')$ has $\mu$-independent bands.
\end{prop}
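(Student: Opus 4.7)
The plan is to reduce to minimal components, apply Gaboriau's theorem on them, and then lift the ``narrowing'' step back to $(\Sigma,\calf)$ via combinatorial moves that are compatible with the sequence $\call_i$.

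First, I would apply Proposition \ref{prop_minimal} to perform a finite sequence of inert moves (band subdivisions and domain cuts along the graph $\calc$ lifting the singular $\rond\calf_\mu$-leaves) compatible with $\call_i$, reducing to the case where $(\Sigma,\calf)$ is the disjoint union of finitely many $\mu$-minimal foliated band complexes $\Lambda_1,\dots,\Lambda_p$ together with bands and intervals of measure zero. The zero-measure pieces play no role with respect to $\mu$-independence, so I can focus on one $\mu$-minimal component $\Lambda_j$. By Proposition \ref{prop_homogene}, the boundedness of the exponent of periodicity of $\call_i$ forbids any homogeneous component, so the projected Rips band complex $\Lambda_{j,\mu}$ contains no homogeneous component.

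Next, I would apply Theorem \ref{thm_Gab} of Gaboriau to $\Lambda_{j,\mu}$, obtaining a band complex $\Lambda'_{j,\mu}\subset\Lambda_{j,\mu}$ with the same orbits, with independent bands, and such that every singular leaf of $\Lambda'_{j,\mu}$ is contained in a pseudo-singular leaf of $\Lambda_{j,\mu}$. Each band $B$ of $\Lambda_j$ thus gets replaced by a narrower central sub-band $B'=[a',b']\times[0,1]\subset [a,b]\times[0,1]$, and the two boundary segments $\{a'\}\times[0,1]$ and $\{b'\}\times[0,1]$ lie in singular leaves of $\Lambda'_{j,\mu}$, hence in pseudo-singular leaves of $\Lambda_{j,\mu}$. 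By Lemma \ref{lem_singular} and Remark \ref{rem_pseudo} (each $\call_i$ carries an invariant combinatorial measure by hypothesis), these segments lift to special leaves of the topological foliation on $\Lambda_j$. I would then perform the corresponding band subdivisions on $(\Sigma,\calf)$; by Lemma \ref{lem_band_subdiv} these moves are compatible with $\call_i$.

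After subdivision, each original band has been cut into a central sub-band and up to two outer strips. The orbit-preservation clause of Theorem \ref{thm_Gab} ensures that for each outer strip $B$ (with bases $J_0,J_1$ and endpoints $a_0,b_0$ of $J_0$), there is a reduced $\Sigma_\mu$-word $w$, not involving $B^{\pm 1}$, whose holonomy is defined at $a_0,b_0$ and sends them to $h_B(a_0),h_B(b_0)$. Since the four points $a_0,b_0,h_B(a_0),h_B(b_0)$ lie in singular, hence special, leaves of $\calf$, the corresponding $\Sigma$-word has holonomy defined there and agrees with $h_B$ on $\{a_0,b_0\}$ (there is no ambiguity in measure-zero fibers because the holonomy of $w B^{-1}$ at these special-leaf endpoints is rigid). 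The hypothesis on rational constraints being invariant under the holonomy handles the twisting morphisms via the standard-form mechanism recalled in Section \ref{sec_incompatibility}. I may then invoke Lemma \ref{lem_band_removal} to remove each outer strip, compatibly with $\call_i$.

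The result $(\Sigma',\calf')$ projects (on each former $\Lambda_j$) to $\Lambda'_{j,\mu}$, which has independent bands in Gaboriau's sense. Any non-trivial reduced $\Sigma'_\mu$-word whose holonomy fixes an interval of positive measure would produce a cycle in a $\rond{\calf'_\mu}$-leaf, contradicting independence; hence $(\calf',\mu)$ has $\mu$-independent bands, which is what we needed. The main obstacle is the lifting step: Gaboriau's theorem lives on $\Sigma_\mu$, where measure-zero fibers have been collapsed, so one has to check carefully that the narrowed band sides and the replacement words $w$ chosen for band removal can be realized in $\Sigma$ at precise points in a way compatible with the prelaminations $\call_i$ and with the twisting data; this is where the pseudo-singular refinement in Theorem \ref{thm_Gab} and the combinatorial-measure hypothesis on $\call_i$ (via Remark \ref{rem_pseudo}) are essential.
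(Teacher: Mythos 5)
There is a genuine gap in your removal step for the outer strips. You assert that for each outer strip $B$ with domain $[a_0,b_0]$, orbit preservation in Theorem~\ref{thm_Gab} gives a single $\Sigma_\mu$-word $w$, not involving $B^{\pm 1}$, whose holonomy is defined on $\{a_0,b_0\}$ and agrees with $h_B$ there. Orbit preservation does not give this: it only tells you that for each individual point $x$ of the outer strip's base, $x$ and $h_{B_\mu}(x)$ lie in the same $\Sigma'_\mu$-leaf, so the connecting $\Sigma'_\mu$-word may vary with $x$. Since the base of the outer strip has positive $\mu$-measure, $a_0$ and $b_0$ project to distinct points of $\Sigma_\mu$ separated by a positive-measure interval, and there is no a~priori reason for a single word to connect both to their images.

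The paper's proof closes this gap with the \emph{segment-closed property} (Proposition from \cite{GLP2}, stated just before the proof): one obtains finitely many $\Sigma'_\mu$-words $w'_1,\dots,w'_n$ whose domains cover the outer strip's base and whose holonomies coincide with $h_{B_\mu}$ there. Because $n$ may exceed one, a further round of band subdivision is needed, cutting the outer strip along singular leaves through the boundaries of the $\dom w'_i$, so that each remaining sub-band $B'''$ of positive measure has $\dom B''' \subset \dom w'_i$ for some single $i$. Only then is band removal applicable, one sub-band at a time. There is also the point you gesture at with ``rigidity'': after lifting $w'_i$ from $\Sigma_\mu$ to $\Sigma$, the holonomy of $u = (w'_i)^{-1}B'''$ need not literally fix the endpoint $a$ of $\dom B'''$, since $\pi_\mu$ identifies the whole measure-zero fibre; the paper's actual argument is that if $h_u(a)\neq a$ then iterating $u$ would force $exponent(\call_i)\to\infty$, contradicting the hypothesis — not an appeal to ``rigidity.'' You should incorporate both the segment-closed property and this periodicity argument to make the removal step correct. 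Your initial decomposition into $\mu$-minimal components via Proposition~\ref{prop_minimal} is harmless but not needed for the statement: the paper applies Gaboriau's theorem directly to $\Sigma_\mu$ once Proposition~\ref{prop_homogene} has excluded homogeneous components.
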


We will use the following useful finiteness property:
\begin{prop}[Segment-closed property, \cite{GLP2}]
Let $(\Sigma_\mu,\calf,\mu)$ be a Rips band complex, and $\phi:I\ra J$ a partial isometry between two segments
of $D_\mu$ such that for all $x\in I$, $x$ and $\phi(x)$ are in the same leaf.

Then there exists finitely many $\Sigma_\mu$-words $w_1,\dots,w_n$ whose domains cover $I$,
and whose holonomies coincide with $\phi$ on $I$.\qed
\end{prop}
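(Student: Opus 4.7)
The plan is to prove this as a pointwise-to-local-to-finite compactness argument on the segment $I$.

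Fix $x \in I$. Since $x$ and $\phi(x)$ are in the same $\calf_\mu$-leaf, by definition there is a regular leaf path from $x$ to $\phi(x)$, which determines a $\Sigma_\mu$-word $w$ with $x$ in the interior (or endpoint) of $\dom w$ and $h_w(x)=\phi(x)$. The holonomy $h_w$ and the partial isometry $\phi$ are both local isometries of $D_\mu$ at $x$, so each has a local orientation sign $\eps \in \{+,-\}$. Two isometries of an interval that agree at a point and share the same orientation sign must coincide on the intersection of their domains. Thus, once $w$ is chosen with sign matching that of $\phi$ at $x$, we have $h_w \equiv \phi$ on an open neighborhood $V_x$ of $x$ in $I$.

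If the initial word $w$ produced by the leaf path hypothesis has the wrong sign at $x$, I modify it as follows: pick a point $x' \in I$ close to $x$ (on the appropriate side) together with a word $w'$ realizing $h_{w'}(x') = \phi(x')$, and concatenate $w$ with a word obtained from a leaf path between $x$ and $x'$ (using the fact that $\phi$ is globally defined on $I$, so nearby realizations exist on both sides). After finitely many such modifications, one obtains a word with matching sign at $x$. This yields an open cover $\{V_x\}_{x \in I}$ of $I$. The closure $\bar I$ being compact, a finite subcover $V_{x_1},\dots,V_{x_n}$ exists; the corresponding words $w_1 ,\dots, w_n$ have domains $\dom w_i \supset V_{x_i}$, so their union covers $I$, and by construction each $h_{w_i}$ coincides with $\phi$ on $\dom w_i \cap I$.

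The main obstacle is the sign-matching step: showing that at every $x \in I$ one can realize the germ of $\phi$ by the germ of a holonomy requires careful use of the Rips band complex structure and the hypothesis that $\phi$ is an isometry defined on a whole segment. The delicate case is when $x$ lies on a pseudo-singular leaf, where the available words at $x$ may systematically have the opposite orientation sign to $\phi$; exploiting neighboring realizations and composing across the leaf graph is the essential combinatorial ingredient, and this is precisely where the measured foliation hypothesis (non-atomic invariant measure, Rips band complex) enters in a nontrivial way, via the tree-like structure of leaves established in \cite{GLP2}.
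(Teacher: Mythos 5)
The paper does not actually prove this statement — it is quoted from \cite{GLP2} with the proof omitted — so the only question is whether your argument genuinely establishes it, and it does not: the central step fails. Your plan reduces everything to realizing, at each $x\in I$, the germ of $\phi$ by the holonomy of a single $\Sigma_\mu$-word with the correct orientation, and your mechanism for fixing a wrong orientation is to ``concatenate $w$ with a word obtained from a leaf path between $x$ and $x'$'' for a nearby point $x'\in I$. But two nearby points of $D_\mu$ are in general \emph{not} in the same leaf (leaves are transverse to $D_\mu$; distinct points of $I$ typically lie in distinct leaves), so no such leaf path exists, and knowing only that some word $w'$ satisfies $h_{w'}(x')=\phi(x')$ gives no control on the orientation of $h_{w'}$ at $x'$ nor any way to transport it to $x$ — you are facing the same difficulty again, one point over. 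Since the conclusion of the proposition forces, at every interior point of $I$, a word whose holonomy agrees with $\phi$ on a nondegenerate subinterval (hence with matching orientation there), this pointwise-to-local upgrade is precisely the whole content of the statement, and your proof defers it (your last paragraph essentially concedes this by invoking \cite{GLP2} for ``the essential combinatorial ingredient'').

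For comparison, the actual proof in \cite{GLP2} is global rather than pointwise: it exploits the fact that there are only countably many $\Sigma_\mu$-words, that each domain $\dom h_w$ is a closed subinterval, and that the agreement set $E_w=\{x\in I : h_w(x)=\phi(x)\}$ of two partial isometries of an interval is empty, a single point, or a closed interval (the interval case forcing equal orientations). Since $I=\bigcup_w E_w$, a Baire category argument applied to the closed set of ``bad'' points (those not yet covered by a word agreeing with $\phi$ on a nondegenerate neighbourhood) shows this set must be empty, isolated bad points being absorbed because closed domains containing a one-sided punctured neighbourhood of $x$ contain $x$ itself; compactness of $I$ then yields the finite family $w_1,\dots,w_n$. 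None of this machinery (countability, Baire, the trichotomy for agreement sets) appears in your argument, so as written there is a genuine gap at the sign-matching step, not merely a missing routine verification.
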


\begin{proof}[Proof of Proposition \ref{prop_indep}]
For each band $B$ of $\Sigma$, we denote by  $B_\mu$ the corresponding band of $\Sigma_\mu$.
Consider $\Sigma'_\mu$ the Rips band complex on $D_\mu$ whose bands $B'_\mu\subset B_\mu$ are given by Theorem \ref{thm_Gab}.
Consider a band $B$ and $I,J$ (resp. $I_\mu,J_\mu$ and $I'_\mu,J'_\mu$) the bases of $B$ (resp. $B_\mu$, $B'_\mu$).
Let $K_\mu$ be the closure of a connected component of $I_\mu\setminus I'_\mu$.
Since $\Sigma_\mu$ and $\Sigma'_\mu$ have the same orbits, 
the holonomy $h_{B_\mu}$ of $B_\mu$ is a partial isometry such that 
$x$ and $h_{B_\mu}(x)$ are in the same $\Sigma'_\mu$-leaf.
By segment-closed property, there exist $\Sigma'_\mu$-words $w'_1,\dots,w'_n$ whose domains cover $K_\mu$,
and whose holonomies coincide with $h_{B_\mu}$ on $K_\mu$. 
We can assume that $\dom w'_i$ is not reduced to a point \ie has positive measure.

We now lift this situation to $\Sigma$.
For each band $B$ of $\Sigma$ whose bases have positive measure,
consider a restriction $B'$ of $B$ in $\Sigma$ whose base projects to $B'_{\mu}$ in $\Sigma_\mu$.
By Lemma \ref{lem_singular}, one can choose the endpoints of $B'$ in pseudo-singular leaves of $\calf$. 
By remark \ref{rem_pseudo}, these leaves are special.
Using the band subdivision move, one can subdivide the band $B$ along the boundary of the corresponding sub-band $B'$.
Doing this for each band of $\Sigma$  gives get a new band complex $\Sigma_1$
 whose set of bands is $\calb'\dunion \calb''$
where $\calb'$ is the set chosen lifts of bands of $\Sigma'_\mu$.

For each band $B''\in\calb''$, there are $\Sigma'_\mu$-words $w'_1,\dots,w'_n$ whose domains cover $\dom B''_\mu$,
and whose holonomies coincide with $h_{B''_\mu}$.
We view these words as $\Sigma_1$-words involving only bands of $\calb'$. 
The domains of $w'_i$ cover all $\dom B''$ except for some finite union of intervals of measure zero.
By further subdividing $B''$ along some singular leaves (defined as the boundary of the domain of some $w'_i$),
we can replace each band $B''$ by a finite set of smaller sub-bands $B'''$ so that either $\mu(\dom B''')=0$
or $\dom B'''\subset \dom w'_i$ for some $i$.

Fix some band $B'''$ whose domain $[a,b]$ has positive measure.
By construction, the holonomy of the word $u=w_i\m B'''$ induces the identity on $\pi_\mu([a,b])$.
If $h_u(a)\neq a$, then up to changing $u$ to $u\m$, $h_{u^k}(a)$ is defined for all $k>0$.
Since the leaf through $a$ is special, this clearly implies that $exponent(\call_i)$ goes to infinity.
It follows that $h_u$ fixes both $a$ and $b$,
so one can remove the band $B'''$ using  the band removal move.
The bands of the obtained foliated band complex $\Sigma_2$ are either bands of $\calb'$, or bands whose bases have measure zero.
Thus, $\Sigma_2$ has $\mu$-independent bands.
\end{proof}

\subsection{The pruning process}\label{sec_exotic}

Consider a foliated band complex $(\Sigma,\calf,\mu)$ 
with $\mu$-independent bands, 
decomposed into $\mu$-minimal components as in Proposition \ref{prop_minimal}. Let $\Lambda$ be a minimal component of this decomposition.

We recall the following pruning process (also known as process 1 of the Rips machine).
All the moves are performed on $\Sigma$, modifying only $\Lambda\subset \Sigma$.

We denote by $D_\Lambda=D\cap\Lambda$ the domain of $\Lambda$, which will be the \emph{active} part for the moves.
The pruning process consists in the iteration of the following steps:
\begin{itemize}
\item \emph{Step a.}
Say that an open interval $I\subset D_{\Lambda}$ is \emph{prunable} if every $x\in \rond{I}$ lies in exactly one base of a band 
of $\Sigma$.
If $\Lambda_\mu$ has no prunable interval of positive measure, the process stops.
Otherwise, let $I_1,\dots I_n$ be the complete list of maximal prunable intervals of positive measure, 
and perform a pruning move for each $I_i$.

\item \emph{Step b.} 
If some component of $D_\Lambda$ contains at most one base of band, prune this band, and iterate step b as many times as possible.

\item \emph{Step c.} 
Perform on all possible forgetful moves on $\Lambda$.
\end{itemize}

\begin{rem}
  In step a, note that each band of $\Sigma$ having a base in $D_\Lambda$ either lies in $\Lambda$ or has measure zero.
Step b and step c can be repeated only
  finitely many times in a row since they decrease the number of bands.
\end{rem}

Starting with  $(\Sigma_0,\calf_0,\mu_0)=(\Sigma,\calf,\mu)$,
we define inductively $(\Sigma_{j+1},\calf_{j+1},\mu_{j+1})$ by performing the three steps a,b,c on $(\Sigma_j,\calf_j,\mu_j)$.
The sequence of prelaminations $\call_i\subset \Sigma$ represented by $\calf$ induces
a sequence of prelaminations $\call_{j,i}\subset \Sigma_j$ represented in $\calf_i$.
Moreover, the rational constraints on $\call_i$ (in standard form) 
induce rational constraints on $\call_{j,i}$ which define
rational constraints $\ul m_j\in\calm^{E(\Sigma_j)}$ on $\Sigma_j$ independently of $i$
by compatibility of the rational constraints.
All rational constraints are in standard form, using the same morphism $\rho:\fmip\ra\calm$.

We denote by $\Lambda_j\subset\Sigma_j$ the subset corresponding to the minimal component under study. 
We denote by  $D_{\Sigma_j}\subset D$ and $D_{\Lambda_j}=D_{\Sigma_j}\cap \Lambda$
the domains of $\Sigma_j$ and $\Lambda_j$, viewed as subsets of $D$.

Since bands are independent (which excludes homogeneous components), surface and  exotic components 
can be defined by the fact that the pruning process stops or not (\cite{GLP1}):

\begin{dfn}
  Say that $\Lambda$ is \emph{exotic} (or Levitt, or thin), if the pruning process never stops.
Otherwise it is a \emph{surface} component.
\end{dfn}

\subsubsection{Exotic components}

We focus on an exotic minimal component $\Lambda$ of this decomposition.
The goal of this section is the following proposition:

\begin{prop}\label{prop_exotic}
Consider a foliated band complex $(\Sigma,\calf,\mu)$
with $\mu$-independent bands, 
decomposed into $\mu$-minimal components as in Proposition \ref{prop_minimal}.
Consider $\Lambda$ an exotic $\mu$-minimal component of $\Sigma$.
Let $\call_i$ be a sequence of prelaminations represented by $\calf$.

Then for $i$ large enough, there exists a shortening sequence of moves for $(\Sigma,\call_i)$ as
in definition \ref{dfn_shortening_moves}, certifying that a shortest solution of $\Sigma$ cannot induce $\call_{i}$.
\end{prop}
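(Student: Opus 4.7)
The plan is to exploit the self-similarity of the pruning process on the exotic component $\Lambda$, generalising the ``$\Sigma$ visited twice'' scenario sketched in the introduction. First I would iterate the pruning process on $(\Sigma,\calf,\mu)$ focused on $\Lambda$, producing an infinite sequence $(\Sigma,\calf) = (\Sigma^{(0)},\calf^{(0)}) \to (\Sigma^{(1)},\calf^{(1)}) \to \cdots$ whose elementary moves are either inert (band subdivision, band removal, domain cut) or restriction moves (pruning, of Lipschitz factor $2$ in reverse; forgetful, of factor $3$). Since the number of bands never increases, the combinatorial type of $\Sigma^{(j)}$---enriched with its rational constraints, which take values in the finite monoid $\calm$---takes only finitely many values, so pigeonhole gives indices $j_1 < j_2$, bounded by a constant $C=C(\Sigma)$, and an isomorphism $\varphi:\Sigma^{(j_1)}\cong \Sigma^{(j_2)}$ preserving this enriched data.

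The triggers of the pruning process (prunability, components with at most one base, components carrying two bases with overlapping domain) are purely combinatorial, so $\varphi$ transports the move sequence $\Sigma^{(j_1)}\to\Sigma^{(j_2)}$ to an analogous sequence $\Sigma^{(j_2)}\to\Sigma^{(j_3)}$ with $\Sigma^{(j_3)}\cong\Sigma^{(j_1)}$; iterating yields a family $\Sigma^{(j_k)}$, pairwise isomorphic. In each cycle the restriction moves strictly decrease $\mu(D_{\Lambda^{(j)}})$ by pruning intervals of positive measure corresponding under $\varphi$ to a fixed combinatorial pattern, and by the standard analysis of thin components in Rips machine theory \cite{GLP1,BF_stable} this forces $\mu(D_{\Lambda^{(j_k)}}) \to 0$ as $k\to\infty$. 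Let $\lambda_0 := 2^{a_0} 3^{b_0}$, where $a_0$ and $b_0$ are the number of pruning and forgetful moves in the prefix $\Sigma \to \Sigma^{(j_1)}$; this is bounded because $j_1\leq C$. By Lemma~\ref{lem_certif_mesure} applied to the $\mu$-minimal foliated band complex $\Lambda$, there exists $\eta>0$ such that any segment $I\subset D_\Lambda$ with $\mu(I)<\eta$ admits, for $i$ sufficiently large, an adapted refinement carrying a certificate of $\frac{1}{2\lambda_0}$-shortness relative to $D_\Lambda$.

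Choosing $k$ with $\mu(D_{\Lambda^{(j_k)}}) < \eta$ and $i$ large enough that all the finitely many moves between $\Sigma$ and $\Sigma^{(j_k)}$ lift to compatible moves on $(\Sigma,\call_i)$ via Lemmas~\ref{lem_band_subdiv}, \ref{lem_band_removal}, \ref{lem_domain_cut}, \ref{lem_forgetful} (together with the automatic compatibility of pruning moves, whose prunable endpoints lie on singular, hence special, leaves), I would assemble the shortening sequence (Definition~\ref{dfn_shortening_moves}) as follows: empty inert phase; restriction $(\Sigma,\call_i)\to(\Sigma^{(j_k)},\call_i^{(j_k)})$ with active part $D_A = D_\Lambda$ and restriction map $\iota:D_{\Lambda^{(j_k)}}\hookrightarrow D_\Lambda$; extension given by the identification $\Sigma^{(j_k)}\cong\Sigma^{(j_1)}$ (a trivial extension move of Lipschitz $1$, as $\varphi$ preserves the enriched combinatorial type) followed by the reversal of the prefix $\Sigma \to \Sigma^{(j_1)}$ (of total Lipschitz $\lambda_0$); together with the $\frac{1}{2\lambda_0}$-shortness certificate from Lemma~\ref{lem_certif_mesure}. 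The conclusion then follows from Lemma~\ref{lem_correct_shortening}. The main obstacle I expect is the careful verification that $\varphi$ transports the full data faithfully---including the prelamination and rational-constraint structures---so that the geometric decay of $\mu(D_{\Lambda^{(j_k)}})$ really holds and that the concluding identification is indeed a legitimate extension move of Lipschitz $1$ at the level of $\call_i$ for $i$ large enough.
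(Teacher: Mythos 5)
Your plan has the right overall shape — iterate the pruning process to shrink the $\mu$-measure of the exotic component, use the bounded number of combinatorial types to bound the Lipschitz factor of the return path, and combine with Lemma~\ref{lem_certif_mesure} — and the Lipschitz bookkeeping is compatible with the paper's. However, the ``transport'' step is a genuine gap. The triggers of the pruning process are \emph{not} purely combinatorial: step~a prunes the maximal prunable intervals \emph{of positive measure}, and which maximal prunable intervals have positive measure is metric data carried by $(\calf,\mu)$, not by the underlying band complex. An isomorphism $\varphi:\Sigma^{(j_1)}\cong\Sigma^{(j_2)}$ of unfoliated band complexes with rational constraints does not carry $\calf^{(j_1)}$ to $\calf^{(j_2)}$, so it does not transport the next pruning cycle, and the asserted self-similar family $\Sigma^{(j_k)}$ need not exist. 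Relatedly, deriving $\mu(D_{\Lambda^{(j_k)}})\to 0$ from self-similarity plus strict decrease is not a valid inference (a strictly decreasing positive sequence need not tend to $0$); the paper instead quotes this as Lemma~\ref{lem_exotic_mesure}, which rests on \cite{GLP1}, and you should lean on that directly.

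The paper closes the argument without any periodicity. By Lemma~\ref{lem_exotic_cplexity}, only $C$ distinct unfoliated band complexes with rational constraints occur along the pruning sequence, and each iteration uses at most $k$ elementary moves. So for \emph{every} $j$, one can shortcut repeated types in the reversed path $\Sigma_j\to\Sigma_{j-1}\to\cdots\to\Sigma$ to obtain a chain of at most $Ck$ elementary extension moves, hence Lipschitz factor at most $3^{Ck}$, uniformly in $j$. Then one picks $\eps<3^{-Ck}$ in Lemma~\ref{lem_certif_mesure}, gets the corresponding $\eta$, and by Lemma~\ref{lem_exotic_mesure} chooses $j_0$ with $\mu(D_{\Lambda_{j_0}})<\eta$; the restriction moves are exactly the forward pruning sequence applied to $\call_i$ for $i$ large (Lemmas~\ref{lem_band_subdiv}, \ref{lem_band_removal}, \ref{lem_domain_cut}, \ref{lem_forgetful} and the compatibility of pruning moves). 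Your $\lambda_0=2^{a_0}3^{b_0}\le 3^{Ck}$ and $\frac{1}{2\lambda_0}$-shortness certificate are within this budget, so once you replace the transport/self-similarity argument by the deduplication argument above, the rest of your construction of the shortening sequence goes through as written.
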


Proposition \ref{prop_exotic} will follow from the two following facts.

\begin{lem}\label{lem_exotic_cplexity}
The complexity of $\Sigma_j$, defined as the sum of number of bands of $\Sigma_j$ 
and of the number of connected components of $D_j$ remains bounded.

In particular, only finitely many distinct unfoliated band complexes with rational constraints 
appear in the sequence $\Sigma_1,\Sigma_2,\dots$.
\end{lem}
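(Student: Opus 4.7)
The plan is to reduce the finiteness statement to the boundedness of a single complexity function. Define $c(\Sigma_j) = b(\Sigma_j) + p(\Sigma_j)$, where $b(\Sigma_j)$ is the number of bands and $p(\Sigma_j)$ the number of connected components of $D_{\Sigma_j}$. Once $c(\Sigma_j)$ is bounded, the number $|E(\Sigma_j)|$ of oriented elementary segments is also bounded, and the rational constraint $\ul m_j$ lies in $\calm^{E(\Sigma_j)}$ with $\calm$ a fixed finite monoid; hence only finitely many (unfoliated, rationally constrained) band complexes can appear in the sequence. So the real content is the bound on $c(\Sigma_j)$.

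I would analyse each of the three steps a, b, c separately. Steps b and c each strictly decrease $c$: step b removes a band whose component of $D$ carries at most one base, so $c$ drops by at least one; step c replaces two bands sharing a common domain component by a single band and removes that component, so $c$ drops by at least two. The delicate step is step a. A pruning move on a maximal prunable interval $(a,b) \subset J_0$ can a priori increase $c$ by up to two, namely when $(a,b)$ sits strictly inside the base $J_0$: the band carrying $J_0$ then splits into two bands and the component of $D$ containing $J_0$ is cut in two at the two new endpoints.

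The main obstacle, and the heart of the argument, is to show that this potential growth in step a is compensated within a full cycle a-b-c in the exotic case. The key inputs are the independence of bands (from Proposition \ref{prop_indep}) and the absence of homogeneous components: together they force each extra band and each extra component of $D$ created by pruning to be attached to a vertex of $\Sigma_j$ that is already a singular endpoint (an endpoint of $D_{\Sigma_j}$ or of a base of another band). This matches exactly the classical Rips-machine bookkeeping for thin type components as developed in \cite{GLP1,BF_stable}: one shows that a suitable refinement of $c$, essentially the Euler characteristic of the underlying topological 2-complex $|\Sigma_j|$ together with the number of its free boundary arcs, is non-increasing under a full cycle, whence $c(\Sigma_j)$ itself stays bounded.

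I would therefore carry out the bookkeeping move by move with notation consistent with \cite{GLP1,BF_stable}, conclude that $c(\Sigma_j) \leq c(\Sigma_0)$ for all $j$, and then derive the finiteness of unfoliated band complexes with rational constraints from the pigeonhole argument on $\calm^{E(\Sigma_j)}$ explained above. The principal difficulty, on which essentially the entire proof rests, is the combinatorial bookkeeping certifying that step a's potential increase is always absorbed by subsequent applications of steps b and c in the exotic setting; this is why invoking the classical thin-component analysis is natural at this stage.
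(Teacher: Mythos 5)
The top-level skeleton of your argument is right: once one bounds the complexity $c(\Sigma_j)$, the finiteness of unfoliated band complexes with rational constraints follows by pigeonhole, since the constraints all live in $\calm^{E(\Sigma_j)}$ for the fixed finite monoid $\calm$. Your treatment of steps~b and~c (they decrease the number of bands) is also fine, and you correctly locate the difficulty in step~a. But that is exactly where your proposal stops: you acknowledge that ``the entire proof rests'' on the bookkeeping that absorbs step~a's increase, and then you delegate that bookkeeping to a vague reference to thin-type Rips-machine complexity. That is the proof, not a pointer to it, and as written there is a genuine gap.

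The paper's argument is more specific and cleaner, and worth internalising. One forms the finite graph $\Gamma_j$ dual to $\Lambda_j$ (vertices = components of $D_{\Lambda_j}$, edges = bands), and proves two things. First, the first Betti number $b_1(\Gamma_j)$ is \emph{preserved} by every move of the pruning cycle, including step~a; the point for step~a is that when a maximal prunable interval $[a,b]$ is cut out, each endpoint $a$ not in $\partial D$ must lie in the interior of the band being pruned --- if it did not, the $\rond\calf$-leaf through $a$ would be finite, contradicting $\mu$-minimality. This is the fact that blocks the ``$+2$'' you worry about. Second, after steps~b and~c every vertex of $\Gamma_j$ of valence $1$ or $2$ carries a base of a band of $\Sigma_j\setminus\Lambda_j$, so the number of such vertices is bounded independently of $j$. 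A finite graph with bounded $b_1$ and a bounded number of low-valence vertices has bounded size. Note that this gives only a bound, not the monotonicity $c(\Sigma_j)\leq c(\Sigma_0)$ that you assert; the latter is both stronger than needed and not established by either your argument or the paper's. Also, the input you cite (independence of bands and absence of homogeneous components) is not the hypothesis doing the work in the step-a analysis: what is used is $\mu$-minimality of $\Lambda$, which forces all leaves to be infinite.
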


\begin{proof}
The second assertion easily follows from the first one since all rational constraints are in standard form,
defined using the same morphism $\rho:\fmip\ra \calm$.

We need only to prove that the number of bands of $\Lambda_j$ and of connected components of $D_{\Lambda_j}$
is bounded.
The band complex $\Lambda_j$ is homotopy equivalent to a graph $\Gamma_j$ whose set of vertices $V(\Gamma_j)$ 
is the set of connected components of $D_{\Lambda_j}$ and whose set of non-oriented edges $E(\Gamma_j)$ corresponds to non-oriented bands.
The first Betti number of $\Lambda_j$ is $b_1(\Lambda_j)=\# V(\Gamma_j)-\# E(\Gamma_j)$.
The moves of the pruning process preserve the first Betti number: this is clear for moves involved in step b and c;
 for moves in step a, this follows from the fact that if $[a,b]$ is pruned, then either $a\in \partial D$, 
or $a$ lies in the the interior of the pruned band because otherwise, the $\rond\calf$-leaf through $a$ would be finite,
contradicting $\mu$-minimality.

After step b,  every terminal vertex of $\Gamma_j$ corresponds to a connected component of $D_{\Lambda_j}$ 
containing a base of a band of $\Sigma_j\setminus \Lambda_j$.
Similarly, after step c, every vertex of valence 2 of $\Gamma_j$ corresponds to a connected component of $D_{\Lambda_j}$ 
containing a base of a band of $\Sigma_j\setminus \Lambda_j$.
Since step c does not create new terminal vertices, 
the number of vertices of valence 1 and 2 of $\Gamma_j$ is bounded.
Since $b_1(\Gamma_j)$ is bounded, the lemma follows.
\end{proof}

\begin{lem}\label{lem_exotic_mesure}
For all $\eta>0$, there exists $j$ large enough such that $\mu(D_{\Lambda_j})<\eta$.
\end{lem}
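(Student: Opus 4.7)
The plan is to establish monotonicity of the sequence $\mu(D_{\Lambda_j})$, use the complexity bound of Lemma \ref{lem_exotic_cplexity} to extract a limit foliated band complex, and then derive a contradiction with the exoticness of $\Lambda$ coming from the fact that ``nothing could be pruned in the limit''.

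First I would observe that $\mu(D_{\Lambda_j})$ is non-increasing in $j$. Indeed, step a removes from $D_{\Lambda_j}$ the union of the prunable intervals $I_1,\dots,I_n$, each of positive $\mu$-measure; steps b and c either leave the $\mu$-measure of the active part unchanged or decrease it (step b prunes additional bands, and in step c the $\mu$-measure of the component of $D_{\Lambda_j}$ that is collapsed equals the $\mu$-measure of the band it merges with, so no mass is created). Consequently $\mu(D_{\Lambda_j})$ converges to some $\eta_\infty\geq 0$, and the total $\mu$-mass removed by all three steps between $j$ and $j+1$ tends to $0$.

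Next, assume for contradiction that $\eta_\infty>0$. By Lemma \ref{lem_exotic_cplexity}, only finitely many distinct unfoliated band complexes with rational constraints appear in the sequence $(\Sigma_j,\ul m_j)$, so one can extract a subsequence $j_k\to\infty$ along which $(\Sigma_{j_k},\ul m_{j_k})$ is a fixed $(\Sigma_*,\ul m_*)$ with minimal component $\Lambda_*\subset\Sigma_*$. Each transverse measure $\mu_{j_k}$ is determined by the vector of $\mu_{j_k}$-measures of the finitely many elementary segments of $\Sigma_*$; since $\mu_{j_k}(D_{\Lambda_*})\leq \mu(D_{\Lambda_0})$ is bounded, one extracts a further subsequence so that these vectors converge in $\R^{E(\Sigma_*)}$ to a limit, defining a transverse measure $\mu_*$ on $\Sigma_*$ that is invariant under the holonomy of $\Lambda_*$ and satisfies $\mu_*(D_{\Lambda_*})=\eta_\infty>0$.

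For the contradiction, I would compare the pruning step a applied to $(\Lambda_*,\mu_{j_k})$ and to $(\Lambda_*,\mu_*)$: the family of maximal prunable intervals $I_1,\dots,I_n$ is determined by the combinatorial structure, hence is the same for all $k$ and for the limit. Their total $\mu_{j_k}$-measure is bounded by $\mu(D_{\Lambda_{j_k}})-\mu(D_{\Lambda_{j_k+1}})$, which tends to $0$, so the limit total $\mu_*$-measure of the prunable intervals is $0$. Thus applying step a to $(\Lambda_*,\mu_*)$ removes no mass; one can moreover iterate the whole pruning process on $(\Lambda_*,\mu_*)$, each iteration again removing zero $\mu_*$-mass (by the same limit argument applied to the later stages). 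The hard part is to conclude, via Rips' classification for measured band complexes with independent bands (as in \cite{GLP1,BF_stable}), that a $\mu$-minimal band complex with independent bands on which the whole pruning process removes no measure must be of surface or homogeneous type. Homogeneous components are excluded by $\mu$-independence of the bands (Section~\ref{sec_indep}), and surface components terminate the pruning process in finitely many steps by definition; either way this contradicts the fact that $(\Lambda_*,\mu_*)$ arises as a limit of stages in the pruning of the exotic component $\Lambda$ and carries a non-trivial mass. This contradiction yields $\eta_\infty=0$, which is the statement of the lemma.
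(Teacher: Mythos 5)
Your approach is genuinely different from the paper's, which disposes of this lemma in one line by citing \cite[Prop.~7.1]{GLP1} (the $\mu$-diameter of the connected components of $D_{\Lambda_j}$ tends to $0$), and then invoking the bound on the number of components from Lemma~\ref{lem_exotic_cplexity}. Your plan is to re-derive the statement from monotonicity of $\mu(D_{\Lambda_j})$, the complexity bound, and a compactness argument. The first two steps are sound: $\mu(D_{\Lambda_j})$ is indeed non-increasing (and, in fact, since the domains are nested subsets of the original $D$, the limit measure $\mu_*$ is simply the restriction of $\mu$ to $\bigcap_j D_{\Lambda_j}$, so no subsequence extraction is really needed for the measures); and one can extract a subsequence with fixed unfoliated combinatorics $\Sigma_*$.

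The gap is in the contradiction at the end, and it is a genuine one. You conclude that $(\Lambda_*,\mu_*)$ must be of surface type (the pruning process applied to it would stop after finitely many steps) and assert that this ``contradicts the fact that $(\Lambda_*,\mu_*)$ arises as a limit of stages in the pruning of the exotic component.'' But the surface/exotic dichotomy is a property of the \emph{whole trajectory} of the pruning process starting from a given measured band complex, not of any individual stage, and the process starting from $(\Lambda_*,\mu_*)$ is not the process applied to $\Lambda$. Knowing that $(\Lambda_*,\mu_{j_k})$ appears as a stage of an exotic pruning does not tell you that the pruning process \emph{initialized at} $(\Lambda_*,\mu_*)$ fails to terminate; the two processes start from the same unfoliated complex with different transverse measures, and nothing so far forbids a limit of exotic stages from itself being an interval exchange. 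To close the gap you would essentially have to reprove that, in an exotic component, a definite fraction of the measure escapes along the pruning process — which is precisely the content of the GLP result the paper cites. A secondary, more minor point: your claim that the maximal prunable intervals are ``the same for all $k$'' silently ignores that step~a only prunes prunable intervals of \emph{positive} measure, a set that a priori depends on $\mu_{j_k}$; this is repairable (the measures are non-increasing, so the set of positive-measure prunable intervals stabilizes along the subsequence), but should be said.
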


\begin{proof}
By \cite[Prop. 7.1]{GLP1}, the $\mu$-diameter of the connected components of $D_{\Lambda_j}$ converges to $0$.
Since the number of connected components of $D_{\Lambda_j}$ is bounded, the proposition follows.  
\end{proof}

\begin{proof}[Proof of Proposition \ref{prop_exotic}]
Since the complexity of band complexes 
appearing in the sequence $\Sigma_1,\Sigma_2,\dots$ is bounded,
there is a bound $C$ on the number of unfoliated band complexes with rational constraints appearing in this sequence.
The number of moves performed at step a, b and c, is bounded in terms of the complexity of $\Sigma_j$,
and is therefore bounded by some number $k$.
It follows that for all $j$, one can go back from the unfoliated band complex $\Sigma_j$ to 
the initial unfoliated band complex $\Sigma$ using at most $Ck$ extension moves.

Consider $\eps<1/3^{Ck}$.
Using Lemma \ref{lem_certif_mesure}, consider $\eta$ so that 
for any $I\subset D_\Lambda$ with $\mu(I)<\eta$, 
there exists $\eps$-shortness certificates for $I$ relative to $D_{\Lambda}$.
By Lemma \ref{lem_exotic_mesure}, let $j_0$ be large enough so that $\mu(D_{\Lambda_{j_0}})<\eta$. 

Let $\call_i$ be a sequence of prelaminations represented in $\calf$, and consider $i_0$ large enough so that
the sequence of moves from $(\Sigma,\calf)$ to $(\Sigma_{j_0},\calf_{j_0})$ 
can be applied to $(\Sigma,\call_{i_0})$, transforming it into $(\Sigma_{j_0},\call')$.
These moves are restriction moves with inert part $D_I=D\setminus \Lambda$ and active part $D_A=D_\Lambda$.
Since $\Sigma$ can be obtained from $\Sigma_{j_0}$ using at most $Ck$ extension moves of Lipschitz factor $3$,
going from $\Sigma_{j_0}$ to $\Sigma$ is an extension move of Lipschitz factor $3^{Ck}$.
Thus, we have found a shortening sequence of moves for $(\Sigma,\call_{i_0})$,
which certifies that a shortest solution of $\Sigma$ cannot induce $\call_{i_0}$ (see Definition \ref{dfn_shortening_moves}).
\end{proof}

\subsubsection{Surface components}\label{sec_surface}

\begin{prop}\label{prop_surface}
Consider a foliated band complex $(\sigma,\calf,\mu)$ with $\mu$-independent bands, 
decomposed into $\mu$-minimal components as in Proposition \ref{prop_minimal}.
Assume that $\Lambda$ is a $\mu$-minimal surface component.
Let $\call_i$ be a sequence of prelaminations represented by $\calf$.

Then for $i$ large enough, there exists a shortening sequence of moves for $(\Sigma,\call_i)$ as
in definition \ref{dfn_shortening_moves},
certifying that a shortest solution of $\Sigma$ cannot induce $\call_{i}$.
\end{prop}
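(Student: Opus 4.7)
The plan is to mimic the strategy of Proposition~\ref{prop_exotic}, replacing the pruning process (which is now inapplicable since it stops after finitely many steps) by a Rauzy--Veech-type induction on surface components (often called ``process~2'' of the Rips machine). This procedure produces a sequence of band complexes $(\Sigma_j,\calf_j,\mu_j)$ where $\Lambda_j\subset\Sigma_j$ is still a $\mu$-minimal surface component, and where each transition $(\Sigma_j,\calf_j)\to(\Sigma_{j+1},\calf_{j+1})$ is a sequence of inert moves that modifies only $\Lambda_j$.

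The construction is as follows. Once the pruning process of Section~\ref{sec_exotic} has halted, every interior point of $D_\Lambda$ is covered by exactly two bases of bands, so $\Lambda$ has the combinatorial structure of an interval exchange transformation (or, more generally, of a system of isometries defining one via the first return to a suitable transversal). On such a band complex one can perform a splitting move: locate two bands of $\Lambda$ whose bases share an endpoint in the interior of $D_\Lambda$, subdivide the longer band along the endpoint of the shorter via a band subdivision (an inert move), and then remove the resulting short piece via a band removal move (also inert, since it is justified by another holonomy word, using independence of the bands). This is a restriction move for the partition where $D_A=D_\Lambda$, and whose reverse is an extension move with a uniformly bounded Lipschitz factor.

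The two facts we need, each parallel to those in the exotic case, are:
\begin{enumerate}
\item \emph{Bounded complexity.} The complexity of $\Sigma_j$ stays bounded. This follows because the first Betti number of $\Lambda_j$ is preserved by splitting moves, and because after applying the pruning process at each stage (steps b and c of Section~\ref{sec_exotic}) the valence-$1$ and valence-$2$ vertices of the associated graph are controlled as in Lemma~\ref{lem_exotic_cplexity}. Equivalently, this is the standard finiteness of Rauzy classes of interval exchanges with a given number of intervals.
\item \emph{Vanishing measure.} For any $\eta>0$, one has $\mu(D_{\Lambda_j})<\eta$ for $j$ large enough. This is the Rauzy--Veech renormalization statement: repeated splitting of a minimal surface component shrinks the total transverse measure to zero. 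In the framework of \cite{GLP1}, it follows from the fact that the diameters of the connected components of $D_{\Lambda_j}$ tend to $0$, combined with the bound on the number of components from (1).
\end{enumerate}

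Given these two facts, the proof of Proposition~\ref{prop_surface} follows the template of Proposition~\ref{prop_exotic} verbatim. The bound on complexity provides a uniform bound $C$ on the number of distinct unfoliated band complexes (with rational constraints) visited by the sequence $\Sigma_1,\Sigma_2,\dots$, and a bound $k$ on the number of elementary moves used at each stage; thus any $\Sigma_j$ can be transformed back to $\Sigma$ by at most $Ck$ extension moves of uniformly bounded Lipschitz factor, giving a total Lipschitz factor $\lambda_0$. Fix $\eps<1/\lambda_0$ and, using Lemma~\ref{lem_certif_mesure}, choose $\eta>0$ such that any $I\subset D_\Lambda$ with $\mu(I)<\eta$ admits an $\eps$-shortness certificate relative to $D_\Lambda$. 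Pick $j_0$ with $\mu(D_{\Lambda_{j_0}})<\eta$, and then pick $i_0$ large enough that the whole sequence of moves from $(\Sigma,\calf)$ to $(\Sigma_{j_0},\calf_{j_0})$ is compatible with $\call_{i_0}$ in the sense of Definition~\ref{dfn_compatible_with_Li}. Assembling these pieces exactly as in the exotic case produces the required shortening sequence of moves.

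The main obstacle I expect is verifying point~(2), the vanishing of $\mu(D_{\Lambda_j})$. For exotic components this is exactly \cite[Prop.~7.1]{GLP1}, but for surface components the pruning process has already stopped, so one must argue that the splitting moves described above really do make the transverse measure tend to zero. This is the content of the renormalization theory for interval exchanges; checking that the generalized splittings performed on the band complex genuinely correspond to Rauzy--Veech induction (and in particular that the induction cannot loop trivially without shrinking measure, which would force a wandering interval and contradict $\mu$-minimality) is the crux of the argument. Everything else — independence of bands, compatibility of moves with the prelaminations $\call_i$, the combinatorial finiteness — is either inherited from the earlier sections or is a routine bookkeeping exercise.
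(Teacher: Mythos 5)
Your high-level structure is right: produce a renormalization sequence $\Sigma_j$ satisfying bounded complexity and vanishing measure, then assemble exactly as in the proof of Proposition \ref{prop_exotic}. These two facts are precisely Lemmas \ref{lem_surface_cplexity} and \ref{lem_surface_mesure} in the paper, so you see the correct skeleton. However, the elementary move you propose does not exist in this setting. You want to ``subdivide the longer band along the endpoint of the shorter, then remove the resulting short piece via a band removal move, justified by another holonomy word, using independence of the bands'' --- but this is exactly backwards. Band removal requires a $\Sigma$-word whose holonomy reproduces the holonomy of the band to be removed on its base endpoints; $\mu$-independence is precisely the absence of such redundancy for bands of positive measure, so it \emph{forbids} rather than enables the removal. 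Moreover, in the interval exchange picture (every interior point covered by exactly two bases) you cannot delete a band of positive measure at all without leaving part of $D_\Lambda$ uncovered. So the ``splitting move'' you describe would fail.

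The paper's actual process is simpler and sidesteps band removal entirely: after the pruning process halts, fix a point $x_\mu$ lying in three bands of $\Lambda'_{\mu'}$, follow the unique infinite regular leaf path $B_1B_2\cdots$ out of it, lift $x_\mu$ to a point $x\in\Sigma'$ in a singular leaf that avoids all bases of bands of $\Sigma'\setminus\Lambda'$ along the orbit (Lemma \ref{lem_choix}), and then iterate domain cuts at $x_j = h_{w_j}(x)$ followed by forgetful moves. The vanishing of $\mu(D_{\Lambda_j})$ then drops out immediately from the density of the separatrix $\{h_{w_j}(x_\mu)\}_j$ in $D_{\Lambda'_{\mu'}}$ (Lemma \ref{lem_surface_mesure}) --- this is exactly the fact you flag as the crux and leave open, and the unzipping formulation is what makes it a one-paragraph argument rather than a detour through Rauzy--Veech renormalization theory. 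You also have no analogue of Lemma \ref{lem_choix}, which is needed to keep the cut points off the bases of inert bands and hence is what allows the Betti-number bookkeeping of Lemma \ref{lem_surface_cplexity} to go through.
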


Consider $\Lambda$ a $\mu$-minimal surface component.
By definition, this means that the pruning process defined in section \ref{sec_exotic} stops.
Let $(\Sigma',\calf',\mu')=(\Sigma_{j_0},\calf_{j_0},\mu_{j_0})$ the foliated complex obtained at the end of the pruning process.
The Rips band complex $\Lambda'_{\mu'}$ is a (non-orientable) interval exchange 
in the following sense: every $x\in D_{\Lambda'_{\mu'}}$ outside a finite set belongs to exactly two bases of $\Lambda'_{\mu'}$
(\cite[Cor. 6.3]{GLP1}).
Opening-up a separatrix (or unzipping a train-track carrying the foliation) 
gives a sequence of interval exchanges with arbitrary small domains.
We want to implement this \emph{unzipping process} at the level of $\Sigma$.

Consider $x_\mu\in D_{\Lambda'_{\mu'}}$ a point lying in 3 bands of $\Lambda'_{\mu'}$.
Let $B_1B_2\dots B_j\dots$ be the infinite $\Lambda'_{\mu'}$-word
corresponding to the unique infinite regular leaf path
starting at $x_\mu$, and consider the corresponding $\Lambda'$-word $w_j=B_1\dots B_j$.

\begin{lem}\label{lem_choix}
There exists $x\in\pi_{\mu'}\m(\{x_\mu\})$ lying in a singular leaf of $(\Sigma',\calf')$, such that for all $j\geq 0$,
$w_j(x)$ is not in the interior of a base of a band of $\Sigma'\setminus \Lambda'$.
\end{lem}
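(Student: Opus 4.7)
The plan is to set $I := \pi_{\mu'}^{-1}(\{x_\mu\})$, a compact interval in $D$ of $\mu$-measure zero (possibly degenerate), and to find the desired $x$ inside $I$. First I would show that since $x_\mu$ lies in the interior of the base of each band $B_j$ traversed by the infinite regular leaf path (regularity), each holonomy $w_j$ extends to a homeomorphism from an open neighborhood of $I$ onto an open neighborhood of $I_j := w_j(I) \subset \pi_{\mu'}^{-1}(\{w_j(x_\mu)\})$, and this homeomorphism sends singular leaves of $\calf'$ to singular leaves.

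Next, let $U \subset D$ denote the open set of points in the interior of some base of a band of $\Sigma' \setminus \Lambda'$, and define the \emph{bad set} $B := I \cap \bigcup_{j \geq 0} w_j^{-1}(U)$, which is open in $I$. The key observation is that $\partial U$ is a finite set consisting of endpoints of bases, hence of vertices of $\Sigma'$, all lying in singular leaves of $\calf'$. Therefore, the boundary of $B$ within $I$ is contained in $\bigcup_j w_j^{-1}(\partial U) \cap I$, a countable set of points lying in singular leaves (since the holonomy preserves singular leaves). Two subcases then follow easily: if $B = \emptyset$, any lift of $x_\mu$ in a singular leaf (which exists by Lemma \ref{lem_singular} since $x_\mu$ is a branch point of $\Lambda'_{\mu'}$) fulfills the conclusion; if $B$ is a proper non-empty open subset of the connected interval $I$, then its boundary in $I$ is non-empty and contained in $I \setminus B$, providing the desired $x$ in a singular leaf.

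The hard part will be excluding the case $B = I$. My plan is to combine two ingredients. By the decomposition of Proposition \ref{prop_minimal}, bases of bands of $\Sigma' \setminus \Lambda'$ meeting $D_{\Lambda'}$ must have $\mu$-measure zero (otherwise such bands would belong to the $\mu$-minimal component $\Lambda'$), so their projections form a finite set $Y \subset D_{\Lambda'_{\mu'}}$. Since $\Lambda'_{\mu'}$ is a surface-type minimal component, it contains no periodic regular leaves, so the points $\{w_j(x_\mu)\}_{j \geq 0}$ are pairwise distinct, and the orbit visits $Y$ at only a finite set $J_0$ of indices. Consequently, $B$ reduces to a finite union $\bigcup_{j \in J_0} B_j$ of open subsets of $I$.

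The remaining delicate task is to verify that this finite union does not cover $I$ entirely. To address this I would analyse the intervals $I_j = w_j(I)$ for $j \in J_0$ individually: such a $B_j$ equals $I$ only when the entire interval $I_j$ sits inside a single connected component of $U$, which itself lies in the gap interval $\pi_{\mu'}^{-1}(\{w_j(x_\mu)\})$; ruling this out requires a careful combinatorial examination of how $\Lambda'$ is attached to the zero-measure bands of $\Sigma' \setminus \Lambda'$ after the moves of Proposition \ref{prop_minimal}, ensuring that no iterate $w_j$ pushes all of $I$ into the open interior of a single external base.
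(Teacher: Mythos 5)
Your plan follows the same outline as the paper's proof: isolate the bad set inside $I := \pi_{\mu'}^{-1}(\{x_\mu\})$, show it is a finite union of segments by combining that the bases of $\Sigma'\setminus\Lambda'$ in $D_{\Lambda'}$ project to a finite subset of $D_{\Lambda'_{\mu'}}$ with the fact that the orbit $\{w_j(x_\mu)\}$ is injective, and then take a boundary point. The finiteness argument you give is essentially the paper's, just phrased in terms of the finite set $Y$.

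The difference that creates your gap is that you work with the \emph{open} bad set $B$ and therefore need to rule out $B=I$ before you can extract a boundary point lying off $B$. You correctly flag this as the delicate step and do not resolve it. The paper instead works with the \emph{closed} set $E = \{x\in I : \exists j,\ w_j(x)\in I_1\cup\dots\cup I_p\}$ and takes its \emph{leftmost} point. This device makes the existence of the desired $x$ automatic as soon as $E\neq\emptyset$, with no need to exclude $E=I$: the leftmost point of $E$ is, for each contributing $j$, the left endpoint of the segment $w_j^{-1}(I_i)\cap I$. Combined with the observation you already make in your first step --- that each $w_j$ is a homeomorphism on a (relatively) open neighbourhood of $I$, because the regular leaf path through $x_\mu$ crosses bands through the interiors of their bases --- maximality of $w_j^{-1}(I_i)\cap I$ forces $w_j(x)$ onto the boundary of $I_i$, except possibly when $x$ is simultaneously the left endpoint of $I$ and of $D'$, in which case the whole holonomy orbit of $x$ consists of vertices of $\Sigma'$ and the conclusion also holds. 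So replace your open $B$ by the closed $E$ and take the leftmost point; then the case split you were worried about disappears, and the lemma follows from the ingredients you already have.
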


\begin{proof}
Let $I_1,\dots,I_p$ be the bases of the bands of $\Sigma'\setminus\Lambda'$ which lie in $D_{\Lambda'}$.
Recall that $\mu'(I_1\cup\dots\cup I_p)=0$.
Let $$E=\left\{x\in\pi_{\mu'}\m(\{x_\mu\})\ |\ \exists k\text{ s.t. } h_{w_j}(x)\in I_1\cup\dots\cup I_p \right\}.$$

Since the points $h_{w_j}(x_\mu)$ are all distinct in $\Lambda'_{\mu'}$, 
for each $i$ and each $x\in \pi_{\mu'}\m(x_\mu)$, there is at most one index $k$ such that $w_j(x)\in I_i$.
It follows that $E$ is a finite union of segments.
If $E$ is empty, just take any point $x\in\pi_\mu\m(x_\mu)$ lying in a singular leaf of $\Sigma$.
Otherwise, the leftmost point of $E$ satisfies the Lemma.
\end{proof}

We are now ready to describe the unzipping process.
Consider $x\in D_{\Lambda'}$ as given by the lemma above and $x_j=h_{w_j}(x)\in D_{\Lambda'}$.
For each $j\geq 1$, $x_j$ lies in the interior of exactly two bases of $\Lambda'$.
For $j=0,1,\dots$  perform the following moves on $\Sigma'$, modifying only $\Lambda'$:
\begin{itemize}
\item \emph{Step $a_j$.} Cut the domain at $x_j$.
\item  \emph{Step $b_j$.} Perform all possible forgetful moves on $\Lambda'$.
\end{itemize}

We denote by $(\Sigma_{i_0+j},\calf_{i_0+j},\mu_{i_0+j})$ the band complex obtained after performing steps $a_j$ and $b_j$.
Proposition \ref{prop_surface} will follow from the two following facts.

\begin{lem}\label{lem_surface_cplexity}
The complexity of $\Sigma_j$, defined as the sum of number of bands of $\Sigma_j$ 
and of the number of connected components of $D_j$ remains bounded.

In particular, only finitely many distinct unfoliated band complexes with rational constraints 
appear in the sequence $\Sigma_1,\Sigma_2,\dots$.
\end{lem}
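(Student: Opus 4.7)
The plan is to mimic the proof of Lemma \ref{lem_exotic_cplexity}. Let $\Gamma_j$ denote the graph with vertex set the connected components of $D_{\Lambda_j}$ and non-oriented edges corresponding to the non-oriented bands of $\Lambda_j$. It suffices to bound both the first Betti number $b_1(\Gamma_j)$ and the number of vertices of $\Gamma_j$ of valence at most two; the conclusion then follows by the handshake identity and the fact that the complexity of $\Sigma_j$ differs from that of $\Lambda_j$ only by the fixed contribution of $\Sigma \setminus \Lambda$.

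First, I would verify that a single iteration (step $a_j$ followed by step $b_j$) preserves $b_1(\Lambda_j)$. The band subdivisions preliminary to the domain cut do not alter the topological space $\Lambda_j$, so they preserve $b_1$. The domain cut at $x_j$, which by Lemma \ref{lem_choix} and the interval-exchange property of $\Lambda'_{\mu'}$ lies in the interior of exactly two bases of $\Lambda_j$, has a purely local effect: in the dual graph it splits a valence-four vertex (into which the four subdivided bands enter) into two valence-two vertices, contributing at most $+1$ to $b_1$. The ensuing exhaustive forgetful moves in step $b_j$ merge pairs of parallel edges created by the unzipping, each contributing $-1$ to $b_1$. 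A careful tally, accounting for the possibility that the domain cut disconnects $\Gamma_j$ or that the subsequent merges reconnect components, yields the net invariance of $b_1(\Lambda_j)$; this mirrors the topological fact that a Rauzy--Veech-type induction preserves the surface supporting an interval exchange.

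Next, I would show that after step $b_j$, any vertex of $\Gamma_j$ of valence at most two must correspond to a component $C$ of $D_{\Lambda_j}$ bearing a base of some band of $\Sigma_j \setminus \Lambda_j$. Otherwise, either a forgetful move on $C$ would still be applicable (contradicting the exhaustiveness of step $b_j$), or $C$ carries a single band with both bases on itself (a self-loop in $\Gamma_j$), which is ruled out by the $\mu$-independence of bands from Proposition \ref{prop_indep}: such a self-loop would provide a nontrivial reduced $\Lambda_j$-word whose holonomy fixes an interval of positive measure. Since the number of bands of $\Sigma_j \setminus \Lambda_j$ does not increase along the unzipping process, the number of such ``boundary'' vertices is uniformly bounded in $j$.

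The main obstacle I anticipate is the precise combinatorial verification of the $b_1$-invariance in the first step: tracking the effects of the domain cut (which may or may not disconnect $\Gamma_j$) and the subsequent cascade of forgetful moves (which may or may not reconnect components) requires a case analysis that is straightforward in the generic situation but delicate in the degenerate cases where the cut interacts with the boundary between $\Lambda$ and $\Sigma \setminus \Lambda$. Once this invariance is established, the final assertion that only finitely many unfoliated band complexes with rational constraints appear in the sequence follows as in Lemma \ref{lem_exotic_cplexity}, since all rational constraints are in standard form with respect to the fixed morphism $\rho : \fmip \to \calm$.
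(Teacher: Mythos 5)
Your overall framework---introduce the dual graph $\Gamma_j$ and bound both $b_1(\Gamma_j)$ and the number of low-valence vertices---is exactly the paper's, but the local picture you assume at $x_j$ is wrong and this invalidates the $b_1$-tally. For $j\ge 1$ the point $x_j$ does \emph{not} lie in the interior of two bases of $\Lambda_j$: you are importing the interval-exchange picture from $\Lambda'_{\mu'}$ without accounting for the effect of the previous cut. The band $B_j$ carrying the leaf from $x_{j-1}$ to $x_j$ was already subdivided along $[x_{j-1},B_j)$ in step $a_{j-1}$, so in $\Lambda_j$ the point $x_j$ is an \emph{endpoint} of the two resulting bases and lies in the interior of exactly one base. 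Step $a_j$ thus subdivides a single band and cuts the domain at $x_j$, raising $\#E$ and $\#V$ by one each, so $\chi$ is already preserved at this stage---the forgetful moves are not needed to compensate. The inductive invariant one must carry is precisely that, after steps $a_{j-1},b_{j-1}$, the point $x_j$ lies in exactly three bases with exactly one interior; this is the observation you are missing, and it is what makes the tally close. (Your claim that each forgetful move ``contributes $-1$ to $b_1$'' is also false: a forgetful move deletes one vertex and one edge without changing connectivity, so it preserves $b_1$.) Only step $a_0$ can genuinely increase $b_1$, and by at most $2$, since $x_0$ may be interior to up to three bases; this should be recorded once, not folded into a would-be invariance.

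Two further issues. Your valence argument does not cover valence $1$: a forgetful move requires two bases on the component, so its non-applicability says nothing about a component carrying a single base, and the self-loop alternative does not arise there either. The paper rules out valence-$1$ vertices directly from the fact that $\Lambda_{j,\mu_j}$ is an interval exchange, so every positive-measure component of $D_{\Lambda_j}$ carries at least two bases of $\Lambda_j$. Finally, the appeal to $\mu$-independence to exclude self-loops is not needed and not obviously valid: a band with both bases on the same component of $D_{\Lambda_j}$ does not by itself produce a reduced $\Lambda_j$-word whose holonomy fixes an interval of positive measure.
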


\begin{proof}
Before step $a_0$, $x$ lies in the interior of at least 1 and at most 3 bases of $\Lambda'$
because $x_\mu$ lies in the boundary of two bases and in the interior of one base of $\Lambda'_{\mu'}$.
By choice of $x$, $x$ does not lies in the interior of a base of band of $\Sigma'\setminus\Lambda'$.
Step $a_0$ may increase $b_1(\Sigma)$ by at most 2.
After step $a_0$, $x_1$ lies in exactly 3 bases of $\Lambda$, and the interior of exactly one base  by Lemma \ref{lem_choix}. 
This does not change after set $b_0$.

By induction, we see that that steps $a_j,b_j$ do not change $b_1(\Lambda)$, 
and after these steps, $x_j$ lies in exactly 3 bases of $\Lambda$, and the interior of exactly one base by Lemma \ref{lem_choix}.
As in the proof of Lemma \ref{lem_exotic_cplexity},
consider the graph $\Gamma_j$ whose vertex set $V(\Gamma_j)$ 
is the set of connected components of $D_{\Lambda_j}$ and whose set of non-oriented edges $E(\Gamma_j)$ corresponds to non-oriented bands.
Since $\Lambda_{j,\mu_j}$ is an interval exchange, $G_j$ has no vertex of valence one.
Because of step b, each vertex of valence 2 of $\Gamma_j$ is bounded by the number of
bases of bands of $\Sigma_j\setminus \Lambda_j$.
Since $b_1(\Gamma_j)$ is bounded, the lemma follows.
\end{proof}

\begin{lem}\label{lem_surface_mesure}
For all $\eta>0$, there exists $j$ large enough such that $\mu(D_{\Lambda_j})<\eta$.
\end{lem}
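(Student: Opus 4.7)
The plan is to mirror the strategy used for Lemma \ref{lem_exotic_mesure}, but now exploiting the combinatorial bound of Lemma \ref{lem_surface_cplexity} in combination with the $\mu$-minimality of $\Lambda$. Observe first that the projection $\pi_{\mu}: D_{\Lambda} \to D_{\Lambda_\mu}$ is measure preserving, and that each elementary move performed at step $a_j$ or $b_j$ in $\Sigma_j$ corresponds to an analogous operation on the Rips band complex $(\Lambda_j)_{\mu_j}$. In particular, $(\Lambda_j)_{\mu_j}$ remains a (non-orientable) interval exchange throughout the process, and the unzipping process is precisely a Rauzy--Veech-style induction driven by the infinite regular leaf path starting at $x_\mu$ with $\Sigma_\mu$-word $B_1 B_2 \cdots$.

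Next, I would show that the domains $D_{\Lambda_j}$ can be identified with a decreasing sequence of adapted subsets $D'_{\Lambda_j} \subseteq D_{\Lambda_0}$ through the restriction maps of steps $a_j$ and $b_j$: step $a_j$ simply cuts at a point and preserves measure, while each forgetful move at step $b_j$ removes a connected component $D_0$ of the domain and thus decreases $\mu(D_{\Lambda_j})$ by $\mu(D_0)$. Writing $D_{\Lambda_\infty} = \bigcap_j D'_{\Lambda_j}$, it suffices to prove $\mu(D_{\Lambda_\infty}) = 0$.

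Assume for contradiction that $\mu(D_{\Lambda_j}) \geq \eta > 0$ for infinitely many $j$. By Lemma~\ref{lem_surface_cplexity}, only finitely many unfoliated band complexes $\Sigma^{*}$ appear in the sequence; pass to a subsequence $j_k \to \infty$ along which the combinatorial type is constant equal to $\Sigma^{*}$ and along which the measured foliations $(\calf_{j_k}, \mu_{j_k})$ converge (in the natural finite-dimensional space of transverse measures on $\Sigma^{*}$) to a non-zero measured foliation $(\calf^{*}, \mu^{*})$. The corresponding Rips band complex $\Sigma^{*}_{\mu^*}$ is then a first-return interval exchange of the original $\Lambda_\mu$ on a subset of positive measure, and the unzipping steps from $j_k$ to $j_{k+1}$ produce a non-trivial Rauzy--Veech return map on $\Sigma^{*}_{\mu^*}$ that acts trivially in the limit. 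This forces the existence of a periodic, hence finite, leaf in the limit interval exchange; pulling back to $\Lambda_\mu$ one obtains a proper closed subset of $\Lambda_\mu$ which is a union of $\rond\calf_\mu$-leaves and carries positive $\mu$-measure, contradicting the $\mu$-minimality of $\Lambda$ (every $\rond\calf_\mu$-leaf is dense).

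The main technical point will be the compactness/limit extraction and the identification of the limit object as a genuine Rauzy--Veech return map: one must check that the finite set of transverse measures on $\Sigma^{*}$ compatible with the combinatorics is indeed relatively compact (this is immediate since it is a bounded subset of a finite-dimensional cone), and that convergence of $\mu_{j_k}$ implies that the holonomy of the words $B_1 \cdots B_{j_k}$ stabilises in the limit. Once these compactness issues are settled, the contradiction with $\mu$-minimality is immediate, and the conclusion $\mu(D_{\Lambda_j}) \to 0$ follows.
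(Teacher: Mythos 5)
The paper's proof is short and direct: since the $\rond\calf_\mu$-orbit $\{w_j(x_\mu)\}_{j\geq 0}$ is dense in $D_{\Lambda'_{\mu'}}$ (by minimality of the component), the maximal measure of the connected components of $D_{\Lambda'}\setminus\{x_0,\dots,x_j\}$ goes to $0$ as $j\to\infty$; since each connected component of $D_{\Lambda_j}$ is contained in such a component (cutting at $x_j$ refines the decomposition, and the forgetful moves only delete components), and the number of components of $D_{\Lambda_j}$ is uniformly bounded by Lemma~\ref{lem_surface_cplexity}, the conclusion follows.

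Your proposal takes an entirely different route --- a compactness argument followed by a contradiction with minimality --- but it has genuine gaps and, as written, does not establish the lemma. The first problem is the claim that the limit Rips band complex $\Sigma^*_{\mu^*}$ is a first-return interval exchange of $\Lambda_\mu$ on a subset of positive measure. You only extract a subsequence for which the abstract combinatorial type $\Sigma^*$ is constant and the vector of transverse measures converges; but the embeddings of the successive copies of $\Sigma^*$ into $\Lambda_\mu$ keep shrinking, and nothing forces these embeddings to converge to a positive-measure piece of $\Lambda_\mu$. The second, more serious, problem is the phrase ``the unzipping steps from $j_k$ to $j_{k+1}$ produce a non-trivial Rauzy--Veech return map $\ldots$ that acts trivially in the limit.'' There is no reason for the Rauzy--Veech matrices to tend to the identity: convergence of the $\mu_{j_k}$ in the cone of transverse measures on a fixed combinatorial type is perfectly compatible with the induction matrices staying ``large'' (indeed, that is exactly what happens for a minimal interval exchange with a converging normalized invariant measure). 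Consequently the final inference --- that one obtains a periodic, hence finite, leaf contradicting minimality --- is unsupported. Crucially, you never invoke the density of the orbit $\{w_j(x_\mu)\}$ in $D_{\Lambda'_{\mu'}}$, which is the actual content of minimality used here and which makes the argument both direct and short. Without that ingredient, the compactness strategy, as outlined, does not close.
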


\begin{proof}
Since $\{w_j(x_\mu)\}_{j\geq 0}$ is dense in $D_{\Lambda'_{\mu'}}$, 
the maximum of the measures of the connected components
of $D_{\Lambda'}\setminus\{x_0,\dots,x_j\}$ goes to $0$ as $j$ tends to infinity.
If follows that the measure of the connected components of $D_{\Lambda_j}$ go to $0$ as $j$ goes to infinity.
Since the number of connected components of $D_{\Lambda_j}$ is bounded, the lemma follows.
\end{proof}

The proof of Proposition \ref{prop_surface}
is now identical to the proof of Proposition \ref{prop_exotic}.

\section{Endgame}\label{sec_endgame}

\subsection{Finding solutions of band complexes}

\begin{thm}\label{thm_band_cplex}
  There is an algorithm which takes as input a band complex with rational constraints,
and decides whether it has a solution or not.
\end{thm}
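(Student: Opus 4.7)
The plan is to combine the prelamination generator of Section \ref{sec_generator} with the prelamination analyser (Algorithm \ref{algo_analyser}) into a single semi-algorithm, and then to prove termination by invoking the asymptotic analysis of Section \ref{sec_asym}. Since given a prelamination there are only finitely many candidate certificates of each bounded size, I may assume the analyser always halts on each node with one of the verdicts \textbf{Solution found}, \textbf{Reject}, or \textbf{I don't know}. First I would run the generator, producing a locally finite rooted tree of prelaminations with rational constraints, and traverse it in breadth-first order. On each node $\call$ I run the analyser: if it returns \textbf{Solution found} I halt and output \textsc{yes}; if it returns \textbf{Reject} I prune that subtree; otherwise I enqueue the children produced by the generator. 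When the frontier is exhausted (the tree truncated at rejected nodes has been visited completely) I output \textsc{no}.

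Correctness in the affirmative direction is immediate from Lemma \ref{lem_analyser_correct}. Conversely, if $\Sigma$ admits any solution then it admits a \emph{shortest} solution $\sigma$. By Corollary \ref{cor_extension} the M\"obius-complete prelamination $\call_\infty(\sigma)$ is eventually produced by the generator, and by Lemma \ref{lem_analyser_correct} none of its ancestors in the tree can be rejected, each being induced by the shortest solution $\sigma$. When the search reaches $\call_\infty(\sigma)$ itself, Lemma \ref{lem_check_cplete} guarantees that the analyser's verdict is \textbf{Solution found}, so the algorithm outputs \textsc{yes}.

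The main obstacle is termination when no solution exists; I plan to prove it by contradiction. Suppose the truncated tree is infinite. Since it is locally finite, König's lemma provides an infinite chain $\call_1 \prec \call_2 \prec \cdots$ of non-rejected prelaminations. Because no $\call_i$ was rejected, in particular none was rejected for ``too large exponent of periodicity'' or for ``non-existence of invariant measure'', so $exponent(\call_i)$ stays bounded by the Bulitko constant of Proposition \ref{prop_Bulitko} and each $\call_i$ carries an invariant combinatorial measure. By Proposition \ref{prop_topological} there is a topological foliation $\calf$ on $\Sigma$ representing the chain, and by Proposition \ref{prop_measure} it carries a non-atomic holonomy-invariant measure $\mu$. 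Propositions \ref{prop_minimal} and \ref{prop_indep} then rearrange $\calf$, via inert moves that are automatically compatible with $\call_i$ for $i$ large, so that the foliated complex decomposes into $\mu$-minimal components with $\mu$-independent bands.

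Since $\mu$ is a nonzero probability measure, at least one $\mu$-minimal component $\Lambda$ appears, and the Rips trichotomy declares $\Lambda$ to be homogeneous, surface, or exotic. In the homogeneous case Proposition \ref{prop_homogene} forces $exponent(\call_i) \to \infty$, contradicting the Bulitko bound. In the surface or exotic cases, Propositions \ref{prop_surface} and \ref{prop_exotic} produce, for some $i$ large enough, a shortening sequence of moves for $(\Sigma,\call_i)$ in the sense of Definition \ref{dfn_shortening_moves}; but such a certificate is exactly what the analyser enumerates in Section \ref{sec_shortening}, so $\call_i$ would have been rejected, again a contradiction. Either way we reach a contradiction, so the truncated tree is finite and the algorithm halts. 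The substantive content of the argument is entirely absorbed by Section \ref{sec_asym}; this endgame is a matter of orchestration.
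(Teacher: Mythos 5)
Your proposal is correct and follows the paper's own strategy: generate the rooted tree of prelaminations, analyse each node, and prove termination in the no-solution case by extracting an infinite chain via K\"onig's lemma and running it through the limit-foliation machinery of Section~\ref{sec_asym}. The only meaningful departure is organisational: you assume the analyser halts on every node (justified with a one-line appeal to boundedness of certificates) and then run a sequential breadth-first search, whereas the paper's Algorithm~\ref{algo;main} deliberately avoids proving termination of the analyser by running it in the background and propagating ``Rejected'' marks upward through the tree; the paper explicitly remarks that making the analyser terminate would be easy but unnecessary. Both orchestrations are sound, and neither affects the substance, which is carried by Propositions~\ref{prop_topological}, \ref{prop_measure}, \ref{prop_minimal}, \ref{prop_homogene}, \ref{prop_indep}, \ref{prop_exotic} and \ref{prop_surface} exactly as you invoke them.
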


Let us recall that we have defined two algorithms.
The first one is the \emph{prelamination generator}, described in Section \ref{sec_generator}. It takes  as input a band complex $\Sigma$ with rational constraints,
and explores the space of all prelaminations (with rational constraints). The prelaminations are organised into a rooted tree $\calt$ of finite 
valence, in such a way that children of a prelamination $\call$ are extensions of $\call$.
If $\sigma$ is a solution of $\Sigma$, it induces a M\"obius-complete prelamination $\call_\infty(\sigma)$ 
(see Sections \ref{subsec_prelamination} and \ref{sec_mobius} for definitions).
Lemma \ref{cor_extension} says that $\call_\infty(\sigma)$  will be produced by the prelamination generator.

The second algorithm is the \emph{prelamination analyser}, described in Subsection \ref{subsec_analyser}. 
It takes as input the band complex $\Sigma$ together with a
prelamination $\call$ (with rational constraints). 
If $\call$ is M\"obius-complete, it decides if $\call$ is induced by a solution or not (see Lemma \ref{lem_check_cplete}),
and in this case, the prelamination analyser stops and outputs ``Solution found'' together with a solution, or ``Reject'' accordingly.
If $\call$ is not M\"obius-complete, it looks for some certificate 
 ensuring that no shortest solution can induce $\call$. If it can find one, it says ``Reject'' and stops.
Otherwise, it may fail to stop or say ``I don't know''.
The certificate is either 
\begin{itemize*}
\item the detection of an incompatibility of rational constraints,
\item the detection of the non-existence of invariant measure,
\item the detection of an exponent of periodicity  that is too large for a shortest solution in regards of Bulitko's Lemma (Prop. \ref{prop_Bulitko}),
\item or the  detection of a shortening sequence of moves on $(\Sigma,\call)$,
which proves that a solution inducing $\call$ cannot be shortest.    
\end{itemize*}
See Section \ref{subsec_analyser} for more details.

We can now explain the structure of the main algorithm.

\begin{algorithm}[H]
\dontprintsemicolon
\refstepcounter{thm}\label{algo;main}
\caption{\textbf{Algorithm \thethm.} Main algorithm}
\KwIn{a band complex with rational constraints $\Sigma$}
\KwOut{a solution of $\Sigma$, or ``\emph{There is no solution}''} 
\BlankLine
\SetKw{Exit}{exit}

\SetKwFor{ForeachP}{For each}{do in parallel:}{endfor}
\SetKwFor{Forblank}{}{}{}
Run the prelamination generator to construct step by step the rooted tree $\calt$ of prelaminations\;

For each prelamination $\call$ produced by the generator, do in the background:\Forblank{}
{
\If{$\call$ is a leaf of $\calt$ and is not M\"obius-complete}{mark $\call$ as ``\emph{Rejected}''} 
Run the prelamination analyser on $\call$\;
\If{the analyser says ``\emph{Solution found}''}{\Exit with the solution provided by the analyser}
\If{the analyser says ``\emph{Reject}''}{mark $\call$ as ``\emph{Rejected}''}
}

Simultaneously:\Forblank{}{
\If{some prelamination $\call$ is not a leaf and all children $\call$ get marked as ``\emph{Rejected}''}{mark $\call$ as ``\emph{Rejected}''}

\If{the root lamination get marked as ``\emph{Rejected}''}{
\Exit saying ``\emph{There is no solution}''}
}
\end{algorithm}

\begin{rem*}
 If some prelamination $\call$ is rejected by the prelamination analyser, 
 we could also reject all its descendants, and we could ask the 
 prelamination generator to stop extending this prelamination.  

The algorithm could easily provide a proof that there is no solution when this is the case.
\end{rem*}

The main difficulty is to prove that the algorithm always stops.
Theorem \ref{thm_band_cplex} follows immediately from the two following lemmas.

\begin{lem} 
  The main algorithm is correct: if it stops, its answer is true.
\end{lem}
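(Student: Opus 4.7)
The plan is to analyze the two possible outputs of Algorithm \ref{algo;main} separately. If the algorithm exits with a solution, that solution was produced by the prelamination analyser after it output ``Solution found'' on some M\"obius-complete prelamination; by Lemma \ref{lem_analyser_correct} (correctness of the analyser), this is a genuine solution of $\Sigma$. So the main task is to show that if the algorithm outputs ``There is no solution'', then $\Sigma$ has no solution at all.

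The heart of the proof is the following inductive claim: \emph{every prelamination $\call$ that ever gets marked ``Rejected'' cannot be induced by any shortest solution of $\Sigma$.} I would prove this by strong induction on the time at which the marking occurs. There are three ways a prelamination can become marked, corresponding to three cases. First, if $\call$ is a leaf of $\calt$ and is not M\"obius-complete, then $\calt$ contains no extension of $\call$; since $\call_\infty(\sigma)$ is always M\"obius-complete for any solution $\sigma$, Lemma \ref{lem_extension1} forces $\call \neq \call_\infty(\sigma)$ to have an extension in $\calt$ whenever it is induced by $\sigma$, a contradiction, so in fact $\call$ is induced by no solution at all. Second, if $\call$ was rejected by the prelamination analyser, then Lemma \ref{lem_analyser_correct} directly yields that $\call$ is not induced by any shortest solution (and even by no solution whatsoever when $\call$ is M\"obius-complete). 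Third, if $\call$ is marked because all of its children in $\calt$ were previously marked, then by the induction hypothesis none of those children can be induced by a shortest solution, so Corollary \ref{cor_extension} guarantees the same for $\call$.

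To conclude, suppose the algorithm exits because the root prelamination $\call_0$ has been marked ``Rejected''. Assume for contradiction that $\Sigma$ admits a solution, and let $\sigma$ be a shortest one. By construction (see Section \ref{sec_generator}), $\call_0$ is contained in every prelamination, and in particular is induced by every solution of $\Sigma$; so $\sigma$ induces $\call_0$. But the inductive claim above applied to $\call_0$ asserts that $\call_0$ cannot be induced by any shortest solution, a contradiction. Hence $\Sigma$ has no solution, as claimed. The only real subtlety in the argument is bookkeeping: one must verify that marks really propagate in a well-founded order (children are marked strictly before their parents) so that the strong induction is justified, and that the three basic cases above exhaust all ways in which ``Rejected'' labels appear in the algorithm.
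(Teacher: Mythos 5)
Your proof is correct and follows essentially the same route as the paper's: correctness of the ``Solution found'' branch via Lemma~\ref{lem_analyser_correct}, then the inductive claim that a rejected prelamination cannot be induced by a shortest solution, split into the same three cases (non-M\"obius-complete leaf via Lemma~\ref{lem_extension1}, analyser rejection via Lemma~\ref{lem_analyser_correct}, all children rejected via Corollary~\ref{cor_extension}), finishing with the root. The extra remark about well-foundedness of the induction is sound and worth keeping in mind, though the paper leaves it implicit.
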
 

\begin{proof}
 By Lemma \ref{lem_analyser_correct}, the analyser is correct. 
Thus, the main algorithm is correct if it stops because the analyser has found a solution.

We claim that any prelamination which is marked as rejected cannot be induced
by a shortest solution of $\Sigma$.

We prove the claim inductively.
If $\call$ is marked as rejected by the prelamination analyser, then the claim
is true by correctness of the prelamination analyser.  
If $\call$ is marked as rejected because $\call$ is a leaf of $\calt$ and is not M\"obius-complete,
then the claim is true because of Lemma \ref{lem_extension1}.
If $\call$ is not a leaf of $\calt$ and if all its sons are correctly marked as rejected, then 
the claim holds for $\call$ by  Corollary \ref{cor_extension}. 
This proves the claim.

If the root prelamination has been marked as rejected,
then the root prelamination cannot be induced by a shortest solution of $\Sigma$,
so $\Sigma$ has no shortest solution, and has no solution at all.
\end{proof}

\begin{lem}\label{lem_algo_stops}
The main algorithm always stops.
\end{lem}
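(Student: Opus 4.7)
The plan is to argue by contradiction: I assume the main algorithm never stops, and I use the asymptotic analysis of Section \ref{sec_asym} to produce, for some sufficiently deep prelamination, a certificate that the analyser must eventually find, giving a contradiction. First I would observe that if the algorithm does not stop, then no prelamination is ever marked ``Solution found'', and the root is never marked ``Rejected''. Since the tree $\calt$ produced by the prelamination generator is locally finite and every unrejected internal node still has at least one unrejected child (otherwise it would itself get marked), K\"onig's lemma yields an infinite chain $\call_1 \prec \call_2 \prec \dots$ of prelaminations with rational constraints, none of which is ever rejected by the analyser. In particular each $\call_i$ is not M\"obius-complete (otherwise the analyser would conclude), shows no incompatibility of rational constraints (Subsection \ref{sec_incompatibility}), admits an invariant combinatorial transverse measure (Subsection \ref{sec_transverse}), and has $\mathrm{exponent}(\call_i) \le B(\Sigma, \ul m)$ where $B$ is the Bulitko bound of Proposition \ref{prop_Bulitko} (Subsection \ref{sec_reject_bulitko}).

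The next step is to pass to a limit. Since every subdivision point of $\call_i$ must eventually become non-orphan in some $\call_j$ (else the chain would stabilise at an incomplete prelamination which, being a leaf of $\calt$, would be marked rejected), and every M\"obius strip of $\call_i$ must eventually receive a core leaf in some $\call_j$ (for the same reason, via step 2 of the generator), Proposition \ref{prop_topological} applies and gives a topological foliation $\calf$ on $\Sigma$ representing the chain. Proposition \ref{prop_measure} then gives the alternative: either $\mathrm{exponent}(\call_i) \to \infty$ (impossible by the Bulitko bound above), or $\calf$ carries a non-atomic invariant transverse measure $\mu$. So we are in the second case.

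Now I would perform the structural analysis of $(\Sigma, \calf, \mu)$. Using Proposition \ref{prop_minimal}, a finite sequence of inert moves compatible with $\call_i$ decomposes $(\Sigma, \calf)$ into finitely many $\mu$-minimal components (up to measure-zero pieces); Proposition \ref{prop_indep} then makes the bands $\mu$-independent through further inert moves compatible with $\call_i$. Since inert moves preserve the property of being induced by a shortest solution and are invertible in the obvious sense, the contradiction I will produce on the transformed complex transports back to $(\Sigma, \call_i)$. If any minimal component is homogeneous, Proposition \ref{prop_homogene} forces $\mathrm{exponent}(\call_i) \to \infty$, contradicting the Bulitko bound. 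Otherwise every minimal component is of surface or exotic type, and Propositions \ref{prop_surface} and \ref{prop_exotic} produce, for all $i$ sufficiently large, a shortening sequence of moves for $(\Sigma, \call_i)$ in the sense of Definition \ref{dfn_shortening_moves}.

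The hard part is that the analyser must \emph{find} this certificate. But the analyser enumerates all candidate shortening sequences of moves, so if one exists for $\call_i$ it will be discovered, and $\call_i$ will be marked ``Rejected''. This contradicts the fact that no prelamination in our infinite chain is ever rejected, and concludes the proof. The main technical burden of the argument is therefore not in this lemma itself but in the compatibility statements of Section \ref{sec_asym} (Lemmas \ref{lem_band_subdiv}, \ref{lem_band_removal}, \ref{lem_domain_cut}, \ref{lem_forgetful} and Propositions \ref{prop_minimal}, \ref{prop_indep}, \ref{prop_exotic}, \ref{prop_surface}), which ensure that moves performed at the level of the limit foliated complex can be mirrored on the combinatorial prelaminations $\call_i$ for all $i$ large enough.
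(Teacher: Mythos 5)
Your proof plan is correct and follows the same route as the paper: extract an infinite chain of never-rejected prelaminations via K\"onig's lemma, represent it in a limit measured foliated band complex using Propositions \ref{prop_topological} and \ref{prop_measure}, and invoke the structural analysis of Section \ref{sec_asym} (Propositions \ref{prop_minimal}, \ref{prop_homogene}, \ref{prop_indep}, \ref{prop_exotic}, \ref{prop_surface}) to produce a shortening certificate for all sufficiently large prelaminations in the chain, giving the contradiction. The only difference is organisational: the paper first disposes of the case where $\Sigma_0$ has a solution by a direct argument (the M\"obius-complete prelamination $\call_\infty(\sigma)$ induced by a shortest solution will be produced and accepted by the analyser) and runs the K\"onig/limit argument only when $\Sigma_0$ has no solution, whereas you condition on ``the algorithm never stops'' from the outset, which forces every M\"obius-complete leaf to be eventually rejected and makes the two cases collapse into one; both versions are sound.
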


\begin{proof}
Let $\Sigma_0$ be the band complex taken as input.
If $\Sigma_0$ has a solution, then it has a shortest solution $\sigma$, and the prelamination $\call_\infty(\sigma)$
will be produced by the prelamination generator. 
If the algorithm does not stop,
at some point $\call_\infty(\sigma)$ will be analysed by the analyser.
Since $\call_\infty(\sigma)$ is M\"obius-complete, the 
prelamination analyser will find out that there is a solution, and the main algorithm will stop, a contradiction.

Assume that $\Sigma_0$ has no solution. 
We claim that if some prelamination $\call$ is never rejected, 
then it is not a leaf, and at least one of its children is never rejected.
Indeed, every leaf of the rooted tree $\calt$ of prelaminations
will be marked as rejected: either because it is not M\"obius-complete, or because the analyser will reject it.
On the other hand, if all children of $\call$ are marked as rejected at some point, then $\call$ will itself be marked as rejected.
This proves the claim.

Assume by contradiction that the algorithm does not stop.
Then the root lamination never gets rejected.
The claim says that there is an infinite  ray in  $\calt$ of prelaminations which are never rejected. 
This ray is an infinite sequence of prelaminations with rational constraints extending each other:
$\call_1 \prec \call_2 \prec\dots$ (Definition \ref{dfn_extends}). 

First, there  exists a topological foliation $\calf_0$ on $\Sigma_0$ representing the prelaminations $\call_i$ 
 (this is by Proposition  \ref{prop_topological}, see Definition \ref{dfn_represent} for the notion of foliation representing a sequence of prelaminations).
Moreover, since no $\call_i$ is rejected for large exponent of periodicity,   
there exists  a measure $\mu_0$, without atom,  invariant under the holonomy of $\calf_0$.  
This is ensured by  Proposition  \ref{prop_measure}.

Now by  Proposition  \ref{prop_minimal},
if $i$ is large enough, one can uniformly perform a finite sequence of inert moves on $(\Sigma_0,\call_i)$,
and get a sequence of prelaminations represented in  a foliated band complex $(\Sigma_1,\calf_1,\mu_1$) 
which is decomposed into $\mu$-minimal components. 

By  Proposition \ref{prop_homogene}, 
the Rips band complex $\Sigma_{1\mu_1}$ corresponding to $(\Sigma_1,\calf_1,\mu_1)$ 
cannot contain a homogeneous minimal component, 
since otherwise, $\call_i$ would be rejected by the prelamination analyser for $i$ large enough
because of large exponent of periodicity.

Proposition \ref{prop_indep} then says that one can uniformly perform a finite sequence 
of inert moves on $(\Sigma_1,\calf_1,\mu_1)$
to get some band complex $(\Sigma,\calf,\mu)$ whose bands are $\mu$-independent. 
There are two alternatives left by the classification of minimal components: either there is an exotic component in  
$(\Sigma,\calf,\mu)$, or there is a surface component.
In both cases,  there exists a shortening sequence of moves for $\call_i$ for $i$ large enough
(see Propositions \ref{prop_exotic} and \ref{prop_surface}).
This proves the existence of shortening certificates for $\call_i$ for $i$ large enough,
contradicting the fact that no $\call_i$ is rejected.
\end{proof}

\subsection{Reduction of main theorems to band complexes}

\begin{prop}\label{prop_twisted2}
There exists an algorithm that takes as input a basis of a free group $F$, 
a finite set $\Phi$ of automorphisms of $F$ preserving $S\cup S\m$,
and a system of twisted equations with rational constraints in $F$ (with twisting automorphisms in $\Phi$)
and that decides whether there is a solution or not.  
\end{prop}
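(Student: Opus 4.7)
The plan is to chain together the reductions already established in the paper. Starting from a system $(\cale, \ul\calr)$ of twisted equations with rational constraints on $F = \grp{S}$, with twisting subgroup $\Phi \subset \AutS$, I would proceed as follows.

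First, I would apply Proposition \ref{prop;red_band_cmplex} to effectively compute a finite family of band complexes with rational constraints $\Sigma_1, \dots, \Sigma_p$ such that the set of solutions of $(\cale, \ul\calr)$ is in bijection with the disjoint union of the (reduced) solutions of the $\Sigma_i$'s. The hypothesis that every $\phi \in \Phi$ preserves $S \cup S^{-1}$ is exactly what is needed in order to interpret the twisting morphisms of bands as elements of $\AutS$, as required by the definition of a band complex in Section \ref{sec_band_cplex}. Also, since $\Phi$ is finite and we may assume (by the barycentric subdivision trick described after Definition \ref{def;twisted}) that $\Phi$ acts without inversion, the ``no inversion'' hypothesis on band complexes is satisfied.

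Next, for each $i$, the rational constraints on $\Sigma_i$ produced by the reduction need to be put in standard form. This is done by invoking Lemma \ref{lem_monoid} to compute a single finite monoid $\calm$ with involution and $\AutS$-action, together with an epimorphism $\rho: \fmip \to \calm$ representing simultaneously every rational constraint appearing in any $\Sigma_i$, as well as $\Fix\phi$ for each $\phi \in \Phi$. Then by a finite disjunction of cases over the tuples $\ul m \in \calm^{E(\Sigma_i)}$ compatible with the given constraints, we obtain finitely many band complexes with rational constraints in standard form whose solutions biject with the solutions of $(\cale, \ul\calr)$.

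Finally, I would run the main algorithm of Theorem \ref{thm_band_cplex} on each of these band complexes; the original system $(\cale, \ul\calr)$ has a solution if and only if at least one of them does. Since all reductions are effective and only finitely many band complexes are produced, this yields the required decision algorithm. There is no genuine obstacle here: the real work has already been done in proving Proposition \ref{prop;red_band_cmplex} (which handles the reduction of twisted equations to band complexes) and Theorem \ref{thm_band_cplex} (whose termination, via the Rips-machine analysis of Section \ref{sec_asym}, is the deep content); the present proposition is essentially a book-keeping combination of these two results together with the standardisation of rational constraints.
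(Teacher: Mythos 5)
Your proof is correct and follows exactly the paper's own two-line argument: Proposition \ref{prop;red_band_cmplex} reduces to band complexes, Theorem \ref{thm_band_cplex} decides them. The extra care you take with the no-inversion reduction and the standardisation of rational constraints is indeed necessary, but these are implicit preliminaries already set up in Sections \ref{sec_twisted} and \ref{sec_standard}, so your reasoning and the paper's coincide.
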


\begin{proof}
By Proposition \ref{prop;red_band_cmplex}, this reduces to 
deciding whether a band complex has a solution. Theorem \ref{thm_band_cplex} concludes.
\end{proof}

\begin{UtiliseCompteurs}{thmvf}
  \begin{thmbis}
  The problem of equations with rational constraints is solvable in finite extensions of free groups.

  More precisely, there exists an algorithm with takes as input a presentation of a virtually free group $G$, and a system of
  equations and inequations with constants in $G$, together with a set of rational constraints, and which decides if there exists
  a solution or not.
\end{thmbis}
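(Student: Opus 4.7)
The plan is to reduce Theorem~\ref{thm_vf} to Proposition~\ref{prop_twisted2} (the solvability of twisted equations with rational constraints in free groups) in two preparatory steps, after which everything is already available in the paper.

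First, I would absorb the inequations into the rational constraints. Since $V$ is virtually free, Lemma~\ref{lem;Boolean} gives that $V\setminus\{1\}$ is a rational subset of $V$ and produces an explicit automaton for it from a presentation of $V$. Then each inequation $w\neq 1$ is replaced by an equation $w=y$ where $y$ is a fresh variable, together with the rational constraint $y\in V\setminus\{1\}$; constants $g_0\in V$ are handled analogously using the rational constraint $\{g_0\}$. Next I would put the obtained system in triangular form using the standard procedure from Section~\ref{sec_triangular}: replace each equation of length $\geq 4$ by a chain of length-$3$ equations introducing auxiliary unrestricted variables, eliminate equations of length $2$ by substitution (updating the rational constraint via intersection, which is again a rational subset by Lemma~\ref{lem;Boolean}, with an effectively computable automaton), and convert equations of length $1$ or involving $x\bar x$ into trivial constraints. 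This yields a finite disjunction of triangular systems of equations with rational constraints in $V$ whose combined solution sets biject with the original one, and everything at this stage is algorithmic.

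Then I would invoke Proposition~\ref{prop;trick_Out_Aut}, which is exactly the bridge I need: from a presentation of $V$ and a triangular system of equations with rational constraints in $V$, it effectively produces a free group $\grp{S}$ and a finite disjunction of systems of twisted equations $(\cale'_1,\ul\calr'_1),\dots,(\cale'_k,\ul\calr'_k)$ in $\grp{S}$, with twisting automorphisms lying in a finite subgroup of $\AutS$ without inversion, such that the solutions of the original system are in bijection with the disjoint union of solutions of the twisted systems. The rational constraints in the free group are computed from automata using Lemmas~\ref{lem_morphism}, \ref{lem;rat_virt_free} and \ref{lem;Boolean}.

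Finally, for each of these twisted systems I would call the algorithm of Proposition~\ref{prop_twisted2}, which decides the existence of a solution to a twisted equation system with rational constraints in a free group whose twisting automorphisms lie in a finite subgroup of $\AutS$. The initial system has a solution in $V$ if and only if at least one of these twisted systems has a solution, so the decision procedure is obtained by running the algorithm of Proposition~\ref{prop_twisted2} on each of the finitely many twisted systems and returning \emph{yes} as soon as one returns \emph{yes}. The only real content — solving twisted equations via band complexes, prelaminations and the Rips machinery — is already handled by the main algorithm~\ref{algo;main}, so no obstacle remains at this stage; the delicate points were the careful tracking of rational constraints under the three reductions (inequations $\to$ constraints, virtually free $\to$ twisted free via Proposition~\ref{prop;trick_Out_Aut}, twisted free $\to$ band complex via Proposition~\ref{prop;red_band_cmplex}), all of which are already established.
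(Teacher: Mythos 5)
Your proposal is correct and follows essentially the same route as the paper's own proof, which simply invokes Proposition~\ref{prop;trick_Out_Aut} and Proposition~\ref{prop_twisted2}; you have merely unpacked the intermediate bookkeeping (encoding inequations via Lemma~\ref{lem;Boolean}, triangulation as in Section~\ref{sec_triangular}, and the automaton manipulations) that the paper leaves implicit in the statements of those two propositions.
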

\end{UtiliseCompteurs}

\begin{proof}
By proposition \ref{prop;trick_Out_Aut}, the  
 problem of equations with rational constraints in a virtually free group $V$ 
reduces to the problem of twisted equations with rational constraints 
in the sense of Definition \ref{def;twisted} where the twisting morphisms permute a basis.
Proposition \ref{prop_twisted2} concludes.
\end{proof}

Without assuming that twisting morphisms preserve a basis of $S$, we can prove:
\begin{UtiliseCompteurs}{twistedpb}
\begin{thmbis}
There exists an algorithm that takes as input a basis of a free group $F$, 
a finite set $\Phi$ of automorphisms of $F$ whose image in $\Out(F)$ generate a finite subgroup, 
and a system of twisted equations with rational constraints in $F$ (with twisting automorphisms in $\Phi$)
  and that decides whether there is a solution or not.
\end{thmbis}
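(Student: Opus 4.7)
The strategy is to reduce to Proposition \ref{prop_twisted2} (the basis-preserving case) by enlarging the free group so that the twisting subgroup becomes realised by basis permutations, at the cost of adding a rational constraint pinning the variables down to the original $F$. This is the ``trick'' alluded to just below the statement of Theorem \ref{thm_twistedpb} and it is based on the Zimmermann--Culler realisation theorem.

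First I would recover from the input a faithful geometric model of $\bar\Phi:=$ image of $\Phi$ in $\Out(F)$. Enumerating finite connected graphs $X$ with marked isomorphism $\pi_1(X,v_0)\simeq F$ and finite groups $H$ of simplicial automorphisms of $X$, and checking that $H$ induces on $\Out(F)$ exactly $\bar\Phi$, one eventually finds by Zimmermann--Culler a pair $(X,H)$ doing the job; after one barycentric subdivision we may further assume that $H$ acts on the oriented edge set $S\dunion\bar S$ without inversions, so that we get a concrete embedding $H\hookrightarrow \mathrm{Aut}(\hat S_\pm)\subset\mathrm{Aut}(\hat F)$, where $\hat F:=\grp{S}$ is the free group on the (one-oriented) edges of $X$. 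After choosing a spanning tree of $X$, each loop at $v_0$ writes as a reduced edge-word, giving an explicit embedding $\iota:F\hookrightarrow\hat F$; the image $\iota(F)$ is the subgroup of words ``closing up at $v_0$'', which is finitely generated and therefore a rational subset of $\hat F$, for which an automaton can be computed (its Stallings graph is $X$ with $v_0$ as initial and accept state).

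Next I would translate each given twisting automorphism $\phi\in\Phi$ into data on $\hat F$. Fix a lift $h_\phi\in H$ of the class $\bar\phi\in\bar\Phi$ (computable by trying all elements of $H$ and testing equality in $\Out(F)$ via the Whitehead/Stallings test). Since $H$ only preserves $F$ up to conjugacy in $\hat F$, one has $h_\phi(\iota(F))=g_\phi \iota(F) g_\phi^{-1}$ for some $g_\phi\in\hat F$ obtainable by following a lift in $\hat F$ of a path from $v_0$ to $h(v_0)$, and the automorphisms $x\mapsto g_\phi^{-1} h_\phi(x) g_\phi$ and $\iota\circ\phi\circ\iota^{-1}$ of $\iota(F)$ coincide up to an inner automorphism of $\iota(F)$ by some $c_\phi\in \iota(F)$ that one finds by matching their action on a basis of $F$. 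Thus there is a constant $a_\phi\in\hat F$ (computable) such that $\iota(\phi(x))=a_\phi^{-1}\, h_\phi(\iota(x))\,a_\phi$ for every $x\in F$. A triangular twisted equation $\phi_1(x_1)\phi_2(x_2)\phi_3(x_3)=1$ in $F$ then becomes $a_1^{-1}h_1(\iota(x_1))a_1\cdot a_2^{-1}h_2(\iota(x_2))a_2\cdot a_3^{-1}h_3(\iota(x_3))a_3=1$ in $\hat F$, which I would triangulate with fresh auxiliary variables (including variables for the constants $a_i$, which are pinned by singleton rational constraints), so that every twist used is now in $\hat S_\pm$-preserving form, i.e.\ in $\mathrm{Aut}(\hat S_\pm)$.

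For the rational constraints I would replace each input constraint $x\in\calr_x\subset F$ by the rational constraint $\iota(x)\in\iota(\calr_x)\subset\hat F$ on the corresponding variable of the new system, and additionally impose the new constraint $x\in\iota(F)$ on each variable coming from $X$; computability of the relevant automata on $\hat S_\pm$ is guaranteed by Lemma \ref{lem_morphism} (for $\iota(\calr_x)$), and by the Stallings graph of $\iota(F)$. By construction, solutions of the new twisted system in $\hat F$ with these constraints correspond bijectively with solutions of the original twisted system in $F$, so Proposition \ref{prop_twisted2} finishes the argument. The main obstacle I foresee is not conceptual but bookkeeping: one must verify that the Zimmermann--Culler realisation is genuinely computable (this is the only non-black-box algorithmic input, and it reduces to enumerating candidates and checking they induce $\bar\Phi$ in $\Out(F)$) and that the various automata transports above indeed yield automata on the new alphabet $\hat S_\pm$; beyond that, the proof is a clean substitution plus an application of the already established Proposition \ref{prop_twisted2}.
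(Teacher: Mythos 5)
Your proof is correct, but it takes a genuinely different route from the paper. The paper's proof is a three-line reduction: let $Q\subset\Out(F)$ be the finite group generated by the image of $\Phi$ and let $V$ be its preimage in $\Aut(F)$; then $V$ is a finitely generated virtually free group (containing $\mathrm{Inn}(F)\simeq F$ with finite index $|Q|$), and the identity $i_{\phi(x)}=\phi\circ i_x\circ\phi^{-1}$ converts each twisted equation $\phi_1(x_1)\phi_2(x_2)\phi_3(x_3)=1$ in $F$ into an \emph{untwisted} equation $\phi_1 i_{x_1}\phi_1^{-1}\phi_2 i_{x_2}\phi_2^{-1}\phi_3 i_{x_3}\phi_3^{-1}=1$ in $V$, with the rational constraint that each $i_{x_j}$ lie in $\mathrm{Inn}(F)$ (intersected with the transported $\calr_{x_j}$). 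This reduces directly to Theorem \ref{thm_vf}. You instead go straight to Proposition \ref{prop_twisted2} by running a Zimmermann--Culler realisation by hand: enumerate graphs $X$ with a finite automorphism group $H$ inducing $\bar\Phi$, embed $F$ into the free group $\hat F$ on the (subdivided) edge set, transport each $\phi\in\Phi$ to a basis-permuting $h_\phi\in\Aut(\hat S_\pm)$ together with a computable conjugating constant $a_\phi\in\hat F$, and pin the variables to $\iota(F)$ with a rational constraint from the Stallings graph. Your bookkeeping is sound: the conjugating element $c_\phi$ matching $g_\phi^{-1}h_\phi(\cdot)g_\phi$ with $\iota\circ\phi\circ\iota^{-1}$ on $\iota(F)$ is indeed computable since two tuples of elements of a free group that are simultaneously conjugate have a unique (computable) conjugator once trivial centralisers are ensured, and your barycentric subdivision step correctly disposes of inversions, which the band-complex machinery requires. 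In effect you have unrolled the paper's pipeline (Thm \ref{thm_twistedpb} $\Rightarrow$ Thm \ref{thm_vf} $\Rightarrow$ Prop \ref{prop;trick_Out_Aut} + Prop \ref{prop_twisted2}) into a single direct step; this is more explicit and self-contained but loses the modularity and brevity of the inner-automorphism trick, which also re-derives essentially the same data through the already-established Proposition \ref{prop;trick_Out_Aut}.
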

\end{UtiliseCompteurs}

\begin{proof}
  Let $Q\subset \Out(F)$ be the finite group generated by the image of $\Phi$,
and let $V$ be the preimage of $Q$ in $\Aut F$.
Embed $F$ into $\Aut F$ via inner automorphisms $x\mapsto i_x$ where $i_x(g)=xgx\m$.
Note that for all $\phi\in \Phi$ and all $x\in F$, $i_{\phi(x)}=\phi\circ i_x \circ \phi\m$.
It follows each equation twisted by $\Phi$ on $F$ corresponds to a non-twisted equation in $V$,
together with the rational constraint saying that the variables should lie in $F$.
Since rational subsets of $F$ are rational subsets of $V$, the theorem follows from Theorem \ref{thm_vf}.
\end{proof}

\section{Equations in hyperbolic groups with torsion}\label{sec_eqn_hyp}

In \cite{RiSe_canonical}, Rips and Sela constructed canonical representatives for a torsion free hyperbolic group $\G$,
in order to \emph{lift} solutions of a system of equations on $\Gamma$ to a free group.
Then using Makanin's algorithm, they deduced an algorithm deciding whether a given system of equations in $\G$ has a solution.

Delzant \cite[Rem.III.1]{Delzant_image} remarked that for certain hyperbolic groups with torsion, such canonical representatives do
not exist, whereas Reinfeldt \cite{Reinfeldt} noticed that, if the abelian subgroups are finite or cyclic, they do exist.
In fact, in the general case, \emph{most} of the construction of \cite{RiSe_canonical} remains valid,
up to the construction of canonical cylinders.
Instead of defining canonical representatives as paths in the Cayley graph of $\G$ (thus living in a free group)
as in \cite{RiSe_canonical},
we need to interpret them as paths in the $1$-skeleton $\calk$ of the barycentric subdivision of a Rips complex of $\G$.
The action of $\G$ on $\calk$ fails to be free in presence of torsion, and paths in $\calk$ correspond to 
elements of the virtually free group $V$ occurring as the fundamental group of the
graph of groups $\calk/\G$ (see below for details).
The natural generalisation of Rips and Sela's construction in presence of torsion 
then leads to a family of systems of equations in the virtually free group $V$.

In \cite{Dah_existential}, rational constraints were used to handle inequations in torsion free hyperbolic groups.
More generally, we would like to solve equations with rational constraints in a hyperbolic group.
However, this is impossible without restriction on the rational subsets involved (even if $\G$ is torsion free).
For instance, any finitely generated subgroup is a rational subset, but
the membership problem is unsolvable in some hyperbolic groups (\cite{Rips_subgroups}).
This is why we introduce a nice class of rational subsets: \emph{\qi embeddable} rational subsets 
(see Definition \ref{dfn_qie} below).
Examples of such  subsets include 
finite subsets, quasiconvex subgroups, and their complements.

A set of \emph{\qi embeddable rational constraints} on a system of equations is the additional requirement that
each variable $x$ should live in a \qi embeddable rational subset $\calr_x\subset\G$. 
As we will see, this class of subsets is a Boolean algebra (Cor. \ref{cor_boolean}), so inequations
are a particular case of \qi embeddable rational constraints.

Here is the main statement of this section (compare with \cite[\textsection 5, 5.3]{Dah_existential}).

\begin{UtiliseCompteurs}{thm_eqn_hyp}
  \begin{thmbis}
    There exists an algorithm which takes as input
    \begin{itemize*}
    \item a presentation $\grp{S|R}$ of a hyperbolic group $\G$ (maybe
      with torsion),
    \item a finite system of equations, inequations, with constants in
      $\G$ and with \qi embeddable rational constraints
    \end{itemize*}
    and which decides whether there exists a solution or not.
  \end{thmbis}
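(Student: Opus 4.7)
The plan is to follow the Rips--Sela strategy via canonical representatives, adapted to accommodate torsion and \qi embeddable rational constraints, and ultimately to invoke Theorem~\ref{thm_vf}. First I would reduce the input to finitely many systems of triangular equations, as in Section~\ref{sec_triangular}: constants are encoded as variables whose constraint is a singleton (trivially \qi embeddable), and inequations $w\neq 1$ are encoded by a fresh variable with constraint $G\setminus\{1\}$, which is \qi embeddable rational by the Boolean-algebra property of \qi embeddable rational subsets (Cor.~\ref{cor_boolean}). The reduction to triangular form stays within this class because the class is closed under the Boolean operations used there.

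Next I would build canonical representatives. From the presentation $\grp{S|R}$ one can effectively compute a hyperbolicity constant, a Rips complex, and its barycentric subdivision; let $\calk$ denote the resulting simplicial complex. The $1$-skeleton quotient $\calk^{(1)}/\G$ is a finite graph of finite groups whose fundamental group $V$ is virtually free and effectively computable. Fix a base vertex $*\in \calk^{(1)}$; oriented edge paths in $\calk^{(1)}$ emanating from $*$ correspond bijectively to elements of $V$, and the covering map induces a surjection $V\tto \G$ whose kernel is a free subgroup. For each triangular equation $x_1x_2x_3=1$ in $\G$, the Rips--Sela construction attaches to any hypothetical solution a geodesic triangle, together with canonical representatives in $\calk^{(1)}$ that agree on matching neighborhoods of the centre of the triangle. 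In the torsion case, canonical cylinders may fail to exist as in Delzant's remark, but the failure occurs along a bounded list of exceptional configurations that can be enumerated a priori; for each such configuration one obtains a \emph{path equation} in $V$ (of triangular shape, up to splitting at the cancellation centre as in Section~\ref{sec_eq2band}), together with rational constraints forcing the paths to project to the prescribed cosets in $V/\ker = \G$. This produces finitely many systems of equations with rational constraints in the virtually free group $V$.

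The core technical step is lifting each \qi embeddable rational constraint $\calr_x\subset \G$ to a rational constraint on the canonical representative of $x$ inside $V$. By hypothesis, $\calr_x = \pi(L_x)$ for some regular language $L_x$ in $\fmi$ consisting of quasi-geodesics with uniform constants. Canonical representatives are themselves uniform quasi-geodesics in $\calk^{(1)}$ (this is built into the Rips--Sela construction), so one can adapt Cannon's finite-state argument for the language of geodesics \cite{Cannon_combinatorial,CDP,Kap_Detecting}: the local combinatorics of a canonical representative of an element of $\calr_x$ can be recognized by a finite automaton with states recording a bounded neighborhood of the current position, the current state of an automaton for $L_x$, and the cone type in the Rips complex. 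This yields an effectively computable automaton over the generating set of $V$, hence a rational subset $\calr'_x\subset V$ by Lemma~\ref{lem_chg_base}. The constraint $g_x\in\calr_x$ in $\G$ is then equivalent to the constraint that the canonical representative of $g_x$ lies in $\calr'_x$.

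Finally I would invoke Theorem~\ref{thm_vf} on each of the finitely many resulting systems of equations with rational constraints in $V$; decidability of the original problem follows, since a solution of the original system exists if and only if at least one of these lifted systems in $V$ is solvable. The main obstacles I anticipate are twofold. The first is bookkeeping: verifying that the enumeration of exceptional cancellation combinatorics in the torsion case really is finite and effectively producible, and that the correspondence between solutions upstairs and downstairs is both sound (each solution in $V$ projects to a solution in $\G$) and complete (each solution in $\G$ admits canonical representatives fitting one of the listed cases). The second, and more substantive, is the regularity of the language of canonical representatives of elements of $\calr_x$; this is precisely where the \qi embeddability hypothesis is used, and where the construction would fail for an arbitrary rational subset. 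Once these are in place, everything else is mechanical reduction.
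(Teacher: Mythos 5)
Your overall plan matches the paper's (triangularize, build canonical cylinders, read them as paths in the barycentric subdivision $\calk$ of a Rips complex, get a virtually free group $V=\pi_1(\calk/\G)$, lift both equations and \qi embeddable rational constraints to $V$ via a Cannon-style finite-state argument, then invoke Theorem~\ref{thm_vf}), and that is exactly how the paper proceeds. But there is one concrete misconception in the middle that would derail the argument as you have stated it: you write that ``canonical cylinders may fail to exist as in Delzant's remark, but the failure occurs along a bounded list of exceptional configurations that can be enumerated a priori.'' That is not what fails, and there is no such enumeration. The Rips--Sela theorem on canonical \emph{cylinders} (\cite[Cor.~4.3]{RiSe_canonical}, quoted as Theorem~\ref{thm_can_cyl}) holds verbatim for hyperbolic groups with torsion; what breaks down in the torsion case is the next step of their paper, namely choosing a single representative \emph{path} in each cylinder in a $\G$-equivariant way inside the Cayley graph, which is what would land you in a free group. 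Your proposal cannot dispense with the unconditional existence of canonical sliced cylinders: it is precisely the slice decomposition that produces the paths $\pp_x$ in $\calk$ (vertices of $\pp_x$ are the barycenters of slices and of unions of adjacent slices), so if cylinders ``failed to exist along some configurations'' you would have nothing to feed into $V$.

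The only finite enumeration actually required in the torsion case is a different one, which you gesture at but attribute to the wrong cause: for each equation $x_1x_2x_3=1$ one enumerates the (boundedly many) possible short central pieces $\Tilde c_{\eps,i}\in V_{\leq\kappa_1}$ coming from the central parts $C_i$ of the sliced cylinders, and checks $\pi(\Tilde c_{\eps,1}\Tilde c_{\eps,2}\Tilde c_{\eps,3})=1$ using the word problem in $\G$; for each admissible tuple one gets a system of honest equations with rational constraints in $V$. With that correction, the rest of your argument (in particular the regularity of the language of local quasigeodesics over $\calr_x$, your analogue of Proposition~\ref{prop_full}, and the lift of that language into $V$) goes through along the lines of the paper.
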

\end{UtiliseCompteurs}

Each rational constraint $\calr_x$ should be given to the algorithm under the form of a finite automaton 
accepting a \qi embedded language $\Tilde \calr_x\subset \fmi$ projecting to $\calr_x\subset \Gamma$ (see Definition \ref{dfn_qie}).

Let us emphasise  that the algorithm is uniform over all hyperbolic groups. This is a usual application of the fact that the hyperbolicity 
 constant of a hyperbolic group can be computed from a presentation  
(see \cite[Prop 8.6.1]{Bow_notes} or \cite{Gromov_hyperbolic,Papasoglu_algorithm}).
 In all the following, we will make sure that the algorithms we use are explicit if the hyperbolicity constant is known 
(in particular that the constants we use are explicit, or computable in terms of the presentation and the hyperbolicity constant). 
 In this way, there is no restriction in considering that the hyperbolic group is given once and for all.

In particular, we get (compare with \cite[Theorem 0.1]{Dah_existential}): 

\begin{cor}\label{cor_exist}
  The existential theory with constants of a
  hyperbolic group  is decidable. 
\end{cor}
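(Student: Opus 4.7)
The plan is to derive Corollary \ref{cor_exist} directly from Theorem \ref{thm_eqn_hyp} by standard logical manipulations. An existential formula with constants over $\G$ has the form $\exists x_1\dots\exists x_n\, \phi(x_1,\dots,x_n,g_1,\dots,g_k)$ where $\phi$ is a quantifier-free Boolean combination of atomic formulas of the shape $w(\bar x,\bar g)=1$, with $w$ a word in the $x_i$, their inverses, and the constants $g_j\in\G$. Deciding the existential theory amounts to deciding the satisfiability of such formulas.

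First I would put the quantifier-free part $\phi$ into disjunctive normal form, algorithmically. This produces a finite disjunction $\phi \equiv \bigvee_{i=1}^N \psi_i$, where each $\psi_i$ is a finite conjunction of atomic formulas $w_{i,j}=1$ and their negations $w'_{i,j}\neq 1$. The original existential sentence is satisfiable if and only if at least one of the finitely many systems $\psi_i$ (viewed as a system of equations and inequations with constants in $\G$, over the common variable set $\{x_1,\dots,x_n\}$) has a solution. It thus suffices to decide, for each $i$, whether $\psi_i$ admits a solution.

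Each $\psi_i$ is a finite system of equations and inequations with constants in $\G$, and so falls directly within the scope of Theorem \ref{thm_eqn_hyp}, provided inequations are allowed. Since the statement of Theorem \ref{thm_eqn_hyp} explicitly allows inequations (and moreover permits \qi embeddable rational constraints, of which inequations $w\neq 1$ are the special case $w\in \G\setminus\{1\}$, and $\{1\}$ is a finite hence quasiconvex subgroup whose complement is \qi embeddable rational by the Boolean algebra property of Corollary \ref{cor_boolean}), we may apply the algorithm of Theorem \ref{thm_eqn_hyp} to each $\psi_i$ in turn and output ``satisfiable'' as soon as one of them is declared to have a solution, and ``unsatisfiable'' if all of them are declared to have none.

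There is no significant obstacle here: the reduction to disjunctive normal form is elementary, and the main ingredient, Theorem \ref{thm_eqn_hyp}, has already been proved. Uniformity over hyperbolic groups is inherited from the uniformity of Theorem \ref{thm_eqn_hyp}, which itself relies on the computability of the hyperbolicity constant from a presentation.
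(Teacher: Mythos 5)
Your proof is correct and is exactly the standard reduction the paper leaves implicit: put the quantifier-free part in disjunctive normal form, observe that each disjunct is a finite system of equations and inequations with constants, and apply Theorem \ref{thm_eqn_hyp} to each disjunct. The paper states the corollary as an immediate consequence of Theorem \ref{thm_eqn_hyp} without writing out this argument, and your spelled-out version matches it.
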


\subsection{Quasi-isometrically embeddable rational subsets}

\begin{dfn}\label{dfn_qie}
Let $\Gamma$ be a hyperbolic group generated by a finite set $S$, and $\pi:\fmi\onto \Gamma$ the corresponding morphism.

A regular language $\Tilde \calr\subset \fmi$ is \emph{quasi-isometrically embedded} in $\G$
if there exists $\lambda\geq 1,\mu\geq 0$, such that for any word $w\in \Tilde\calr$,
$\abs{\pi(w)}_\G\geq \frac1\lambda |w|-\mu$.

A rational subset $\calr\subset \G$ is \emph{quasi-isometrically embeddable} in $\G$ if
there exists a quasi-isometrically embedded regular language $\Tilde \calr\subset \fmi$
such that $\pi(\Tilde \calr)=\calr$.
\end{dfn}

Here, $|w|$ is the length of $w$ as a word on $\Spm$, and $\abs{\cdot}_\G$ is any word metric on $\G$.
In particular, finite sets and quasiconvex subgroups are clearly \qi embeddable rational subsets.

The property of being quasi-isometrically embeddable does not depend 
on the chosen word metric on $\G$. Neither does it depend on the choice generating set $S$.
Indeed, consider any other generating set $S'$ 
and express each element of $S$ as a word on $S'_{\pm}$.
This defines a morphism $\rho:\fmi\ra (S'_\pm)^*$ satisfying $|\rho(w)| \leq L|w|$ where $L=\max_{s\in\Spm} \abs{\rho(s)}$.
Consider the natural morphism  $\pi':(S'_\pm)^*\ra \G$.
The regular language $\Tilde\calr'=\rho(\Tilde \calr)$ is quasi-isometrically embedded since
for each $w'=\rho(w)$, $\pi'(w')=\pi(w)$, and $\abs{\pi'(w')}_\G=\abs{\pi(w)}_\G\geq \frac1\lambda |w|-\mu \geq \frac1{\lambda L}|w'|-\mu$.

However, the fact of being quasi-isometrically embedded does depend on the choice of $\Tilde\calr$ representing $\calr$. For instance, 
if $\calr$ is the set of all words, and $\calr'$ is the set of $L$-local geodesics  they represent the same rational subset (namely $\G$), but $\calr$ is not quasi-isometrically embedded.

A quasi-isometrically embeddable rational subset is quasiconvex: there exists $C>0$ such that
for all $x,y\in\calr$, any geodesic of $\G$ joining $x$ to $y$ is contained in the $C$-neighbourhood of $\calr$.
Indeed, by thinness of triangles, one can assume $x=1$.
Let $A$ be an automaton accepting $\Tilde \calr$.
By stability of quasigeodesics, any path represented by a word accepted by $\Tilde R$ lies within a bounded distance from a geodesic joining its endpoints.
Finally, if a path is accepted by $A$ then any of its points lies a bounded distance away from an accepted point (the bound depending only
on the number of states of $A$).
 
Conversely, we don't know if a quasiconvex rational subset is always quasi-isometrically embeddable.\\

\subsection{Full sublanguage of a set of quasigeodesics}

The goal of this section is to prove that given $\calr\subset \G$ a \qi embeddable rational subset,
the set of local quasigeodesics representing elements of $\calr$ is rational (Proposition \ref{prop_full}).
This should be thought as an analogue of 
 Lemma \ref{lem_reduit} saying that if $\grp{S}$ is a free group and $\calr\subset \grp{S}$ is a 
rational subset, then the language of freely reduced words in $\fmi$
representing an element of $\calr$ is a regular language.
As for the free group, we will deduce that 
 \qi embeddable rational subsets form a Boolean algebra.

Let $S$ be a finite generating system of $\G$, 
 $\pi: \fmi \onto \G$ be the corresponding morphism,
$\Cay \G$ be the Cayley graph, and $\abs{\cdot}_S$ be the word metric.

Given $w\in \fmi$, we denote by $p_w$ be the corresponding path in $\Cay \G$, defined by $p_w(i)=\pi(s_1\dots s_i)$.
Given $\lambda\geq 1$, $\mu\geq 0$, let 
$\qg_{\lambda,\mu}(\fmi)\subset \fmi$ be the set of $(\lambda,\mu)$-quasigeodesics, \ie
the set of words $w$ such that $\abs{p_w(i)-p_w(j)}_S\geq \frac1\lambda \abs{i-j}-\mu$.
Similarly, the set of $\nu$-local quasigeodesics
$\lqg_{\nu,\lambda,\mu}(\fmi)\subset \fmi$ is the set of 
words $w$ such that for all $\abs{i-j}\leq \nu$, $\abs{p_w(i)-p_w(j)}_S\geq \lambda \abs{i-j}-\mu$.
Clearly, $\lqg_{\nu,\lambda,\mu}(\fmi)$ is a regular language since this is the complement of the set of words
containing as a subword one of the finitely many words of length at most $\nu$ which are not $(\lambda,\mu)$-quasigeodesic.

We will use the following statement about stability of local quasigeodesics:

\begin{lem}[{\cite[Th\'eor\`eme 1.2, 1.4]{CDP}}] \label{lem;lambdaprime}
Fix $\delta$ a hyperbolicity constant.

Given $\lambda\geq 1$ and $\mu\geq 0$,
there exists computable numbers $\nu$, $\lambda'\geq 1$, $\mu'\geq 0$ such 
that any  $\nu$-local, $(\lambda,\mu)$-quasigeodesic is a 
$(\lambda',\mu')$-quasigeodesic.

Given $\lambda\geq 1,\mu\geq 0$, there exists a computable number $\eta$ such that
any $(\lambda,\mu)$-quasigeodesic is at Hausdorff distance at most $\eta$ from any geodesic 
with the same endpoints.
\end{lem}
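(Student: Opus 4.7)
The plan is to prove the two statements separately, with part 2 (the global Morse-type stability) serving as an intermediate tool for part 1 (the local-to-global principle). Both are classical results in $\delta$-hyperbolic geometry, so I would follow the standard strategy, while paying attention to the fact that the constants must be computable in terms of $\delta, \lambda, \mu$.

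For part 2, fix a $(\lambda,\mu)$-quasigeodesic $q$ with endpoints $x,y$ and let $\gamma$ be a geodesic between the same endpoints. Let $D$ be the maximum distance from a point of $q$ to $\gamma$, achieved at some $p_0=q(t_0)$. I would consider the sub-arc of $q$ centered at $t_0$ consisting of points at distance $\geq D/2$ from $\gamma$; let $p_-,p_+$ be its endpoints. Using the $(\lambda,\mu)$-quasigeodesic inequality, the length of this sub-arc is $\leq \lambda\, d(p_-,p_+) + \lambda\mu$, and projecting everything onto $\gamma$ via nearest-point projection (combined with $\delta$-thinness of the relevant quadrilateral) shows $d(p_-,p_+)$ is at most a linear function of $\delta+D/2$. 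Comparing the two estimates forces $D$ to be bounded by some explicit $\eta = \eta(\delta,\lambda,\mu)$. All constants are visibly computable from $\delta, \lambda, \mu$.

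For part 1, given $\lambda,\mu$ I would choose $\nu$ large enough so that every length-$\nu$ sub-word, being a global $(\lambda,\mu)$-quasigeodesic, stays within distance $\eta$ of any geodesic joining its endpoints (using part 2). Given a $\nu$-local $(\lambda,\mu)$-quasigeodesic $w$, I would cut it into overlapping pieces of length $\nu/2$ and replace each piece by a geodesic with the same endpoints, obtaining a broken geodesic $\bar\gamma$. The key technical point is that consecutive geodesic pieces share a sub-arc of length $\nu/2$ along which, by part 2, they remain within distance $2\eta$ of a common quasigeodesic arc. With $\nu$ chosen larger than some explicit function of $\delta$ and $\eta$, standard thin-triangle bookkeeping shows the broken geodesic $\bar\gamma$ does not backtrack: the projection of one geodesic piece on the next is close to its initial endpoint. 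A telescoping argument then yields that $\bar\gamma$, and hence $w$ (which is uniformly close to $\bar\gamma$), is a global $(\lambda',\mu')$-quasigeodesic with explicit constants.

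The main obstacle will be the quantitative control of the overlap estimate in part 1: one has to ensure the fellow-travelling window along overlapping sub-pieces is long enough (compared to $\delta$) to prevent accumulation of angular error when concatenating. This forces a careful choice of $\nu$ in terms of $\delta$ and of the $\eta$ produced in part 2, but presents no conceptual difficulty. Since $\delta$ is itself computable from the presentation of $\G$ (by the results cited in the paper), all constants $\nu,\lambda',\mu',\eta$ produced are computable in terms of the given data, which is exactly what the lemma demands.
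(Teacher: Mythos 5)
The paper gives no proof of this lemma; it is cited verbatim from Coornaert--Delzant--Papadopoulos, so there is no ``paper's proof'' to compare against. Your part~1 sketch (subdivide into overlapping windows, replace by geodesics, control backtracking, apply the Morse lemma from part~2) is the standard local-to-global argument and, modulo the usual bookkeeping you acknowledge, is fine.

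The gap is in part~2. You bound the length of the sub-arc by $\lambda\, d(p_-,p_+)+\lambda\mu$ and then claim that nearest-point projection plus quadrilateral thinness gives $d(p_-,p_+)\lesssim \delta+D/2$. That estimate is false: the quasigeodesic can travel ``parallel'' to $\gamma$ at height between $D/2$ and $D$ for an arbitrarily long stretch $L$, so the projections $\pi(p_-),\pi(p_+)$ can be far apart on $\gamma$ and $d(p_-,p_+)$ is of order $L$, not $\delta+D/2$. Worse, even if one did have $d(p_-,p_+)\le D+c\delta$, combining it with the length bound and the trivial lower bound (length $\ge D$, since the arc goes from height $D/2$ to $D$ and back) yields $D\le \lambda D+\lambda c\delta+\lambda\mu$, which gives no bound on $D$ once $\lambda\ge 1$. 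What closes the Morse lemma is not thinness of a quadrilateral but the \emph{exponential divergence} input: a rectifiable path $c$ joining the endpoints of a geodesic $[p,q]$ in a $\delta$-hyperbolic space satisfies $d(x,\operatorname{im} c)\le \delta\log_2 \ell(c)+1$ for every $x\in[p,q]$. One then picks $x_0\in\gamma$ realizing $D=d(\gamma,q)$, points $y,z\in\gamma$ at distance $2D$ on either side, bridges to $q$ by short segments, and obtains a path avoiding $B(x_0,D)$ whose length is at most linear in $D$; the logarithmic lemma then gives $D\le \delta\log_2(aD+b)+1$, which does bound $D$. Your argument is missing exactly this logarithmic ingredient, and no amount of thin-triangle/quadrilateral reasoning alone will substitute for it, because the needed inequality must compare a linear quantity to a sublinear one in $D$.
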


We always assume that $\nu$ is large enough so that all $\nu$-local $(\lambda,\mu)$-quasigeodesics
are global quasigeodesics as in the result above.

\begin{prop}\label{prop_full}
Consider a hyperbolic group $\G$, $\pi:\fmi\onto \G$, and $\calr$ a \qi embeddable regular language.
Consider any $\lambda\geq 1, \mu\geq 0$, and any $\nu$ such that
$\nu$-local $(\lambda,\mu)$-quasigeodesics
are global quasigeodesics.

Then $\Tilde\calr=\pi\m(\calr)\cap \lqg_{\nu,\lambda,\mu}(\fmi)$
is a regular language of $\fmi$.

Moreover, given an automaton accepting a \qi embedded regular language representing $\calr$,
one can algorithmically compute an automaton $A$ accepting $\Tilde\calr$.
\end{prop}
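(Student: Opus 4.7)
The plan is to build an explicit automaton accepting $\Tilde\calr$ by simulating, alongside the input word $w$, the reading of a parallel word $w'$ in the given \qi embedded regular language $\Tilde\calr_0$ with $\pi(\Tilde\calr_0)=\calr$, while keeping track only of the bounded ``offset'' between the current endpoints of $w$ and $w'$ in $\G$. Let $A_0$ be an automaton accepting $\Tilde\calr_0$; up to standard preprocessing we may assume $A_0$ is deterministic and trim (every state both reachable and co-reachable). By the \qi embedded assumption, words of $\Tilde\calr_0$ are $(\lambda_0,\mu_0)$-quasigeodesics for explicit constants, while words of $\lqg_{\nu,\lambda,\mu}(\fmi)$ are $(\lambda',\mu')$-quasigeodesics by Lemma \ref{lem;lambdaprime}. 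The same lemma then yields a computable $\eta$ such that any two such quasigeodesics with a common $\pi$-image trace paths in $\Cay\G$ at Hausdorff distance at most $\eta$; in particular, for every prefix $w_i=s_1\dots s_i$ of $w\in\Tilde\calr$ there is a prefix $w'_{j(i)}$ of the matching $w'$ with $\abs{\pi(w_i)\m\pi(w'_{j(i)})}_S\leq\eta$, and by the quasigeodesic property the index $j$ can be chosen non-decreasing in $i$.

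Let $B=\{g\in\G\st\abs{g}_S\leq\eta\}$; this is a finite subset of $\G$, explicitly computable from the hyperbolicity constant. I would define an automaton $A$ with state set $Q_0\times B$, where $Q_0$ is the state set of $A_0$, with initial state $(q_{\mathrm{start}},1_\G)$. The intended meaning of state $(q,g)$ after reading an input prefix $w_i$ is: there exists a word $u$ labeling a path in $A_0$ from $q_{\mathrm{start}}$ to $q$ with $\pi(u)=\pi(w_i)\cdot g$. A transition $(q,g)\xrightarrow{s}(q',g')$ is added precisely when there exists a word $v$ labeling a path in $A_0$ from $q$ to $q'$ with $\pi(v)=g\m s g'$, and a state $(q,g)$ is declared accepting precisely when there exists a word $v$ labeling a path from $q$ to some accept state of $A_0$ with $\pi(v)=g\m$. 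Finally, I would intersect $A$ with the (obviously regular) automaton recognising $\lqg_{\nu,\lambda,\mu}(\fmi)$, obtained by forbidding the finitely many subwords of length $\leq\nu$ that fail to be $(\lambda,\mu)$-quasigeodesic.

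Algorithmic effectiveness of the transition and acceptance tests rests on the following observation: since $A_0$ is trim, any path from $q$ to $q'$ extends to an accepted word, so the sub-path itself labels a $(\lambda_0,\mu_0)$-quasigeodesic; hence if $\pi(v)=h$ with $\abs{h}_S\leq 2\eta+1$ (the only case relevant for the transitions and accept conditions above), then $\abs{v}\leq\lambda_0(2\eta+1+\mu_0)$. One therefore enumerates only finitely many candidate words $v$ of bounded length and checks $\pi(v)=h$ in $\G$ using the decidable word problem, which is uniform in the hyperbolic presentation. The set $B$ and the stability constant $\eta$ are computable from the input data.

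The main obstacle lies in the correctness argument. Soundness is immediate: any accepting run of $A$ on $w$ glues into a word $uv\in\Tilde\calr_0$ with $\pi(uv)=\pi(w)$, forcing $\pi(w)\in\calr$. Completeness is the delicate step: given $w\in\lqg_{\nu,\lambda,\mu}(\fmi)$ with $\pi(w)\in\calr$, choose $w'\in\Tilde\calr_0$ with $\pi(w')=\pi(w)$, and use the combinatorial fellow-traveler property from the first paragraph to build a non-decreasing matching $i\mapsto j(i)$ of prefix lengths with $\pi(w_i)$ and $\pi(w'_{j(i)})$ at $\G$-distance at most $\eta$. The sequence of pairs (state of $A_0$ after reading $w'_{j(i)}$, offset $g_i=\pi(w_i)\m\pi(w'_{j(i)})$) is then an accepting run of $A$ on $w$: the bridging subwords of $w'$ between positions $j(i)$ and $j(i+1)$ serve as the witnesses $v$ in each transition, their lengths being bounded by the quasigeodesic estimate above.
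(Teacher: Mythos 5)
Your construction is genuinely different from the paper's (which tracks, for each group element in a bounded neighbourhood of the last $L$ steps of $w$, the \emph{set} of monoid-states reachable by prefixes of $\Tilde\calr_0$ landing there — a deterministic, powerset-style automaton), and the nondeterministic ``synchronisation'' idea is a reasonable alternative. The local automata-theoretic pieces are correct: trimness does guarantee that any path $q\to q'$ in $A_0$ extends to an accepted word, hence labels a subword of a $(\lambda_0,\mu_0)$-quasigeodesic and so has length bounded by $\lambda_0(|\pi(v)|+\mu_0)$; this makes the transition and acceptance tests finite. Soundness of the automaton is also fine.

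The genuine gap is in the completeness argument, specifically in the sentence ``by the quasigeodesic property the index $j$ can be chosen non-decreasing in $i$.'' What Lemma \ref{lem;lambdaprime} gives you is only a bound $\eta$ on \emph{Hausdorff} distance between the two quasigeodesics. That statement alone does not produce a monotone reparametrisation whose offsets $\pi(w_i)\m\pi(w'_{j(i)})$ all lie in the ball of radius $\eta$: choosing, for each $i$, any $j$ within Hausdorff distance $\eta$ can make $j$ oscillate, and forcing monotonicity (e.g.\ by taking running maxima) enlarges the offset. What you actually need is a \emph{synchronous} fellow-traveller constant $\eta''\geq\eta$: two quasigeodesics in a $\delta$-hyperbolic space with the same endpoints admit a monotone correspondence of indices with offsets bounded by some $\eta''$ depending only on $\delta$ and the quasigeodesic constants. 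That constant is indeed computable (one can derive it from the Morse lemma together with the fact that quasigeodesics project quasi-monotonically onto a common geodesic, with backtracking in index bounded by $\lambda_0(2\eta+\mu_0)$ and the like); the paper's own proof effectively does the analogous enlargement by taking $R=\eta_0+\eta+10\delta$. So your argument becomes correct once you replace $\eta$ by a synchronous constant $\eta''$ in the definition of $B$, state and prove (or cite) the monotone-matching lemma, and propagate the new constant through the length bounds on $v$. As written, the claim is asserted without justification, and with the Hausdorff constant the matching you invoke need not exist.
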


We say that $\Tilde \calr$ is \emph{full} in $ \lqg_{\nu,\lambda,\mu}(\fmi)$
since any word in $ \lqg_{\nu,\lambda,\mu}(\fmi)$ representing an element of $\calr$ lies in $\Tilde \calr$.

\begin{rem}
 Recall that the set of geodesic words in a hyperbolic group is itself regular, by finiteness of Cannon's so-called cone
  types  
  \cite{Cannon_combinatorial}.  Using this fact, one could modify the proposition by substituting the set of global geodesic
  words to $\lqg_{\nu,\lambda,\mu}(\fmi)$.  However, we need to use local quasigeodesics for solving equations and inequations in
  hyperbolic groups.
\end{rem}

\begin{cor}\label{cor_boolean}
  The class of all \qi embeddable rational subsets of a hyperbolic group $\G$ is a Boolean algebra.

Moreover, this can be algorithmically computed: given automata accepting \qi embedded
regular languages representing $\calr_1,\calr_2$, one can compute  automata accepting \qi embedded
regular languages representing $\calr_1\cup\calr_2$, $\calr_1\cap\calr_2$ and $\Gamma\setminus \calr_1$.
\end{cor}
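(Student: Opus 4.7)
The plan is to reduce everything to Proposition \ref{prop_full} and standard automata constructions. The union case is essentially immediate: if $\Tilde\calr_1$ and $\Tilde\calr_2$ are \qi embedded regular languages in $\fmi$ representing $\calr_1$ and $\calr_2$ with constants $(\lambda_i,\mu_i)$, then $\Tilde\calr_1\cup\Tilde\calr_2$ is a regular language (by Lemma \ref{lem_union}), still \qi embedded (with constants $(\max\lambda_i,\max\mu_i)$), and it projects onto $\calr_1\cup\calr_2$.

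For intersection and complement, the key is to pass to a \emph{canonical} representing language. Given automata for \qi embedded languages $\Tilde\calr_1,\Tilde\calr_2$ representing $\calr_1,\calr_2$ with common $(\lambda,\mu)$ (one may replace $(\lambda_i,\mu_i)$ by the larger pair to unify them), choose $\nu$ as in Lemma \ref{lem;lambdaprime} so that $\nu$-local $(\lambda,\mu)$-quasigeodesics are global quasigeodesics. Applying Proposition \ref{prop_full} to each $\calr_i$, compute an automaton for
\[\Tilde\calr_i^{\mathrm{full}} \;=\; \pi\m(\calr_i)\cap\lqg_{\nu,\lambda,\mu}(\fmi).\]
Each $\Tilde\calr_i^{\mathrm{full}}$ is a \qi embedded regular language still representing $\calr_i$, and the point is that both now live inside the same ambient regular language $\lqg_{\nu,\lambda,\mu}(\fmi)$ of local quasigeodesics.

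Now intersections and complements can be taken at the level of the regular languages inside $\lqg_{\nu,\lambda,\mu}(\fmi)$, which is a Boolean algebra of languages of $\fmi$. For intersection, I claim $\Tilde\calr = \Tilde\calr_1^{\mathrm{full}}\cap\Tilde\calr_2^{\mathrm{full}}$ represents $\calr_1\cap\calr_2$: one inclusion is obvious; for the other, any $g\in\calr_1\cap\calr_2$ admits a geodesic word $w\in\fmi$ (which lies in $\lqg_{\nu,\lambda,\mu}(\fmi)$), and fullness of each $\Tilde\calr_i^{\mathrm{full}}$ in the local-quasigeodesic class forces $w\in\Tilde\calr_1^{\mathrm{full}}\cap\Tilde\calr_2^{\mathrm{full}}$. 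For complement, set $\Tilde\calr' = \lqg_{\nu,\lambda,\mu}(\fmi)\setminus\Tilde\calr_1^{\mathrm{full}}$; the same geodesic-word argument shows $\pi(\Tilde\calr')=\G\setminus\calr_1$. In both cases, the resulting language is a subset of $\lqg_{\nu,\lambda,\mu}(\fmi)$, hence automatically \qi embedded, and regular by the Boolean closure of regular languages in $\fmi$.

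The algorithmic content follows automatically: $\lqg_{\nu,\lambda,\mu}(\fmi)$ is given by forbidding a computable finite list of non-$(\lambda,\mu)$-quasigeodesic subwords of length at most $\nu$, so an automaton for it is explicit; $\nu,\lambda',\mu'$ are computable from $\lambda,\mu$ and the hyperbolicity constant by Lemma \ref{lem;lambdaprime}; Proposition \ref{prop_full} gives automata for $\Tilde\calr_i^{\mathrm{full}}$; and Lemma \ref{lem_union} together with the standard determinisation/complementation constructions for regular languages on $\fmi$ yield explicit automata for unions, intersections and complements. The main substantive input is really Proposition \ref{prop_full}; the present corollary is only the Boolean-algebra bookkeeping once that full sublanguage has been constructed, and the only mild subtlety is the necessity to harmonise the quasigeodesic constants $(\lambda,\mu)$ before intersecting or complementing, which is the reason we work inside the common ambient language $\lqg_{\nu,\lambda,\mu}(\fmi)$.
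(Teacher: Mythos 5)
Your proof is correct and takes essentially the same route as the paper: both reduce the statement to Proposition \ref{prop_full} and then perform Boolean operations inside the regular language $\lqg_{\nu,\lambda,\mu}(\fmi)$, using the fact that every element of $\G$ has a geodesic representative in that language. The only cosmetic differences are that the paper normalises to $\nu$-local geodesics $\lqg_{\nu,1,0}(\fmi)$ and proves only the complement (deriving intersection via De Morgan and Lemma \ref{lem_union}), whereas you keep general $(\lambda,\mu)$ and argue intersection directly; neither difference is substantive.
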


\begin{proof}[Proof of the corollary]
Clearly, the union of two \qi embeddable rational subsets is \qi embeddable.
Let $\calr\subset \G$ be a \qi embeddable rational subset.
By Lemma \ref{lem_union},
we need only to prove that $\G\setminus \calr$ is  \qi embeddable.
Let $\call=\lqg_{\nu,1,0}(\fmi)$ be the set of $\nu$-local geodesics (for some $\nu$ large enough so that local geodesics
are quasigeodesics).
By Proposition \ref{prop_full}, 
$\tilde R=\pi\m(\calr)\cap\call$ is a regular language of $\fmi$.
Since regular languages of $\fmi$ form a Boolean algebra, 
$\Tilde R'=\call\setminus \Tilde R$ is a regular language, 
it is quasi-isometrically  embedded,
and since $\pi(\call)=\G$,
$\pi(\Tilde R')=\G\setminus \calr$.
\end{proof}

\begin{proof}[Proof of Proposition \ref{prop_full}]
Let $\Tilde\calr_0\subset \qg_{\lambda_0,\mu_0}(\fmi)$ be a \qi embedded regular language representing $\calr$.
Consider a finite monoid $\calm$, a morphism $\rho:\fmi\onto \calm$, and $Accept\subset \calm$ such that
$\Tilde\calr_0=\rho\m(Accept)$.
Let $\calp$ be the language of prefixes of $\Tilde\calr_0$: 
 $w\in \calp$ if there exists $u\in \fmi$ with $wu\in\Tilde\calr_0$.
Note that $\calp$ consists of $(\lambda_0,\mu_0)$-quasigeodesics, 
and $\calp=\rho\m(P)$ where $P=\{m\in \calm \st \exists n\in\calm,\ mn\in Accept\}$.

Write $\call=\lqg_{\nu,\lambda,\mu}(\fmi)$.
Let $\lambda',\mu'$ and $\eta$ be such that $\nu$-local $(\lambda,\mu)$-quasigeodesics of $\G$ 
are global $(\lambda',\mu')$-quasigeodesic, and is at Hausdorff distance at most $\eta$
from any geodesic with the same endpoints (those values can be algorithmically computed from $\nu,\lambda,\mu$).

On the other hand, from the automaton $A_0$, one can algorithmically find $(\lambda'_0,\mu'_0)$ such that all accepted words
are $(\lambda'_0,\mu'_0)$-quasigeodesics. 
Indeed, given any positive $\nu_1$, and any $\lambda_1,\mu_1$, one can
algorithmically check whether all words accepted by $A_0$ are $\nu_1$-local $(\lambda_1,\mu_1)$-quasigeodesics.
Denote by $\nu(\lambda,\mu)$ $\lambda'(\lambda,\mu)$ and $\mu'(\lambda,\mu)$  computable functions such that $\nu(\lambda,\mu)$-local
$(\lambda,\mu)$-quasigeodesics are global $(\lambda'(\lambda,\mu),\mu'(\lambda,\mu))$-quasigeodesics.
By checking if $\Tilde\calr_0\subset\lqg_{\nu(\lambda_1,\mu_1),\lambda_1,\mu_1}$ for larger and larger values of $\lambda_1,\mu_1$,
one will finally get a positive answer, from which one can conclude that
$\Tilde\calr_0\subset \qg_{\lambda'(\lambda_1,\mu_1),\mu'(\lambda_1,\mu_1)}$.
Compute $\eta_0$ such that each $(\lambda'_0,\mu'_0)$-quasigeodesic
is at Hausdorff distance at most $\eta_0$
from any geodesic with the same endpoints.

For $h\in \G$, consider $\sigma_h=\rho(\calp\cap\pi\m(h))\subset \calm$ 
($\sigma_h$ can be thought as the set of states of the automaton corresponding to words representing $h$,
and which can be extended to accepted words).
Note that $h\in R$ if and only if $\sigma_h\cap Accept\neq \es$.

\newcommand{\undef}{\textrm{undef}}

Given $w\in \call$, Let $p_w$ be the path in $\Cay\Gamma$ corresponding to $w$.
We want to get a local picture of the values of $\sigma_h$ in the neighbourhood of the endpoint $g=\pi(w)$ of $p_w$.
Define $R=\eta_0+\eta+10\delta$ and
$L=\max\{(R+1+10\delta+\mu'+\eta)\lambda',\nu\}$.
Consider $w_L$ the suffix of length $L$ of $w$ ($w=w_L$ if $|w|\leq L$).
Let $q_w$ be the suffix of length $L$ of the path $g\m p_w$ 
($q_w$ is the path ending at $1$ and labelled by $w_L$).
Let $N$ be the $R$-neighbourhood of $q_w$,
and consider $\Sigma_w:N\ra 2^\calm$ defined by 
$\Sigma_w(h)=\sigma_{gh}$ (this records the values of $\sigma_h$ in the neighbourhood of the suffix of $p_w$).
In particular, $\pi(w)\in \calr$ if and only if $\Sigma_w(1)\cap Accept\neq \es$.
The mapping $\Phi:w\mapsto (w_L,\Sigma_w)$ clearly takes finitely many values.

\begin{claim*}
For all word $w\in\fmi$ and all $s\in\Spm$ with $w,ws\in \call$,
$\Phi(ws)$ depends only on $\Phi(w)$ and $s$.
\end{claim*}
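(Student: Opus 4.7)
The plan is to address the two components of $\Phi(ws) = ((ws)_L, \Sigma_{ws})$ in turn. For the first, $(ws)_L$ is obtained by appending $s$ to $w_L$ and truncating to length $L$, so it depends only on $w_L$ and $s$ in an obvious way; consequently the same is true of the path $q_{ws}$ in $\Cay \G$.

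For the second component, my plan hinges on the purely algebraic identity
\[
\Sigma_{ws}(h) = \sigma_{\pi(ws)\,h} = \sigma_{\pi(w)\,sh} = \Sigma_w(sh),
\]
which is immediate from the fact that $\sigma_k$ depends only on the group element $k$. Granting this, $\Sigma_{ws}$ is completely determined by $\Sigma_w$ and $s$, and the task reduces to the geometric assertion that the map $h \mapsto sh$ sends the domain of $\Sigma_{ws}$ into the domain of $\Sigma_w$. I would verify this by a direct computation: since $p_{ws}$ extends $p_w$ by one edge labelled $s$, one finds $q_{ws}(t) = s^{-1} q_w(t+1)$ for $0 \leq t \leq L-1$ while $q_{ws}(L) = 1$, so left-translating by $s$ sends $q_{ws}$ almost onto $q_w$ (shifted by one index) together with the single new endpoint $s$, which lies at graph distance $1$ from $q_w(L)=1$.

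The hard part will be handling this off-by-one cleanly: a point $h$ in the $R$-neighbourhood of $q_{ws}$ can produce an $sh$ within $R+1$ rather than $R$ of $q_w$ in the worst case. This is absorbed by the $10\delta$ slack built into $R = \eta_0 + \eta + 10\delta$, up to a harmless adjustment of conventions. A more robust alternative is to reprove the identity at the level of $\calp$-words: any $u \in \calp$ with $\pi(u) = g(sh)$ is a $(\lambda_0,\mu_0)$-quasigeodesic, and by Lemma \ref{lem;lambdaprime}, together with the choice of $L = \max\{(R+1+10\delta+\mu'+\eta)\lambda', \nu\}$, the path $p_u$ must pass within $\eta_0 + \eta + O(\delta)$ of $p_w$ at some prefix $u_0$. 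Decomposing $u = u_0 u_1$ then yields a formula expressing $\Sigma_{ws}(h)$ in terms of $\Sigma_w$, the letter $s$, and the finite monoid data $\rho(\pi^{-1}(h_0^{-1} s h))$, exhibiting the required functional dependence and completing the claim.
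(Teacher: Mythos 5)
Your primary route—the algebraic identity $\Sigma_{ws}(h)=\Sigma_w(sh)$—has a genuine gap, and your attempt to dismiss the off-by-one as ``harmless'' is incorrect. If $h$ lies within distance $R$ of the endpoint $q_{ws}(L)=1$ but no other point of $q_{ws}$, then $sh$ lies within $R$ of $s$, hence within $R+1$ of $q_w(L)=1$, and in general is \emph{strictly} outside the domain $N$ of $\Sigma_w$. The right-hand side $\Sigma_w(sh)$ is then undefined, so the identity cannot be used. You cannot absorb this into the definition of $R$: the constant is fixed once and for all, and replacing $R$ by $R+1$ at each application would grow without bound as the claim is iterated along a word. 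So the slack in $R=\eta_0+\eta+10\delta$ is not available for this purpose.

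The paper avoids translation entirely and takes a fundamentally different route. It records $\Sigma_w$ on the $R$-neighbourhood of $q_w$ and then proves (the ``Fact'') that \emph{every} $u\in\calp$ with $\pi(u)=gsh$ must pass through $B(x_L,R)$, where $x_L=\pi(w')$ is the vertex of $p_w$ at the \emph{start} of the length-$L$ suffix, not its end. Since $x_L$ corresponds to $y_L=q_w(0)$ and $B(y_L,R)\subset N$, the values $\Sigma_w(y')$ for $y'\in B(y_L,R)$ supply exactly the monoid data $m'=\rho(u')$ for the prefix $u'$; the continuation $u''$ is a quasigeodesic of length bounded by $\lambda(L+2R+1+\mu)$, so there are only finitely many such $u''$ to enumerate, with the additional constraint $\rho(u'')\in m'^{-1}P$. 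The crucial point is that the relevant part of $\Sigma_w$'s domain is near $q_w(0)$, not near $q_w(L)$, so the boundary defect that sinks your algebraic identity never enters.

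Your ``robust alternative'' is indeed in the direction of the paper's argument, but it is too vague to stand as a proof: ``$p_u$ must pass within $\eta_0+\eta+O(\delta)$ of $p_w$'' does not identify \emph{where} on $p_w$—and it must be near $x_L$, because that is what lies in the domain of $\Sigma_w$. The expression $\rho(\pi^{-1}(h_0^{-1}sh))$ also misses the essential length bound on $u''$ that makes the set of relevant words finite and effectively enumerable; without it the dependence of $\Phi(ws)$ on $\Phi(w)$ and $s$ is asserted but not exhibited.
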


\begin{proof}
If $|w_L|<L$, this is clear as $\Phi(w)$ then determines $w=w_L$.
Obviously, the $L$-suffix of $ws$ is determined by $w_L$ and $s$.
We need to prove that for all $|h|\leq R$, $\Sigma_w$ and $w_L$ determine $\sigma_{gsh}$.
Write $w$ as a concatenation $w=w'.w_L$ and let $x_L=\pi(w')\in \G$.
Consider $u\in\calp$ be a word with $\pi(u)=gsh$, and let $p_u$ be the corresponding path in $\Cay \G$.

\begin{fact*}
   $p_u$ intersects $B(x_L,R)$. 
\end{fact*}

\begin{proof}[Proof of the fact]
Consider $c_u$ (resp.\ $c_w$) a geodesic joining $1$ to $gsh$ (resp.\ to $g$).
The Hausdorff distance between $c_u$ and $p_u$ (resp.\ between $c_w$ and $p_w$) is at most $\eta_0$
(resp. $\eta$).
The projection $x_{w}$ of $x_L$ on $c_w$ satisfies $d(x_w,x_L)\leq \eta$.
Thus, $d(g,x_w)\geq -\eta + \frac{1}{\lambda'}L-\mu' \geq R+1+10\delta$ by choice of $L$.
Looking at a comparison tree for the triangle $1,g,gsh$, 
since $d(g,gsh)\leq R+1$, we see that $x_w$ is $10\delta$-close to some point $x_u\in c_u$.
The projection $x'_u$ of $x_u$ on $p_u$ is $\eta_0$ close to $x_u$ and satisfies
$d(x'_u,x_L)\leq \eta_0+10\delta + \eta =R$.
\end{proof}

We now prove that the fact implies our claim. 
Given $u\in \calp$ with $\pi(u)=gsh$, write
$u=u'u''$ with $\pi(u')=x'\in B(x_L,R)$, and let $m'=\rho(u')$. 
Consider $y_L=g\m x_L$ the initial point of $q_w$, and $y'=g\m x'\in N$.
For each $x'\in B(x_L,R)$, the set of possible $m'$ is known as it is encoded in $\sigma_{x'}=\sigma_{gy'}$.
Since $u''$ is a quasigeodesic between two points at distance at most $L+2R+1$,
$|u''|$ is bounded by $L_{u''}=\lambda(L+2R+1+\mu)$.
Since $u\in\calp$, among those quasigeodesic $u''$, 
one should consider only those such that $m'\rho(u'')\in P$ \ie $u''\in \rho\m(m'{}\m P)$.
Then $\sigma_{gsh}$ is precisely the set of all possible values of $m'\rho(u'')$
as $y'$ varies in $B(y_L,R)$, $m'$ varies in $\sigma_{gy'}$ and $u''$ varies in the set of words of length at most
$L_{u''}$ in $\rho\m(m'{}\m P)$.
This proves the claim, and proves moreover that 
$\Phi(ws)$ is algorithmically computable from $\Phi(w)$ and $s$.
\end{proof}

We now construct a deterministic automaton $A$ accepting $\call\cap\pi\m(\calr)$.
Note that for any individual $w\in \fmi$, $\Phi(w)$ is algorithmically computable since one can enumerate the finitely many
$(\lambda'_0,\mu'_0)$-quasigeodesics with endpoint $\pi(w)$.

Let $\call_{\leq L}$ be the set of words of length at most $L$ in $\call$.
Since the $R$-neighbourhood $N$ of $q_w$ satisfies 
$N\subset B(1,R+L)$, $\Sigma_w$ is a partially defined map on $B(1,R+L)$,
\ie an element of the finite set $D=(2^\calm\cup\{\undef\})^{B(1,R+L)}$.
Thus, $\Phi(\call)$ takes values in the finite set $F=\call_L\times D$.

We take $F$ as the set of states of our automaton, and $\Phi(1)$ as its initial state.
If $f=(w_L,\Sigma_w)\in F$ with $|w_L|<L$, we can discard $f$ if $f\neq \Phi(w_L)$.
If $|w_L|<L$ and $f=\Phi(w_L)$, for each $s\in\Spm$, we add an edge joining $f$ to $\Phi(w_L s)$.
If $|w_L|=L$, and if $w_L.s\in\call$, 
we connect $(w_L,\Sigma_w)$ to the state $(w',\Sigma')$ determined 
by $(w_L,\Sigma_w)$ and $s$ as in the claim.
The set of words $w$ read by the automaton starting from the initial state
is exactly $\call$.
Moreover, the corresponding final state is $\Phi(w)$ which determines
$\sigma_{\pi(w)}=\Sigma_w(1)$.
We define the set of accepting states of $A$ as the set of elements $(w_L,\Sigma_w)\in F$
such that
$\sigma_\pi(w)\cap Accept\neq \es$.
Since $\pi(w)\in \calr$ if and only if $\Sigma_{\pi(s)}\cap Accept\neq es$,
the language accepted by $A$ is $\pi\m(\calr)\cap\call$.

Finally, note that the automaton $A$ can be algorithmically computed from $A_0$.
\end{proof}

\subsection{Canonical representatives in hyperbolic groups with torsion}

\newcommand{\pp}{\mathbf{p}}
\newcommand{\qq}{\mathbf{q}}
\newcommand{\cc}{\mathbf{c}}
\renewcommand{\ll}{\mathbf{l}} 
\newcommand{\rr}{\mathbf{r}}

\subsubsection{Canonical sliced cylinders and paths.}

Let $\Gamma$ be a hyperbolic group, and $\Cay \Gamma$ be a Cayley graph.  
Let $\delta$ be its hyperbolicity constant.
A \emph{cylinder} for  $(x,y)\in \G\times \G$ is a subset of the 
$5\delta$-neighbourhood of a geodesic segment 
$[x,y]$ in $\Cay \G$, containing every such segment.   
A \emph{slicing} of a cylinder (or a slice decomposition),  
is a partition into subsets (called slices), 
with a total ordering of them, such that 
\begin{itemize}
\item the union of any two consecutive slices has diameter at most $50\delta$
\item for  this ordering, the slice containing $x$ is smaller or equal to the slice containing $y$
\item slices move quasigeodesically: if $x,y$ lie respectively in the $i$-th and $j$-th slice for this ordering,
then $d(x,y)\geq \frac{1}{4m_0} |i-j| -200\delta$ where $m_0=\#B(1,50\delta)$.
\end{itemize}

In \cite{RiSe_canonical}, Rips and Sela proved the following theorem (which applies for hyperbolic groups with torsion).

\begin{thm}[{\cite[Cor. 4.3]{RiSe_canonical}}]  
\label{thm_can_cyl}
Let $\G$ be a hyperbolic group and $\Cay \G$ a Cayley graph. 
Then there exists a computable constant $\kappa$ (controlling the size of the defect $C_i$ below) such that the following holds.

Let $X$ be a set of variables, and $\cale$ be a triangular system of equations over $X$.
Let $\ul g=(g_x)_{x\in X\cup \ol X}\in \G^X$ be a solution of $\cale$.

Then for every $x\in X\cup \ol X$, there exists a cylinder $\Cyl_x$ for $(1,g_x)$ together with a slicing, such that
\begin{itemize}
\item for each variable $x\in X$, $\Cyl_{\ol x}=g_x\m \Cyl_x$, their slices are the same but in reverse order,
\item for each equation $x_1x_2x_3\in \cale$, and for each $i\in\{1,2,3 \mod 3\}$ 
there is a decomposition (depending on the equation) of the cylinder as 
$$\Cyl_{x_i}= L_i \dunion C_i \dunion R_i$$ 
where
\begin{itemize}
\item $L_i,C_i,R_i$ are union of slices with $L_i<C_i<R_i$ with respect to the ordering,
\item $L_{i+1}=g_{xi}\m R_{i}$, their slices are the same but in reverse order,
\item $C_i$ contains at most $\kappa. \#\cale$ slices.
\end{itemize}
\end{itemize}
\end{thm}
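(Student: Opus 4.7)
The plan is to build the cylinders and slicings from the geodesic triangles associated to each equation, using $\delta$-thinness as the main tool, and then to reconcile the data across equations that share a variable. For each triangular equation $x_1 x_2 x_3 = 1 \in \cale$, fix geodesic sides $[1,g_{x_1}]$, $[g_{x_1},g_{x_1}g_{x_2}]$ and $[g_{x_1}g_{x_2},1]$ in $\Cay\G$. By $\delta$-thinness, each side decomposes into two \emph{matching arcs} (each of which is within distance $4\delta$ of an adjacent side) and a short central piece of diameter $O(\delta)$ around the approximate barycentre. The nearest-point projection between adjacent sides gives, up to bounded error, a bijection between their matching arcs. Taking the $5\delta$-neighbourhood of each geodesic as the cylinder $\Cyl_x$ (and symmetrising by setting $\Cyl_{\ol x}=g_x^{-1}\Cyl_x$) is straightforward; the content is in choosing the slicing.

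First I would put on each cylinder a \emph{tentative} slicing of mesh at most $50\delta$ whose consecutive slices are at distance $\geq 1$ in the Cayley graph, refined so that the matching bijections identified above send slices to slices whenever both endpoints lie in matching arcs. With this tentative slicing, each equation in isolation admits a decomposition $\Cyl_{x_i}=L_i\sqcup C_i\sqcup R_i$ with $|C_i|\leq c_0$ for a universal $c_0=c_0(\delta)$, and the matching identities $L_{i+1}=g_{x_i}^{-1}R_i$ hold on the nose. Of course, a variable $x$ appearing in several equations gets several tentative slicings, one per occurrence, which need not agree; to obtain a single slicing I would build the graph $\calg$ whose vertices are all tentative slices of all cylinders and whose edges are the identifications prescribed by the matching bijections in each equation (together with the involution $x\leftrightarrow\ol x$), and take its connected components as the final slices of $\Cyl_x$. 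Declaring a slice of $\Cyl_{x_i}$ to lie in $C_i$ whenever its component is not entirely supported in the matching arc with side $i\pm 1$ for equation $x_1x_2x_3$ produces the required three-part decompositions, and the matching $L_{i+1}=g_{x_i}^{-1}R_i$ is tautological.

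The main obstacle is the linear bound $|C_i|\leq\kappa\cdot\#\cale$: a slice may be merged, via chains of matchings crossing many equations, with a slice that was central somewhere else, and central status propagates. To control this I would run a breadth-first argument in $\calg$: each equation contributes at most $c_0$ genuinely central slices, and each step of the propagation crosses from one cylinder to another along a matching, of which there are boundedly many per equation. This gives a total central mass at most $\kappa_0 \cdot \#\cale$ distributed over all the $C_i$, hence the stated bound after absorbing constants. The same BFS analysis is what I would use to verify the quasigeodesic-movement of slices with constant $1/(4m_0)$: the merging can only decrease the Cayley-graph distance between slices by a controlled amount per step, and since the total merging performed is bounded by $\kappa\cdot\#\cale$, the uniform quasigeodesic estimate survives. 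Verifying this quantitative preservation of the slicing axioms under the global merging is where the proof is most delicate, and is where one must use hyperbolicity beyond the qualitative thin-triangles statement, namely through uniform projection estimates and the bound $m_0=\#B(1,50\delta)$.
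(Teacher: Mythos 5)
This theorem is cited in the paper from Rips and Sela (Cor.\ 4.3 of \cite{RiSe_canonical}) and is not reproved there, so there is no ``paper's own proof'' to compare against. The question is therefore whether your sketch, taken on its own, is a viable proof, and I do not think it is, for a concrete reason.

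Your construction starts from one tentative slicing of $\Cyl_x$ per equation in which $x$ appears, and then passes to connected components of the identification graph $\calg$ to produce the final slices. The crux you need to control, but do not, is that a single variable $x$ appearing in several equations acquires several tentative slicings of the \emph{same} cylinder, and these have no reason to be aligned with one another: the slice boundaries coming from equation $\varepsilon_1$ may sit at slightly shifted positions relative to those from $\varepsilon_2$. When you merge via components of $\calg$, a misalignment of size $\epsilon$ produces a zig-zag chain $\ldots - S_i^{(1)} - S_i^{(2)} - S_{i+1}^{(1)} - S_{i+1}^{(2)} - \ldots$ between the two slicings, which can coalesce an unboundedly long run of the cylinder into a single component. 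That destroys both the $50\delta$ diameter bound on consecutive slices and, in tandem, the quasigeodesic-movement lower bound $d(x,y)\geq\frac{1}{4m_0}|i-j|-200\delta$, since the merged slicing has far too few slices. Your BFS remark that ``the merging can only decrease the Cayley-graph distance between slices by a controlled amount per step, and the total merging performed is bounded by $\kappa\cdot\#\cale$'' conflates the (plausibly bounded) number of \emph{central} components with the total amount of merging: a non-central slice of $\Cyl_x$ can be glued to an adjacent slice through a long chain of matchings across many equations even though no vertex of that chain is ``central.'' The bound on $|C_i|$ is not the bottleneck; the survival of the slicing axioms after the global quotient is, and nothing in the sketch controls it.

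This is exactly the difficulty the Rips--Sela construction is built to avoid, and their approach is genuinely different from yours. They do not derive the cylinder and slicing from the equations at all: the cylinder $\Cyl(1,g)$ and its slicing are defined \emph{intrinsically}, depending only on the pair of endpoints, using a uniformly bounded amount of local data in the hyperbolic group (their ``colorings'' of slices, with $m_0 = \#B(1,50\delta)$ entering precisely to bound this data). Because the slicing of $\Cyl_x$ is canonical, the multiple occurrences of $x$ across equations automatically refer to the \emph{same} slicing and there is nothing to reconcile; the only work is showing that two such canonical cylinders on adjacent legs of a thin triangle match, slice-for-slice, off a central defect of boundedly many slices. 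If you want to salvage your per-equation approach, you would need a mechanism guaranteeing that any two tentative slicings of the same cylinder already agree outside a set of $O(1)$ slices, and that mechanism is in effect the Rips--Sela canonicity statement you would be trying to prove.
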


\begin{figure}[htb]
  \centering
\includegraphics[width=6in]{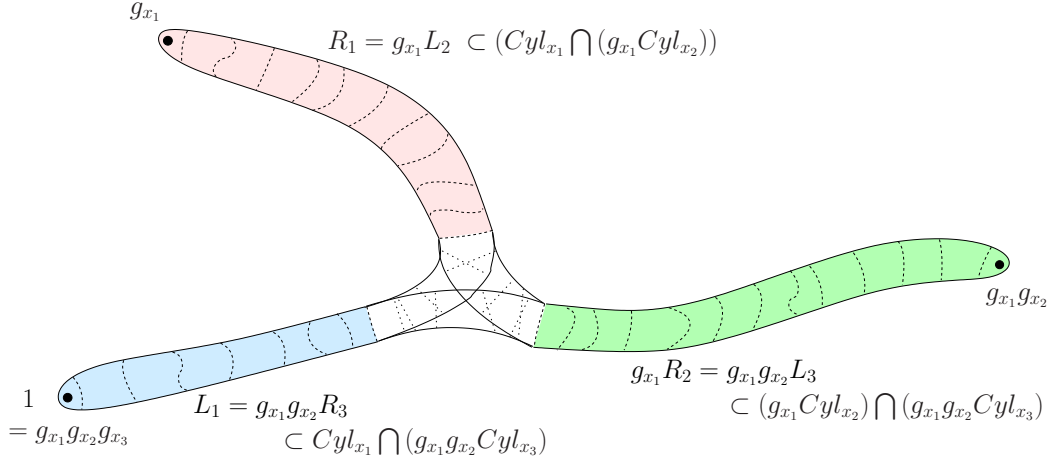}  
  \caption{Three cylinders for a solution to $x_1x_2x_3=1$. The subdivision in slices is drawn. The central part, where the cylinders do not coincide, does not contain more than $3\kappa  \#\cale$ slices.}
  \label{fig_repcan}
\end{figure}

Note that cylinders and slicings depend only on the variable considered whereas the decomposition of the cylinder depends on the equation
in which the variable appears.  
The fact that slices move quasigeodesically is a consequence of their definition in \cite{RiSe_canonical},
see \cite[Prop. 3.8]{Dah_existential} for a proof.
\\

Next step consists in interpreting sliced cylinders as paths in a suitable graph $\K$.
Let us consider the Rips complex $P_{50\delta}(\Gamma)$ whose set of vertices is $\G$, and whose simplices 
are subsets of $\Gamma$ of diameter at most $50\delta$. 
Let us call $\K$ the $1$-skeleton of its barycentric subdivision. 
We identify $\Gamma$ to a subset of the vertices of $\K$, in the obvious way. 
For any path $\pp:\{0,\dots,n\}\ra \calk$, we denote by $\ol\pp$ the reverse path
defined by $\ol\pp(i)=\pp(n-i)$. 
If $\qq$ is a path with $\qq(0)=\pp(n)$, 
we denote by $\pp\cdot \qq$ the concatenation.

\begin{prop}    \label{prop;canpaths}                
Let $\G$ be a hyperbolic group, $\Cay \G$ a Cayley graph,
and $\K$ the $1$-skeleton of the barycentric subdivision of the Rips complex of $\Gamma$ as above.
Consider $\lambda_0= 400\delta m_0$
where $m_0$ is a bound on the cardinality of balls of radius $50\delta$ in $\Cay \G$,
$\mu_0 = 8$,  
and $\kappa$ as in Theorem \ref{thm_can_cyl}.

Let $\cale$ be a triangular system of equations over $X$, and
$\ul g=(g_x)_{x\in X}\in \G^X$ be a solution of $\cale$ and let $g_{\ol x}=g_x\m$.

Then for every $x\in X\cup \ol X$, there exists a $(\lambda_0,\mu_0)$-quasigeodesic path $\pp_x$ 
joining $1$ to $g_x$ in $\K$ such that
\begin{itemize}
\item for each variable $x\in X$, $\pp_{\ol x}=g_x\m \ol\pp_x$
\item for each equation $x_1x_2x_3\in \cale$, and for each $i\in\{1,2,3\mod 3\}$
there is a decomposition (depending on the equation) 
$\pp_{x_i}= \ll_i \cdot \cc_i \cdot \rr_i$,
where
$\ll_{i+1}=g_{x_i}\m \ol{\rr_{i}}$,
and $\cc_i$ has length at most $2\kappa. \#\cale$. 
\end{itemize}
\end{prop}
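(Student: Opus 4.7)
The strategy is to convert the sliced cylinders given by Theorem~\ref{thm_can_cyl} into paths in $\calk$, and then to read off the required decompositions from the decomposition of cylinders. First, I would apply Theorem~\ref{thm_can_cyl} to $\ul g$ to obtain, for every $x \in X \cup \ol X$, a cylinder $\Cyl_x$ for $(1,g_x)$ with a slicing $S_0^x < S_1^x < \dots < S_{n(x)}^x$, together with the decomposition $\Cyl_{x_i} = L_i \dunion C_i \dunion R_i$ for each triangular equation. Each slice has diameter at most $50\delta$, so it is a simplex of the Rips complex $P_{50\delta}(\G)$, hence a vertex of $\calk$; similarly, the union of any two consecutive slices is a simplex, hence a vertex of $\calk$, and is adjacent in $\calk$ to each of the two slice-vertices it contains. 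Finally, $\{1\}$ is a face of $S_0^x$ and $\{g_x\}$ is a face of $S_{n(x)}^x$, giving edges in $\calk$ from $1$ to $[S_0^x]$ and from $[S_{n(x)}^x]$ to $g_x$.

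I then define $\pp_x$ as the concatenation
$$\pp_x : 1 \,\to\, [S_0^x] \,\to\, [S_0^x \cup S_1^x] \,\to\, [S_1^x] \,\to\, \cdots \,\to\, [S_{n(x)-1}^x \cup S_{n(x)}^x] \,\to\, [S_{n(x)}^x] \,\to\, g_x.$$
The equivariance $\pp_{\ol x} = g_x\m \ol{\pp_x}$ is immediate from the property $\Cyl_{\ol x} = g_x\m \Cyl_x$ with reversed slicing. For the decomposition induced by an equation $x_1x_2x_3 \in \cale$, if $L_i = \{S_0^{x_i}, \dots, S_{a_i-1}^{x_i}\}$, $C_i = \{S_{a_i}^{x_i}, \dots, S_{b_i-1}^{x_i}\}$ and $R_i = \{S_{b_i}^{x_i}, \dots, S_{n(x_i)}^{x_i}\}$, I cut $\pp_{x_i}$ at the two vertices $[S_{a_i-1}^{x_i} \cup S_{a_i}^{x_i}]$ and $[S_{b_i-1}^{x_i} \cup S_{b_i}^{x_i}]$, defining $\ll_i$, $\cc_i$, $\rr_i$ as the three resulting subpaths. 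By construction $\cc_i$ contains exactly $2(b_i - a_i)$ edges, which is bounded by $2\kappa \#\cale$ since $C_i$ has at most $\kappa \#\cale$ slices. The matching identity $\ll_{i+1} = g_{x_i}\m \ol{\rr_i}$ follows directly from $L_{i+1} = g_{x_i}\m R_i$ with reversed slicing, since this identifies the sequence of slices and unions-of-consecutive-slices used to build $\ll_{i+1}$ with the translated reverse of the one used to build $\rr_i$, including the boundary vertices at which the cut is made.

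It remains to verify the $(\lambda_0,\mu_0)$-quasigeodesic property, which I expect to be the main bookkeeping step. The inclusion $\G \hookrightarrow \calk$ is a quasi-isometry: adjacent elements of $\G$ are connected in $\calk$ by a path of length $2$ through the vertex associated to their pair, and each edge of $\calk$ joins two simplices, one a face of the other, hence two vertices of $\G$ at distance at most $50\delta$. Consequently $\tfrac{1}{50\delta}\,d_\G \leq d_\calk \leq 2\,d_\G$ on pairs of vertices of $\G$. Given two positions along $\pp_x$ at combinatorial distance $N$, they lie within bounded $\calk$-distance of slice-vertices $[S_i^x]$ and $[S_j^x]$ with $2|i-j|$ within $2$ of $N$. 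Picking a $\G$-point in each of these two slices, the quasigeodesic property of the slicing gives a $\G$-distance at least $\tfrac{1}{4m_0}|i-j| - 200\delta$, hence a $\calk$-distance at least $\tfrac{1}{50\delta}\bigl(\tfrac{1}{4m_0}|i-j| - 200\delta\bigr) - O(1) \geq \tfrac{N}{400\delta m_0} - 8$, which is exactly the required bound. The hard part is not any single step but this tracking of constants through the quasi-isometry between $\G$ and $\calk$, together with checking that the boundary vertices at which we split $\pp_{x_i}$ into $\ll_i \cdot \cc_i \cdot \rr_i$ are consistent with the equality $\ll_{i+1} = g_{x_i}\m \ol{\rr_i}$.
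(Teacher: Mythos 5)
Your proposal is correct and follows essentially the same approach as the paper: build $\pp_x$ as the zigzag through the barycentres of the slices and of pairs of consecutive slices, read off the decomposition $\ll_i\cdot\cc_i\cdot\rr_i$ from $L_i\dunion C_i\dunion R_i$, and deduce the quasigeodesic estimate from the fact that slices move quasigeodesically. The only (cosmetic) difference is in the quasigeodesic estimate, where you relate $\calk$-distances to $\G$-distances by inserting a chosen $\G$-point in each slice, whereas the paper argues directly that the $\calk$-distance between barycentres is at least the $\Cay\G$-distance between the slices divided by $50\delta$ (because every edge of $\calk$ joins simplices whose union has $\G$-diameter at most $50\delta$); both routes give the stated constants once the ``$O(1)$'' in your write-up is made explicit.
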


\begin{proof}
Consider sliced cylinders $C_x$ given by Rips and Sela's Theorem \ref{thm_can_cyl}.
Let $S,S'$ be the slices containing $1$ and $g_x$ respectively,
and $S=S_1,\dots , S_n=S'$ be the set of slices between $S$ and $S'$, in increasing order.
Since $S_i$ of $C_x$ has diameter at most $50\delta$, it defines a simplex of $P_{50\delta}(\Gamma)$, and thus a vertex
$v_i$ of $\K$.
Similarly,  $S_i\cup S_{i+1}$ defines a vertex $u_i$ of $\K$
connected by an edge to $v_i$ and $v_{i+1}$,
and there is an edge in $\calk$ joining $1$ to $v_1$ and $v_n$ to $g_x$.
We can therefore define $\pp_x$ as $1,v_1,u_1,v_2,\dots,u_{n-1},v_n,g_x$.

Let us explain why these paths are quasigeodesic, the other properties immediately follow
from the properties of canonical cylinders.
The fact that slices vary quasigeodesically say that 
the distance in $\Cay\G$ between the $i$-th slice and the $j$-th slice is at least 
$|j-i|/4m_0 -200\delta$. 
Thus, the barycentres of the corresponding simplices in  $P_{50\delta}(\Gamma)$ are at distance 
at least $|j-i|/(200\delta m_0) -4$ so $d_\K(v_i,v_j)\geq |j-i|/(200\delta m_0) -4$. 
Taking into account the fact that $\pp_x$ takes $2|j-i|$ steps to go from $v_i$ to $v_j$, we get
$d_\K(\pp_x(s),\pp_x(t))\geq |s-t|/(400\delta m_0) -4$ for all odd $s,t$.    
Since $u_i$ is at distance $1$ from $v_i$ and $v_{i+1}$,
one gets for all $s,t$
$d_\K(\pp_x(s),\pp_x(t))\geq (|s-t|-2)/(400\delta m_0) -4-2\geq |s-t|/(400\delta m_0)-8$    
as desired.
\end{proof}

\subsubsection{The virtually free group $V$, and canonical representatives for $\Gamma$.}\label{sec;the_group_V}

We now define the virtually free group $V$ as a group of paths.
We consider homotopy classes of edge paths relative to endpoints in the graph $\K$. 
A path is \emph{reduced} if it has no subpath of length $2$ passing twice through the same edge. 
Thus, each homotopy class of paths in $\K$ has a unique reduced representative.

Let $V$ be the set of all homotopy classes of edge paths $\pp$ in $\K$ that start at the vertex $1_\G$, 
and end at a vertex of $\G$. Define $\pi:V\onto \Gamma$ by mapping $\pp$ to its endpoint.
We endow $V$ with a group structure by defining $ \pp \pp'$ to be the
homotopy class of the concatenation $\pp\cdot (\pi(\pp)\pp')$ 
(where $\pi(\pp)\pp'$ is the translate of $\pp'$ by $\pi(\pp)\in\Gamma$). 
Note that $\pp \m=\pi(p)\m \ol\pp$.
It is also clear that $\pi: V \to \G$ is a surjective homomorphism.

\begin{lem}
  The group $V$ is virtually free.
More precisely, it is isomorphic to the fundamental group of the finite graph of finite groups defined by $\K/\G$, 
where vertices and edges are marked by copies of stabilisers of their preimages in $\K$. 
In particular, a presentation of $V$ is computable from a presentation of $\G$.  
\end{lem}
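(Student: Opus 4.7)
The plan is to identify $V$ with the group of all lifts, to the universal cover $\widetilde\K$ of $\K$, of translations of $\K$ by elements of $\G$, and then to apply Bass--Serre theory to the resulting action on the tree $\widetilde\K$.

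First I would observe that $\K$ is a connected locally finite graph on which $\G$ acts by simplicial automorphisms, without inversions (an inversion on an edge $(\sigma,\tau)$ with $\sigma\subsetneq\tau$ would force $g\sigma=\tau$ and $g\tau=\sigma$, contradicting $g\sigma\subsetneq g\tau$). Connectedness is immediate since consecutive vertices of $\Cay\G$ lie in a common edge of $P_{50\delta}(\G)$. The setwise stabiliser in $\G$ of a simplex $\sigma$ of $P_{50\delta}(\G)$ acts on the finite set $\sigma$ and, since $\G$ acts freely on itself, injects into the symmetric group on $\sigma$; hence all $\G$-stabilisers of vertices and edges of $\K$ are finite. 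Moreover $\K/\G$ is a finite graph: there are only finitely many $\G$-orbits of simplices of diameter at most $50\delta$.

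Next, fix a lift $\widetilde 1$ of $1_\G$ in $\widetilde\K$, and define $\Psi\colon V\ra\Aut(\widetilde\K)$ as follows: given $\pp\in V$, lift $\pp$ to a path $\widetilde\pp$ in $\widetilde\K$ starting at $\widetilde 1$, and let $\Psi(\pp)$ be the unique lift of the translation by $\pi(\pp)$ sending $\widetilde 1$ to the endpoint of $\widetilde\pp$. Its image $\widetilde V$ is exactly the set of all lifts to $\widetilde\K$ of translations by elements of $\G$. The map $\Psi$ is bijective onto $\widetilde V$ (its inverse projects any path in $\widetilde\K$ from $\widetilde 1$ to $\Psi(\pp)(\widetilde 1)$ back to $\K$), and a direct unravelling of the definitions shows that the twisted concatenation law $\pp\pp'=\pp\cdot(\pi(\pp)\pp')$ corresponds precisely to composition of lifts, so $\Psi$ is a group isomorphism $V\cong\widetilde V$. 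In particular $V$ acts on the tree $\widetilde\K$ without inversions, sitting in an extension $1\ra\pi_1(\K,1_\G)\ra V\ra\G\ra 1$ in which $\pi_1(\K,1_\G)$ is the deck group.

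The $V$-stabiliser of any vertex or edge $\Tilde x$ of $\widetilde\K$ injects into the $\G$-stabiliser of its image $x$ in $\K$ (the intersection with $\pi_1(\K,1_\G)$ is trivial since the deck action is free), and conversely each $g\in\G_x$ has a unique lift fixing $\Tilde x$, yielding a canonical isomorphism between the $V$-stabiliser of $\Tilde x$ and the $\G$-stabiliser of $x$; in particular these stabilisers are finite. Since $\widetilde\K/V=\K/\G$ is a finite graph, Bass--Serre theory identifies $V$ with the fundamental group of the finite graph of finite groups $\K/\G$ with vertex and edge groups given by the $\G$-stabilisers of chosen preimages in $\K$, which is virtually free. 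For the algorithmic statement: $\delta$ can be computed from a presentation of $\G$; using the (solvable) word problem in $\G$, one then enumerates representatives of $\G$-orbits of simplices of diameter at most $50\delta$ in a sufficiently large ball of $\Cay\G$, together with their finite setwise stabilisers and the inclusion maps between edge and vertex stabilisers, and extracts a presentation of $V$ from the standard presentation of the fundamental group of the resulting finite graph of finite groups.

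I expect the main subtlety, though not deep, to be the verification that the twisted concatenation law on $V$ corresponds to composition of lifts in $\widetilde V$: this relies on the observation that $\pi(\pp)\pp'$, viewed as a path in $\K$, is precisely the projection of the lift of $\pp'$ starting at the endpoint of $\widetilde\pp$, which follows from the uniqueness of lifts once a basepoint lift is fixed and from $\widetilde V$-equivariance of $\widetilde\K\ra\K$ above the $\G$-action.
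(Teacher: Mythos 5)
Your proof is correct and follows essentially the same route as the paper: both arguments construct the natural action of $V$ on the universal cover tree of $\K$ and apply Bass--Serre theory. The paper defines the action by the explicit formula $\pp.x = $ endpoint of the lift of $\pp\cdot(\pi(\pp)\qq_x)$ starting at $v_0$, whereas you package the same construction slightly more conceptually by identifying $V$ with the group of lifts to $\widetilde\K$ of $\G$-translations of $\K$; these two descriptions coincide, as your final paragraph correctly observes. The remaining steps (free deck action of $\ker\pi=\pi_1(\K,1_\G)$, injectivity of $\pi$ on vertex/edge stabilisers, the converse isomorphism $V_{\Tilde x}\cong\G_{p(\Tilde x)}$, finiteness of $\K/\G$, and the computability discussion) match the paper's proof point for point.
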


\begin{proof}
  Let $p:T \tto \K$ be the universal cover of the graph $\K$; it is a tree. 
With the choice of a preimage $v_0\in T$ of $1_\G\in
  \K$, the group $V$ admits a natural action on $T$, as we explain now. 
Any path $\pp$ in $\K$ starting at $1_\G$ has a unique
lift $\tilde{\pp}$ in $T$ starting at $v_0$.
On the other hand, for any point $x\in T$, the path $[v_0,x]\subset T$ defines a path $\qq_x$ by projection to $\K$.
Then one can define $\pp.x$ as the endpoint of lift starting at $v_0$ of $\pp\cdot (\pi(p)\qq_x)$.
It is easy to check that $p:T\tto \K$ is $\pi$-equivariant.
In particular, $T/V \simeq \K/\G$ is a finite graph.

Note that $\pi_1(\K,1)\subset V$ is the kernel of $\pi$ and acts freely on $T$.

To show that $V$ is virtually free, we prove that stabilisers of vertices of $T$ are finite.
Let $V_x\subset V$ be the stabiliser of a vertex $x\in T$. Then $\pi(V_x)$ stabilises $p(x)\in\K$.
Since $\ker \pi$ acts freely on $T$, $\pi_{|V_x}$ is injective. 
This proves the finiteness of vertex stabilisers.

To prove the computability of vertex stabilisers, we prove that $\pi$ induces an isomorphism
between $V_x$ and the $\G$ stabiliser of $p(x)$, which is computable.
We need to prove that any $g\in\G$ fixing $p(x)$ lies in $\pi(V_x)$.
Consider the path $\qq'=g\qq_x$ joining $g$ to $p(x)$, and
$\pp=\qq_x\cdot \ol\qq'$ joining $1$ to $g$.
Then $\pp.x$ is the endpoint of the lift of $\pp\cdot (g\qq_x)= \qq_x\cdot \ol\qq'\cdot (g\qq_x)=\qq_x$,
so $\pp.x=x$.

A similar argument shows that $\pi$ induces an isomorphism between edge stabilisers of $T$ and $\K$,
so one can compute a finite graph of finite groups representing $V$.
\end{proof}

\subsection{Lifting equations to $V$}

Consider $(\lambda_1,\mu_1)=(\lambda_0,\mu_0+2+\frac{2}{\lambda_0})$,
so that any concatenation of a $(\lambda_0,\mu_0)$-quasigeodesic with a path of length 1 at each extremity
is a $(\lambda_1,\mu_1)$-quasigeodesic.
Let $\qg_{\lambda_1,\mu_1}(V)\subset V$ be the set of elements such that the corresponding reduced path in $\K$ is
$(\lambda_1,\mu_1)$-quasigeodesic.
Denote by $V_{\leq L}$ the set of elements of $V$ whose corresponding reduced path in $\K$
has length at most $L$.

Interpreting the paths occurring in Proposition \ref{prop;canpaths} in terms of elements of $V$, we get:
\begin{prop}\label{prop_canrep}
  Consider a system of equations in $\Gamma$ as in Proposition \ref{prop;canpaths},
and $(g_x)_{x\in X}\in \G^X$ a solution. Let $\kappa_1=2\kappa.\#\cale+2$.

Then for each variable $x\in X\cup\ol X$, there exists $\Tilde g_x\in \qg_{\lambda_1,\mu_1}(V)$ with $\Tilde g_{\ol x}=(\Tilde g_x)\m$ 
and $\pi(\Tilde g_x)=g_x$,
and for each equation $\eps\in \cale$ representing the equation $x_1x_2x_3=1$ and for each $i\in \{1,2,3\mod 3\}$, there exists
$\Tilde l_{\eps,i}\in\qg_{\lambda_1,\mu_1}(V)$ and $\Tilde c_{\eps,i}\in V_{\leq \kappa_1}$, such that
\renewcommand{\labelenumi}{(\roman{enumi})}
\begin{enumerate*}
\item $\Tilde g_{x_i}= \Tilde l_{\eps,i} \Tilde c_{\eps,i} \Tilde l_{\eps,i+1}\m$ in $V$
\item $\pi (\Tilde c_{\eps,1}\Tilde c_{\eps,2}\Tilde c_{\eps,3})=1$ in $\G$.
\end{enumerate*}

Conversely, given any family of elements of $V$ $(\Tilde g_{x})_{x\in X\cup \ol X}$,
$(\Tilde l_{\eps,i})_{\eps\in\cale,i=1,2,3}$,
$(\Tilde c_{\eps,i})_{\eps\in\cale,i=1,2,3}$ satisfying $g_{\ol x}=g_x\m$ and  $(i)-(ii)$,
the family $g_x=\pi(\Tilde g_x)$ is a solution of $\cale$.
\end{prop}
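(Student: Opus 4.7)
The plan is to establish the two directions separately. The converse is a short calculation: assuming (i) and (ii), the product $\Tilde g_{x_1}\Tilde g_{x_2}\Tilde g_{x_3}$ telescopes in $V$ (indices mod $3$) to $\Tilde l_{\eps,1}(\Tilde c_{\eps,1}\Tilde c_{\eps,2}\Tilde c_{\eps,3})\Tilde l_{\eps,1}\m$; applying the homomorphism $\pi:V\tto\G$ and using (ii) gives $g_{x_1}g_{x_2}g_{x_3}=1$, while the identity $\Tilde g_{\ol x}=\Tilde g_x\m$ yields $g_{\ol x}=g_x\m$.

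For the forward direction, starting from a solution $\ul g$ I would first apply Proposition \ref{prop;canpaths} to obtain $(\lambda_0,\mu_0)$-quasigeodesic paths $\pp_x$ in $\K$ from $1$ to $g_x$ (with $\pp_{\ol x}=g_x\m\ol{\pp_x}$), together with a decomposition $\pp_{x_i}=\ll_i\cdot\cc_i\cdot\rr_i$ for each equation $\eps$ satisfying $\ll_{i+1}=g_{x_i}\m\ol{\rr_i}$ and $|\cc_i|\leq 2\kappa\#\cale$. Writing $a_i,b_i$ for the endpoints of $\ll_i$ and $\cc_i$ in $\K$, the plan is to interpret $\pp_x$ as $\Tilde g_x\in V$, $\ll_i$ as $\Tilde l_{\eps,i}\in V$, and the $\G$-translate $a_i\m\cc_i$ (which starts at $1_\G$) as $\Tilde c_{\eps,i}\in V$. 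Identity (i) then unwinds from the multiplication law in $V$: the product $\Tilde l_{\eps,i}\Tilde c_{\eps,i}$ represents $\ll_i\cdot\cc_i$ (ending at $b_i$), and right-multiplying by $\Tilde l_{\eps,i+1}\m$ appends the translate $g_{x_i}\ol{\ll_{i+1}}$, which by the identity $\ll_{i+1}=g_{x_i}\m\ol{\rr_i}$ equals $\rr_i$, producing $\pp_{x_i}$. Identity (ii) follows by telescoping: the $\G$-endpoint of $\Tilde c_{\eps,i}$ is $a_i\m b_i$, and reading $\ll_{i+1}=g_{x_i}\m\ol{\rr_i}$ at the level of endpoints gives $a_{i+1}=g_{x_i}\m b_i$, so $\prod_i a_i\m b_i=a_1\m(g_{x_1}g_{x_2}g_{x_3})a_1=1$. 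The quasigeodesic bounds on $\Tilde g_x$ and $\Tilde l_{\eps,i}$ follow from the facts that reducing a path in $\K$ cannot worsen its quasigeodesic constants and that subpaths of quasigeodesics inherit them, while $|\Tilde c_{\eps,i}|\leq\kappa_1$ follows from $|\cc_i|\leq 2\kappa\#\cale$.

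The technical point I expect to be the main obstacle is that elements of $V$ are required to be homotopy classes of paths between vertices of $\G\subset\K$, whereas the natural breakpoints $a_i,b_i$ produced by Proposition \ref{prop;canpaths} typically sit at barycentres of simplices of the Rips complex rather than at elements of $\G$. I plan to handle this by shifting each breakpoint by at most one edge of $\K$ to a chosen element of the corresponding slice $S_k\cup S_{k+1}\subset\G$, absorbing the resulting short backtracks into the adjacent pieces; reduction in $\K$ then removes them without altering $\G$-endpoints or homotopy classes. This correction costs only a uniformly bounded additive amount on each of the three pieces, which is precisely the slack in $(\lambda_1,\mu_1)=(\lambda_0,\mu_0+2+2/\lambda_0)$ and the $+2$ appearing in $\kappa_1=2\kappa\cdot\#\cale+2$.
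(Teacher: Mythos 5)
Your proposal reproduces the paper's own argument: apply Proposition~\ref{prop;canpaths}, interpret the paths $\pp_x$, $\ll_{\eps,i}$, $\cc_{\eps,i}$ as elements of $V$, fix up the breakpoints (which generally sit at barycentres rather than at elements of $\G$) by appending a single $\K$-edge at each junction, and verify (i) and (ii) by telescoping; the slack in $(\lambda_1,\mu_1)$ and the ``$+2$'' in $\kappa_1$ account exactly for those extra edges, as you say. The only cosmetic difference is that the paper derives (ii) directly from $\pi(\Tilde g_{x_1}\Tilde g_{x_2}\Tilde g_{x_3})=1$ rather than by endpoint bookkeeping, but these are the same computation.
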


\begin{figure}[htbp]
  \centering
  \includegraphics{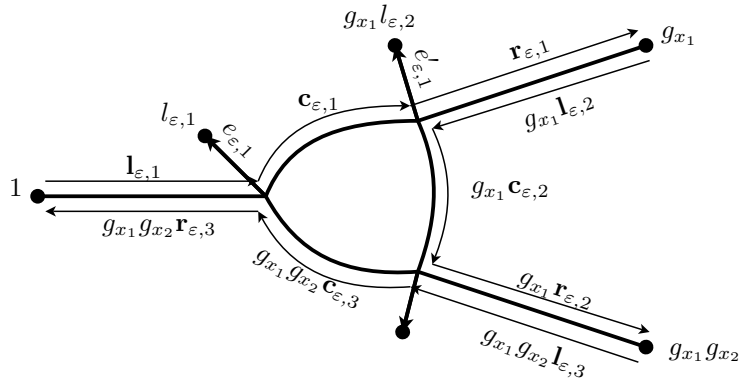}
  \caption{canonical paths in $\K$}
  \label{fig_can_paths}
\end{figure}

\begin{proof}
  The converse implication is obvious as for each equation $\eps$ written $x_1x_2x_3=1$, 
 $\pi(\Tilde g_{x_1}\Tilde g_{x_2}\Tilde g_{x_3})$ simplifies to
$\pi(\Tilde l_{\eps,1}\Tilde c_{\eps,1}\Tilde c_{\eps,2}\Tilde c_{\eps,3}\Tilde l_{\eps,1}\m)=1$.

For the direct implication, apply Proposition \ref{prop;canpaths} to get a path $\pp_x$
for each $x\in X\cup \ol X$, and for each equation $\eps$ written $x_1x_2x_3=1$, 
and each $i\in \{1,2,3\mod 3\}$, three paths $\ll_{\eps,i},\cc_{\eps,i},\rr{\eps,i}$.
Since $\pp_x$ joins $1$ to $g_x$, it represents an element $\Tilde g_x=\pp_x\in V$, and $\pi(\Tilde g_x)=g_x$.
On the other hand $\ll_{\eps,i}$ may fail to end at a vertex of $\Gamma$, so consider $e_{\eps_i}$
an edge of $\K$ joining the endpoint of $\ll_{\eps,i}$ to a point in $l_{\eps,i}\G$.
This allows to define an element of $V$ by $\Tilde l_{\eps,i}=\ll_{\eps,i}\cdot e_{\eps,i}$.
Since $\ll_{\eps,i+1}=g_{x_i}\m \ol{\rr_{\eps,i}}$, the edge $g_{x_i} e_{\eps,i+1}$ joins the initial point of $\rr_{\eps,i}$
to $g_{x_i}l_{\eps, i+1}\in \G$, we denote it by $e'_{\eps,i}$ (see Figure \ref{fig_can_paths}).
One can therefore define the following elements of $V$: 
$\Tilde c_{\eps,i}=l_{\eps,i}\m(\ol e_{\eps,i}\cdot \cc_{\eps,i} \cdot e'_{\eps,i})$,
and $\Tilde r_{\eps,i}=(g_{x_i}l_{\eps i+1})\m(\ol e'_{\eps,i}\cdot \rr_{\eps,i})$.

By construction, $\Tilde g_{x_i}=\Tilde l_{\eps,i}\Tilde c_{\eps,i}\Tilde r_{\eps,i}$.
Since $\ll_{\eps,i+1}=g_{x_i}\m \ol{\rr_{\eps,i}}$, we get
$\Tilde l_{\eps,i+1}=\ll_{\eps,i+1}\cdot e_{i+1}=g_{x_i}\m (\ol{\rr_{\eps,i}}\cdot e'_{\eps,i})=\Tilde r_{\eps,i}\m$.
The fact that $\pi (\Tilde c_{\eps,1}\Tilde c_{\eps,2}\Tilde c_{\eps,3})=1$ then follows from $\pi(\Tilde g_{x_1}\Tilde g_{x_2}.\Tilde g_{x_3})=1_\G$.
The fact that $\Tilde l\in \qg_{\lambda_1,\mu_1}(V)$ and $\Tilde c\in V_{\leq \kappa_1}$ is clear from the analogous facts 
satisfied by $\ll$ and $\cc$.
\end{proof}

Note that statement $(i)$ of Proposition \ref{prop_canrep} is an equation in $V$ while $(ii)$
takes place in $\G$. But since each $\Tilde c_{\eps,i}$ is short, there are only finitely possibilities,
and for each value of $\Tilde c_{\eps,i}$ satisfying $(ii)$,
one can think of $(i)$ as a system equations in $V$ with variables $\Tilde l_{\eps,i}$ and $\Tilde g_x$,
and constants $\Tilde c_{\eps,i}$.
More formally:

\begin{dfn}\label{dfn_eqn_lift}
For each tuple $\ul{\Tilde c}$ of elements $\Tilde c_{\eps,i}\in V_{\leq \kappa_1}$ satisfying $(ii)$,
define $\Tilde \cale(\ul{\Tilde c})$ as the system of equations $(i)$ with unknowns $\Tilde g_x$ and $\Tilde l_{\eps,i}$.
  
For each solution of $\Tilde\cale(\ul{\Tilde c})$, its projection $(g_x)=(\pi(\Tilde g_x))\in \Gamma^X$,
is a solution of $\cale$. We say that the solution $(\Tilde g_x,\Tilde l_{\eps,i})$ is a \emph{lift}
of $(g_x)$ in $V$.
\end{dfn}

Proposition \ref{prop_canrep} says that every solution of $\cale$ has a lift in $V$, and more precisely in 
$\qg_{\lambda_1,\mu_1}(V)\subset V$.

\subsection{Lifting rational constraints to $V$, and proof of the theorem}

Before proving the main theorem of this section, we need to lift 
the \qi embedded rational subsets of $\Gamma$ in $V$.
Let $\Hat S$ be a generating system of $V$.
Let  $\call_{\Hat S}=\lqg_{\nu,\lambda,\mu}\subset \Hfmi$ be the 
the rational language of local quasigeodesics for the metric $\abs{\cdot}_{\Hat S}$ on $\G$.
Let $\call_V$ be its image in $V$.
Given $\lambda_1,\mu_1$ as in previous section, one can compute $\lambda,\mu$ large enough so that
$\call_V$ contains $\qg_{\lambda_1,\mu_1}(V)$.

\begin{lem}\label{lem_full_V}
Let $\calr$ be any \qi embeddable rational subset of a hyperbolic group $G$.
Consider $V$, 
$\pi:V\onto G$ and $\qg_{\lambda_1,\mu_1}(V)\subset\call_V\subset V$ as above.

Then $\Tilde\calr_V=\call_V\cap\pi\m (\calr)$ is a rational subset of $V$, and $\pi(\Tilde \calr_V)=\calr$.
\end{lem}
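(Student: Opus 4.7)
The whole argument reduces to a direct application of Proposition \ref{prop_full} to the hyperbolic group $\Gamma$ equipped with the generating set $\pi(\Hat S)$, once one recognises that $\call_{\Hat S}$ is exactly the $\lqg$ language of that proposition in this setup. The natural monoid epimorphism $\pi_\Gamma := \pi \circ \pi_V : \Hfmi \to \Gamma$ sends each letter $\hat s \in \Hat S \cup \bar{\Hat S}$ to $\pi(\hat s)$, and the language $\call_{\Hat S}$ consists of the words in $\Hfmi$ whose associated paths in $\Cay(\Gamma, \pi(\Hat S))$ are $\nu$-local $(\lambda, \mu)$-quasigeodesics.

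For the equality $\pi(\Tilde \calr_V) = \calr$, the inclusion $\subset$ is immediate. For $\supset$, given $g \in \calr$, I take any geodesic in $\Cay(\Gamma, \pi(\Hat S))$ from $1_\Gamma$ to $g$ and lift each of its letters to a corresponding element of $\Hat S \cup \bar{\Hat S}$; this produces a word $w \in \Hfmi$ whose induced path in $\Gamma$ is that geodesic. Since subwords of geodesics are geodesics, $w$ is a $\nu$-local $(1,0)$-quasigeodesic, hence lies in $\call_{\Hat S}$. Then $v := \pi_V(w) \in \call_V$ satisfies $\pi(v) = g$, witnessing $g \in \pi(\Tilde\calr_V)$.

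For the rationality of $\Tilde\calr_V$, I would apply Proposition \ref{prop_full} directly to the data $(\Gamma, \pi(\Hat S), \pi_\Gamma, \calr)$ (the constants $\lambda, \mu, \nu$ chosen before the lemma satisfy its hypothesis by construction). The proposition yields that
\[
M := \pi_\Gamma^{-1}(\calr) \cap \call_{\Hat S} \;\subset\; \Hfmi
\]
is a regular language. I would then check that $\pi_V(M) = \Tilde\calr_V$: the inclusion $\subset$ is clear since $w \in M$ gives $\pi_V(w) \in \pi_V(\call_{\Hat S}) = \call_V$ and $\pi(\pi_V(w)) = \pi_\Gamma(w) \in \calr$; conversely, any $v \in \Tilde\calr_V$ admits, by $v \in \call_V$, a representative $w \in \call_{\Hat S}$ with $\pi_V(w) = v$, and then $\pi_\Gamma(w) = \pi(v) \in \calr$ shows $w \in M$. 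Being the image of a regular language in $\Hfmi$ under the natural map $\pi_V: \Hfmi \to V$, the set $\Tilde\calr_V$ is a rational subset of $V$.

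The only conceptual point is recognising the identification of $\call_{\Hat S}$ with $\lqg_{\nu,\lambda,\mu}(\Hfmi)$ in the sense of Proposition \ref{prop_full} applied to $\Gamma$ with generating set $\pi(\Hat S)$; once this is noted, no serious obstacle arises, and the proof reduces to three short observations.
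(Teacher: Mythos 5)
Your proof is correct and follows the same route as the paper: identify $\call_{\Hat S}$ with $\lqg_{\nu,\lambda,\mu}(\Hfmi)$, apply Proposition~\ref{prop_full} to the morphism $\Hfmi\to\Gamma$ to get regularity of $M=\pi_\Gamma^{-1}(\calr)\cap\call_{\Hat S}$, and push forward to $V$ after checking $\pi_V(M)=\Tilde\calr_V$. The paper's own proof is essentially just these last two steps; you additionally spell out the surjectivity claim $\pi(\Tilde\calr_V)=\calr$ (by lifting geodesics over $\pi(\Hat S)$ to words in $\Hfmi$), which the published proof leaves implicit.
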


\begin{proof}
  By Proposition \ref{prop_full},
the language $\Tilde\calr_{\Hat S}=\pi\m(\calr)\cap\call_{\Hat S}\subset \Hfmi$ is regular.
Since $\pi_{\Hat S}:\Hfmi\ra G$ factors through $\pi_V:V\ra G$,
$\Tilde\calr_v$ is the image in $V$ of $\Tilde\calr_{\Hat S}$.
Therefore, it is rational.
\end{proof}

We are now ready to prove that one can decide algorithmically whether a system of equations with \qi-embeddable
rational constraints has a solution.

\begin{proof}[Proof of Theorem \ref{thm_eqn_hyp}]
Since $\Gamma\setminus\{1\}$ is a \qi embeddable rational subset by Corollary \ref{cor_boolean},
and since constants can be encoded by rational constraints consisting of a single element,
we can reduce to the case of a system of equations $\cale$ with \qi embeddable rational constraints $\calr_x\subset \G$ (without constants).

For each $x\in X$, consider $\Tilde\calr_x=\call_V\cap\pi\m(\calr_x)\subset V$ which is rational by Lemma \ref{lem_full_V}.
Compute $\kappa_1$, and enumerate all tuples $\ul{\Tilde c}$ in $V_{\leq \kappa_1}$ satisfying  
$\pi (\Tilde c_{\eps,1}\Tilde c_{\eps,2}\Tilde c_{\eps,3})=1$ (this uses a solution of the word problem in $\G$).
Consider the lifted system of equations $\Tilde \cale(\ul{\Tilde c})$ in $V$ as in definition \ref{dfn_eqn_lift},
and add the rational constraint $\Tilde g_x\in \Tilde \calr_x$.
Denote by $\ul{\Tilde \calr}$ the corresponding set of rational constraints.
For each $\ul{\Tilde c}$, use the  algorithm  of Theorem \ref{thm_vf} for  virtually free groups to decide whether 
$(\Tilde \cale(\ul{\Tilde c}), \ul{\Tilde \calr})$ has a solution.
We claim that $(\cale,\ul\calr)$ has a solution if and only if 
$(\Tilde \cale(\ul{\Tilde c}),\Tilde \calr)$ has a solution for at least one $\ul{\Tilde c}$.

If $(\Tilde \cale(\ul{\Tilde c}),\Tilde \calr)$ has a solution, then $\pi(\Tilde g_x)$ is a solution of $\cale$ by Proposition \ref{prop_canrep},
and since $\Tilde g_x\in \Tilde \calr_x$, $\pi(\Tilde g_x)\in \calr_x$, so the constraints are satisfied.

Conversely, if $(\cale,\ul\calr)$ has a solution $(g_x)$, then  by Proposition \ref{prop_canrep},
$\Tilde \cale(\ul{\Tilde c})$ has a solution lying in $\qg_{\lambda_1,\mu_1}(V)\subset \call_V$
for some tuple $\ul{\Tilde c}$.
Since the solution satisfies the rational constraint $g_x\in\calr_x$,
the corresponding solution of $\Tilde \cale(\ul{\Tilde c})$ 
satisfies $\Tilde g_x\in \call_V\cap\pi\m(\calr_x)=\Tilde \calr_x$, so 
this solution of $\Tilde \cale(\ul{\Tilde c})$  satisfies the rational constraint $\ul{\Tilde \calr}$. 
\end{proof}

{\small
\bibliographystyle{alpha}
\bibliography{published,unpublished}
}

\begin{flushleft}
Fran{\c c}ois Dahmani, Vincent Guirardel\\
Institut de Math\'ematiques de Toulouse  (UMR 5219)\\
Universit\'e Paul Sabatier, Toulouse III\\
31062 Toulouse cedex 9.\\
France.\\
\emph{e-mail: } \texttt{dahmani@math.univ-toulouse.fr}, \texttt{guirardel@math.univ-toulouse.fr}\\
\end{flushleft}

\end{document}